\theoremstyle{plain}
\newtheorem{Thm}[equation]{Theorem}
\newtheorem{Cor}[equation]{Corollary}
\newtheorem{Lem}[equation]{Lemma}
\newtheorem{Prop}[equation]{Proposition}
\newtheorem{Conj}[equation]{Conjecture}
\theoremstyle{definition}
\newtheorem{Def}[equation]{Definition}
\theoremstyle{remark}
\newtheorem{Rem}[equation]{Remark}
\numberwithin{equation}{section}
\renewcommand{\rm}{\normalshape}
\newif\ifShowLabels
\newdimen\theight
\def\TeXref#1{%
    \leavevmode\vadjust{\setbox0=\hbox{{\tt
        \quad\quad  {\small \rm #1}}}%
    \theight=\ht0
    \advance\theight by \lineskip
    \kern -\theight \vbox to
    \theight{\rightline{\rlap{\box0}}%
    \vss}%
    }}%
\newenvironment{thm}[1]%
    { \begin{Thm} \label{T:#1}  \ifShowLabels \TeXref{T:#1} \fi }%
    { \end{Thm} }
\renewcommand{\th}[1]{\begin{thm}{#1} \sl }
\renewcommand{\eth}{\end{thm} }
\newenvironment{lemma}[1]%
    { \begin{Lem} \label{L:#1}  \ifShowLabels \TeXref{L:#1} \fi }%
    { \end{Lem} }
\newcommand{\lem}[1]{\begin{lemma}{#1} \sl}
\newcommand{\elem}{\end{lemma}}
\newenvironment{propos}[1]%
    { \begin{Prop} \label{P:#1}  \ifShowLabels \TeXref{P:#1} \fi }%
    { \end{Prop} }
\newcommand{\prop}[1]{\begin{propos}{#1}\sl }
\newcommand{\eprop}{\end{propos}}
\newenvironment{corol}[1]%
    { \begin{Cor} \label{C:#1}  \ifShowLabels \TeXref{C:#1} \fi }%
    { \end{Cor} }
\newcommand{\cor}[1]{\begin{corol}{#1} \sl }
\newcommand{\ecor}{\end{corol}}
\newenvironment{defeni}[1]%
    { \begin{Def} \label{D:#1}  \ifShowLabels \TeXref{D:#1} \fi }%
    { \end{Def} }
\newcommand{\defe}[1]{\begin{defeni}{#1} \sl }
\newcommand{\edefe}{\end{defeni}}
\newenvironment{remark}[1]%
    { \begin{Rem} \label{R:#1}  \ifShowLabels \TeXref{R:#1} \fi }%
    { \end{Rem} }
\newcommand{\rem}[1]{\begin{remark}{#1}}
\newcommand{\erem}{\end{remark}}
\newenvironment{conjec}[1]%
    { \begin{Conj} \label{Co:#1}  \ifShowLabels \TeXref{Co:#1} \fi }%
    { \end{Conj} }
\renewcommand{\conj}[1]{\begin{conjec}{#1} \sl }
\newcommand{\econj}{\end{conjec}}
\newcommand{\eq}[1]%
    { \ifShowLabels \TeXref{E:#1} \fi
       \begin{equation} \label{E:#1} }
\newcommand{\eeq}{ \end{equation} }
\newcommand{\prf}{ \begin{proof} }
\newcommand{\epr}{ \end{proof} }
\newcommand\nc{\newcommand}
\nc{\unl}{\underline}
\nc{\ol}{\overline}
\nc{\on}{\operatorname}
\nc{\BA}{{\mathbb{A}}}
\nc{\BC}{{\mathbb{C}}}
\nc{\BD}{{\mathbb{D}}}
\nc{\BF}{{\mathbb{F}}}
\nc{\BG}{{\mathbb{G}}}
\nc{\BM}{{\mathbb{M}}}
\nc{\BN}{{\mathbb{N}}}
\nc{\BO}{{\mathbb{O}}}
\nc{\BQ}{{\mathbb{Q}}}
\nc{\BP}{{\mathbb{P}}}
\nc{\BR}{{\mathbb{R}}}
\nc{\BZ}{{\mathbb{Z}}}
\nc{\BS}{{\mathbb{S}}}
\nc{\BK}{{\mathbb{K}}}
\nc{\CA}{{\mathcal{A}}} \nc{\CB}{{\mathcal{B}}} \nc{\CalC}{{\mathcal
C}} \nc{\CalD}{{\mathcal D}} \nc{\CE}{{\mathcal{E}}}
\nc{\CF}{{\mathcal{F}}} \nc{\CG}{{\mathcal{G}}}
\nc{\CH}{{\mathcal{H}}} \nc{\CI}{{\mathcal{I}}}
\nc{\CK}{{\mathcal{K}}} \nc{\CL}{{\mathcal{L}}}
\nc{\CM}{{\mathcal{M}}} \nc{\CN}{{\mathcal{N}}}
\nc{\CO}{{\mathcal{O}}} \nc{\CP}{{\mathcal{P}}}
\nc{\CQ}{{\mathcal{Q}}} \nc{\CR}{{\mathcal{R}}}
\nc{\CS}{{\mathcal{S}}} \nc{\CT}{{\mathcal{T}}}
\nc{\CU}{{\mathcal{U}}} \nc{\CV}{{\mathcal{V}}}
\nc{\CW}{{\mathcal{W}}} \nc{\CX}{{\mathcal{X}}}
\nc{\CY}{{\mathcal{Y}}} \nc{\CZ}{{\mathcal{Z}}}
\nc{\fa}{{\mathfrak{a}}}
\nc{\fb}{{\mathfrak{b}}}
\nc{\fg}{{\mathfrak{g}}}
\nc{\fgl}{{\mathfrak{gl}}}
\nc{\fh}{{\mathfrak{h}}}
\nc{\fj}{{\mathfrak{j}}}
\nc{\fl}{{\mathfrak{l}}}
\nc{\fm}{{\mathfrak{m}}}
\nc{\fn}{{\mathfrak{n}}}
\nc{\fu}{{\mathfrak{u}}}
\nc{\fp}{{\mathfrak{p}}}
\nc{\frr}{{\mathfrak{r}}}
\nc{\fs}{{\mathfrak{s}}}
\nc{\ft}{{\mathfrak{t}}}
\nc{\fw}{{\mathfrak{w}}}
\nc{\fz}{{\mathfrak{z}}}
\nc{\fA}{{\mathfrak{A}}}
\nc{\fB}{{\mathfrak{B}}}
\nc{\fD}{{\mathfrak{D}}}
\nc{\fE}{{\mathfrak{E}}}
\nc{\fF}{{\mathfrak{F}}}
\nc{\fG}{{\mathfrak{G}}}
\nc{\fI}{{\mathfrak{I}}}
\nc{\fJ}{{\mathfrak{J}}}
\nc{\fK}{{\mathfrak{K}}}
\nc{\fL}{{\mathfrak{L}}}
\nc{\fM}{{\mathfrak{M}}}
\nc{\fN}{{\mathfrak{N}}}
\nc{\frP}{{\mathfrak{P}}}
\nc{\fQ}{{\mathfrak Q}}
\nc{\fR}{{\mathfrak R}}
\nc{\fS}{{\mathfrak S}}
\nc{\fT}{{\mathfrak{T}}}
\nc{\fU}{{\mathfrak{U}}}
\nc{\fW}{{\mathfrak{W}}}
\nc{\fY}{{\mathfrak{Y}}}
\nc{\fZ}{{\mathfrak{Z}}}
\nc{\ba}{{\mathbf{a}}}
\nc{\bb}{{\mathbf{b}}}
\nc{\bc}{{\mathbf{c}}}
\nc{\bd}{{\mathbf{d}}}
\nc{\be}{{\mathbf{e}}}
\nc{\bi}{{\mathbf{i}}}
\nc{\bj}{{\mathbf{j}}}
\nc{\bn}{{\mathbf{n}}}
\nc{\bp}{{\mathbf{p}}}
\nc{\bq}{{\mathbf{q}}}
\nc{\bu}{{\mathbf{u}}}
\nc{\bv}{{\mathbf{v}}}
\nc{\bw}{{\mathbf{w}}}
\nc{\bx}{{\mathbf{x}}}
\nc{\by}{{\mathbf{y}}}
\nc{\bz}{{\mathbf{z}}}
\nc{\bA}{{\mathbf{A}}}
\nc{\bB}{{\mathbf{B}}}
\nc{\bC}{{\mathbf{C}}}
\nc{\bD}{{\mathbf{D}}}
\nc{\bE}{{\mathbf{E}}}
\nc{\bF}{{\mathbf{F}}}
\nc{\bI}{{\mathbf{I}}}
\nc{\bK}{{\mathbf{K}}}
\nc{\bH}{{\mathbf{H}}}
\nc{\bM}{{\mathbf{M}}}
\nc{\bN}{{\mathbf{N}}}
\nc{\bO}{{\mathbf{O}}}
\nc{\bQ}{{\mathbf Q}}
\nc{\bS}{{\mathbf{S}}}
\nc{\bT}{{\mathbf{T}}}
\nc{\bV}{{\mathbf{V}}}
\nc{\bW}{{\mathbf{W}}}
\nc{\bX}{{\mathbf{X}}}
\nc{\bP}{{\mathbf{P}}}
\nc{\bY}{{\mathbf{Y}}}
\nc{\bZ}{{\mathbf{Z}}}
\nc{\sA}{{\mathsf{A}}}
\nc{\sB}{{\mathsf{B}}}
\nc{\sC}{{\mathsf{C}}}
\nc{\sD}{{\mathsf{D}}}
\nc{\sF}{{\mathsf{F}}}
\nc{\sH}{{\mathsf{H}}}
\nc{\sK}{{\mathsf{K}}}
\nc{\sM}{{\mathsf{M}}}
\nc{\sO}{{\mathsf{O}}}
\nc{\sQ}{{\mathsf{Q}}}
\nc{\sP}{{\mathsf{P}}}
\nc{\sT}{{\mathsf{T}}}
\nc{\sV}{{\mathsf{V}}}
\nc{\sW}{{\mathsf{W}}}
\nc{\sX}{{\mathsf{X}}}
\nc{\sZ}{{\mathsf{Z}}}
\nc{\sU}{{\mathsf{U}}}
\nc{\sS}{{\mathsf{S}}}
\nc{\sG}{{\mathsf{G}}}
\nc{\sfb}{{\mathsf{b}}}
\nc{\sfc}{{\mathsf{c}}}
\nc{\sd}{{\mathsf{d}}}
\nc{\sg}{{\mathsf{g}}}
\nc{\sk}{{\mathsf{k}}}
\nc{\sfl}{{\mathsf{l}}}
\nc{\sfp}{{\mathsf{p}}}
\nc{\sr}{{\mathsf{r}}}
\nc{\st}{{\mathsf{t}}}
\nc{\sfu}{{\mathsf{u}}}
\nc{\sw}{{\mathsf{w}}}
\nc{\sz}{{\mathsf{z}}}
\nc{\sx}{{\mathsf{x}}}
\nc{\se}{{\mathsf{e}}}
\nc{\sff}{{\mathsf{f}}}
\nc{\sfv}{{\mathsf{v}}}
\nc{\bLambda}{{\boldsymbol{\Lambda}}}
\nc{\vv}{{\boldsymbol{v}}}
\nc{\Fl}{{{\mathcal F}\ell}}
\nc{\Gr}{{\on{Gr}}}
\nc{\CHH}{{\CH\!\!\CH}}
\nc{\lambdavee}{{\lambda^{\!\scriptscriptstyle\vee}}}
\nc{\alphavee}{\alpha^{\!\scriptscriptstyle\vee}}
\nc{\rhovee}{{\rho^{\!\scriptscriptstyle\vee}}}
\newcommand\iso{\,\vphantom{j^{X^2}}\smash{\overset{\sim}{\vphantom{\rule{0pt}{0.20em}}\smash{\longrightarrow}}}\,}
\nc{\oQM}{\vphantom{j^{X^2}}\smash{\overset{\circ}{\vphantom{\vstretch{0.7}{A}}\smash{\QM}}}}
\nc{\oZ}{{}^\dagger\!\vphantom{j^{X^2}}\smash{\overset{\circ}{\vphantom{\vstretch{0.7}{A}}\smash{Z}}}}
\nc{\odZ}{{}^\dagger\!\vphantom{j^{X^2}}\smash{\overset{\circ}{\vphantom{\vstretch{0.7}{A}}\smash{\mathfrak Z}}}^{c',c}}
\nc{\bdZ}{{}^\dagger\!\vphantom{j^{X^2}}\smash{\overset{\bullet}{\vphantom{\vstretch{0.7}{A}}\smash{\mathfrak Z}}}^{c',c}}
\nc{\oS}{\vphantom{j^{X^2}}\smash{\overset{\circ}{\vphantom{\vstretch{0.7}{A}}\smash{S}}}}
\nc{\buM}{\vphantom{j^{X^2}}\smash{\overset{\bullet}{\vphantom{\vstretch{0.7}{A}}\smash{M}}}}
\nc{\dW}{{}^\dagger\ol\CW{}}
\nc{\hW}{{}^\dagger\hat\CW{}}
\nc{\wW}{{}^\dagger\widetilde\CW{}}
\nc{\dZ}{{}^\dagger\!\fZ^{c',c}}
\nc{\dZc}{{}^\dagger\!\fZ^{c,c}}
\nc{\tZ}{{}^\dagger\!\tilde{Z}{}}
\nc{\hZ}{{}^\dagger\!\hat{Z}{}}
\nc{\ssl}{\mathfrak{sl}} \nc{\gl}{\mathfrak{gl}}
\nc{\wt}{\widetilde} \nc{\Sym}{\mathrm{Sym}} \nc{\Res}{\mathrm{Res}}
\nc{\sE}{{\mathsf{E}}} \nc{\bs}{{\mathbf{s}}}
\nc{\trig}{\mathrm{trig}} \nc{\rat}{\mathrm{rat}}
\nc{\sign}{\mathrm{sign}} \nc{\sL}{{\mathsf{L}}}
\nc{\fv}{{\mathfrak{v}}} \nc{\ad}{\mathrm{ad}}
\nc{\spsi}{{\mathsf{\psi}}} \nc{\sh}{{\mathsf{h}}}
\nc{\rtt}{\mathrm{rtt}} \nc{\qdet}{\mathrm{qdet}} \nc{\pt}{{\operatorname{pt}}}
\nc{\M}{\mathrm{M}} \nc{\Ker}{\mathrm{Ker}} \nc{\ssc}{\mathrm{sc}}
\nc{\loc}{\mathrm{loc}} \nc{\fra}{\mathrm{frac}}
\nc{\ddj}{\mathrm{DJ}} \nc{\End}{\mathrm{End}} \nc{\ev}{\mathrm{ev}}
\nc{\GL}{\mathrm{GL}}
\nc{\ext}{\mathrm{ext}}
\nc{\Ad}{\mathrm{Ad}}
\nc{\jlm}{\mathrm{{JLM}}}
\nc{\blambda}{\boldsymbol{\lambda}}
\nc{\bmu}{\boldsymbol{\mu}}
\nc{\spp}{\mathfrak{p}}
\nc{\sll}{\mathfrak{l}}
\nc{\snn}{\mathfrak{n}}
\nc{\sso}{\mathfrak{so}}
\nc{\ssp}{\mathfrak{sp}}
\nc{\rrr}{{\mathsf{r}}}
\nc{\sss}{{\mathsf{s}}}
\nc{\supp}{\mathrm{supp}}
\nc{\cl}{\mathrm{cl}}
\nc{\op}{\mathrm{op}}
\nc{\Pop}{\mathrm{P}}
\nc{\Qop}{\mathrm{Q}}
\nc{\ID}{\mathrm{I}}
\nc{\rk}{r}
\nc{\extt}{X(so_{2\rk})}
\newcommand{\osc}[1]{\mathbf{#1}}
\renewcommand{\wp}{\bar{\mathbf{w}}}
\newcommand{\wm}{\mathbf{w}}
\newcommand{\oad}{\osc{\bar{a}}}
\newcommand{\oa}{\osc{a}}
\newcommand{\idb}{\mathrm{J}}
\newcommand{\id}{{\mathrm{I}}}
\newcommand{\ap}{\bar{\mathbf{A}}}
\newcommand{\am}{{\mathbf{A}}}
\begin{document}
\title[Rational Lax matrices: BCD types]
{Rational Lax matrices from\\ antidominantly shifted extended Yangians:\\ BCD types}

\author{Rouven Frassek}
 \address{R.F.: University of Modena and Reggio Emilia, FIM, 41125 Modena, Italy}
 \email{rouven.frassek@unimore.it}

\author{Alexander Tsymbaliuk}
 \address{A.T.:  Purdue University, Department of Mathematics, West Lafayette, IN 47907, USA}
 \email{sashikts@gmail.com}

\begin{abstract}
Generalizing~\cite{fpt}, we construct a family of $SO(2r),Sp(2r),SO(2r+1)$ rational Lax matrices $T_D(z)$,
polynomial in the spectral parameter $z$, parametrized by $\Lambda^+$-valued divisors $D$ on $\BP^1$.
To this end, we provide the RTT realization of the antidominantly shifted extended Drinfeld Yangians
of $\fg=\sso_{2r}, \ssp_{2r}, \sso_{2r+1}$, and of their coproduct homomorphisms.
\end{abstract}

\maketitle
\tableofcontents


\section{Introduction}


\subsection{Summary}\label{ssec summary}
\

\noindent
The main results of the present paper are:
\begin{itemize}
\item[$\bullet$]
  The RTT realization of the antidominantly shifted (extended) Yangians associated to the simple Lie algebras $\fg$
  of the classical types $\sso_{2r},\ssp_{2r},\sso_{2r+1}$, generalizing the recent isomorphisms
  of~\cite{jlm1} in the non-shifted case. This naturally equips those algebras with the coproduct homomorphisms,
  which as we show do coincide with those of~\cite{fkp} (obtained by rather lengthy computations in the Drinfeld realization).
\item[$\bullet$]
  The construction of a family of (rational) Lax matrices, regular in the spectral parameter, of the corresponding type,
  parametrized by the divisors on the projective line $\BP^1$ with coefficients in $\Lambda^+$, the dominant integral cone
  of the coweight lattice of $\fg$. In the simplest cases, this recovers recent constructions in the physics literature~\cite{ikk,f,kk}.
\end{itemize}
Our exposition follows closely that of our previous joint work with V.~Pestun~\cite{fpt},
where both above constructions were carried out for $\fg=\ssl_n$ (extended version corresponding to $\gl_n$).

\medskip
\noindent
The original definition of Yangians $Y(\fg)$ associated to any simple Lie algebra $\fg$ is due
to~\cite{d1}, where these algebras are realized as Hopf algebras with a finite set of generators.
The representation theory of such algebras is best developed using their alternative
\emph{Drinfeld realization} proposed in~\cite{d2}, though the Hopf algebra
structure is much more involved in this presentation (e.g.\ the coproduct formula has been
known since the 90s, see~\cite[(2.8)--(2.11)]{kt}, but its proof has never appeared in the literature
until the very recent paper~\cite{gnw}).

\medskip
\noindent
For $\fg=\gl_n$, a closely related algebra was studied earlier in the work of L.~Faddeev's school
(see e.g.~\cite{frt}), where the algebra generators were encoded into an $n\times n$ square matrix
$T(z)$ subject to a single \emph{RTT relation}
\begin{equation}\label{eq:rtt intro}
  R_{12}(z-w)T_1(z)T_2(w) = T_2(w)T_1(z)R_{12}(z-w)
\end{equation}
involving the rational $R$-matrix $R(z)$ satisfying the \emph{Yang-Baxter equation}
\begin{equation}\label{eq:YB intro}
   R_{12}(z)R_{13}(z+w)R_{23}(w) = R_{23}(w)R_{13}(z+w)R_{12}(z)
\end{equation}
(note that the $\ssl_n$-version is recovered by imposing an extra relation $\mathrm{qdet}\, T(z)=1$).
The Hopf algebra structure is extremely simple in this RTT realization, which is suitable both for
the development of the representation theory and study of the corresponding integrable systems.

\medskip
\noindent
An explicit isomorphism from the new Drinfeld to the RTT realizations of type $A$ Yangians is
constructed using the Gauss decomposition of $T(z)$, a complete proof provided in~\cite{bk}
(the trigonometric version of this result established earlier in~\cite{df}).
A similar explicit isomorphism for the remaining classical types $B, C, D$
was only recently provided in~\cite{jlm1}, where it was again constructed using the Gauss decomposition of the
generating matrices $T(z)$ which are subject to the RTT relations~\eqref{eq:rtt intro} with the rational
solutions of~\eqref{eq:YB intro} first discovered in~\cite{zz}.
However, let us emphasize that the formulas recovering the matrix $T(z)$ through
the Drinfeld currents in $B,C,D$ types are significantly harder than their counterparts in type $A$, see our
Lemmas \ref{lem:all-H}, \ref{lem:all-E-known}, \ref{lem:all-E-new}, \ref{lem:all-F-known}, \ref{lem:all-F-new},
\ref{lem:known type C}, \ref{lem:new type C}, \ref{lem:known type B}, \ref{lem:new type B}, generalizing partial
results of~\cite{jlm1}.
We note that a non-constructive
existence of such an isomorphism for any $\fg$ was noted by V.~Drinfeld back in 80s, while
a detailed proof of this result was only recently provided in~\cite{wen}.

\medskip
\noindent
In the present paper, we are mostly interested with the shifted versions of the algebras above.
Historically, the shifted Yangians $Y_\nu(\fg)$ were first introduced for $\fg=\gl_n$
and dominant shifts $\nu$ in~\cite{bk2}, where their certain quotients were identified
with type $A$ finite $W$-algebras, the latter being natural quantizations of type $A$ Slodowy slices.
This construction was further generalized to any semisimple $\fg$ still with dominant
shifts $\nu\in \Lambda^+$ in~\cite{kwwy}, where it was shown that their ``GKLO-type'' quotients
(called \emph{truncated shifted Yangians}) quantize slices in the affine Grassmannians.
To this end, the authors constructed a family of algebra homomorphisms
\begin{equation}\label{eq:gklo homs}
  \Phi^{\lambda,\underline{x}}_\nu\colon\ Y_{\nu}(\fg)\longrightarrow \CA
\end{equation}
to the (localized) oscillator algebra $\CA$ (generalizing the construction of~\cite{gklo} for $\nu=0$)
parametrized by $\lambda\in \Lambda^+$ and an associated collection of points $\underline{x}\in \BC^N$.
The generalization to arbitrary shifts $\nu\in \Lambda$ was finally carried out in~\cite[Appendix B]{bfnb}
for simply-laced $\fg$ and later in~\cite[\S5]{nw} for non-simply-laced types, where it was
also shown (using earlier arguments of A.~Weekes) that their images quantize
\emph{generalized slices in the affine Grassmannians}.

\medskip
\noindent
In contrast to~\cite{bk2, kwwy}, we consider the opposite case of antidominantly shifted
Yangians (note that any shifted Yangian $Y_\nu(\fg)$ may be embedded into the antidominantly
shifted one $Y_{-\mu}(\fg),\ \mu\in \Lambda^+,$ via the \emph{shift homomorphisms} of~\cite{fkp}).
For $\fg=\sso_{2r},\ssp_{2r},\sso_{2r+1}$, we introduce the shifted extended Drinfeld Yangians
$X_\mu(\fg)$ related to $Y_{\nu}(\fg)$ via isomorphisms
\begin{equation}\label{eq:ext vs nonext}
  X_{\mu}(\fg)\simeq Y_{\bar{\mu}}(\fg)\otimes ZX_\mu(\fg)
\end{equation}
where the center $ZX_\mu(\fg)$ of $X_\mu(\fg)$ can be explicitly described via a central Cartan current.
For $\mu\in \Lambda^+$ and $\fg$ as above, we also introduce the shifted extended RTT Yangians
$X^\rtt_{-\mu}(\fg)$, whose generators are encoded in a single matrix $T(z)$ (the shift is reflected in the
powers of~$z$) subject to the relation~\eqref{eq:rtt intro}.
Based on and generalizing~\cite{jlm1}, we construct isomorphisms
\begin{equation}\label{eq:key isom}
  \Upsilon_{-\mu}\colon\ X_{-\mu}(\fg)\iso X^\rtt_{-\mu}(\fg) \qquad \mathrm{for\ any}\quad \mu\in \Lambda^+ \, .
\end{equation}
The construction of $\Upsilon_{-\mu}\colon X_{-\mu}(\fg)\twoheadrightarrow X^\rtt_{-\mu}(\fg)$ is
exactly the same as in~\cite{jlm1}, but the proof of its injectivity is different, since the arguments
of~\emph{loc.cit}.\ do not apply in the shifted setup.

\medskip
\noindent
To this end, we construct a family of $\CA((z^{-1}))$-valued Lax matrices $T_D(z)$, parametrized
by $\Lambda^+$-valued divisors $D$ on the projective line $\BP^1$, which can be equivalently thought of as algebra homomorphisms
$\Theta_D\colon X^{\rtt}_{-\mu}(\fg)\to \CA$ with $\mu=D|_\infty$, the coefficient of $[\infty]$. The compositions
\begin{equation}\label{eq:composed  maps}
  \Psi_D=\Theta_D\circ \Upsilon_{-\mu}\colon\ X_{-\mu}(\fg)\longrightarrow \CA
\end{equation}
coincide with extended versions of~\eqref{eq:gklo homs}. Combining this with the recent result of~\cite{w},
asserting that the intersection of kernels of~\eqref{eq:gklo homs} as $\lambda$ varies is trivial,
implies the injectivity of~$\Upsilon_{-\mu}$.

\medskip
\noindent
The aforementioned Lax matrices $T_D(z)$ are defined explicitly by providing the lower-triangular,
diagonal, and upper-triangular factors in their Gauss decomposition. The exact defining formulas
are exactly engineered (utilizing the new explicit formulas for the inverse of the isomorphism $\Upsilon_0$
constructed in~\cite{jlm1}) to allow us match the resulting homomorphisms $\Psi_D$ of~\eqref{eq:composed  maps} with extended
versions of~\eqref{eq:gklo homs}. Meanwhile, the fact that thus constructed matrices are Lax, i.e.\ satisfy~\eqref{eq:rtt intro},
follows from a simple \emph{renormalized limit} argument as we shall explain now
(expected from the physics of $\mathcal{N}=2$ ADE quiver gauge theories as explained in~\cite[p.~3]{fpt}).
To this end, we show that if the divisor $D$ contains a summand $\omega_i[x]$
(with $x\in \BP^1$ and $\omega_i$ being the $i$-th fundamental coweight of $\fg$)
and $D'$ is defined as $D'=D-\omega_i[x]+\omega_i[\infty]$, then
\begin{equation}\label{eq:limit construction intro}
  T_{D'}(z)\ =\underset{x\to \infty}{\lim}\, \Big\{(-x)^{\omega_i}\cdot\, T_D(z)\Big\}
\end{equation}
realizing $T_{D'}(z)$ as an $x\to \infty$ limit of $T_D(z)$ multiplied on the left by a $z$-independent
diagonal factor $(-x)^{\omega_i}$, the latter preserving the RTT relation~\eqref{eq:rtt intro}. Therefore, it suffices to prove that $T_D(z)$
satisfies the RTT relation for the divisors $D$ whose support does not contain $\infty\in \BP^1$.
However, the latter follows from the fact that $\Upsilon_0$ is indeed an isomorphism as proved in~\cite{jlm1}.

\medskip
\noindent
Similar to the type $A$ case treated in~\cite{fpt}, the Lax matrices $T_D(z)$ are actually regular in the spectral parameter $z$
(up to a rational factor). This provides a shortcut to the explicit formulas of all linear (in $z$) Lax
matrices $T_D(z)$, which we classify explicitly for each of the $B,C,D$ types. We also show that some of our simplest
linear and quadratic Lax matrices, after nontrivial canonical transformations, recover the recent constructions
in the physics literature~\cite{ikk,f,kk} (see also~\cite{r}).
The latter results were obtained by making an ansatz for the Lax matrices and subsequently solving
the conditions that arise from the RTT relation. We would like to point out that our formalism provides a recipe
to write down Lax matrices of any degree in the spectral parameter
(with the leading term not necessarily proportional to the identity matrix) without making such an ansatz.


\medskip
\noindent
The algebras $X^\rtt_{-\mu}(\fg)$ are naturally equipped with coassociative coproduct homomorphisms
\begin{equation}\label{eq:rtt coproduct intro}
  \Delta^\rtt_{-\mu_1,-\mu_2}\colon\
  X^\rtt_{-\mu_1-\mu_2}(\fg)\longrightarrow X^\rtt_{-\mu_1}(\fg)\otimes X^\rtt_{-\mu_2}(\fg)\, ,
  \qquad T(z)\mapsto T(z)\otimes T(z) \, .
\end{equation}
Evoking the isomorphisms~\eqref{eq:key isom} and the embeddings
$Y_{-\bar{\mu}}(\fg)\hookrightarrow X_{-\mu}(\fg)$, cf.~\eqref{eq:ext vs nonext}, we obtain
\begin{equation}\label{eq:Drinf coproduct intro}
  \Delta_{-\nu_1,-\nu_2}\colon\
  Y_{-\nu_1-\nu_2}(\fg)\longrightarrow Y_{-\nu_1}(\fg)\otimes Y_{-\nu_2}(\fg) \, .
\end{equation}
We show that the homomorphisms~\eqref{eq:Drinf coproduct intro} precisely coincide with
the coproduct homomorphisms of~\cite[Theorem 4.8]{fkp} provided in~\emph{loc.cit}.\ via lengthy formulas
(but suitable for any $\fg$).

\noindent
We note that both the isomorphism~\eqref{eq:key isom} and the identifications
of~(\ref{eq:rtt coproduct intro},~\ref{eq:Drinf coproduct intro}) with~\cite{fkp} were conjectured
recently (for a general $\fg$) in the physics literature~\cite[\S7-8]{cgy} (see also~\cite{dg}).


\subsection{Outline of the paper}
\

The structure of the present paper is the following:

\medskip
\noindent
$\bullet$
In Section~\ref{sec Rational Lax matrices}, we present our results relevant to the classical type $D_r$ ($\fg=\sso_{2r}$)
in full details.

\medskip
\noindent
$\bullet$
In Section~\ref{sec C-type}, we provide our results relevant to the classical type $C_r$ (that is, for $\fg=\ssp_{2r}$).
Since this is very similar to the type $D_r$, we only highlight the few technical differences.

\medskip
\noindent
$\bullet$
In Section~\ref{sec B-type}, we provide our results relevant to the classical type $B_r$ (that is, for $\fg=\sso_{2r+1}$).
Since this is very similar to the type $D_r$, we only highlight the few technical differences.

\medskip
\noindent
$\bullet$
In Section~\ref{sec further directions}, we briefly discuss the further directions.

\medskip
\noindent
$\bullet$
In Appendix~\ref{Appendix A: Lax explicitly}, we provide explicit formulas for the Lax matrices in type $D_r$.

\medskip
\noindent
$\bullet$
In Appendix~\ref{Appendix B: shuffle realization}, we provide the shuffle algebra realization of the
key homomorphisms~\eqref{eq:gklo homs}, which allows us to derive the explicit formulas for the Lax
matrices in types $C_r$ and $B_r$.

\medskip


\subsection{Acknowledgments}
\

We are indebted to Vasily Pestun for bringing us together to the main subject of the current paper,
a natural BCD-type generalization of previous work~\cite{fpt} in type A, joint with Vasily.
We are also very grateful to the anonymous referees for useful suggestions.
R.F.\ acknowledges the support of the DFG Research Fellowships Programme No.~$416527151$.
A.T.\ is grateful to Boris Feigin, Michael Finkelberg, Igor Frenkel, Alexander Molev, Andrei Negu\c{t}, Leonid Rybnikov for discussions on the subject;
to Kevin Costello for a correspondence on~\cite{cgy}; to IHES (Bures-sur-Yvette) for the hospitality and wonderful working conditions.
A.T.\ gratefully acknowledges the support from NSF Grants DMS-$1821185$, DMS-$2001247$, and DMS-$2037602$.


\section{Type D}\label{sec Rational Lax matrices}

Consider the lattice $\bar{\Lambda}^\vee=\bigoplus_{j=1}^r \BZ\epsilon^\vee_j$, endowed
with the bilinear form with $(\epsilon^\vee_i,\epsilon^\vee_j)=\delta_{i,j}$. We realize
the simple positive roots $\{\alphavee_i\}_{i=1}^r$ of the Lie algebra $\sso_{2r}$ via:
\begin{equation}\label{eq:alpha-vee}
  \alphavee_1=\epsilon^\vee_1-\epsilon^\vee_2\, ,\
  \alphavee_2=\epsilon^\vee_2-\epsilon^\vee_3\, ,\ \ldots\, ,\
  \alphavee_{r-1}=\epsilon^\vee_{r-1}-\epsilon^\vee_r\, ,\
  \alphavee_r=\epsilon^\vee_{r-1}+\epsilon^\vee_r\, ,
\end{equation}
so that the Cartan matrix $A=(a_{ij})_{i,j=1}^r$ is symmetric and is given by
$a_{ij}=(\alphavee_i,\alphavee_j)$.


\subsection{Classical (unshifted) story}
\label{ssec: unshifted D}
\

To motivate our constructions in the shifted setting, as well as to carry out the explicit
computation of the corresponding Lax matrices, we start by recalling the unshifted setup.


\subsubsection{Drinfeld Yangian $Y(\sso_{2r})$ and its extended version $X(\sso_{2r})$}
\label{sssec: Drinfeld-unshifted-D}
\

The \emph{Drinfeld Yangian} of $\sso_{2r}$, denoted by $Y(\sso_{2r})$,
is the associative $\BC$-algebra generated by
  $\{\sE_i^{(k)},\sF_i^{(k)},\sH_i^{(k)}\}_{1\leq i\leq r}^{k\geq 1}$
with the following defining relations:\footnote{We note that our conventions $k\geq 1$ instead
of $k\geq 0$ are in charge of perceiving the Yangian
as a \emph{QFSHA} (quantum formal series Hopf algebra) which is related to a more standard
viewpoint of it as a \emph{QUEA} (quantum universal enveloping algebra) via the so-called
Drinfeld-Gavarini quantum duality principle, see~\cite{d3} and~\cite{ga}.}
\begin{equation}\label{Y0}
  [\sH_i^{(k)}, \sH_j^{(\ell)}]=0 \, ,
\end{equation}
\begin{equation}\label{Y1}
  [\sE_i^{(k)}, \sF_j^{(\ell)}]=
  \delta_{i,j}\, \sH_{i}^{(k+\ell-1)} \, ,
\end{equation}
\begin{equation}\label{Y2}
  [\sH_i^{(k'+1)}, \sE_j^{(\ell)}] - [\sH_i^{(k')}, \sE_j^{(\ell+1)}]=
  \frac{(\alphavee_i,\alphavee_j)}{2}\, \{\sH_i^{(k')},\sE_j^{(\ell)}\} \, ,
\end{equation}
\begin{equation}\label{Y3}
  [\sH_i^{(k'+1)}, \sF_j^{(\ell)}] - [\sH_i^{(k')}, \sF_j^{(\ell+1)}]=
  -\frac{(\alphavee_i,\alphavee_j)}{2}\, \{\sH_i^{(k')},\sF_j^{(\ell)}\} \, ,
\end{equation}
\begin{equation}\label{Y4}
  [\sE_i^{(k+1)}, \sE_j^{(\ell)}] - [\sE_i^{(k)}, \sE_j^{(\ell+1)}]=
  \frac{(\alphavee_i,\alphavee_j)}{2}\, \{\sE_i^{(k)},\sE_j^{(\ell)}\} \, ,
\end{equation}
\begin{equation}\label{Y5}
  [\sF_i^{(k+1)}, \sF_j^{(\ell)}] - [\sF_i^{(k)}, \sF_j^{(\ell+1)}]=
  -\frac{(\alphavee_i,\alphavee_j)}{2}\, \{\sF_i^{(k)},\sF_j^{(\ell)}\} \, ,
\end{equation}
\begin{equation}\label{Y6}
  \sum_{\sigma\in S(1-a_{ij})}
    [\sE_i^{(k_{\sigma(1)})},[\sE_i^{(k_{\sigma(2)})}, \,\cdots, [\sE_i^{(k_{\sigma(1-a_{ij})})},\sE_j^{(\ell)}]\cdots]]=0
  \quad \mathrm{for}\ i\ne j \, ,
\end{equation}
\begin{equation}\label{Y7}
  \sum_{\sigma\in S(1-a_{ij})}
    [\sF_i^{(k_{\sigma(1)})},[\sF_i^{(k_{\sigma(2)})}, \,\cdots, [\sF_i^{(k_{\sigma(1-a_{ij})})},\sF_j^{(\ell)}]\cdots]]=0
  \quad \mathrm{for}\ i\ne j \, ,
\end{equation}
for $i,j\in \{1,\ldots, r\}$, $k,\ell,k_s\in \BZ_{>0}$, and $k'\in \BZ_{\geq 0}$, where we set:
\begin{equation}\label{eq:conventions}
  \sH_i^{(0)}=1 \qquad \mathrm{and} \qquad \{a,b\}=ab+ba \, .
\end{equation}
Considering the generating series:
\begin{equation}\label{eq:EFH series}
  \sE_i(z):=\sum_{k\geq 1} \sE_i^{(k)}z^{-k}\, ,\
  \sF_i(z):=\sum_{k\geq 1} \sF_i^{(k)}z^{-k}\, ,\
  \sH_i(z):=\sum_{k\geq 0} \sH_i^{(k)}z^{-k} = 1 + \sum_{k\geq 1} \sH_i^{(k)}z^{-k} \, ,
\end{equation}
the defining relations~(\ref{Y0})--(\ref{Y7}) are easily seen to be equivalent to
(cf.~\cite[(6.1)--(6.5)]{jlm1}):
\begin{equation}\label{gY0}
  [\sH_i(z), \sH_j(w)]=0 \, ,
\end{equation}
\begin{equation}\label{gY1}
  [\sE_i(z), \sF_j(w)]=
  -\delta_{i,j}\, \frac{\sH_i(z)-\sH_i(w)}{z-w} \, ,
\end{equation}
\begin{equation}\label{gY2}
  [\sH_i(z), \sE_j(w)]=
  -\frac{(\alphavee_i,\alphavee_j)}{2}\, \frac{\{\sH_i(z),\sE_j(z)-\sE_j(w)\}}{z-w} \, ,
\end{equation}
\begin{equation}\label{gY3}
  [\sH_i(z), \sF_j(w)]=
  \frac{(\alphavee_i,\alphavee_j)}{2}\, \frac{\{\sH_i(z),\sF_j(z)-\sF_j(w)\}}{z-w} \, ,
\end{equation}
\begin{equation}\label{gY4}
  [\sE_i(z), \sE_j(w)] + [\sE_j(z), \sE_i(w)]=
  -\frac{(\alphavee_i,\alphavee_j)}{2}\, \frac{\{\sE_i(z)-\sE_i(w),\sE_j(z)-\sE_j(w)\}}{z-w} \, ,
\end{equation}
\begin{equation}\label{gY5}
  [\sF_i(z), \sF_j(w)] + [\sF_j(z), \sF_i(w)]=
  \frac{(\alphavee_i,\alphavee_j)}{2}\, \frac{\{\sF_i(z)-\sF_i(w),\sF_j(z)-\sF_j(w)\}}{z-w} \, ,
\end{equation}
\begin{equation}\label{gY6}
  \sum_{\sigma\in S(1-a_{ij})}
    [\sE_i(z_{\sigma(1)}),[\sE_i(z_{\sigma(2)}), \,\cdots, [\sE_i(z_{\sigma(1-a_{ij})}),\sE_j(w)]\cdots]]=0\quad
  \mathrm{for}\ i\ne j \, ,
\end{equation}
\begin{equation}\label{gY7}
  \sum_{\sigma\in S(1-a_{ij})}
    [\sF_i(z_{\sigma(1)}),[\sF_i(z_{\sigma(2)}), \,\cdots, [\sF_i(z_{\sigma(1-a_{ij})}),\sF_j(w)]\cdots]]=0
  \quad \mathrm{for}\ i\ne j \, .
\end{equation}

\medskip
\noindent
Likewise, following~\cite[Theorem 5.14]{jlm1}, the \emph{extended Drinfeld Yangian} of $\sso_{2r}$,
denoted by $X(\sso_{2r})$, is defined as the associative $\BC$-algebra generated by
  $\{E_i^{(k)},F_i^{(k)}\}_{1\leq i\leq r}^{k\geq 1}\cup \{D_i^{(k)}\}_{1\leq i\leq r+1}^{k\geq 1}$
with the following defining relations:
\begin{equation}\label{eY0}
  [D_i(z), D_j(w)]=0 \, ,
\end{equation}
\begin{equation}\label{eY1}
  [E_i(z), F_j(w)]=
  -\delta_{i,j}\, \frac{K_i(z)-K_i(w)}{z-w} \, ,
\end{equation}
\begin{equation}\label{eY2.1}
  [D_i(z), E_j(w)] = (\epsilon^\vee_i,\alphavee_j)\, \frac{D_i(z)(E_j(z)-E_j(w))}{z-w}
  \quad \mathrm{if}\ i\leq r \, ,
\end{equation}
\begin{equation}\label{eY2.2}
  [D_{r+1}(z), E_j(w)] =
  \begin{cases}
    -(\epsilon^\vee_r,\alphavee_r)\, \frac{D_{r+1}(z)(E_r(z)-E_r(w))}{z-w} & \mbox{if } j=r \\
    \frac{D_{r+1}(z)(E_{r-1}(z)-E_{r-1}(w))}{z-w} & \mbox{if } j=r-1 \\
    0 & \mbox{if } j<r-1
  \end{cases} \, ,
\end{equation}
\begin{equation}\label{eY3.1}
  [D_i(z), F_j(w)] = -(\epsilon^\vee_i,\alphavee_j)\, \frac{(F_j(z)-F_j(w))D_i(z)}{z-w}
  \quad \mathrm{if}\ i\leq r \, ,
\end{equation}
\begin{equation}\label{eY3.2}
  [D_{r+1}(z), F_j(w)] =
  \begin{cases}
    (\epsilon^\vee_r,\alphavee_r)\, \frac{(F_r(z)-F_r(w))D_{r+1}(z)}{z-w} & \mbox{if } j=r \\
    -\frac{(F_{r-1}(z)-F_{r-1}(w))D_{r+1}(z)}{z-w} & \mbox{if } j=r-1 \\
    0 & \mbox{if } j<r-1
  \end{cases} \, ,
\end{equation}
\begin{equation}\label{eY4.1}
  [E_i(z),E_i(w)] = -\frac{(\alphavee_i,\alphavee_i)}{2}\, \frac{(E_i(z)-E_i(w))^2}{z-w} \, ,
\end{equation}
\begin{equation}\label{eY4.2}
  z[E^\circ_i(z),E_j(w)] - w[E_i(z),E_j^\circ(w)] = (\alphavee_i,\alphavee_j)\, E_i(z)E_j(w)\quad \mathrm{for}\ i\ne j \, ,
\end{equation}
\begin{equation}\label{eY5.1}
  [F_i(z),F_i(w)] = \frac{(\alphavee_i,\alphavee_i)}{2}\, \frac{(F_i(z)-F_i(w))^2}{z-w} \, ,
\end{equation}
\begin{equation}\label{eY5.2}
  z[F^\circ_i(z),F_j(w)] - w[F_i(z),F_j^\circ(w)] = -(\alphavee_i,\alphavee_j)\, F_j(w)F_i(z)\quad \mathrm{for}\ i\ne j \, ,
\end{equation}
\begin{equation}\label{eY6}
  \sum_{\sigma\in S(1-a_{ij})}
    [E_i(z_{\sigma(1)}),[E_i(z_{\sigma(2)}), \,\cdots, [E_i(z_{\sigma(1-a_{ij})}),E_j(w)]\cdots]]=0
  \quad \mathrm{for}\ i\ne j \, ,
\end{equation}
\begin{equation}\label{eY7}
  \sum_{\sigma\in S(1-a_{ij})}
    [F_i(z_{\sigma(1)}),[F_i(z_{\sigma(2)}), \,\cdots, [F_i(z_{\sigma(1-a_{ij})}),F_j(w)]\cdots]]=0
  \quad \mathrm{for}\ i\ne j \, ,
\end{equation}
where the generating series are defined via:
\begin{equation}\label{eq:extended EF generating series}
\begin{split}
  & E_i(z):=\sum_{k\geq 1} E_i^{(k)}z^{-k} \, , \qquad
    E^\circ_i(z):=\sum_{k\geq 2} E_i^{(k)}z^{-k} \, , \\
  & F_i(z):=\sum_{k\geq 1} F_i^{(k)}z^{-k} \, , \qquad
    F^\circ_i(z):=\sum_{k\geq 2} F_i^{(k)}z^{-k} \, ,
\end{split}
\end{equation}
as well as:
\begin{equation}\label{eq:extended DK generating series}
\begin{split}
  & D_i(z):=\sum_{k\geq 0} D_i^{(k)}z^{-k} = 1 + \sum_{k\geq 1} D_i^{(k)}z^{-k} \, , \\
  & K_i(z):=
    \begin{cases}
      D_i(z)^{-1}D_{i+1}(z) & \mbox{if } i<r \\
      D_{r-1}(z)^{-1} D_{r+1}(z) & \mbox{if } i=r
    \end{cases}.
\end{split}
\end{equation}

\medskip
\noindent
Let us define the elements $\{C_r^{(k)}\}_{k\geq 1}$ of $X(\sso_{2r})$ via:
\begin{equation}\label{eq:central C}
  C_r(z)=1+\sum_{k\geq 1} C_r^{(k)}z^{-k}:=
  \prod_{i=1}^{r-1}\frac{D_i(z+i-r)}{D_i(z+i-r+1)} \cdot D_r(z) D_{r+1}(z).
\end{equation}
The following result follows from~\cite[Main Theorem, Theorem 5.8]{jlm1}:

\medskip

\begin{Lem}\label{lem:C is central}
The elements $\{C_r^{(k)}\}_{k\geq 1}$ are in the center of $X(\sso_{2r})$.
\end{Lem}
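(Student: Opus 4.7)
My plan is to deduce this statement directly from the two cited results of JLM1, rather than attempting a lengthy verification inside the Drinfeld presentation.

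First, I would recall the Main Theorem of JLM1, which provides an isomorphism between $X(\sso_{2r})$ and the RTT realization $X^\rtt(\sso_{2r})$ of the extended Yangian, obtained via the Gauss decomposition of the generating matrix $T(z)$; under this isomorphism the diagonal Gauss factors are identified precisely with the series $D_1(z),\ldots,D_{r+1}(z)$ appearing in the definition \eqref{eq:central C}. Next, I would invoke Theorem 5.8 of JLM1, which in the RTT realization exhibits a canonical Sklyanin-type ``quantum contraction'' built from $T(z)$, proves its centrality as a consequence of the rational $R$-matrix formalism (specifically the crossing relation satisfied by Zamolodchikov's orthogonal $R$-matrix), and furthermore identifies this central series, via the Gauss decomposition, with exactly the product $\prod_{i=1}^{r-1} D_i(z+i-r)/D_i(z+i-r+1) \cdot D_r(z) D_{r+1}(z)$ appearing on the right-hand side of \eqref{eq:central C}. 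Transporting centrality across the Drinfeld--RTT isomorphism then yields the claim.

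The only substantive work in this approach is a bookkeeping verification: one must confirm that our conventions for labeling the $D_i(z)$ (in particular the asymmetric treatment of $D_{r+1}$ relative to $D_r$, and our formula $K_i(z) = D_i(z)^{-1}D_{i+1}(z)$ for $i<r$ together with $K_r(z) = D_{r-1}(z)^{-1}D_{r+1}(z)$) match those of JLM1 verbatim, so that their Sklyanin-contraction formula specializes to ours. Should one instead wish to argue directly within the Drinfeld presentation, commutativity $[C_r(z), D_j(w)] = 0$ is immediate from \eqref{eY0}, and the commutators $[C_r(z), E_j(w)]$ and $[C_r(z), F_j(w)]$ would be checked node by node via \eqref{eY2.1}--\eqref{eY3.2}; I expect the main obstacle in such a direct approach to be the spinor-node case $j = r$, where \eqref{eY2.2} and \eqref{eY3.2} treat $D_{r+1}$ asymmetrically and the telescoping between neighboring nodes breaks down, explaining precisely why the argument-shifts $z+i-r$ in \eqref{eq:central C} are engineered exactly as they are.
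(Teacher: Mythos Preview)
Your citation route via JLM1's Main Theorem and Theorem~5.8 is valid and is indeed how the paper introduces the lemma. However, the paper then deliberately takes a different route for the proof itself---the direct verification in the Drinfeld presentation that you mention only as an alternative. Working entirely from relations \eqref{eY0} and \eqref{eY2.1}--\eqref{eY3.2}, the paper shows case by case (for $i\le r-2$, $i=r-1$, $i=r$) that $E_i(w)$ commutes with $C_r(z)$, by first establishing that $E_i(w)$ commutes with products such as $D_i(z)D_{i+1}(z+1)$. The reason is stated explicitly: this direct argument is the one that survives in the shifted setup of Subsection~\ref{sssec shifted Drinfeld extended}, where the JLM1 isomorphism is precisely what is being generalized and therefore cannot be invoked. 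Your citation approach buys brevity in the unshifted case but does not transport to $X_\mu(\sso_{2r})$, which is the paper's real purpose for this lemma. Also, your expectation that the spinor node $j=r$ is the main obstacle in the direct approach is slightly off: the paper's computation for $i=r$ is no harder than for $i=r-1$ (both require handling $D_{r+1}$ via \eqref{eY2.2}), and all three cases are dispatched by the same trick once one sees that $E_i(w)$ commutes with the relevant two-factor products of shifted $D_j$'s.
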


\medskip
\noindent
This result is actually an immediate corollary of the defining relations~(\ref{eY0},~\ref{eY2.1}--\ref{eY3.2}),
as the proof below shows. This will allow us to generalize it to the shifted setup in Subsection~\ref{sssec shifted Drinfeld extended}.

\medskip

\begin{proof}
$C_r(z)$ obviously commutes with all $\{D_i(w)\}_{i=1}^{r+1}$, due to~\eqref{eY0}.
We shall now verify that it also commutes with all $\{E_i(w)\}_{i=1}^r$
(cf.~\cite[Theorem 7.2]{bk} for the type $A$ counterpart); the commutativity
with $\{F_i(w)\}_{i=1}^r$ is completely analogous and is left to the interested reader.

\medskip
\noindent
$\bullet$
For $i\leq r-2$, the relations~(\ref{eY2.1},~\ref{eY2.2}) imply:
\begin{equation}\label{eq:DE1}
  (z-w+1)D_i(z)E_i(w)-D_i(z)E_i(z)=(z-w)E_i(w)D_i(z) \, ,
\end{equation}
\begin{equation}\label{eq:DE2}
  (z-w-1)D_{i+1}(z)E_i(w)+D_{i+1}(z)E_i(z)=(z-w)E_i(w)D_{i+1}(z) \, .
\end{equation}
Setting $w=z-1$ in~\eqref{eq:DE2}, we find:
\begin{equation}\label{eq:DE3}
  E_i(z-1)D_{i+1}(z)=D_{i+1}(z)E_i(z) \, .
\end{equation}
Now, calculating $(z-w)E_i(w)D_i(z)D_{i+1}(z+1)$ using~(\ref{eq:DE1})--(\ref{eq:DE3}), we find
that it equals $(z-w)D_i(z)D_{i+1}(z+1)E_i(w)$. Hence, $E_i(w)$ commutes with $D_i(z)D_{i+1}(z+1)$.
But it also commutes with $D_j(z)$ for $j\ne i,i+1$, due to~(\ref{eY2.1},~\ref{eY2.2}).
Thus, $[C_r(z),E_i(w)]=0$ for $i\leq r-2$.

\medskip
\noindent
$\bullet$
For $i=r-1$, applying the same arguments we see that $E_{r-1}(w)$ commutes both
with $D_{r-1}(z)D_r(z+1)$ and $D_{r+1}(z)D_r(z+1)$, hence, it also commutes with
$$
  \frac{D_{r-1}(z-1)}{D_{r-1}(z)}D_r(z)D_{r+1}(z)=
  \frac{(D_{r-1}(z-1)D_{r}(z))\cdot (D_r(z+1)D_{r+1}(z))}{D_{r-1}(z)D_{r}(z+1)} \, .
$$
As $[E_{r-1}(w),D_j(w)]=0$ for $j<r-1$ by~\eqref{eY2.1}, we thus get the equality
$[C_r(z),E_{r-1}(w)]=0$.

\medskip
\noindent
$\bullet$
For $i=r$, applying the same arguments we see that $E_{r}(w)$ commutes with
$D_{r}(z)D_{r+1}(z+1)$ as well as with $D_{r-1}(z)D_{r+1}(z+1)$, hence, it also commutes with
$$
  \frac{D_{r-1}(z-1)}{D_{r-1}(z)}D_r(z)D_{r+1}(z)=
  \frac{(D_{r-1}(z-1)D_{r+1}(z))\cdot (D_r(z)D_{r+1}(z+1))}{D_{r-1}(z)D_{r+1}(z+1)} \, .
$$
As $[E_{r}(w),D_j(w)]=0$ for $j<r-1$ by~\eqref{eY2.1}, we thus get the equality $[C_r(z),E_{r}(w)]=0$.
\end{proof}

\medskip
\noindent
On the other hand, comparing the defining relations of $Y(\sso_{2r})$ and $X(\sso_{2r})$,
it is easy to check (see~\cite[Proposition 6.2]{jlm1}) that there is a natural homomorphism
\begin{equation}\label{eq:iota-null}
  \iota_0\colon Y(\sso_{2r})\longrightarrow X(\sso_{2r}) \, ,
\end{equation}
determined by:
\begin{equation}\label{eq:iota-null explicitly}
\begin{split}
  & \sE_i(z)\mapsto
    \begin{cases}
      E_{i}(z+\frac{i-1}{2}) & \mbox{if } i<r \\
      E_{r}(z+\frac{r-2}{2}) & \mbox{if } i=r
    \end{cases} \, , \qquad
    \sF_i(z)\mapsto
    \begin{cases}
      F_i(z+\frac{i-1}{2}) & \mbox{if } i<r \\
      F_r(z+\frac{r-2}{2}) & \mbox{if } i=r
    \end{cases} \, , \\
  & \sH_i(z)\mapsto
    \begin{cases}
      D_i(z+\frac{i-1}{2})^{-1}D_{i+1}(z+\frac{i-1}{2}) & \mbox{if } i<r \\
      D_{r-1}(z+\frac{r-2}{2})^{-1}D_{r+1}(z+\frac{r-2}{2}) & \mbox{if } i=r
    \end{cases} \, .
\end{split}
\end{equation}

\medskip

\begin{Lem}\label{lem:embedding}
$\iota_0$~\eqref{eq:iota-null} is an embedding and we have a tensor product algebra decomposition:
\begin{equation}\label{eq:decomposition of X}
  X(\sso_{2r})\simeq Y(\sso_{2r})\otimes_{\BC} \BC[\{C_r^{(k)}\}_{k\geq 1}] \, .
\end{equation}
\end{Lem}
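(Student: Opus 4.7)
The plan is to establish both assertions at once by showing that the algebra homomorphism
\[
  \Psi\colon\ Y(\sso_{2r})\otimes_{\BC}\BC[\{C_r^{(k)}\}_{k\geq 1}]\longrightarrow X(\sso_{2r})\, ,
  \qquad y\otimes f\mapsto \iota_0(y)\cdot f\, ,
\]
(well-defined by Lemma~\ref{lem:C is central}) is an isomorphism. The argument splits into a direct telescoping computation for surjectivity and a PBW/filtered-degree argument for injectivity.

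For surjectivity, the image of $\iota_0$ contains all coefficients of $E_i(z),F_i(z)$ (up to a spectral shift) as well as of the ratios $K_i(z)=D_i(z)^{-1}D_{i+1}(z)$ for $i<r$ and $K_r(z)=D_{r-1}(z)^{-1}D_{r+1}(z)$. Solving these recursions writes each $D_j(z)$ as a word in $D_1(z)$ and the $K_i$'s; substituting these expressions into~\eqref{eq:central C}, the $D_1$-contributions telescope to yield
\[
  C_r(z)\ =\ D_1(z+1-r)\cdot D_1(z)\cdot G(z)\, ,
\]
where $G(z)\in \BC[[z^{-1}]]$ is a Laurent expression in the coefficients of $K_1(z),\ldots,K_r(z)$ and their integer shifts. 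Since the constant term of $D_1(z+1-r)\cdot D_1(z)$ equals~$1$, matching coefficients of $z^{-k}$ allows one to recursively recover each $D_1^{(k)}$ from $C_r^{(1)},\ldots,C_r^{(k)}$ and lower-order coefficients of the $K_i$'s. Thus $D_1(z)$—and hence every $D_j(z)$—lies in the image of $\Psi$, proving surjectivity.

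For injectivity, endow both sides with the ascending filtration that assigns degree $k-1$ to each generator labelled by~$k\geq 1$. Invoking the PBW-type theorems of~\cite{jlm1}, the associated graded algebras are
\[
  \on{gr} Y(\sso_{2r})\simeq U(\sso_{2r}[t])\, ,\qquad
  \on{gr} X(\sso_{2r})\simeq U(\sso_{2r}[t])\otimes_{\BC}\BC[\bar{C}_r^{(1)},\bar{C}_r^{(2)},\ldots]\, ,
\]
with the $\bar{C}_r^{(k)}$ algebraically independent central elements. Under these identifications, $\on{gr}\Psi$ becomes the identity map, so it is injective; by the standard filtered-graded lifting, $\Psi$ itself is injective, and the tensor decomposition~\eqref{eq:decomposition of X} follows. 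In particular $\iota_0$ is an embedding.

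The main obstacle is the second identification above: one must produce a PBW basis of $X(\sso_{2r})$ that explicitly separates out the polynomial factor in $\bar{C}_r^{(k)}$ from a copy of $U(\sso_{2r}[t])$. This is precisely the content of the PBW analysis carried out in~\cite{jlm1}, which we invoke here as a black box.
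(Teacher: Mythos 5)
Your proof is correct, but it takes a genuinely different and heavier route than the paper's. The surjectivity half is essentially sound: writing $D_j(z)=D_1(z)\prod_{i<j}K_i(z)$ (and $D_{r+1}(z)=D_{r-1}(z)K_r(z)$) and telescoping in~\eqref{eq:central C} does give $C_r(z)=D_1(z+1-r)D_1(z)G(z)$ with $G$ built from the $K_i$'s, and the coefficient of $z^{-k}$ in $D_1(z+1-r)D_1(z)$ is $2D_1^{(k)}$ plus lower-order terms, so the recursion closes. For injectivity you invoke the PBW theorems of~\cite{jlm1} (really of~\cite{amr} for $X^\rtt(\sso_{2r})$, transported through Theorem~\ref{thm:Dr=RTT-unshifted-D}) as a black box. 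The paper instead gives a purely presentation-level argument: on the commutative polynomial algebra generated by the $D_i^{(k)}$, the collection $\{\bar D_i^{(k)}\}\cup\{C_r^{(k)}\}$ is an alternative set of polynomial generators (a triangular change of variables), and the full list of defining relations~(\ref{eY0})--(\ref{eY7}) rewritten in the new generators decouples into ``$C_r(z)$ is central'' plus exactly the defining relations of $Y(\sso_{2r})$ under~\eqref{eq:iota-null explicitly}; both the injectivity of $\iota_0$ and the tensor decomposition then follow by comparing presentations, with no PBW input. What the paper's route buys is portability: the same argument applies verbatim to the shifted algebras $X_\mu(\sso_{2r})$ in Proposition~\ref{prop:relating shifted yangians}, where the RTT/PBW machinery of~\cite{jlm1,amr} is not yet available (the shifted analogue of $\Upsilon$ being an isomorphism is one of the paper's main results, proved only much later). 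Your argument, by contrast, is confined to the unshifted case. One further stylistic caution: within this paper's logical ordering, Lemma~\ref{lem:embedding} is used to derive Theorem~\ref{thm:JLM Main thm}, so if you lean on~\cite{jlm1} you should be explicit that you are importing their independently proven results (which is legitimate) rather than anything downstream of this Lemma; as written this is not actually circular, but it is worth flagging.
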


\medskip

\begin{proof}
Given an abstract polynomial algebra
  $\mathcal{B}=\BC[\{D_i^{(k)}\}_{1\leq i\leq r+1}^{k\geq 1}]$,
define the elements $\{\bar{D}_i^{(k)}\}_{1\leq i\leq r}^{k\geq 1}$
and $\{C_r^{(k)}\}_{k \geq 1}$ of $\mathcal{B}$ via
\begin{equation*}
  \bar{D}_i(z):=1+\sum_{k\geq 1}\bar{D}_i^{(k)}z^{-k}=D_i(z)^{-1}D_{i+1}(z) \, , \qquad 1\leq i<r \, ,
\end{equation*}
\begin{equation*}
  \bar{D}_r(z):=1+\sum_{k\geq 1}\bar{D}_r^{(k)}z^{-k}=D_{r-1}(z)^{-1}D_{r+1}(z) \, ,
\end{equation*}
\begin{equation*}
  C_r(z):=1+\sum_{k\geq 1}C_r^{(k)}z^{-k}=
  \prod_{i=1}^{r-1}\frac{D_i(z+i-r)}{D_i(z+i-r+1)} \cdot D_r(z) D_{r+1}(z) \, ,
\end{equation*}
where $D_i(z):=1+\sum_{k\geq 1} D_i^{(k)}z^{-k}$.
It is clear that
  $\{\bar{D}_i^{(k)}\}_{1\leq i\leq r}^{k\geq 1}\cup \{C_{r}^{(k)}\}_{k\geq 1}$
provide an alternative collection of generators of the polynomial algebra $\mathcal{B}$,
so that we have:
\begin{equation*}
  \mathcal{B}\simeq
  \BC[\{C_r^{(k)}\}_{k\geq 1}]\otimes_\BC \BC[\{\bar{D}_i^{(k)}\}_{1\leq i\leq r}^{k\geq 1}].
\end{equation*}
Applying this in our setup, we get a tensor product decomposition of vector spaces:
\begin{equation}\label{eq:tensor decomp vspace}
   X(\sso_{2r})\simeq Z\otimes_{\BC} X'(\sso_{2r}) \, ,
\end{equation}
where $Z$ is a $\BC$-subalgebra generated by
$\{C_r^{(k)}\}_{k\geq 1}$ and $X'(\sso_{2r})$ is the $\BC$-subalgebra generated by
$\{E_{i}^{(k)},F_i^{(k)},\bar{D}_i^{(k)}\}_{1\leq i\leq r}^{k\geq 1}$. Moreover,
the defining relations~(\ref{eY0})--(\ref{eY3.2}) are equivalent to $Z$ being central
(as explained above) and the commutation relations between $\bar{D}_i^{(k)}$ and
$E_i^{(k)},F_i^{(k)}$ exactly matching those of $Y(\sso_{2r})$ through~(\ref{eq:iota-null explicitly}).
Thus, $\iota_0$ of~(\ref{eq:iota-null},~\ref{eq:iota-null explicitly}) is indeed injective,
and furthermore~\eqref{eq:tensor decomp vspace} precisely recovers the tensor product
algebra decomposition~\eqref{eq:decomposition of X}.
\end{proof}

\medskip


\subsubsection{RTT Yangian $Y^\rtt(\sso_{2r})$ and its extended version $X^\rtt(\sso_{2r})$}
\label{sssec: RTT-unshifted-D}
\

It will be convenient to use the following notations:
\begin{equation}\label{eq:N,kappa,prime}
\begin{split}
  & N=2r \, , \qquad \kappa=r-1 \, , \\
  & i'=N+1-i \quad \mathrm{for} \quad 1\leq i\leq N \, .
\end{split}
\end{equation}
Following~\cite{zz}, we consider the rational \emph{$R$-matrix} $R(z)$ given by:
\begin{equation}\label{eq:R-matrix}
  R(z)=\mathrm{Id}+\frac{\Pop}{z}-\frac{\Qop}{z+\kappa}
\end{equation}
with $\Pop,\Qop\in \mathrm{End}\, \BC^N \otimes \mathrm{End}\, \BC^N$ defined via:
\begin{equation}
  \Pop=\sum_{i,j=1}^N E_{ij}\otimes E_{ji} \, , \qquad
  \Qop=\sum_{i,j=1}^N E_{ij}\otimes E_{i'j'} \, .
\end{equation}
We note the following relations:
\begin{equation*}
  \Pop^2=\mathrm{Id} \, ,\qquad \Qop^2=N \Qop \, , \qquad \Pop\Qop=\Qop\Pop=\Qop \, ,
\end{equation*}
which imply that $R(z)$ of~\eqref{eq:R-matrix} satisfies the Yang-Baxter equation with a spectral parameter:
\begin{equation}\label{rYB}
  R_{12}(z)R_{13}(z+w)R_{23}(w) = R_{23}(w)R_{13}(z+w)R_{12}(z) \, .
\end{equation}

\medskip
\noindent
The \emph{extended RTT Yangian} of $\sso_{2r}$, denoted by $X^\rtt(\sso_{2r})$, is the
associative $\BC$-algebra generated by $\{t_{ij}^{(k)}\}_{1\leq i,j\leq N}^{k\geq 1}$
with the following defining relation (the so-called \emph{RTT relation}):
\begin{equation}\label{eq:rtt}
  R_{12}(z-w)T_1(z)T_2(w) = T_2(w)T_1(z)R_{12}(z-w) \, ,
\end{equation}
where
  $T(z)\in X^\rtt(\sso_{2r})[[z^{-1}]]\otimes_\BC \End\, \BC^N$
is defined via:
\begin{equation}\label{eq:T-matrix}
  T(z)=\sum_{i,j=1}^N t_{ij}(z)\otimes E_{ij} \qquad \mathrm{with} \qquad
  t_{ij}(z):=\sum_{k\geq 0} t^{(k)}_{ij}z^{-k} = \delta_{i,j} + \sum_{k\geq 1} t^{(k)}_{ij}z^{-k} \, ,
\end{equation}
where we set $t_{ij}^{(0)}=\delta_{i,j}$. Thus,~\eqref{eq:rtt} is an equality in
  $X^\rtt(\sso_{2r})[[z^{-1},w^{-1}]]\otimes_\BC (\End\, \BC^N)^{\otimes 2}$,
which can be explicitly written as:
\begin{equation}\label{eq:rtt explicit}
\begin{split}
  [t_{ij}(z),t_{k\ell}(w)]
  & = \frac{1}{z-w}\Big(t_{kj}(w)t_{i\ell}(z)-t_{kj}(z)t_{i\ell}(w)\Big)\, + \\
  & \quad \frac{1}{z-w+\kappa}
    \left(\delta_{k,i'}\sum_{p=1}^N t_{pj}(z)t_{p'\ell}(w)-\delta_{\ell,j'}\sum_{p=1}^N t_{kp'}(w)t_{ip}(z)\right)\, .
\end{split}
\end{equation}
These formulas immediately imply the following simple result, which will be needed later:

\medskip

\begin{Cor}\label{cor:YBE invariance}
If $\mathsf{T}^\circ(z)$ satisfies~\eqref{eq:rtt} and
  $\mathsf{T}=\mathrm{diag}(\mathsf{t}_1,\ldots,\mathsf{t}_{2r})$
is a diagonal $z$-independent matrix such that
  $\mathsf{t}_1\mathsf{t}_{2r}=\mathsf{t}_{2}\mathsf{t}_{2r-1}=\ldots=\mathsf{t}_{r}\mathsf{t}_{r+1}$,
then $\bar{\mathsf{T}}^\circ(z):=\mathsf{T}\cdot \mathsf{T}^\circ(z)$ also satisfies~\eqref{eq:rtt}.
\end{Cor}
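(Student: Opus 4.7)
The plan is to reduce the RTT relation for $\bar{\mathsf{T}}^\circ(z)=\mathsf{T}\cdot \mathsf{T}^\circ(z)$ to the RTT relation for $\mathsf{T}^\circ(z)$ together with the commutation $[R_{12}(z-w),\, \mathsf{T}\otimes\mathsf{T}]=0$. Concretely, writing $\mathsf{T}_1=\mathsf{T}\otimes\mathrm{Id}$ and $\mathsf{T}_2=\mathrm{Id}\otimes\mathsf{T}$, we have $\bar{\mathsf{T}}^\circ_a(z)=\mathsf{T}_a\mathsf{T}^\circ_a(z)$. Since $\mathsf{T}_1,\mathsf{T}_2$ are $z$-independent diagonal matrices acting on different tensor factors, they commute with each other and with $\mathsf{T}^\circ_b(w)$ for $a\ne b$. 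Thus both sides of the putative relation $R_{12}(z-w)\bar{\mathsf{T}}^\circ_1(z)\bar{\mathsf{T}}^\circ_2(w)=\bar{\mathsf{T}}^\circ_2(w)\bar{\mathsf{T}}^\circ_1(z)R_{12}(z-w)$ can be rearranged to bring the factor $\mathsf{T}_1\mathsf{T}_2=\mathsf{T}\otimes\mathsf{T}$ outside, reducing everything to showing that $R_{12}(z-w)$ commutes with $\mathsf{T}\otimes\mathsf{T}$.

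Next, I would analyze this commutation term by term using $R(z)=\mathrm{Id}+\Pop/z-\Qop/(z+\kappa)$. The identity term is trivial. For $\Pop=\sum_{i,j}E_{ij}\otimes E_{ji}$ a direct computation gives
\[
  (\mathsf{T}\otimes\mathsf{T})\Pop=\sum_{i,j}\mathsf{t}_i\mathsf{t}_j\, E_{ij}\otimes E_{ji}=\Pop(\mathsf{T}\otimes\mathsf{T}),
\]
so this piece commutes for any diagonal $\mathsf{T}$. For $\Qop=\sum_{i,j}E_{ij}\otimes E_{i'j'}$ the same manipulation yields
\[
  (\mathsf{T}\otimes\mathsf{T})\Qop=\sum_{i,j}\mathsf{t}_i\mathsf{t}_{i'}\, E_{ij}\otimes E_{i'j'},
  \qquad
  \Qop(\mathsf{T}\otimes\mathsf{T})=\sum_{i,j}\mathsf{t}_j\mathsf{t}_{j'}\, E_{ij}\otimes E_{i'j'},
\]
and these are equal exactly when $\mathsf{t}_i\mathsf{t}_{i'}$ is independent of $i\in\{1,\ldots,2r\}$. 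This is precisely the hypothesis $\mathsf{t}_1\mathsf{t}_{2r}=\mathsf{t}_2\mathsf{t}_{2r-1}=\cdots=\mathsf{t}_r\mathsf{t}_{r+1}$, so the conclusion follows.

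There is no real obstacle here: the argument is a direct verification. The only conceptual point worth flagging is that the hypothesis on the $\mathsf{t}_i$'s is exactly tailored to the $\Qop$-piece of the BCD-type $R$-matrix, which is the feature distinguishing it from type $A$; without this condition the statement would fail, even though $\Pop$ (and hence the full type-$A$ $R$-matrix) always commutes with $\mathsf{T}\otimes\mathsf{T}$.
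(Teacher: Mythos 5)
Your proof is correct and is essentially the argument the paper has in mind: the Corollary is stated as an immediate consequence of the explicit RTT formulas, and reducing it to $[R_{12}(z-w),\mathsf{T}\otimes\mathsf{T}]=0$, with the hypothesis on the $\mathsf{t}_i$'s exactly matching the $\Qop$-term, is the same verification packaged slightly more structurally. No gaps.
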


\medskip
\noindent
The \emph{RTT Yangian} of $\sso_{2r}$, denoted by $Y^\rtt(\sso_{2r})$, is the subalgebra
of $X^\rtt(\sso_{2r})$ which consists of the elements stable under the automorphisms:
\begin{equation}\label{eq:f-autom}
  \mu_f\colon T(z)\mapsto f(z)T(z) \, , \qquad \forall\,
  f(z)=1+f_1z^{-1}+f_2z^{-2}+\ldots \in \BC[[z^{-1}]] \, .
\end{equation}
At the same time, $Y^\rtt(\sso_{2r})$ may also be viewed as a quotient of $X^\rtt(\sso_{2r})$.
To this end, we recall the following tensor product decomposition (see~\cite[Theorem 3.1, Corollary 3.9]{amr}):
\begin{equation}\label{eq:extended to usual}
  X^\rtt(\sso_{2r})\simeq ZX^\rtt(\sso_{2r})\otimes_{\BC} Y^\rtt(\sso_{2r}) \, ,
\end{equation}
where $ZX^\rtt(\sso_{2r})$ is the center of $X^\rtt(\sso_{2r})$.
Explicitly, $ZX^\rtt(\sso_{2r})$ is a polynomial algebra in the coefficients
$\{\sz_N^{(k)}\}_{k\geq 1}$ of the series
\begin{equation}\label{eq:z-series}
  \sz_N(z)=1+\sum_{k\geq 1} \sz_N^{(k)}z^{-k} \,,
\end{equation}
determined from (with $\ID_N$ denoting the $N\times N$ identity matrix):
\begin{equation}\label{eq:zenter}
  T'(z-\kappa)T(z)= T(z)T'(z-\kappa)=\sz_N(z)\ID_N \, ,
\end{equation}
where the prime denotes the matrix transposition along the antidiagonal, that is:
\begin{equation}\label{eq:prime}
  (X')_{ij}=X_{j'i'} \, \quad  \mathrm{for\ any} \quad N\times N \ \mathrm{matrix}\ X \, .
\end{equation}
Therefore, the \emph{RTT Yangian} $Y^\rtt(\sso_{2r})$ may also be realized
as a quotient of $X^\rtt(\sso_{2r})$ by:
\begin{equation}\label{eq:killing center}
  \sz_N(z)=1+\sum_{k\geq 1} b_kz^{-k}\qquad \mathrm{for\ any\ collection\ of}\ b_k\in \BC \, ,
\end{equation}
though it is common (\cite[Corollary 3.2]{amr}) to choose $b_{\geq 1}=0$, so that~\eqref{eq:killing center} reads $\sz_N(z)=1$.

\medskip


\subsubsection{From RTT to Drinfeld realization}
\label{sssec: RTT-to-Drinfeld-D}
\

Consider the Gauss decomposition of the matrix $T(z)$ of~\eqref{eq:T-matrix}:
\begin{equation}\label{eq:Gauss-D}
  T(z)=F(z)\cdot H(z)\cdot E(z) \, ,
\end{equation}
where $H(z)$ is diagonal:
\begin{equation}\label{eq:diagonal factor}
  H(z)=\left(\begin{array}{cccc}
        h_1(z) & 0 & \cdots & 0 \\
        0 & h_2(z) & \cdots & 0 \\
        \vdots & \ddots & \ddots & \vdots \\
        0 & \cdots & 0 & h_{N}(z)
       \end{array}\right) \, ,
\end{equation}
$F(z)$ is lower-triangular:
\begin{equation}\label{eq:lower triangular}
  F(z)=\left(\begin{array}{cccc}
        1 & 0 & \cdots & 0 \\
        f_{2,1}(z) & 1 & \ddots & \vdots \\
        \vdots & \ddots & \ddots & 0 \\
        f_{N,1}(z) & \cdots & f_{N,N-1}(z) & 1
       \end{array}\right) \, ,
\end{equation}
and $E(z)$ is upper-triangular:
\begin{equation}\label{eq:upper triangular}
  E(z)=\left(\begin{array}{cccc}
        1 & e_{1,2}(z) & \cdots & e_{1,N}(z) \\
        0 & 1 & \ddots & \vdots \\
        \vdots & \ddots & \ddots & e_{N-1,N}(z)\\
        0 & \cdots & 0 & 1
       \end{array}\right) \, .
\end{equation}

\medskip
\noindent
The following explicit identification of the Drinfeld and RTT extended Yangians
of $\sso_{2r}$ constitutes the key result of~\cite{jlm1}:

\medskip

\begin{Thm}[{\cite[Theorem 5.14]{jlm1}}]\label{thm:Dr=RTT-unshifted-D}
There is a $\BC$-algebra isomorphism:
\begin{equation}\label{eq:unshifted isomorphism}
  \Upsilon_0\colon X(\sso_{2r})\iso X^\rtt(\sso_{2r}) \, ,
\end{equation}
defined by:
\begin{equation}\label{eq:explicit identification 1}
  E_i(z)\mapsto
    \begin{cases}
      e_{i,i+1}(z) & \mbox{if\, } i<r \\
      e_{r-1,r+1}(z) & \mbox{if\, } i=r
    \end{cases} \, , \qquad
  F_i(z)\mapsto
    \begin{cases}
      f_{i+1,i}(z) & \mbox{if\, } i<r \\
      f_{r+1,r-1}(z) & \mbox{if\, } i=r
    \end{cases}
\end{equation}
and
\begin{equation}\label{eq:explicit identification 2}
  D_j(z)\mapsto h_j(z) \qquad \mathrm{for}\quad 1\leq j\leq r+1 \, .
\end{equation}
\end{Thm}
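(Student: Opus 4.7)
The plan is to establish $\Upsilon_0$ as a $\BC$-algebra isomorphism in three steps: (i) verify that the proposed assignment respects the defining relations~\eqref{eY0}--\eqref{eY7} of $X(\sso_{2r})$, so that $\Upsilon_0$ is a well-defined homomorphism; (ii) prove surjectivity; (iii) prove injectivity via a PBW-type comparison.

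For step (i), the strategy is to extract each Drinfeld relation directly from the componentwise RTT relation~\eqref{eq:rtt explicit}. Substituting the Gauss decomposition $T(z)=F(z)H(z)E(z)$ into~\eqref{eq:rtt explicit} and choosing appropriate indices $(i,j,k,\ell)$ produces rational identities among the Gauss factors $h_i(z), e_{i,j}(z), f_{j,i}(z)$. The diagonal and first sub/superdiagonal choices recover the type-$A$ portion of~\eqref{eY0}--\eqref{eY7} much as in~\cite{bk}, while the choices coupling the crossing pair $(r-1,r+1)/(r+1,r-1)$ produce the "folded" relations~\eqref{eY2.2},~\eqref{eY3.2} together with the modified $E$-$E$ and $F$-$F$ relations~\eqref{eY4.2},~\eqref{eY5.2}. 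The feature absent in type $A$ is the $\Qop/(z-w+\kappa)$ contribution in the $R$-matrix, which mixes the $(i,j)$ entry of a product with its $(i',j')$ counterpart; this coupling is exactly what forces the "jumping" assignment $E_r\mapsto e_{r-1,r+1}$ and $F_r\mapsto f_{r+1,r-1}$, matching the extra simple root $\alphavee_r=\epsilon^\vee_{r-1}+\epsilon^\vee_r$ to the entries folded across the antidiagonal.

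For step (ii), the Gauss decomposition itself expresses each $t_{ij}(z)$ as a polynomial in the entries of $F(z), H(z), E(z)$, so it suffices to show that every $h_i, e_{i,j}, f_{j,i}$ lies in the subalgebra generated by the image of $\Upsilon_0$. For the simple Gauss factors this is tautological; for the non-simple ones (including the "lower" diagonal entries $h_{r+2},\ldots,h_{2r}$ pinned down by the center equation~\eqref{eq:zenter}), one isolates appropriate components of~\eqref{eq:rtt explicit} to obtain inductive recursions of the shape $e_{i,k}(z)\sim [e_{i,j}(z),e^{(1)}_{j,k}]$ (and analogously for $f$), together with an identification of $\Upsilon_0(C_r^{(k)})$ with generators of the RTT center, so that the decomposition~\eqref{eq:extended to usual} is matched with the one of Lemma~\ref{lem:embedding}.

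For step (iii), one appeals to a PBW theorem on each side: $X^\rtt(\sso_{2r})$ has a PBW basis in the $t^{(k)}_{ij}$ by~\cite{amr}, while $X(\sso_{2r})$ has a PBW basis in the $E_i^{(k)}, F_i^{(k)}, D_j^{(k)}$ inherited from Drinfeld's classical PBW theorem for $Y(\sso_{2r})$ via Lemma~\ref{lem:embedding}. Comparing associated graded pieces then forces $\Upsilon_0$ to be a bijection. The main obstacle will be step~(i): verifying the quadratic relations~\eqref{eY4.2},~\eqref{eY5.2} and the Serre relations~\eqref{eY6},~\eqref{eY7} involving the jumping generator $e_{r-1,r+1}$. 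These are precisely the places where the $Q$-term of $R(z)$ intervenes nontrivially, producing the lengthy computations referenced in the excerpt's introduction; everything else is essentially formal.
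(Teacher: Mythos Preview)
The paper does not give its own proof of this theorem; it is quoted verbatim from \cite[Theorem~5.14]{jlm1} and used as a black box. Your outline is essentially the strategy carried out in \cite{jlm1}: extract the Drinfeld relations \eqref{eY0}--\eqref{eY7} from the componentwise RTT relation~\eqref{eq:rtt explicit} after Gauss decomposition (with the rank-reduction embeddings of Corollary~\ref{cor:JLM reduction} doing most of the work), get surjectivity from the recursive formulas for the non-simple Gauss entries, and finish with a PBW comparison.

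One point worth flagging in your step~(iii): you propose to invoke a PBW basis for $X(\sso_{2r})$ ``inherited from Drinfeld's classical PBW theorem for $Y(\sso_{2r})$ via Lemma~\ref{lem:embedding}.'' In \cite{jlm1} the logic runs the other way: a PBW theorem on the Drinfeld side is not assumed but is an output. They produce an ordered spanning set of monomials in $X(\sso_{2r})$, observe that $\Upsilon_0$ sends it onto the PBW basis of $X^\rtt(\sso_{2r})$ already established in \cite{amr}, and conclude injectivity and the Drinfeld PBW theorem simultaneously. Your variant is not circular (Lemma~\ref{lem:embedding} is proved purely from the Drinfeld relations, and an independent PBW theorem for $Y(\fg)$ is available, e.g.\ via the degeneration argument of Levendorski\u{\i} or from \cite{wen}), but it imports a nontrivial external input that \cite{jlm1} manages to avoid.
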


\medskip
\noindent
Combining the Theorem above with Lemma~\ref{lem:embedding}, we obtain the following
explicit identification of the Drinfeld and RTT Yangians of $\sso_{2r}$:

\medskip

\begin{Thm}[{\cite[Main Theorem]{jlm1}}]\label{thm:JLM Main thm}
The composition of the algebra embedding $\iota_0$~\eqref{eq:iota-null} and the algebra isomorphism
$\Upsilon_0$~\eqref{eq:unshifted isomorphism} gives rise to a $\BC$-algebra isomorphism:
\begin{equation*}
  \Upsilon_0\circ \iota_0 \colon Y(\sso_{2r}) \iso Y^\rtt(\sso_{2r}) \, .
\end{equation*}
Explicitly, it is given by:
\begin{equation}\label{eq:explicit identification}
\begin{split}
  & \sE_i(z)\mapsto
    \begin{cases}
      e_{i,i+1}(z+\frac{i-1}{2}) & \mbox{if\, } i<r \\
      e_{r-1,r+1}(z+\frac{r-2}{2}) & \mbox{if\, } i=r
    \end{cases} \, , \\
  & \sF_i(z)\mapsto
    \begin{cases}
      f_{i+1,i}(z+\frac{i-1}{2}) & \mbox{if\, } i<r \\
      f_{r+1,r-1}(z+\frac{r-2}{2}) & \mbox{if\, } i=r
    \end{cases} \, , \\
  & \sH_i(z)\mapsto
    \begin{cases}
      h_i(z+\frac{i-1}{2})^{-1}h_{i+1}(z+\frac{i-1}{2}) & \mbox{if\, } i<r \\
      h_{r-1}(z+\frac{r-2}{2})^{-1}h_{r+1}(z+\frac{r-2}{2}) & \mbox{if\, } i=r
    \end{cases} \, .
\end{split}
\end{equation}
\end{Thm}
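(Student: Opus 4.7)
\emph{Approach.} Since both $\iota_0$ (by Lemma~\ref{lem:embedding}) and $\Upsilon_0$ (by Theorem~\ref{thm:Dr=RTT-unshifted-D}) are injective, the composition $\Upsilon_0\circ\iota_0$ is automatically an embedding. The explicit identification~\eqref{eq:explicit identification} follows at once by plugging the formulas~(\ref{eq:explicit identification 1},~\ref{eq:explicit identification 2}) into~\eqref{eq:iota-null explicitly}, so the content of the theorem reduces to verifying that (i) this injection lands inside $Y^\rtt(\sso_{2r})\subset X^\rtt(\sso_{2r})$, and (ii) it surjects onto $Y^\rtt(\sso_{2r})$.

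\emph{Step 1: Image inside $Y^\rtt$.} I would use that $Y^\rtt(\sso_{2r})$ is characterised as the subalgebra of elements fixed by all automorphisms $\mu_f$ of~\eqref{eq:f-autom}. Under $\mu_f\colon T(z)\mapsto f(z)T(z)$ the Gauss factorisation~\eqref{eq:Gauss-D} transforms by $H(z)\mapsto f(z)H(z)$, while the lower- and upper-triangular factors $F(z),E(z)$ are unchanged (as $f(z)$ is a scalar). Hence each off-diagonal Gauss coordinate $e_{i,i+1}(z)$, $e_{r-1,r+1}(z)$, $f_{i+1,i}(z)$, $f_{r+1,r-1}(z)$ is $\mu_f$-invariant, and so is every ratio of the form $h_i(z)^{-1}h_{i+1}(z)$ and $h_{r-1}(z)^{-1}h_{r+1}(z)$, because the two $f(z)$ factors cancel. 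Comparing with~\eqref{eq:explicit identification} therefore places the images of all Drinfeld generators $\sE_i(z),\sF_i(z),\sH_i(z)$ inside $Y^\rtt(\sso_{2r})$.

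\emph{Step 2: Surjectivity via the two central decompositions.} The plan is to combine Lemma~\ref{lem:embedding}, which gives $X(\sso_{2r})\simeq Y(\sso_{2r})\otimes_\BC\BC[\{C_r^{(k)}\}_{k\geq 1}]$, with the Molev--Arnaudon--Ragoucy decomposition~\eqref{eq:extended to usual}, $X^\rtt(\sso_{2r})\simeq ZX^\rtt(\sso_{2r})\otimes_\BC Y^\rtt(\sso_{2r})$. Since $\Upsilon_0$ is an isomorphism of algebras sending the first tensor factor of the Drinfeld decomposition into $Y^\rtt$ (by Step~1), it will suffice to show that $\Upsilon_0$ maps the central polynomial subalgebra $\BC[\{C_r^{(k)}\}_{k\geq 1}]$ onto $ZX^\rtt(\sso_{2r})$. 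Since $\Upsilon_0$ sends central elements to central elements, I would match the two explicit generators: substituting~\eqref{eq:explicit identification 2} into~\eqref{eq:central C} rewrites $\Upsilon_0(C_r(z))$ as the telescoping product $\prod_{i=1}^{r-1}\frac{h_i(z+i-r)}{h_i(z+i-r+1)}\cdot h_r(z)h_{r+1}(z)$ in the Gauss-diagonal entries, and one would compare this with $\sz_N(z)$ defined by~\eqref{eq:zenter}. Combined with Step~1, this identification will force the restriction of $\Upsilon_0\circ\iota_0$ to $Y(\sso_{2r})$ to be precisely the desired isomorphism onto $Y^\rtt(\sso_{2r})$.

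\emph{Main obstacle.} The delicate ingredient is the last comparison: expressing the diagonal entries of $T'(z-\kappa)T(z)$ purely in terms of the Gauss factors $h_1,\ldots,h_N$ and recognising the resulting expression as the shifted product defining $C_r(z)$. This is essentially a type-$D$ quantum Liouville-style calculation, which must keep track of the spectral-parameter shift by $\kappa=r-1$ and of the antidiagonal transposition~\eqref{eq:prime}; once this identity is in place, everything else is formal bookkeeping with the two tensor product decompositions.
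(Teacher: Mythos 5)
Your proposal is correct and follows essentially the same route as the paper: the theorem is obtained by combining the isomorphism $\Upsilon_0$ of Theorem~\ref{thm:Dr=RTT-unshifted-D} with the decomposition of Lemma~\ref{lem:embedding} and the decomposition~\eqref{eq:extended to usual}, matching the two central subalgebras via $\Upsilon_0(C_r(z))=\sz_N(z)$. The ``main obstacle'' you single out is exactly the identity~\eqref{eq:JLM-center}, which the paper does not reprove but cites from~\cite[Theorem 5.8]{jlm1} in Remark~\ref{rmk:center comparison}.
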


\medskip

\begin{Rem}\label{rmk:comparison to JLM}
(a) We note that our $R$-matrix $R(z)$~(\ref{eq:R-matrix}) is related to the one
of~\cite[(2.6)]{jlm1}, to be denoted by $R^{\jlm}(z)$, via $R(z)=R^{\jlm}(-z)$.
Therefore, our matrix $T(z)$~(\ref{eq:T-matrix}) is related to the one of~\cite[(2.10)]{jlm1},
to be denoted by $T^{\jlm}(z)$, via $T(z)=T^{\jlm}(-z)$. This explains the sign difference
between our relations~\eqref{eY1}--\eqref{eY5.2} and those of~\cite[Theorem~5.14]{jlm1}.

\medskip
\noindent
(b) Accordingly, our formulas~\eqref{eq:explicit identification} agree with those of~\cite{jlm1},
once we identify the generating series $\sE_i(z), \sF_i(z), \sH_i(z)$ of $Y(\sso_{2r})$
with $\xi^-_i(-z),\xi^+_i(-z),\kappa_i(-z)$ of~\cite[(1.5)]{jlm1}, respectively.
\end{Rem}

\medskip

\begin{Rem}\label{rmk:center comparison}
Evoking the series $C_r(z)$ of~\eqref{eq:central C} and $\sz_N(z)$ of~\eqref{eq:z-series}, we note that:
\begin{equation}\label{eq:JLM-center}
  \sz_N(z)=\prod_{i=1}^{r-1}\frac{h_i(z+i-r)}{h_i(z+i-r+1)} \cdot h_r(z) h_{r+1}(z)=\Upsilon_0(C_r(z))
\end{equation}
with the first equality due to~\cite[Theorem 5.8]{jlm1}. Combining~\eqref{eq:JLM-center} with
Theorems~\ref{thm:Dr=RTT-unshifted-D},~\ref{thm:JLM Main thm}, Lemma~\ref{lem:embedding},
and the aforementioned isomorphism $ZX^\rtt(\sso_{2r})\simeq \BC[\{\sz_N^{(k)}\}_{k\geq 1}]$,
we see that the center of $Y(\sso_{2r})$ is trivial, while the center of $X(\sso_{2r})$
is a polynomial algebra in $\{C_r^{(k)}\}_{k\geq 1}$.
\end{Rem}

\medskip


\subsubsection{From Drinfeld to RTT realization}
\label{sssec: Drinfeld-to-RTT-D}
\

To simplify some of the upcoming formulas, let us introduce the following notations:
\begin{equation}\label{eq:generating ef}
\begin{split}
  & e_i(z)=\sum_{k\geq 1} e_i^{(k)}z^{-k}:=
    \begin{cases}
      e_{i,i+1}(z) & \mbox{if } i<r \\
      e_{r-1,r+1}(z) & \mbox{if } i=r
    \end{cases} \, , \\
  & f_i(z)=\sum_{k\geq 1} f_i^{(k)}z^{-k}:=
    \begin{cases}
      f_{i+1,i}(z) & \mbox{if } i <r \\
      f_{r+1,r-1}(z) & \mbox{if } i=r
    \end{cases} \, .
\end{split}
\end{equation}
According to Theorem~\ref{thm:Dr=RTT-unshifted-D}, the coefficients of
$\{e_i(z),f_i(z)\}_{i=1}^r \cup \{h_j(z)\}_{j=1}^{r+1}$ generate the algebra $X^\rtt(\sso_{2r})$.
In this Subsection, we record the explicit formulas (those of~\cite{jlm1} as well as some new ones)
for all other entries of the matrices $F(z), H(z), E(z)$ in~(\ref{eq:Gauss-D})--(\ref{eq:upper triangular}).

\medskip
\noindent
But first let us recall the key ingredient of~\cite{jlm1}: the embeddings
$X^\rtt(\sso_{2(r-s)})\hookrightarrow X^\rtt(\sso_{2r})$ for any $0\leq s<r$.
To this end, consider the following $(2r-2s)\times (2r-2s)$ submatrices:
\begin{equation}\label{eq:diagonal truncated}
  H^{[s]}(z)=\left(\begin{array}{cccc}
        h_{s+1}(z) & 0 & \cdots & 0 \\
        0 & h_{s+2}(z) & \cdots & 0 \\
        \vdots & \ddots & \ddots & \vdots \\
        0 & \cdots & 0 & h_{(s+1)'}(z)
       \end{array}\right) \, ,
\end{equation}
\begin{equation}\label{eq:lower truncated}
  F^{[s]}(z)=\left(\begin{array}{cccc}
        1 & 0 & \cdots & 0 \\
        f_{s+2,s+1}(z) & 1 & \ddots & \vdots \\
        \vdots & \ddots & \ddots & 0 \\
        f_{(s+1)',s+1}(z) & \cdots & f_{(s+1)',(s+2)'}(z) & 1
       \end{array}\right) \, ,
\end{equation}
\begin{equation}\label{eq:upper truncated}
  E^{[s]}(z)=\left(\begin{array}{cccc}
        1 & e_{s+1,s+2}(z) & \cdots & e_{s+1,(s+1)'}(z) \\
        0 & 1 & \ddots & \vdots \\
        \vdots & \ddots & \ddots & e_{(s+2)',(s+1)'}(z)\\
        0 & \cdots & 0 & 1
       \end{array}\right) \, .
\end{equation}
Then, according to~\cite[Proposition~4.1]{jlm1}, the matrix
\begin{equation}\label{eq:T truncated}
  T^{[s]}(z):=F^{[s]}(z)\cdot H^{[s]}(z)\cdot E^{[s]}(z) \,
\end{equation}
coincides with the image of the corresponding $T$-matrix of $X^\rtt(\sso_{2(r-s)})$ under the
embedding $X^\rtt(\sso_{2(r-s)})\hookrightarrow X^\rtt(\sso_{2r})$ of~\cite[Theorem~3.7]{jlm1}
constructed using the quasideterminants. While we omit the details of the latter construction,
let us record an important corollary that provides a powerful ``rank-reduction''
tool that will be used through the rest of this Subsection:

\medskip

\begin{Cor}[{\cite[Corollary 4.2]{jlm1}}]\label{cor:JLM reduction}
The subalgebra of $X^\rtt(\sso_{2r})$ generated by the coefficients of all matrix coefficients
of the matrix $T^{[s]}(z)$~\eqref{eq:T truncated} is isomorphic to $X^\rtt(\sso_{2(r-s)})$.
\end{Cor}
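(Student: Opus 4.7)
The plan is to derive this corollary directly from the two results of [jlm1] that have already been invoked in the discussion leading up to the statement: Theorem 3.7 and Proposition 4.1. The statement is essentially formal, once these two ingredients are in place, so my proposal will be to write a short two-step argument.

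First, I would invoke [jlm1, Theorem 3.7], which constructs an injective algebra homomorphism
\begin{equation*}
  \psi_s \colon X^\rtt(\sso_{2(r-s)}) \hookrightarrow X^\rtt(\sso_{2r})
\end{equation*}
whose defining feature is that the generating matrix of $X^\rtt(\sso_{2(r-s)})$ is sent to an explicit $(2r-2s)\times(2r-2s)$ matrix $\widetilde{T}^{[s]}(z)$ whose entries are quasideterminants of $T(z)$, indexed by rows and columns from the block $\{s+1,\, s+2,\, \ldots,\, (s+1)'\}$. Verifying that $\psi_s$ is a homomorphism amounts to checking that $\widetilde{T}^{[s]}(z)$ satisfies the RTT relation~\eqref{eq:rtt} with respect to the rank-$(r-s)$ $R$-matrix; this in turn follows from a Sylvester-type identity for quasideterminants, once one observes that the $R$-matrix~\eqref{eq:R-matrix} for $\sso_{2(r-s)}$ embeds consistently into that of $\sso_{2r}$ on the relevant block. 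Injectivity of $\psi_s$ is the more delicate ingredient of [jlm1, Theorem 3.7] and is established there via a PBW-type filtration argument that compares the associated graded algebras on both sides.

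Second, I would recall [jlm1, Proposition 4.1], which identifies this quasideterminant matrix with the explicit Gauss-factor product
\begin{equation*}
  \widetilde{T}^{[s]}(z) \;=\; F^{[s]}(z)\, H^{[s]}(z)\, E^{[s]}(z) \;=\; T^{[s]}(z),
\end{equation*}
with $F^{[s]}(z)$, $H^{[s]}(z)$, $E^{[s]}(z)$ given by \eqref{eq:lower truncated}, \eqref{eq:diagonal truncated}, \eqref{eq:upper truncated}. This identification is a direct quasideterminant computation: each Gauss-factor entry $h_i(z),\, e_{ij}(z),\, f_{ij}(z)$ has its own quasideterminant description in terms of the ambient $T(z)$, and multiplying $F^{[s]}H^{[s]}E^{[s]}$ reproduces precisely the block of quasideterminants that define $\psi_s$.

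With these two inputs in hand the corollary is immediate: the subalgebra of $X^\rtt(\sso_{2r})$ generated by the entries of $T^{[s]}(z)$ coincides with the image of $\psi_s$, and hence, by injectivity, is canonically isomorphic to $X^\rtt(\sso_{2(r-s)})$. The one point worth emphasizing is that all the genuine technical difficulty is concentrated in [jlm1, Theorem 3.7] — namely, establishing that the quasideterminant submatrix satisfies the smaller RTT relation and that the resulting homomorphism has trivial kernel — while the passage from the quasideterminant expression to the Gauss-factor form $T^{[s]}(z)$ used in our statement is a routine, purely formal identification.
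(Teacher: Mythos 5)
Your proposal matches the paper's treatment exactly: the Corollary is deduced by combining the injective quasideterminant embedding of~\cite[Theorem~3.7]{jlm1} with the identification of its image as the Gauss-factor product $T^{[s]}(z)=F^{[s]}(z)H^{[s]}(z)E^{[s]}(z)$ from~\cite[Proposition~4.1]{jlm1}, after which the statement is immediate. The paper itself simply cites~\cite[Corollary~4.2]{jlm1} with precisely this two-ingredient justification, so your argument is correct and takes the same route.
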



\medskip
\noindent
$\bullet$ \underline{\emph{Matrix $H(z)$ explicitly}}.
\

\begin{Lem}\label{lem:all-H}
For $1\leq i\leq r-1$, we have:
\begin{equation}\label{eq:h-explicit 1}
  h_{i'}(z) =
  \frac{1}{h_i(z+i-r+1)}\cdot \prod_{j=i+1}^{r-1} \frac{h_j(z+j-r)}{h_j(z+j-r+1)} \cdot h_r(z) h_{r+1}(z) \, .
\end{equation}
\end{Lem}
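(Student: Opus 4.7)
The plan is to treat all $i$ in one uniform argument by \emph{rank-reducing} to the extremal case and then extracting the desired identity from a single matrix entry of the relation defining the RTT central series.

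First, I would apply Corollary~\ref{cor:JLM reduction} with $s=i-1$, which identifies the subalgebra of $X^\rtt(\sso_{2r})$ generated by the matrix coefficients of $T^{[i-1]}(z)$ with $X^\rtt(\sso_{2(r-i+1)})$. Under this isomorphism the local Cartan generators correspond, in the global indexing, to $h_i(z),\ldots,h_{r+1}(z)$ via the relabeling $j\mapsto j+i-1$, $m:=r-i+1$. Transporting the formula of~\cite[Theorem 5.8]{jlm1} through this identification (where the local $\kappa$ equals $r-i$), the image of the central series $\sz_{2m}(z)$ of the subalgebra is
\begin{equation*}
c_i(z)\ :=\ \prod_{j=i}^{r-1}\frac{h_j(z+j-r)}{h_j(z+j-r+1)}\cdot h_r(z)\, h_{r+1}(z)\, .
\end{equation*}

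Second, I would apply the defining relation~\eqref{eq:zenter} to this subalgebra, which reads
\begin{equation*}
(T^{[i-1]})'(z+i-r)\cdot T^{[i-1]}(z)\ =\ c_i(z)\cdot \ID_{2m}\, ,
\end{equation*}
and equate the $(M,M)$-entries, with $M:=2m$. On the left, by the definition of the anti-transpose and the Gauss decomposition, this entry equals $T^{[i-1]}_{1,1}(z+i-r)=h_i(z+i-r)$. On the right, writing $T^{[i-1]}=F^{[i-1]}H^{[i-1]}E^{[i-1]}$ so that $(T^{[i-1]})^{-1}=(E^{[i-1]})^{-1}(H^{[i-1]})^{-1}(F^{[i-1]})^{-1}$, and using that $(E^{[i-1]})^{\pm 1}$ is upper uni-triangular while $(F^{[i-1]})^{\pm 1}$ is lower uni-triangular, the $(M,M)$-entry of $(T^{[i-1]})^{-1}$ collapses to the single diagonal summand $(H^{[i-1]})^{-1}_{M,M}(z)=1/h_{i'}(z)$. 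Combining these two computations yields the key identity $h_i(z+i-r)\, h_{i'}(z)=c_i(z)$.

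Finally, dividing both sides by $h_i(z+i-r)$ and pulling the $j=i$ factor out of the product defining $c_i(z)$ produces exactly~\eqref{eq:h-explicit 1}. The only subtle point is the shift bookkeeping when transporting the JLM central-element formula from the local algebra $X^\rtt(\sso_{2m})$ back to the global indexing of $X^\rtt(\sso_{2r})$; this is the main (and essentially only) step where one could go astray, but it is a purely mechanical reindexing forced by Corollary~\ref{cor:JLM reduction}.
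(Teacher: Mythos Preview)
Your argument is correct and is essentially the same as the paper's: the paper first proves the $i=1$ case by comparing the $(N,N)$-entries of $T'(z-\kappa)=\sz_N(z)T(z)^{-1}$ and then invokes Corollary~\ref{cor:JLM reduction} for general $i$, whereas you invoke the rank-reduction first and then run the same $(M,M)$-entry computation inside the reduced algebra. One wording quibble: after displaying $(T^{[i-1]})'(z+i-r)\cdot T^{[i-1]}(z)=c_i(z)\,\ID_{2m}$ you should say you rewrite it as $(T^{[i-1]})'(z+i-r)=c_i(z)\,(T^{[i-1]}(z))^{-1}$ before extracting the $(M,M)$-entry, since the $(M,M)$-entry of the product on the left is a sum, not the single term you describe.
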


\begin{proof}
For $i=1$, this follows from~\eqref{eq:JLM-center} combined with the equality
$h_{N}(z)=\frac{\sz_N(z)}{h_1(z-r+1)}$ of~\cite[(5.14)]{jlm1} (obtained by comparing the
$(N,N)$ matrix coefficients of both sides of the equality $T'(z-\kappa)=\sz_N(z)T(z)^{-1}$).
The general case follows now from Corollary~\ref{cor:JLM reduction}.
\end{proof}


\medskip
\noindent
$\bullet$ \underline{\emph{Matrix $E(z)$ explicitly}}.

The following result is essentially due to~\cite{jlm1}:\footnote{Note
a sign and index errors in the equality from part (f) as stated in~\cite{jlm1}.}

\medskip

\begin{Lem}\label{lem:all-E-known}
(a) $e_{r,r+1}(z)=0$.

\medskip
\noindent
(b) $e_{r,(r-1)'}(z)=-e_{r}(z)$.

\medskip
\noindent
(c) $e_{(i+1)',i'}(z)=-e_i(z+i-r+1)$ for $1\leq i\leq r-1$.

\medskip
\noindent
(d) $e_{i,j+1}(z)=-[e_{i,j}(z),e_j^{(1)}]$ for $1\leq i<j\leq r-1$.

\medskip
\noindent
(e) $e_{i,j'}(z)=[e_{i,(j+1)'}(z),e_j^{(1)}]$ for $1\leq i<j \leq r-1$.

\medskip
\noindent
(f) $e_{i,r'}(z)=-[e_{i,r-1}(z),e_r^{(1)}]$ for $1\leq i\leq r-2$.

\medskip
\noindent
(g) $e_{i',j'}(z) = [e_{i',(j+1)'}(z),e_{j}^{(1)}]$ for $1\leq j\leq i-2 \leq r-2$.
\end{Lem}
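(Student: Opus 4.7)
The plan is to derive each of the seven identities from three ingredients: (i) the Gauss decomposition $T(z)=F(z)H(z)E(z)$ as in~\eqref{eq:Gauss-D}, (ii) the RTT relation in coefficient form~\eqref{eq:rtt explicit}, and (iii) the symmetry identity $T'(z-\kappa)T(z)=\sz_N(z)\ID_N$ from~\eqref{eq:zenter}. Most of the assertions are already in~\cite{jlm1}; the proposed argument is essentially an application of their techniques, modulo the sign and index corrections flagged in the footnote to part~(f).

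For parts~(a), (b), (c) the approach is structural. Identity~(a) follows by extracting the $(r,r+1)$-matrix entry of the equality $T(z)^{-1}=\sz_N(z)^{-1}\,T'(z-\kappa)$ and substituting the Gauss decompositions of both sides: on the right-hand side the target position couples only to the primed partner $(r+1)'=r$, which together with the off-diagonal structure forces $e_{r,r+1}(z)=0$, and the base case can be verified in $X^\rtt(\sso_4)$ via Corollary~\ref{cor:JLM reduction}. Identity~(c) is the Gauss-decomposition avatar of the antidiagonal symmetry encoded in~\eqref{eq:zenter}, with the shift $z\mapsto z+i-r+1$ arising from $\kappa=r-1$; identity~(b) is then its specialization at $i=r-1$ after removing the vanishing term supplied by~(a).

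For parts~(d), (e), (f), (g) the strategy is inductive via commutation with the ``simple-root'' generator $e_j^{(1)}=t_{j,j+1}^{(1)}$ (respectively $e_r^{(1)}=t_{r-1,r+1}^{(1)}$ in case~(f)). From~\eqref{eq:rtt explicit} one computes $[t_{ij}(z),t_{j,j+1}^{(1)}]$, substitutes $T(z)=F(z)H(z)E(z)$, and reads off the strictly upper-triangular component. In case~(d) the $\Qop$-summand of~\eqref{eq:rtt explicit} vanishes because no primed index collisions occur in the allowed range $1\le i<j\le r-1$, so the identity reduces to the type-$A$ shift rule. In cases~(e), (f), (g) the recursion crosses the antidiagonal, so the $\Qop$-summand becomes active; its contributions are absorbed by invoking the already-established cases, by using~(c) to match entries on either side of the antidiagonal, and, where necessary, by passing to a subalgebra $X^\rtt(\sso_{2(r-s)})\hookrightarrow X^\rtt(\sso_{2r})$ via Corollary~\ref{cor:JLM reduction}.

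The main obstacle is the careful bookkeeping of $\Qop$-contributions when the recursion crosses the antidiagonal in parts~(e)--(g): each crossing activates the second summand of~\eqref{eq:rtt explicit}, which must be shown either to vanish in the allowed range of indices or to cancel against the correction arising from the Gauss decomposition of $T(z)^{-1}$. The footnote to~(f) signals that this is precisely where errors slipped into~\cite{jlm1}, so the computation must audit signs and index ranges with particular care at that juncture.
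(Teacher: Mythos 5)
Your overall strategy (rank reduction via Corollary~\ref{cor:JLM reduction}, the crossing relation $T'(z-\kappa)T(z)=\sz_N(z)\ID_N$ for the structural identities, and commutators with first modes extracted from~\eqref{eq:rtt explicit} for the recursions) is the same as the paper's. However, there is one concrete gap: part~(b) is \emph{not} a specialization of part~(c). Setting $i=r-1$ in~(c) gives $e_{r',(r-1)'}(z)=-e_{r-1}(z)$, which concerns the matrix position $(r+1,r+2)$ and the current $e_{r-1}(z)=e_{r-1,r}(z)$, whereas~(b) concerns the position $(r,r+2)$ and the current $e_r(z)=e_{r-1,r+1}(z)$. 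The two statements are exchanged by the diagram automorphism swapping the indices $r$ and $r+1=r'$, but that symmetry is not among your listed ingredients, so your derivation of~(b) does not go through as written. The paper instead proves~(a) and~(b) together by reducing to $r=2$ via Corollary~\ref{cor:JLM reduction} and checking them there directly from the results of~\cite[\S4]{amr} (see~\cite[Lemma~5.3]{jlm1}); your sketch of~(a) via the $(r,r+1)$ entry of $T(z)^{-1}=\sz_N(z)^{-1}T'(z-\kappa)$ also needs more care, since that entry yields a functional equation mixing $e_{r,r+1}$ at shifted arguments with products of other Gauss coordinates, and the claimed vanishing is asserted rather than derived.

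Two further points of comparison. First, your worry that the $\Qop$-summand ``becomes active'' in~(e)--(g) is an artifact of commuting with $t_{j,j+1}^{(1)}$; the paper instead commutes $t_{1,(j+1)'}(z)$ with $t_{(j+1)',j'}(w)$ and uses $e_{(j+1)',j'}^{(1)}=-e_j^{(1)}$ from~(c), a choice for which both Kronecker deltas in the second summand of~\eqref{eq:rtt explicit} vanish in the stated index ranges, so no $\Qop$-bookkeeping is needed in this Lemma (it only genuinely enters in Lemma~\ref{lem:all-E-new}). Second, for~(g) the paper does not run a new commutator computation at all: it observes that the product of the bottom-right $r\times r$ blocks of $F(z),H(z),E(z)$ satisfies the type~$A$ RTT relation, so~(g) follows from the type~$A$ result of~\cite[Proposition~5.6]{jlm1} together with~(c). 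Your route for~(g) would likely work but is heavier than necessary.
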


\medskip

\begin{proof}
(a, b) follow from Corollary~\ref{cor:JLM reduction} and their validity for $r=2$
(the latter follow from the results of~\cite[\S4]{amr} in a straightforward way, see~\cite[Lemma 5.3]{jlm1}).

(c) is~\cite[Proposition 5.7]{jlm1} (due to Corollary~\ref{cor:JLM reduction}, it suffices
to prove it for $i=1$ case, in which case it follows by comparing the $(N-1,N)$ matrix coefficients
of both sides of the equality $T'(z-\kappa)=T(z)^{-1}\sz_N(z)$ and using the equality
$h_1(z)e_1(z)=e_1(z+1)h_1(z)$, a result of applying~\eqref{eq:rtt explicit} to the computation of
$[t_{11}(z),t_{12}(z+1)]=[h_1(z),h_1(z+1)e_{1,2}(z+1)]$).

(d, e, f) are~\cite[Lemma 5.15]{jlm1} (due to Corollary~\ref{cor:JLM reduction}, it suffices
to prove them for $i=1$, in which case they follow by evaluating the $w^{-1}$-coefficients in the expressions
$[t_{1j}(z),t_{j,j+1}(w)]$, $[t_{1,(j+1)'}(z),t_{(j+1)',j'}(w)]$, $[t_{1,r-1}(z),t_{r-1,r+1}(w)]$,
respectively, using~\eqref{eq:rtt explicit}, combined with the equalities $t_{1k}(z)=h_1(z)e_{1,k}(z)$
and $e_{(i+1)',i'}^{(1)}=-e_i^{(1)}$, the latter due to part (c)).

(g) follows immediately from~\cite[Proposition 5.6]{jlm1} (based on the observation that multiplying
the bottom-right $r\times r$ submatrices of $F(z),H(z),E(z)$ provides an $r\times r$ matrix satisfying
the RTT relation of type $A$) and the equality $e_{(j+1)',j'}^{(1)}=-e_j^{(1)}$ due to part (c).
\end{proof}

\medskip
\noindent
The remaining matrix coefficients of $E(z)$ from~\eqref{eq:upper triangular} are recovered via:

\medskip

\begin{Lem}\label{lem:all-E-new}
(a) $e_{i,i'}(z) = [e_{i,(i+1)'}(z),e_{i}^{(1)}]-e_{i}(z)e_{i,(i+1)'}(z)$ for $1\leq i\leq r-1$.

\medskip
\noindent
(b) $e_{i+1,i'}(z) = [e_{i+1,(i+1)'}(z),e_{i}^{(1)}] + e_{i}(z)e_{i+1,(i+1)'}(z)-e_{i,(i+1)'}(z)$
for $1\leq i\leq r-2$.

\medskip
\noindent
(c) $e_{i,j'}(z) = [e_{i,(j+1)'}(z),e_{j}^{(1)}]$ for $1\leq j\leq i-2 \leq r-2$.
\end{Lem}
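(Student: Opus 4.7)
The plan is to prove (a), (b), (c) in parallel. First, apply Corollary \ref{cor:JLM reduction} to reduce the parts to ``low-index'' cases: (a), (b) reduce to $i=1$ via the embedding with $s = i-1$, while (c) reduces to $j = 1$ via $s = j - 1$. Working in $X^\rtt(\sso_{2r})$, I will thus prove the three formulas for rows $k = 1$, $k = 2$, and $k = i \geq 3$, respectively, where $1' = 2r$ and $2' = 2r - 1$.

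The central RTT computation is $[t_{k, 2'}(z), t_{1, 2}(w)]$ via \eqref{eq:rtt explicit}. Since $k \leq r < k'$, the $\delta_{k, i_1'}$-term vanishes while $\delta_{\ell, j_1'} = \delta_{2, 2} = 1$. Extracting the $w^{-1}$-coefficient (using $t_{1, 2'}^{(0)} = 0$ and $t_{k, 2}^{(0)} = \delta_{k, 2}$) yields
\begin{equation*}
  [t_{k, 2'}(z), e_1^{(1)}] = \delta_{k, 2}\, t_{1, 2'}(z) + t_{k, 1'}(z).
\end{equation*}
A second, auxiliary identity obtained from $[t_{i, 1}(z), t_{1, 2}(w)]$ for $i \geq 3$ (where both $\delta$-terms vanish) gives $[t_{i, 1}(z), e_1^{(1)}] = -t_{i, 2}(z)$; expanding via the Gauss decomposition $t_{i, 1}(z) = f_{i, 1}(z) h_1(z)$ and $t_{i, 2}(z) = f_{i, 1}(z) h_1(z) e_1(z) + f_{i, 2}(z) h_2(z)$, combined with $[h_1(z), e_1^{(1)}] = -h_1(z) e_1(z)$ extracted from \eqref{eY2.1}, produces the key commutator identity
\begin{equation*}
  [f_{i, 1}(z), e_1^{(1)}]\, h_1(z) = -f_{i, 2}(z)\, h_2(z) \qquad \mathrm{for}\ i \geq 3.
\end{equation*}

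For (a) with $k = 1$, the single-term Gauss identity $t_{1, j}(z) = h_1(z) e_{1, j}(z)$ converts the first RTT identity directly, and expanding $[h_1(z) e_{1, 2'}(z), e_1^{(1)}] = h_1(z) e_{1, 1'}(z)$ via $[h_1(z), e_1^{(1)}] = -h_1(z) e_1(z)$ yields (a) upon dividing by $h_1(z)$. For (b) with $k = 2$, expand $t_{2, 2'}(z) = f_{2, 1}(z) h_1(z) e_{1, 2'}(z) + h_2(z) e_{2, 2'}(z)$ and $t_{2, 1'}(z)$ similarly; substituting into the RTT identity and invoking (a), $[h_\ell(z), e_1^{(1)}] = (\delta_{\ell, 2} - \delta_{\ell, 1}) h_\ell(z) e_1(z)$, and the $i = 2$ commutator $[f_{2, 1}(z), e_1^{(1)}]\, h_1(z) = h_1(z) - h_2(z)$ (derivable from \eqref{gY1}), a short computation gives (b) after dividing by $h_2(z)$.

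For (c) with $k = i \geq 3$, I induct on $i$; the base $i = 3$ uses only (a), (b). Expanding the RTT identity $[t_{i, 2'}(z), e_1^{(1)}] = t_{i, 1'}(z)$ via the full Gauss decomposition $t_{i, j}(z) = \sum_{\ell = 1}^{i} f_{i, \ell}(z) h_\ell(z) e_{\ell, j}(z)$, I evaluate $[f_{i, \ell}(z) h_\ell(z) e_{\ell, 2'}(z), e_1^{(1)}]$ for each $\ell$. Using (a), (b), and the inductive (c) for $3 \leq \ell < i$, together with $[h_\ell(z), e_1^{(1)}] = 0$ and $[f_{i, \ell}(z), e_1^{(1)}] = 0$ for $\ell \geq 3$ (the latter since $f_{i, \ell}$ with $\ell \geq 2$ is built from $F_2, F_3, \ldots$, which commute with $E_1$ by \eqref{Y1}), all contributions with $3 \leq \ell < i$ match $f_{i, \ell}(z) h_\ell(z) e_{\ell, 1'}(z)$ exactly. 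The surplus from $\ell = 1, 2$ collapses to $[f_{i, 1}(z), e_1^{(1)}] h_1(z) e_{1, 2'}(z) + f_{i, 2}(z) h_2(z) e_{1, 2'}(z)$, which vanishes by the key commutator identity above. Consequently, the $\ell = i$ contribution $h_i(z) \bigl([e_{i, 2'}(z), e_1^{(1)}] - e_{i, 1'}(z)\bigr)$ must vanish, yielding (c). The main obstacle is the bookkeeping of the surplus terms and their exact cancellation via $[f_{i, 1}(z), e_1^{(1)}] h_1(z) = -f_{i, 2}(z) h_2(z)$, which ``bridges'' the $\ell = 1$ and $\ell = 2$ contributions.
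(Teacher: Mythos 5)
Your proposal is correct and follows essentially the same route as the paper: rank reduction via Corollary \ref{cor:JLM reduction}, extraction of the identity $[t_{k,2'}(z),e_1^{(1)}]=\delta_{k,2}t_{1,2'}(z)+t_{k,1'}(z)$ from the RTT relation, expansion through the Gauss decomposition using $[h_1(z),e_1^{(1)}]=-h_1(z)e_1(z)$ and $[f_{i,1}(z),e_1^{(1)}]h_1(z)=-f_{i,2}(z)h_2(z)$, and induction on the row index for part (c). The only (cosmetic) difference is that you obtain the key identity by commuting with $t_{1,2}(w)$ — so the $\Qop$-term supplies $t_{k,1'}(z)$ — whereas the paper commutes with $t_{2r-1,2r}(w)$ and invokes $e^{(1)}_{2r-1,2r}=-e_1^{(1)}$ from Lemma \ref{lem:all-E-known}(c); both yield the same three identities and the same downstream computation.
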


\medskip

\begin{proof}
(a) Due to Corollary~\ref{cor:JLM reduction}, it suffices to establish this equality for $i=1$.
Comparing the $w^{-1}$-coefficients in the equality
  $[t_{1,2r-1}(z),t_{2r-1,2r}(w)]=\frac{t_{2r-1,2r-1}(z)t_{1,2r}(w)-t_{2r-1,2r-1}(w)t_{1,2r}(z)}{w-z}$
of~\eqref{eq:rtt explicit}, we get: $[t_{1,2r-1}(z),t_{2r-1,2r}^{(1)}]=-t_{1,2r}(z)$.
Note that $t_{2r-1,2r}^{(1)}=e_{2r-1,2r}^{(1)}=-e_{12}^{(1)}$, due to Lemma~\ref{lem:all-E-known}(c).
Combining this with the identities $t_{1k}(z)=h_1(z)e_{1,k}(z)$, we find:
\begin{equation}\label{eq:e-new-1a}
  [h_1(z),e_1^{(1)}]e_{1,2r-1}(z) + h_1(z)[e_{1,2r-1}(z),e_1^{(1)}] = h_1(z)e_{1,2r}(z) \, .
\end{equation}
On the other hand, we have $[t_{11}(z),t_{12}(w)]=\frac{t_{11}(w)t_{12}(z)-t_{11}(z)t_{12}(w)}{z-w}$,
so that $[h_1(z),e_{1}(w)]=\frac{h_1(z)\left(e_{1}(z)-e_{1}(w)\right)}{z-w}$. Comparing the
$w^{-1}$-coefficients of both sides of the latter equality, we get:
\begin{equation}\label{eq:e-new-1b}
  [h_1(z),e_{1}^{(1)}]=-h_1(z)e_{1}(z) \, .
\end{equation}
Combining the formulas~(\ref{eq:e-new-1a},~\ref{eq:e-new-1b}), we immediately obtain the desired equality:
\begin{equation}\label{eq:e-new-1}
  e_{1,2r}(z) = [e_{1,2r-1}(z),e_{1}^{(1)}]-e_{1}(z)e_{1,2r-1}(z) \, .
\end{equation}

\medskip
(b)  Due to Corollary~\ref{cor:JLM reduction}, it suffices to establish this equality for $i=1$.
To this end, let us compare the $w^{-1}$-coefficients in the equality
\begin{equation*}
  [t_{2,2r-1}(z),t_{2r-1,2r}(w)]=
  \frac{t_{2r-1,2r-1}(z)t_{2,2r}(w)-t_{2r-1,2r-1}(w)t_{2,2r}(z)}{w-z} +
  \frac{\sum_p t_{p,2r-1}(z)t_{p',2r}(w)}{z-w+r-1}
\end{equation*}
of~\eqref{eq:rtt explicit}, which together with the aforementioned equality
$t_{2r-1,2r}^{(1)}=-e_{1}^{(1)}$ implies:
\begin{equation}\label{eq:e-new-2a}
  [t_{2,2r-1}(z),e_{1}^{(1)}]=t_{2,2r}(z)+t_{1,2r-1}(z) \, .
\end{equation}
Note that
\begin{equation}\label{eq:e-new-2b}
  t_{2,2r-1}(z)=h_2(z)e_{2,2r-1}(z)+f_{1}(z)h_1(z)e_{1,2r-1}(z) \, .
\end{equation}
Comparing the $w^{-1}$-coefficients of both sides of
  $[t_{21}(z),t_{12}(w)]=\frac{t_{11}(z)t_{22}(w)-t_{11}(w)t_{22}(z)}{w-z}$,
we get $[f_{1}(z)h_1(z),e_{1}^{(1)}]=t_{11}(z)-t_{22}(z)=h_1(z)-t_{22}(z)$, so that:
\begin{equation}\label{eq:e-new-2c}
  [f_{1}(z)h_1(z)e_{1,2r-1}(z),e_{1}^{(1)}]=
  \left(h_1(z)-t_{22}(z)\right)e_{1,2r-1}(z)+t_{21}(z)[e_{1,2r-1}(z),e_{1}^{(1)}] \, .
\end{equation}
We also have $[h_2(z),e_{1}^{(1)}]=h_2(z)e_{1}(z)$, so that:
\begin{equation}\label{eq:e-new-2d}
  [h_2(z)e_{2,2r-1}(z),e_{1}^{(1)}]=
  h_2(z)\left(e_{1}(z)e_{2,2r-1}(z)+[e_{2,2r-1}(z),e_{1}^{(1)}]\right) \, .
\end{equation}

\medskip
\noindent
Combining the formulas~\eqref{eq:e-new-2a}--\eqref{eq:e-new-2d} with~\eqref{eq:e-new-1},
we immediately obtain the desired equality:
\begin{equation}\label{eq:e-new-2}
  e_{2,2r}(z) = [e_{2,2r-1}(z),e_{1}^{(1)}] + e_{1}(z)e_{2,2r-1}(z)-e_{1,2r-1}(z) \, .
\end{equation}

\medskip
(c) Due to Corollary~\ref{cor:JLM reduction}, it suffices to establish this equality for $j=1$.
We shall proceed by induction on $i$. Comparing the $w^{-1}$-coefficients in both parts of
  $[t_{i,2r-1}(z),t_{2r-1,2r}(w)]=\frac{t_{2r-1,2r-1}(z)t_{i,2r}(w)-t_{2r-1,2r-1}(w)t_{i,2r}(z)}{w-z}$
of~\eqref{eq:rtt explicit}, and evoking $t_{2r-1,2r}^{(1)}=-e_{1}^{(1)}$, we obtain:
\begin{equation}\label{eq:e-new-3a}
  [t_{i,2r-1}(z),e_{1}^{(1)}]=t_{i,2r}(z) \, .
\end{equation}
Note that the series featuring in~\eqref{eq:e-new-3a} are explicitly given by:
\begin{equation}\label{eq:e-new-3b}
\begin{split}
  & t_{i,2r}(z)=h_i(z)e_{i,2r}(z)+\sum_{j=1}^{i-1} f_{i,j}(z)h_j(z)e_{j,2r}(z) \, ,\\
  & t_{i,2r-1}(z)=h_i(z)e_{i,2r-1}(z)+\sum_{j=1}^{i-1} f_{i,j}(z) h_j(z) e_{j,2r-1}(z) \, .
\end{split}
\end{equation}
Comparing the $w^{-1}$-coefficients in both sides of
$[t_{i1}(z),t_{12}(w)]=\frac{t_{11}(z)t_{i2}(w)-t_{11}(w)t_{i2}(z)}{w-z}$,
we obtain $[t_{i1}(z),e_{1}^{(1)}]=-t_{i2}(z)=-f_{i,2}(z)h_2(z)-f_{i,1}(z)h_1(z)e_{1}(z)$,
so that:
\begin{equation}\label{eq:e-new-3c}
\begin{split}
  & [f_{i,1}(z)h_1(z)e_{1,2r-1}(z),e_{1}^{(1)}] =\\
  & f_{i,1}(z)h_1(z)\left([e_{1,2r-1}(z),e_{1}^{(1)}]-e_{1}(z)e_{1,2r-1}(z)\right) - f_{i,2}(z)h_2(z)e_{1,2r-1}(z) \, .
\end{split}
\end{equation}
For $j=2$, we have $[f_{i,2}(z),e_{1}^{(1)}]=0$ and $[h_2(z),e_{1}^{(1)}]=h_2(z)e_{1}(z)$, so that:
\begin{equation}\label{eq:e-new-3d}
  [f_{i,2}(z)h_2(z)e_{2,2r-1}(z),e_{1}^{(1)}]=
  f_{i,2}(z)h_2(z)\left(e_{1}(z)e_{2,2r-1}(z)+[e_{2,2r-1}(z),e_{1}^{(1)}]\right)\, .
\end{equation}
Finally, for $2<j\leq i-1$, we clearly have $[f_{i,j}(z),e_{1}^{(1)}]=0=[h_j(z),e_{1}^{(1)}]$, so that:
\begin{equation}\label{eq:e-new-3e}
  [f_{i,j}(z)h_j(z)e_{j,2r-1}(z),e_{1}^{(1)}]=f_{i,j}(z)h_j(z)[e_{j,2r-1}(z),e_{1}^{(1)}]=f_{i,j}(z)h_j(z)e_{j,2r}(z)
\end{equation}
with the last equality due to the induction assumption.

\medskip
\noindent
Combining the formulas~(\ref{eq:e-new-1},~\ref{eq:e-new-2},~\ref{eq:e-new-3a}--\ref{eq:e-new-3e}),
we immediately obtain the desired equality:
\begin{equation}\label{eq:e-new-3}
  e_{i,2r}(z) = [e_{i,2r-1}(z),e_{1}^{(1)}] \qquad \mathrm{for}\quad 3\leq i\leq r \, .
\end{equation}

\medskip
\noindent
This completes our proof of Lemma~\ref{lem:all-E-new}.
\end{proof}

\medskip
\noindent
Let us record the recursive relations that follow from the above two Lemmas:
\begin{equation}\label{eq:e-recursive}
\begin{split}
  & e_{i,j+1}(z)=[e_{j}^{(1)},[e_{j-1}^{(1)}, \,\cdots, [e_{i+2}^{(1)},[e_{i+1}^{(1)},e_{i}(z)]]\cdots]]
    \, , \qquad 1\leq i<j\leq r-1 \, , \\
  & e_{i,r'}(z)=[e_{r}^{(1)},[e_{r-2}^{(1)}, \,\cdots, [e_{i+2}^{(1)},[e_{i+1}^{(1)},e_{i}(z)]]\cdots]]
    \, , \qquad 1\leq i\leq r-2 \, , \\
  & e_{i,j'}(z)=[[\cdots [[e_{i,r'}(z),e_{r-1}^{(1)}],e_{r-2}^{(1)}], \,\cdots, e_{j+1}^{(1)}],e_{j}^{(1)}]
    \, , \qquad 1\leq i<j\leq r-1 \, , \\
  & e_{i,j'}(z)=[[\cdots [[e_{i,(i-1)'}(z),e_{i-2}^{(1)}],e^{(1)}_{i-3}], \,\cdots,e_{j+1}^{(1)}],e_{j}^{(1)}]
    \, , \qquad 1\leq j\leq i-2\leq r-2 \, , \\
  & e_{i',j'}(z)=[[\cdots [[e_{i',(i-1)'}(z),e_{i-2}^{(1)}],e^{(1)}_{i-3}], \,\cdots,e_{j+1}^{(1)}],e_{j}^{(1)}]
    \, , \qquad 1\leq j\leq i-2\leq r-2 \, .
\end{split}
\end{equation}


\medskip
\noindent
$\bullet$ \underline{\emph{Matrix $F(z)$ explicitly}}.

The following result is essentially due to~\cite{jlm1}\footnote{Note a sign and index errors in
the equality from part (f) as stated in~\cite{jlm1}.} and is proved exactly as Lemma~\ref{lem:all-E-known}:

\medskip

\begin{Lem}\label{lem:all-F-known}
(a) $f_{r+1,r}(z)=0$.

\medskip
\noindent
(b) $f_{(r-1)',r}(z)=-f_{r}(z)$.

\medskip
\noindent
(c) $f_{i',(i+1)'}(z)=-f_{i}(z+i-r+1)$ for $1\leq i\leq r-1$.

\medskip
\noindent
(d) $f_{j+1,i}(z)=-[f_j^{(1)},f_{j,i}(z)]$ for $1\leq i<j\leq r-1$.

\medskip
\noindent
(e) $f_{j',i}(z)=[f_j^{(1)},f_{(j+1)',i}(z)]$ for $1\leq i<j \leq r-1$.

\medskip
\noindent
(f) $f_{r',i}(z)=-[f_r^{(1)},f_{r-1,i}(z)]$ for $1\leq i\leq r-2$.

\medskip
\noindent
(g) $f_{j',i'}(z) = [f_{j}^{(1)},f_{(j+1)',i'}(z)]$ for $1\leq j\leq i-2\leq r-2$.
\end{Lem}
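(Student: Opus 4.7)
The plan is to mirror the proof strategy of Lemma~\ref{lem:all-E-known}, since the RTT relation~(\ref{eq:rtt explicit}) has a manifest symmetry between the upper- and lower-triangular factors of $T(z)$ under the antidiagonal transposition~(\ref{eq:prime}), which transports the $E$-side arguments to the $F$-side almost verbatim. Parts (a) and (b) reduce via Corollary~\ref{cor:JLM reduction} to the rank-two case $r=2$, where they follow directly from the $D_2$-level analysis of~\cite[\S4]{amr}.

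For part (c), I would first apply Corollary~\ref{cor:JLM reduction} to restrict to $i=1$, and then compare the $(N,N-1)$ matrix coefficients of both sides of $T'(z-\kappa)=\sz_N(z)T(z)^{-1}$. Expanding the inverse via the Gauss decomposition~(\ref{eq:Gauss-D}) isolates an expression involving $f_1(z)$ and $h_1(z)$; invoking Lemma~\ref{lem:all-H} together with the commutation relation $h_1(z)f_1(z)=f_1(z+1)h_1(z)$ — obtained by applying~\eqref{eq:rtt explicit} to $[t_{11}(z),t_{21}(w)]$ and extracting the appropriate residue — produces the required spectral-parameter shift on $f_1$.

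For parts (d), (e), (f), I would again use Corollary~\ref{cor:JLM reduction} to pass to $i=1$ and then extract $w^{-1}$-coefficients from suitable RTT commutators, namely $[t_{j+1,j}(w),t_{j,1}(z)]$, $[t_{j',(j+1)'}(w),t_{(j+1)',1}(z)]$, and $[t_{r+1,r-1}(w),t_{r-1,1}(z)]$ respectively. Rewriting each $t_{k,1}(z)$ as $f_{k,1}(z)h_1(z)$ and cancelling the common factor $h_1(z)$ on the right, combined with the identity $f^{(1)}_{i',(i+1)'}=-f^{(1)}_i$ from part~(c), yields the stated recursions. Part (g) then follows from the $F$-side analog of~\cite[Proposition~5.6]{jlm1}: the top-left $r\times r$ submatrices of $F(z), H(z), E(z)$ multiply into a type-$A$ RTT matrix, so the corresponding $\gl_r$-Yangian recursion for $f$-entries applies directly, with the sign $f^{(1)}_{j',(j+1)'}=-f^{(1)}_j$ from part~(c) absorbed into the final formula.

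The main technical subtlety I anticipate lies in part~(c): one must carefully track the shift $z\mapsto z-\kappa$ through the Gauss decomposition of $T(z)^{-1}$ and match the resulting combination of shifted $h_j$'s against the explicit formula for $\sz_N(z)$ from~\eqref{eq:JLM-center}, in order to cleanly isolate the factor $h_1(z-r+2)^{-1}$ on the right-hand side. Once this bookkeeping is handled correctly, the remaining computations are straightforward $F$-analogs of the $E$-side manipulations and introduce no new ideas.
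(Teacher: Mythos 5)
Your strategy is exactly the paper's: the paper gives no separate argument for this lemma beyond stating that it ``is proved exactly as Lemma~\ref{lem:all-E-known}'', and your proposal is precisely that transposition of the $E$-side proof (rank reduction via Corollary~\ref{cor:JLM reduction}, the $(N,N-1)$ entry of $T'(z-\kappa)=\sz_N(z)T(z)^{-1}$ for (c), $w^{-1}$-coefficients of the transposed RTT commutators for (d)--(f), and the type-$A$ submatrix observation plus $f^{(1)}_{i',(i+1)'}=-f^{(1)}_i$ for (g)). Two details are off, though both are easily repaired. First, the commutation relation you quote in (c) has the conjugation reversed: applying~\eqref{eq:rtt explicit} to $[t_{11}(z),t_{21}(w)]$ and setting $w=z+1$ gives $f_1(z)h_1(z)=h_1(z)f_1(z+1)$, not $h_1(z)f_1(z)=f_1(z+1)h_1(z)$; with your version the shift lands on the wrong side and the $i=1$ case of (c) does not come out. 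Second, in (g) the entries $f_{j',i'}$ with $1\leq j\leq i-2\leq r-2$ sit in the \emph{bottom-right} $r\times r$ block of $F(z)$ (both primed indices exceed $r$), so the relevant type-$A$ RTT matrix is the product of the bottom-right submatrices of $F(z),H(z),E(z)$ --- the same block used on the $E$-side --- not the top-left one; the top-left block only governs the unprimed entries already handled in (d).
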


\medskip
\noindent
The remaining matrix coefficients of $F(z)$~\eqref{eq:lower triangular} are recovered
via the analogue of Lemma~\ref{lem:all-E-new}:

\medskip

\begin{Lem}\label{lem:all-F-new}
(a) $f_{i',i}(z) = [f_{i}^{(1)},f_{(i+1)',i}(z)]-f_{(i+1)',i}(z)f_{i}(z)$
for $1\leq i\leq r-1$.

\medskip
\noindent
(b) $f_{i',i+1}(z) = [f_{i}^{(1)},f_{(i+1)',i+1}(z)] + f_{(i+1)',i+1}(z)f_{i}(z)-f_{(i+1)',i}(z)$
for $1\leq i\leq r-2$.

\medskip
\noindent
(c) $f_{j',i}(z) = [f_{j}^{(1)},f_{(j+1)',i}(z)]$ for $1\leq j\leq i-2 \leq r-2$.
\end{Lem}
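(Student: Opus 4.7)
I would prove Lemma~\ref{lem:all-F-new} by the column-oriented analog of the argument used for Lemma~\ref{lem:all-E-new}. First, by Corollary~\ref{cor:JLM reduction}, it suffices to handle parts (a), (b) at $i = 1$ and (c) at $j = 1$. The central replacement is that, in place of the row-oriented commutator $[t_{1,2r-1}(z), t_{2r-1,2r}(w)]$ appearing in the proof of Lemma~\ref{lem:all-E-new}, I would work with the column-oriented commutator $[t_{2r-1,1}(z), t_{2,1}(w)]$ (with the second index of $t_{2r-1,\cdot}(z)$ incremented for the other parts), exploiting that $t_{2,1}^{(1)} = f_{2,1}^{(1)} = f_1^{(1)}$. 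Unlike in Lemma~\ref{lem:all-E-new}(a), however, the Kronecker delta $\delta_{k,i'} = \delta_{2,2} = 1$ is now active for these choices, so the ``quantum-trace'' term $\frac{1}{z-w+\kappa}\sum_p t_{p,1}(z) t_{p',1}(w)$ of~\eqref{eq:rtt explicit} will contribute non-trivially at order $w^{-1}$ throughout.

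For part~(a) I would proceed as follows. Extracting the $w^{-1}$-coefficient of~\eqref{eq:rtt explicit} for the commutator $[t_{2r-1,1}(z), t_{2,1}(w)]$, the classical first summand contributes zero (both $t_{2,1}(w)$ and $t_{2r-1,1}(w)$ lack constant terms), while the constant-in-$w$ part of the quantum-trace sum equals $t_{2r,1}(z)$ (coming uniquely from $p=2r$, via $t_{1,1}(w) = h_1(w)$). This yields the key identity $[f_1^{(1)}, t_{2r-1,1}(z)] = t_{2r,1}(z)$. Expanding $t_{k,1}(z) = f_{k,1}(z) h_1(z)$ for $k \geq 2$ and using the auxiliary commutation $[f_1^{(1)}, h_1(z)] = -f_1(z) h_1(z)$, itself derived by $w^{-1}$-extraction from $[t_{1,1}(z), t_{2,1}(w)]$, and finally cancelling $h_1(z)$ on the right, would give the desired formula for $f_{2r,1}(z)$.

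For part~(b), I would apply the same scheme to $[t_{2r-1,2}(z), t_{2,1}(w)]$; now the classical summand additionally contributes $-t_{2r-1,1}(z)$ (since $t_{2,2}(w) = h_2(w)$ has constant term $1$), so one obtains $[f_1^{(1)}, t_{2r-1,2}(z)] = t_{2r-1,1}(z) + t_{2r,2}(z)$. Expansion via the Gauss decomposition $t_{2r-1,2}(z) = f_{2r-1,1}(z) h_1(z) e_1(z) + f_{2r-1,2}(z) h_2(z)$ (and analogously for $t_{2r,2}(z)$), combined with part~(a) and the auxiliary commutators $[f_1^{(1)}, e_1(z)]$ and $[f_1^{(1)}, h_2(z)]$ (both obtained from~\eqref{eq:rtt explicit}), would then yield~(b). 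Part~(c) I would handle by induction on $i$ using $[t_{2r-1,i}(z), t_{2,1}(w)]$: for $i \geq 3$ the classical summand again contributes zero at order $w^{-1}$ and the quantum-trace term gives $[f_1^{(1)}, t_{2r-1,i}(z)] = t_{2r,i}(z)$, whence expanding both sides by Gauss decomposition and invoking the inductive hypothesis together with parts~(a),~(b) separates out the asserted formula. The main obstacle will be the careful bookkeeping of the multiple Gauss-decomposition summands and the auxiliary commutators of $f_1^{(1)}$ with the various $e$-, $h$-, and $f$-factors that accumulate in parts~(b) and~(c).
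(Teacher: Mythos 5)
Your proposal is correct and is exactly the proof the paper intends: the paper gives no separate argument for Lemma~\ref{lem:all-F-new}, stating only that it is ``the analogue of Lemma~\ref{lem:all-E-new}'', and your column-oriented argument (rank reduction via Corollary~\ref{cor:JLM reduction}, extraction of $w^{-1}$-coefficients from $[t_{2r-1,1}(z),t_{2,1}(w)]$ and its variants using $t_{2,1}^{(1)}=f_1^{(1)}$, then Gauss-decomposition bookkeeping with the auxiliary commutators of $f_1^{(1)}$ against $h_1,h_2,e_1$) is precisely that analogue; I checked that the key identities $[f_1^{(1)},t_{2r-1,1}(z)]=t_{2r,1}(z)$, $[f_1^{(1)},t_{2r-1,2}(z)]=t_{2r-1,1}(z)+t_{2r,2}(z)$, and $[f_1^{(1)},t_{2r-1,i}(z)]=t_{2r,i}(z)$ for $i\geq 3$ all hold and reproduce parts (a)--(c). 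The only blemish is the parenthetical ``$t_{2,2}(w)=h_2(w)$'' in part~(b) --- in fact $t_{2,2}(w)=f_1(w)h_1(w)e_1(w)+h_2(w)$, but only its constant term $1$ is used, so the argument is unaffected.
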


\medskip
\noindent
Let us record the recursive relations that follow from the above two Lemmas:
\begin{equation}\label{eq:f-recursive}
\begin{split}
  & f_{j+1,i}(z)=[[\cdots [[f_i(z),f_{i+1}^{(1)}],f_{i+2}^{(1)}], \,\cdots,f_{j-1}^{(1)}],f_{j}^{(1)}]
    \, , \qquad 1\leq i<j\leq r-1 \, ,\\
  & f_{r',i}(z)=[[\cdots [[f_i(z),f_{i+1}^{(1)}],f_{i+2}^{(1)}], \,\cdots,f_{r-2}^{(1)}],f_{r}^{(1)}]
    \, , \qquad 1\leq i\leq r-2 \, , \\
  & f_{j',i}(z)=[f_j^{(1)},[f_{j+1}^{(1)}, \,\cdots, [f_{r-2}^{(1)},[f_{r-1}^{(1)},f_{r',i}(z)]]\cdots]]
    \, , \qquad 1\leq i<j\leq r-1 \, , \\
  & f_{j',i}(z)=[f_j^{(1)},[f_{j+1}^{(1)}, \,\cdots, [f^{(1)}_{i-3},[f_{i-2}^{(1)},f_{(i-1)',i}(z)]]\cdots ]]
    \, , \qquad 1\leq j\leq i-2\leq r-2 \, , \\
  & f_{j',i'}(z)=[f_j^{(1)},[f_{j+1}^{(1)}, \,\cdots, [f^{(1)}_{i-3},[f_{i-2}^{(1)},f_{(i-1)',i'}(z)]]\cdots ]]
    \, , \qquad 1\leq j\leq i-2\leq r-2 \, .
\end{split}
\end{equation}

\medskip


\subsection{Shifted story}
\label{ssec: shifted story}
\


\subsubsection{Shifted extended Drinfeld Yangians of $\sso_{2r}$}
\label{sssec shifted Drinfeld extended}
\

Consider the \emph{extended} lattice
  $\Lambda^\vee=\bigoplus_{j=1}^{r+1} \BZ\epsilon^\vee_j=\bar{\Lambda}^\vee\oplus \BZ\epsilon^\vee_{r+1}$,
endowed with the bilinear form via $(\epsilon^\vee_i,\epsilon^\vee_j)=\delta_{i,j}$.
We shall need the following family of elements $\{\hat{\alpha}^\vee_i\}_{i=1}^r$ of $\Lambda^\vee$:
\begin{equation}\label{eq:hat-alpha-vee}
  \hat{\alpha}^\vee_1=\epsilon^\vee_1-\epsilon^\vee_2\, ,\
  \hat{\alpha}^\vee_2=\epsilon^\vee_2-\epsilon^\vee_3\, ,\ \ldots\, ,\
  \hat{\alpha}^\vee_{r-1}=\epsilon^\vee_{r-1}-\epsilon^\vee_r\, ,\
  \hat{\alpha}^\vee_r=\epsilon^\vee_{r-1}-\epsilon^\vee_{r+1}\, .
\end{equation}
Let $\Lambda=\bigoplus_{j=1}^{r+1} \BZ\epsilon_j$ be the dual lattice with
$\epsilon^{\vee}_i(\epsilon_j)=\delta_{i,j}$. Identifying the dual space
$(\Lambda^\vee \otimes_{\BZ} \BC)^*$ with $\Lambda^\vee \otimes_{\BZ} \BC$ via
the form $(\cdot,\cdot)$, the lattice $\Lambda$ gets naturally identified with
$\Lambda^\vee$ via $\epsilon_i\leftrightarrow \epsilon^\vee_i$. We will also
need another $\BZ$-basis: $\Lambda=\bigoplus_{i=0}^{r} \BZ\varpi_i$ with
\begin{equation}\label{eq:varpis}
  \varpi_{r-1}:=-\epsilon_r \, ,\
  \varpi_r:=-\epsilon_{r+1}\, ,\
  \varpi_i=-\epsilon_{i+1}-\epsilon_{i+2}-\ldots-\epsilon_{r+1} \quad \mathrm{for}\ 0\leq i<r-1 \,.
\end{equation}
For $\mu\in \Lambda$, define $\unl{d}=\{d_j\}_{j=1}^{r+1}\in \BZ^{r+1}$
and $\unl{b}=\{b_i\}_{i=1}^{r}\in \BZ^{r}$ via:
\begin{equation}\label{extended D shifts}
  d_j:=\epsilon^\vee_j(\mu) \, ,
\end{equation}
\begin{equation}\label{extended D shifts 1}
  b_i:=\hat{\alpha}^\vee_i(\mu) \, ,
\end{equation}
so that:
\begin{equation}\label{extended D shifts 2}
  b_1=d_1-d_2 \, ,\ b_2=d_2-d_3 \, ,\ \ldots\, ,\ b_{r-1}=d_{r-1}-d_r \, ,\ b_r=d_{r-1}-d_{r+1} \, .
\end{equation}

\medskip
\noindent
For $\mu\in \Lambda$, define the \emph{shifted extended Drinfeld Yangian of $\sso_{2r}$},
denoted by $X_\mu(\sso_{2r})$, to be the associative $\BC$-algebra generated by
  $\{E_i^{(k)},F_i^{(k)}\}_{1\leq i\leq r}^{k\geq 1}\cup \{D_i^{(k_i)}\}_{1\leq i\leq r+1}^{k_i\geq d_i+1}$
with the defining relations~(\ref{eY0},~\ref{eY2.1}--\ref{eY7}) and the following replacement of~(\ref{eY1}):
\begin{equation}\label{eY1-shifted}
  [E_i(z), F_j(w)]=
  -\delta_{i,j}\, \frac{\underline{K_i(z)}-\underline{K_i(w)}}{z-w} \, ,
\end{equation}
where $E_i(z),E^\circ_i(z), F_i(z), F^\circ_i(z)$ are defined via~\eqref{eq:extended EF generating series},
$D_i(z),K_i(z)$ are defined via:
\begin{equation}\label{eq:shifted extended DK generating series}
\begin{split}
  & D_i(z):=\sum_{k\geq d_i} D_i^{(k)}z^{-k} = z^{-d_i} \, + \sum_{k\geq d_i+1} D_i^{(k)}z^{-k} \, , \\
  & K_i(z)\, =\sum_{k\geq -b_i} K_i^{(k)}z^{-k}:=
    \begin{cases}
      D_i(z)^{-1}D_{i+1}(z) & \mbox{if } i<r \\
      D_{r-1}(z)^{-1} D_{r+1}(z) & \mbox{if } i=r
    \end{cases} \, ,
\end{split}
\end{equation}
with the conventions
\begin{equation*}
  D_i^{(d_i)}=1=K_i^{(-b_i)} \, ,
\end{equation*}
and finally $\underline{K_i(z)}$ denotes the principal part of $K_i(z)$:
\begin{equation}\label{principal K}
  \underline{K_i(z)}\ :=\sum_{k\geq \max\{1,-b_i\}} K_i^{(k)}z^{-k} \, .
\end{equation}

\medskip

\begin{Rem}\label{rmk comparing to bk}
For $\mu=0$, we obviously get $X_0(\sso_{2r})\simeq X(\sso_{2r})$.
\end{Rem}

\medskip
\noindent
Similar to our proof of Lemma~\ref{lem:C is central}, we note that the coefficients $\{C_r^{(k)}\}_{k\geq d_r+d_{r+1}+1}$ of the series
\begin{equation}\label{eq:central C shifted}
  C_r(z)=z^{-d_r-d_{r+1}}\, + \sum_{k > d_r+d_{r+1}} C_r^{(k)}z^{-k}:=
  \prod_{i=1}^{r-1}\frac{D_i(z+i-r)}{D_i(z+i-r+1)} \cdot D_r(z) D_{r+1}(z)
\end{equation}
are central elements of $X_\mu(\sso_{2r})$, which is an immediate corollary of the relations~(\ref{eY2.1})--(\ref{eY3.2}).

\medskip
\noindent
Let $\bar{\Lambda}=\bigoplus_{i=1}^{r}\BZ\omega_i$ be the coweight lattice of $\sso_{2r}$,
where $\{\omega_i\}_{i=1}^{r}$ are the standard fundamental coweights of $\sso_{2r}$,
i.e.\ $\alphavee_i(\omega_j)=\delta_{i,j}$ for $1\leq i,j\leq r$.
There is a natural $\BZ$-linear projection:
\begin{equation}\label{non-ext coweight from ext}
  \Lambda\longrightarrow \bar{\Lambda}\, ,\quad \mu \, \mapsto \, \bar{\mu}
    \quad \mathrm{defined\ via}\quad
  \alpha^\vee_i(\bar{\mu})=\hat{\alpha}^\vee_i(\mu)\quad \mathrm{for}\ 1\leq i\leq r \, .
\end{equation}
Explicitly, we have:
\begin{equation*}
  \Lambda \ni \mu \ \mapsto \, \bar{\mu}=\sum_{i=1}^r b_i \omega_i \in \bar{\Lambda}\,
\end{equation*}
with $b_i=\hat{\alpha}^\vee_i(\mu)$, cf.~\eqref{extended D shifts 1}, so that:
\begin{equation*}
  \bar{\varpi}_0=0 \, ,\quad \bar{\varpi}_i=\omega_i \quad \mathrm{for}\quad 1\leq i\leq r \, .
\end{equation*}

\medskip
\noindent
The algebra $X_{\mu}(\sso_{2r})$ depends only on the associated $\sso_{2r}$--coweight $\bar{\mu}$,
up to an isomorphism:

\medskip

\begin{Lem}\label{identifying extended Yangians}
If $\mu_1,\mu_2\in \Lambda$ satisfy $\bar{\mu}_1=\bar{\mu}_2\in \bar{\Lambda}$, then the assignment
\begin{equation}\label{isom of shifted gl-Yangians}
  E^{(k)}_i \, \mapsto \, E^{(k)}_i \, ,\quad
  F^{(k)}_i \, \mapsto \, F^{(k)}_i \, ,\quad
  D^{(k_i)}_i \, \mapsto \, D^{(k_i-\epsilon^\vee_i(\mu_1-\mu_2))}_i
\end{equation}
gives rise to a $\BC$-algebra isomorphism
\begin{equation*}
  X_{\mu_1}(\sso_{2r})\iso X_{\mu_2}(\sso_{2r}) \, .
\end{equation*}
\end{Lem}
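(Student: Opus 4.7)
The plan is to translate the proposed shift of indices into a rescaling of the generating series and then verify relation-by-relation that nothing changes up to an overall power of $z^{-s}$ on each side. The key numerical observation comes first. Since $\bar{\mu}_1=\bar{\mu}_2$ means $\hat{\alpha}^\vee_i(\mu_1-\mu_2)=0$ for all $1\leq i\leq r$, the explicit form \eqref{eq:hat-alpha-vee} of the $\hat{\alpha}^\vee_i$ forces
\[
  \epsilon^\vee_1(\mu_1-\mu_2)=\epsilon^\vee_2(\mu_1-\mu_2)=\cdots=\epsilon^\vee_{r+1}(\mu_1-\mu_2).
\]
Call this common integer $s$, so that $d^{(1)}_i-d^{(2)}_i=s$ for all $i=1,\ldots,r+1$. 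A direct computation using $D^{(d^{(1)}_i)}_i=1=D^{(d^{(2)}_i)}_i$ shows that the proposed assignment \eqref{isom of shifted gl-Yangians} corresponds at the level of generating series to
\[
  D^{\mu_1}_i(z) \ \longmapsto\ z^{-s}D^{\mu_2}_i(z),\qquad
  E^{\mu_1}_i(z)\mapsto E^{\mu_2}_i(z),\qquad
  F^{\mu_1}_i(z)\mapsto F^{\mu_2}_i(z).
\]

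Next, I would verify that the map is well-defined by checking each defining relation. Since $K_i(z)$ is a ratio $D_i(z)^{-1}D_{i+1}(z)$ (or $D_{r-1}(z)^{-1}D_{r+1}(z)$), the $z^{-s}$ factors cancel and $K_i(z)$, hence its principal part $\underline{K_i(z)}$, is preserved; this handles \eqref{eY1-shifted}. Relation \eqref{eY0} is trivially preserved. For \eqref{eY2.1}, \eqref{eY2.2}, \eqref{eY3.1}, \eqref{eY3.2}, both sides acquire an overall factor $z^{-s}$ (the $D_i(z)$ on the right-hand sides being multiplied by $z^{-s}$ exactly as the $D_i(z)$ on the left-hand side), so the relations are preserved. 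Relations \eqref{eY4.1}--\eqref{eY7} involve no $D_i$'s and are preserved trivially.

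Finally, invertibility of the constructed homomorphism is automatic: the analogous assignment with $\mu_1$ and $\mu_2$ swapped (i.e.\ the shift by $-s$) produces a two-sided inverse. The only real subtlety, and the part that requires the most care in bookkeeping, is step one: ensuring that the kernel of $\Lambda\twoheadrightarrow\bar{\Lambda}$ is spanned by the diagonal direction $\epsilon^\vee_1+\cdots+\epsilon^\vee_{r+1}$, so that $s$ is genuinely independent of $i$. Everything else reduces to routine verification that the shift $D^{\mu_1}_i(z)\mapsto z^{-s}D^{\mu_2}_i(z)$ is compatible with the defining relations written in series form.
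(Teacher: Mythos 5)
Your proposal is correct and follows essentially the same route as the paper's proof: verify compatibility with the defining relations (the paper dismisses this as "clearly compatible", while you usefully make it explicit via the reformulation $D_i(z)\mapsto z^{-s}D_i(z)$ with $s=\epsilon^\vee_i(\mu_1-\mu_2)$ independent of $i$, which is exactly why $K_i(z)$ and hence \eqref{eY1-shifted} are preserved), and then obtain the inverse by swapping $\mu_1$ and $\mu_2$. No gaps.
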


\medskip

\begin{proof}
The assignment~(\ref{isom of shifted gl-Yangians}) is clearly compatible with
the defining relations~(\ref{eY0},~\ref{eY2.1}--\ref{eY7},~\ref{eY1-shifted}),
giving rise to a $\BC$-algebra homomorphism $X_{\mu_1}(\sso_{2r})\to X_{\mu_2}(\sso_{2r})$.
Switching $\mu_1$ and $\mu_2$, we obtain the inverse homomorphism
$X_{\mu_2}(\sso_{2r})\to X_{\mu_1}(\sso_{2r})$. Hence, the result.
\end{proof}

\medskip
\noindent
Let us also recall the \emph{shifted Drinfeld Yangians of $\sso_{2r}$} introduced in~\cite[Definition~B.2]{bfnb}.
To this end, fix a coweight $\nu\in \bar{\Lambda}$ and set $b_i:=\alphavee_i(\nu)$ for $1\leq i\leq r$.
The \emph{shifted Drinfeld Yangian of $\sso_{2r}$}, denoted by $Y_\nu(\sso_{2r})$,
is the associative $\BC$-algebra generated by
  $\{\sE_i^{(k)},\sF_i^{(k)},\sH_i^{(\ell_i)}\}_{1\leq i\leq r}^{k\geq 1, \ell_i > -b_i}$
with the defining relations~(\ref{gY0},~\ref{gY2}--\ref{gY7}) and the following replacement of~(\ref{gY1}):
\begin{equation}\label{gY1-extended}
  [\sE_i(z), \sF_j(w)]=
  -\delta_{i,j}\, \frac{\underline{\sH_i(z)}-\underline{\sH_i(w)}}{z-w} \, ,
\end{equation}
where $\sE_i(z), \sF_i(z)$ are defined via~\eqref{eq:EFH series},
$\sH_i(z)$ are defined via:
\begin{equation*}
  \sH_i(z):=\sum_{k\geq -b_i} \sH_i^{(k)}z^{-k} = z^{b_i} \, + \sum_{k\geq 1-b_i} \sH_i^{(k)}z^{-k} \, ,
\end{equation*}
with the conventions $\sH_i^{(-b_i)}=1$, and finally $\underline{\sH_i(z)}$
denotes the principal part of $\sH_i(z)$:
\begin{equation}\label{principal H}
  \underline{\sH_i(z)}\ :=\sum_{k\geq \max\{1,-b_i\}} \sH_i^{(k)}z^{-k} \, .
\end{equation}

\medskip
\noindent
The explicit relation between the shifted Yangians $X_\mu(\sso_{2r})$ and $Y_\nu(\sso_{2r})$ is as follows:

\medskip

\begin{Prop}\label{prop:relating shifted yangians}
For any $\mu\in \Lambda$, the assignment~\eqref{eq:iota-null explicitly} gives rise to a $\BC$-algebra embedding
\begin{equation}\label{eq:iota-mu}
  \iota_\mu\colon Y_{\bar{\mu}}(\sso_{2r})\hookrightarrow X_\mu(\sso_{2r}) \, .
\end{equation}
Furthermore, we have a tensor product algebra decomposition:
\begin{equation}\label{eq:shifted decomposition}
  X_\mu(\sso_{2r})\simeq Y_{\bar{\mu}}(\sso_{2r})\otimes_{\BC} \BC[\{C_r^{(k)}\}_{k\geq d_r+d_{r+1}+1}] \, .
\end{equation}
\end{Prop}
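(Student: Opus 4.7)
The plan is to follow the blueprint of Lemma~\ref{lem:embedding}, adapting both of its steps to the shifted setting. First I would construct $\iota_\mu$ as a well-defined homomorphism by direct inspection of the defining relations, and then I would establish the tensor product decomposition~\eqref{eq:shifted decomposition} by a change-of-variables argument inside the commutative ``Cartan'' subalgebra, which simultaneously yields the injectivity of $\iota_\mu$.

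For the homomorphism, every defining relation of $Y_{\bar\mu}(\sso_{2r})$ except~\eqref{gY1-extended} has the same form as in the unshifted Yangian $Y(\sso_{2r})$, and the corresponding relations~(\ref{eY0},~\ref{eY2.1}--\ref{eY7}) of $X_\mu(\sso_{2r})$ coincide with those of $X(\sso_{2r})$. Thus the verification reduces essentially to the unshifted case already treated in Lemma~\ref{lem:embedding}: under the assignment~\eqref{eq:iota-null explicitly} the series $\sH_i(z)$ maps to a spectral-parameter shift of $K_i(z)$, and the normalization $\sH_i^{(-b_i)}=1=K_i^{(-b_i)}$ combined with $b_i=\hat{\alpha}^\vee_i(\mu)=\alphavee_i(\bar\mu)$ ensures that the shifted indexing on the left matches the $X_\mu$-indexing on the right. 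The only genuinely new relation, the shifted commutator~\eqref{gY1-extended}, reduces to~\eqref{eY1-shifted} because taking the principal part commutes with spectral-parameter translation.

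For the tensor product decomposition, the key is a triangular change of variables in the abstract commutative polynomial algebra $\mathcal{B}=\BC[\{D_i^{(k)}\}_{1\le i\le r+1}^{k\ge d_i+1}]$. Defining inside $\mathcal{B}$ the elements $\bar D_i^{(k)}$ via $\bar D_i(z)=K_i(z)$ for $1\le i\le r$, together with $C_r^{(k)}$ via~\eqref{eq:central C shifted}, I would check that these form an alternative algebraically independent generating set of $\mathcal{B}$, yielding
\[
  \mathcal{B}\simeq\BC[\{C_r^{(k)}\}_{k\ge d_r+d_{r+1}+1}]\otimes_\BC\BC[\{\bar D_i^{(k)}\}_{1\le i\le r}^{k\ge 1-b_i}].
\]
Applying this inside $X_\mu(\sso_{2r})$, invoking the centrality of $\{C_r^{(k)}\}$ (already noted after~\eqref{eq:central C shifted} as a consequence of~(\ref{eY2.1})--(\ref{eY3.2}), mirroring Lemma~\ref{lem:C is central}), and matching the commutation relations between $\bar D_i^{(k)}$ and $E_i^{(k)},F_i^{(k)}$ against the defining relations of $Y_{\bar\mu}(\sso_{2r})$ through~\eqref{eq:iota-null explicitly}, I obtain both the decomposition~\eqref{eq:shifted decomposition} and the injectivity of~$\iota_\mu$.

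The main obstacle is the bookkeeping of the shifted indexing ranges: one must verify that under a suitable filtration (for instance $\deg D_i^{(k)}=k-d_i$) the transition matrix between the two generating sets is upper-triangular with $\pm 1$ entries on the diagonal, so that the new generators are genuinely algebraically independent and span $\mathcal{B}$. Once this purely combinatorial check is performed, the remainder of the argument is a direct transcription of the unshifted proof of Lemma~\ref{lem:embedding}.
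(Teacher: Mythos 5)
Your proposal is correct and follows essentially the same route as the paper: the paper likewise checks compatibility of the assignment~\eqref{eq:iota-null explicitly} with the defining relations (the only new point being~\eqref{gY1-extended} versus~\eqref{eY1-shifted}), and then obtains injectivity and the decomposition~\eqref{eq:shifted decomposition} by switching the Cartan generators from the coefficients of $\{D_i(z)\}_{i=1}^{r+1}$ to those of $C_r(z)$ together with $\{D_i(z)^{-1}D_{i+1}(z)\}_{i=1}^{r-1}\cup\{D_{r-1}(z)^{-1}D_{r+1}(z)\}$, exactly as in the proof of Lemma~\ref{lem:embedding}. Your extra remarks on the triangularity of the change of variables and on the principal part commuting with spectral-parameter translation are correct fillings-in of details the paper leaves implicit.
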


\medskip

\begin{Rem}\label{classical embedding Yangians}
For $\mu=0$, this exactly recovers~(\ref{eq:iota-null}) and Lemma~\ref{lem:embedding}.
\end{Rem}

\medskip

\begin{proof}
The proof is completely analogous to that of Lemma~\ref{lem:embedding} treating the special case $\mu=0$
(while Lemma~\ref{lem:embedding} follows from the results of~\cite{jlm1} combined with the
isomorphism~\eqref{eq:unshifted isomorphism}, let us stress right away that our proof was only using
the defining relations~(\ref{eY0})--(\ref{eY7})).

The compatibility of the assignment~\eqref{eq:iota-null explicitly} with the defining relations of $Y_{\bar{\mu}}(\sso_{2r})$
is straightforward, giving rise to a $\BC$-algebra homomorphism $\iota_\mu\colon Y_{\bar{\mu}}(\sso_{2r})\to X_\mu(\sso_{2r})$.
The injectivity of $\iota_\mu$ as well as the tensor product algebra decomposition~(\ref{eq:shifted decomposition})
are immediate after switching from the coefficients of the generating Cartan series $\{D_i(z)\}_{i=1}^{r+1}$ to
the coefficients of the central Cartan series $\{C_r(z)\}$ of~\eqref{eq:central C shifted} and the series
$\{D_i(z)^{-1}D_{i+1}(z)\}_{i=1}^{r-1}\cup\{D_{r-1}(z)^{-1}D_{r+1}(z)\}$, as in our proof of Lemma~\ref{lem:embedding}.
\end{proof}

\medskip

\begin{Cor}\label{cor:sub and quotient}
$Y_{\bar{\mu}}(\sso_{2r})$ may be realized both as a subalgebra of $X_\mu(\sso_{2r})$
via~(\ref{eq:iota-mu}) as well as a quotient of $X_\mu(\sso_{2r})$ by the central ideal
$(C_r^{(k)}-c_k)_{k>d_r+d_{r+1}}$ for any $c_k\in \BC$.
\end{Cor}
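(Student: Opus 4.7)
The proof plan is essentially a direct unpacking of Proposition~\ref{P:relating shifted yangians}. The first assertion is nothing but the embedding $\iota_\mu\colon Y_{\bar{\mu}}(\sso_{2r})\hookrightarrow X_\mu(\sso_{2r})$ of~\eqref{eq:iota-mu}, so no additional work is required there; I would simply cite the proposition.

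For the quotient assertion, I would invoke the tensor product algebra decomposition~\eqref{eq:shifted decomposition}, namely
\[
  X_\mu(\sso_{2r})\simeq Y_{\bar{\mu}}(\sso_{2r})\otimes_{\BC}\BC[\{C_r^{(k)}\}_{k\geq d_r+d_{r+1}+1}] \, .
\]
Since the elements $\{C_r^{(k)}\}_{k>d_r+d_{r+1}}$ are central in $X_\mu(\sso_{2r})$ (as noted right after~\eqref{eq:central C shifted}, this is an immediate consequence of the relations~(\ref{eY2.1})--(\ref{eY3.2}) applied in the shifted setting, following the same argument as in Lemma~\ref{L:C is central}), the two-sided ideal $I_{\unl{c}}:=(C_r^{(k)}-c_k)_{k>d_r+d_{r+1}}$ is well defined for any choice of scalars $c_k\in \BC$. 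Under the decomposition above, this ideal corresponds to $Y_{\bar{\mu}}(\sso_{2r})\otimes_\BC J_{\unl{c}}$, where $J_{\unl{c}}$ is the maximal ideal of $\BC[\{C_r^{(k)}\}_{k\geq d_r+d_{r+1}+1}]$ generated by $\{C_r^{(k)}-c_k\}_{k>d_r+d_{r+1}}$, so that $\BC[\{C_r^{(k)}\}_{k\geq d_r+d_{r+1}+1}]/J_{\unl{c}}\simeq \BC$.

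Therefore, taking the quotient yields
\[
  X_\mu(\sso_{2r})/I_{\unl{c}} \simeq Y_{\bar{\mu}}(\sso_{2r}) \otimes_\BC \BC \simeq Y_{\bar{\mu}}(\sso_{2r}) \, ,
\]
as claimed. There is essentially no obstacle beyond carefully appealing to Proposition~\ref{P:relating shifted yangians}; the only point meriting mention is that the centrality of the $C_r^{(k)}$ guarantees $I_{\unl{c}}$ is a genuine two-sided ideal, so that the quotient inherits an algebra structure and the isomorphism above is indeed an isomorphism of $\BC$-algebras.
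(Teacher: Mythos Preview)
Your proposal is correct and matches the paper's approach: the paper states this corollary with no separate proof, treating it as immediate from Proposition~\ref{P:relating shifted yangians} (the embedding~\eqref{eq:iota-mu} and the tensor product decomposition~\eqref{eq:shifted decomposition}). Your write-up simply unpacks that implication, so nothing further is needed.
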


\medskip
\noindent
Similar to Remark~\ref{rmk:center comparison} and~\cite[Lemma 2.26]{fpt}, we have:

\medskip

\begin{Lem}\label{center of shifted yangians}
(a) The center of the shifted Yangian $Y_{\nu}(\sso_{2r})$ is trivial for any $\nu\in \bar{\Lambda}$.

\medskip
\noindent
(b) The center of the shifted extended Yangian $X_\mu(\sso_{2r})$ is
$\BC[\{C_r^{(k)}\}_{k>d_r+d_{r+1}}]$ for any $\mu\in \Lambda$.
\end{Lem}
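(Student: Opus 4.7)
My plan is to deduce part (b) from part (a) using the tensor product decomposition of \refp{relating shifted yangians}, and to prove part (a) by combining the shift embeddings of~\cite{fkp} with the family of oscillator homomorphisms from~\eqref{eq:composed  maps}.

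\medskip

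\textbf{Reduction of (b) to (a).} \refp{relating shifted yangians} yields an algebra isomorphism $X_\mu(\sso_{2r})\simeq Y_{\bar\mu}(\sso_{2r})\otimes_{\BC}\BC[\{C_r^{(k)}\}_{k>d_r+d_{r+1}}]$, with the polynomial factor entirely central (as recorded immediately after~\eqref{eq:central C shifted}, by the same argument as in the proof of \refl{C is central}). Expanding any $z\in Z(X_\mu(\sso_{2r}))$ as $z=\sum_\alpha y_\alpha\otimes c_\alpha$ with the $\{c_\alpha\}$ linearly independent, the centrality condition $[y\otimes 1,z]=0$ for every $y\in Y_{\bar\mu}(\sso_{2r})$ gives $[y,y_\alpha]=0$ for each $\alpha$, and hence each $y_\alpha\in Z(Y_{\bar\mu}(\sso_{2r}))$. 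Granting (a), the $y_\alpha$ are scalars, so $z\in\BC[\{C_r^{(k)}\}_{k>d_r+d_{r+1}}]$; together with the obvious reverse inclusion this yields (b).

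\medskip

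\textbf{Sketch for (a), and main obstacle.} The case $\nu=0$ is recorded in \refr{center comparison}, via the RTT realization and the identification $ZX^{\rtt}(\sso_{2r})\simeq\BC[\{\sz_N^{(k)}\}_{k\geq 1}]$ of~\cite{amr}. For general $\nu\in\bar\Lambda$, the plan is to choose $\mu'\in\bar\Lambda$ with $\mu'\in\Lambda^+$ and $\mu'+\nu\in\Lambda^+$, invoke the shift embedding $Y_\nu(\sso_{2r})\hookrightarrow Y_{-\mu'}(\sso_{2r})$ of~\cite{fkp}, compose further with the embedding $Y_{-\mu'}(\sso_{2r})\hookrightarrow X_{-\mu'}(\sso_{2r})$ from~\eqref{eq:iota-mu}, and then evaluate the family of oscillator homomorphisms $\Psi_D\colon X_{-\mu'}(\sso_{2r})\to\CA$ from~\eqref{eq:composed  maps} (those with $D|_\infty=\mu'$), whose joint kernel is trivial by~\cite{w}. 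Any $z\in Z(Y_\nu(\sso_{2r}))$ would satisfy that its image in $\CA$ under each such composition centralizes the image of $Y_\nu(\sso_{2r})$; the task is to show that for a rich enough family of $D$, this centralizer property together with joint faithfulness forces $z$ to be scalar. The chief obstacle is the ``centralizer rigidity'' statement---that the image of $Y_\nu(\sso_{2r})$ in $\CA$ has only scalar centralizer for generic $D$---which I expect to follow from the explicit Lax-matrix formulas constructed in Section~\ref{sec Rational Lax matrices} combined with standard irreducibility arguments for the corresponding oscillator modules.
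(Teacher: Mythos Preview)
Your reduction of (b) to (a) via the tensor decomposition of \refp{relating shifted yangians} is correct and is exactly what the paper does.

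For (a), however, the paper does \emph{not} attempt a self-contained argument: it simply records that this is a general result for any semisimple $\fg$ due to~\cite{w}, with the explanation deferred to~\cite[Remark~2.81]{fpt}. Your sketch, by contrast, tries to manufacture a proof from the oscillator homomorphisms $\Psi_D$ and the faithfulness theorem, and the gap you yourself flag is genuine. Concretely: if $z\in Z(Y_\nu(\sso_{2r}))$, then after the shift embedding $Y_\nu\hookrightarrow Y_{-\mu'}$ the image of $z$ is \emph{not} central in $Y_{-\mu'}$---it only commutes with the (proper) subalgebra $Y_\nu$. Hence $\Psi_D(z)$ centralizes only the image of $Y_\nu$ in $\CA$, which is a strictly smaller subalgebra than the image of $X_{-\mu'}$. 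There is no reason, from anything developed in this paper, why that centralizer should reduce to scalars; the ``standard irreducibility arguments for oscillator modules'' you invoke are neither stated nor proved here, and making them precise would essentially require reproving the geometric input from~\cite{w} (triviality of the Poisson center of the slices, or an equivalent statement). Even granting centralizer rigidity for each $D$, you would still need to argue that the resulting scalars $c_D=\Psi_D(z)$ agree across all $D$ before the joint faithfulness of~\cite{w} lets you conclude $z\in\BC$.

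In short: your proof of (b) is complete and matches the paper; your proof of (a) is not a proof but a plan with a real hole, and the paper sidesteps this entirely by citing~\cite{w}.
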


\medskip

\begin{proof}
Part (a) is a general result which follows from~\cite{w} as explained in~\cite[Remark 2.81]{fpt}.
Part (b) follows from (a), the decomposition~(\ref{eq:shifted decomposition}), and the series $C_r(z)$ being central.
\end{proof}

\medskip


\subsubsection{Homomorphisms $\Psi_D$}
\label{sssec extended homoorphisms}
\

In this Subsection, we generalize~\cite[Theorem B.15]{bfnb} for the type $D_r$
Dynkin diagram with arrows pointing $i\to i+1$ for $1\leq i\leq r-2$ and $r\to r-2$
(see~Fig.~1), by replacing $Y_{\bar{\mu}}(\sso_{2r})$ of \emph{loc.cit.}\ with $X_\mu(\sso_{2r})$.
We closely follow the presentation of~\cite[\S2.2]{fpt} for type~$A$.

 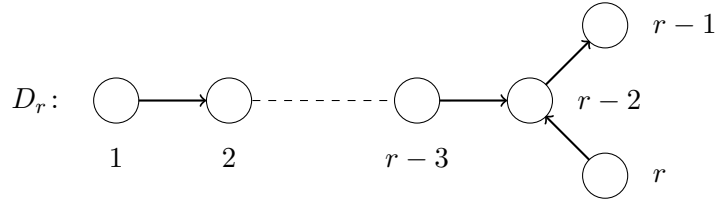
\begin{figure}[h!]
 \begin{center}
 \begin{tikzpicture}
       \draw[black,dashed] (0.1,0)--(3,0);
      \draw[black,thick,->] (-1.5,0)--(-0.3,0);
      \draw[black,thick,->] (2.5,0)--(3.7,0);
      \draw[black,thick,->] (4,0)--(4.8,0.8);
      \draw[black,thick,->] (4.8,-0.8)--(4.2,-0.2);
      \draw[fill=white] (-1.5,0) circle (0.3cm);
      \draw[fill=white] (0,0) circle (0.3cm);
      \draw[fill=white] (2.5,0) circle (0.3cm);
      \draw[fill=white] (4,0) circle (0.3cm);
      \draw[fill=white] (5,1) circle (0.3cm);
      \draw[fill=white] (5,-1) circle (0.3cm);
       \node [left] at (-2,0) {$D_r\colon$};
       \node [below] at (-1.5,-0.5) {$1$};
       \node [below] at (0,-0.5) {$2$};
       \node [below] at (2.5,-0.5) {$r-3$};
       \node [right] at (4.5,0) {$r-2$};
       \node [right] at (5.5,1) {$r-1$};
       \node [right] at (5.5,-1) {$r$};
\end{tikzpicture}
\end{center}
\caption{Oriented Dynkin diagram of type $D_r$}
\label{fig:dr}
\end{figure}

\medskip

\begin{Rem}\label{rmk:about other orientations}
While similar generalizations exist for all orientations of $D_r$ Dynkin diagram,
it suffices to consider only the above one for the purposes of this paper,
see Remark~\ref{rmk:other orientations}.
\end{Rem}

\medskip
\noindent
An element $\lambda\in \Lambda$ will be called \emph{dominant}, denoted by $\lambda\in \Lambda^+$,
if the corresponding $\sso_{2r}$--coweight $\bar{\lambda}$~\eqref{non-ext coweight from ext}
is dominant: $\bar{\lambda}\in \bar{\Lambda}^+$. Thus, $\sum_{i=0}^{r} c_i\varpi_i$ is dominant iff
$c_i\in \BN$ for $1\leq i\leq r$.

\medskip
\noindent
A \emph{$\Lambda$-valued divisor $D$ on $\BP^1$, $\Lambda^+$-valued outside $\{\infty\}\in \BP^1$},
is a formal sum:
\begin{equation}\label{eq:divisor def1}
  D\ =\sum_{1\leq s\leq N} \gamma_s\varpi_{i_s} [x_s] + \mu [\infty]
\end{equation}
with
  $N\in \BN,\, 0\leq i_s\leq r,\, x_s\in \BC,\,
   \gamma_s=\begin{cases}
     1 & \text{if } i_s\ne 0 \\
     \pm 1 & \text{if } i_s=0
   \end{cases}\, ,$
and $\mu\in \Lambda$. We will write
\begin{equation}\label{eq:mu from divisor}
  \mu=D|_\infty\, .
\end{equation}
If $\mu\in \Lambda^+$, we call $D$ a \emph{$\Lambda^+$-valued divisor on $\BP^1$}.
It will be convenient to present $D$ also as:
\begin{equation}\label{eq:divisor def2}
  D\ =\sum_{x\in \BP^1\backslash\{\infty\}} \lambda_x [x] + \mu [\infty]
  \quad \mathrm{with}\ \lambda_x\in \Lambda^+ \,,
\end{equation}
related to~(\ref{eq:divisor def1}) via
  $\lambda_x=D|_x:=\sum_{1\leq s\leq N}^{x_s=x} \gamma_s\varpi_{i_s}$.
Define $\lambda\in \Lambda^+$ via:
\begin{equation}\label{eq:lambda from divisor}
  \lambda\, :=\sum_{1\leq s \leq N} \gamma_s\varpi_{i_s}\ =\sum_{x\in \BP^1\backslash\{\infty\}}D|_x \, .
\end{equation}

\medskip
\noindent
Let  $\{\alpha_i\}_{i=1}^r\subset \bar{\Lambda}$ denote the simple coroots of $\sso_{2r}$,
explicitly given by:
\begin{equation}\label{eq:coroots D}
  \alpha_1=\epsilon_1-\epsilon_2 \, ,\ \ldots \, ,\
  \alpha_{r-2}=\epsilon_{r-2}-\epsilon_{r-1} \, ,\
  \alpha_{r-1}=\epsilon_{r-1}-\epsilon_r \, ,\ \alpha_r=\epsilon_{r-1}+\epsilon_r \, .
\end{equation}
We also consider the following family of elements $\{\hat{\alpha}_i\}_{i=1}^r\subset \Lambda$ given by:
\begin{equation}\label{eq:lifted coroots D}
  \hat{\alpha}_1=\epsilon_1-\epsilon_2 \, ,\ \ldots \, ,\
  \hat{\alpha}_{r-2}=\epsilon_{r-2}-\epsilon_{r-1}\, ,\
  \hat{\alpha}_{r-1}=\epsilon_{r-1}-\epsilon_r+\epsilon_{r+1} \, ,\
  \hat{\alpha}_r=\epsilon_{r-1}+\epsilon_r-\epsilon_{r+1} \, ,
\end{equation}
which are the ``lifts'' of $\alpha_i$ from~(\ref{eq:coroots D}) in the sense
of~\eqref{non-ext coweight from ext}, that is:
\begin{equation}\label{eq:lift}
  \bar{\hat{\alpha}}_i=\alpha_i\qquad \mathrm{for}\ 1\leq i\leq r \, .
\end{equation}

\medskip
\noindent
Following~\cite{bfnb}, we make the following
\begin{equation}\label{eq:assumption}
  \textbf{Assumption}:\qquad
  \lambda+\mu=a_1\hat{\alpha}_1+\ldots+a_{r}\hat{\alpha}_{r}\quad \mathrm{with}\ a_i\in \BN \, .
\end{equation}
Let us record the explicit formulas for the coefficients $a_i$ of~\eqref{eq:assumption}:
\begin{equation}\label{eq:a explicitly}
\begin{split}
   & a_k = (\epsilon^\vee_1+\ldots+\epsilon^\vee_k)(\lambda+\mu) \qquad \mathrm{for}\quad 1\leq k\leq r-2 \, , \\
   & a_{r-1} = \frac{(\epsilon^\vee_1+\ldots+\epsilon^\vee_{r-1} - \epsilon^\vee_r)(\lambda+\mu)}{2} \, , \\
   & a_{r} = \frac{(\epsilon^\vee_1+\ldots+\epsilon^\vee_{r-1} + \epsilon^\vee_r)(\lambda+\mu)}{2} \, .
\end{split}
\end{equation}

\medskip

\begin{Rem}\label{rmk:assumption explicitly}
Note that $D$ of~\eqref{eq:divisor def1} satisfies the assumption~(\ref{eq:assumption})
if and only if all quantities in the right-hand sides of~\eqref{eq:a explicitly} are
non-negative integers and $(\epsilon^\vee_{r}+\epsilon^\vee_{r+1})(\lambda+\mu)=0$.
\end{Rem}

\medskip
\noindent
Consider the associative $\BC$-algebra
\begin{equation}\label{algebra A}
   \CA = \BC\, \Big\langle p_{i,k}, e^{\pm q_{i,k}}, (p_{i,k}-p_{i,\ell}+m)^{-1}\Big\rangle_{1\leq i\leq r, m\in \BZ}^{1\leq k\ne \ell\leq a_i}
\end{equation}
with the defining relations:
\begin{equation*}
  [e^{\pm q_{i,k}},p_{j,\ell}]=\mp \delta_{i,j}\delta_{k,\ell} e^{\pm q_{i,k}} \, ,\quad
  [p_{i,k},p_{j,\ell}]=0=[e^{q_{i,k}},e^{q_{j,\ell}}] \, ,\quad
  e^{\pm q_{i,k}} e^{\mp q_{i,k}}=1 \, .
\end{equation*}

\medskip

\begin{Rem}\label{additive difference operators}
(a) This algebra $\CA$ can be represented in the algebra of difference operators
with rational coefficients on functions of $\{p_{i,k}\}_{1\leq i\leq r}^{1\leq k\leq a_i}$
by taking $e^{\mp q_{i,k}}$ to be a difference operator $\mathsf{D}^{\pm 1}_{i,k}$ that acts as
  $(\mathsf{D}^{\pm 1}_{i,k} \mathsf{\Psi})(p_{1,1},\ldots,p_{i,k},\ldots, p_{r,a_{r}}) =
   \mathsf{\Psi}(p_{1,1},\ldots,p_{i,k}\pm 1, \ldots, p_{r,a_{r}})$.

\medskip
\noindent
(b) The \emph{total number of pairs of $(p,q)$-oscillators in the algebra~$\CA$}
will refer to the sum $\sum_{i=1}^r a_i$.
\end{Rem}

\medskip
\noindent
For $0\leq i\leq r$ and $1\leq j\leq r$, we define:
\begin{equation}\label{ZW-series}
\begin{split}
  & P_j(z):=\prod_{k=1}^{a_j} (z-p_{j,k}) \, ,\quad
    P_{j,\ell}(z)\, :=\prod_{1\leq k\leq a_j}^{k\ne \ell} (z-p_{j,k}) \, ,\\
  & Z_i(z):=\prod_{1\leq s\leq N}^{i_s=i} (z-x_s)^{\gamma_s} \ =
    \prod_{x\in \BP^1\backslash\{\infty\}} (z-x)^{\wt{\alpha}^\vee_i(\lambda_x)} \, ,
\end{split}
\end{equation}
where $\{\wt{\alpha}^\vee_i\}_{i=0}^r$ is a $\BZ$-basis of $\Lambda^\vee$
dual to the $\BZ$-basis $\{\varpi_i\}_{i=0}^r$ of $\Lambda$.
Explicitly, we have:
\begin{equation}\label{eq:tildealphas}
  \wt{\alpha}^\vee_0=-\epsilon^\vee_1 \, ,\
  \wt{\alpha}^\vee_1=\epsilon^\vee_1-\epsilon^\vee_2 \, ,\
  \wt{\alpha}^\vee_2=\epsilon^\vee_2-\epsilon^\vee_3 \, ,\ \ldots \, ,\
  \wt{\alpha}^\vee_{r-1}=\epsilon^\vee_{r-1}-\epsilon^\vee_r \, ,\
  \wt{\alpha}^\vee_{r}=\epsilon^\vee_{r-1}-\epsilon^\vee_{r+1} \, .
\end{equation}
We also set:
\begin{equation}\label{eq:aP conventions}
  a_0:=0 \, ,\quad a_{r+1}:=0 \, ,\quad P_0(z):=1\, ,\quad P_{r+1}(z):=1 \, .
\end{equation}

\medskip
\noindent
The following result generalizes the $D_r$-case of~\cite[Theorem B.15]{bfnb}
stated for semisimple Lie algebras $\fg$ (preceded by~\cite{gklo} for the
trivial shift and by~\cite{kwwy} for dominant shifts):

\medskip

\begin{Thm}\label{thm:extended homom D}
Let $D$ be as in~\eqref{eq:divisor def1}, satisfying the assumption~\eqref{eq:assumption},
and set $\mu=D|_\infty$. There is a unique $\BC$-algebra homomorphism
\begin{equation}\label{eq:homom psi}
  \Psi_D\colon X_{-\mu}(\sso_{2r})\longrightarrow \CA \, ,
\end{equation}
determined by the following assignment:
\begin{equation}\label{eq:homom assignment}
\begin{split}
   & E_i(z)\mapsto
     \begin{cases}
       \sum_{k=1}^{a_i}\frac{P_{i-1}(p_{i,k}-1)}{(z-p_{i,k})P_{i,k}(p_{i,k})} e^{q_{i,k}} & \mbox{if\, } i\leq r-3 \\
       \sum_{k=1}^{a_{r-2}}\frac{P_{r-3}(p_{r-2,k}-1) P_{r}(p_{r-2,k})}{(z-p_{r-2,k})P_{r-2,k}(p_{{r-2},k})} e^{q_{r-2,k}} & \mbox{if\, } i=r-2 \\
       \sum_{k=1}^{a_{r-1}}\frac{P_{{r-2}}(p_{{r-1},k}-1)}{(z-p_{{r-1},k})P_{{r-1},k}(p_{{r-1},k})} e^{q_{r-1,k}} & \mbox{if\, } i=r-1 \\
       \sum_{k=1}^{a_{r}}\frac{1}{(z-p_{{r},k})P_{{r},k}(p_{{r},k})} e^{q_{r,k}} & \mbox{if\, } i=r
     \end{cases} \, , \\
   & F_i(z)\mapsto
     \begin{cases}
       -\sum_{k=1}^{a_i}\frac{Z_i(p_{i,k}+1)P_{i+1}(p_{i,k}+1)}{(z-p_{i,k}-1)P_{i,k}(p_{i,k})} e^{-q_{i,k}} & \mbox{if\, } i\leq r-2 \\
       -\sum_{k=1}^{a_{r-1}}\frac{Z_{r-1}(p_{{r-1},k}+1)}{(z-p_{{r-1},k}-1)P_{{r-1},k}(p_{{r-1},k})} e^{-q_{r-1,k}} & \mbox{if\, } i=r-1 \\
       -\sum_{k=1}^{a_{r}}\frac{Z_{r}(p_{{r},k}+1)P_{r-2}(p_{r,k})}{(z-p_{{r},k}-1)P_{{r},k}(p_{{r},k})} e^{-q_{r,k}} & \mbox{if\, } i=r
     \end{cases} \, , \\
   & D_i(z)\mapsto
     \begin{cases}
       \frac{P_i(z)}{P_{i-1}(z-1)} \cdot \prod_{k=0}^{i-1} Z_k(z) & \mbox{if\, } i\leq r-2 \\
       \frac{P_{r-1}(z)P_r(z)}{P_{r-2}(z-1)} \cdot \prod_{k=0}^{r-2} Z_k(z) & \mbox{if\, } i=r-1 \\
       \frac{P_r(z)}{P_{r-1}(z-1)} \cdot \prod_{k=0}^{r-1} Z_k(z) & \mbox{if\, } i=r \\
       \frac{P_{r-1}(z)}{P_{r}(z-1)} \cdot \prod_{k=0}^{r-2} Z_k(z)\cdot Z_r(z) & \mbox{if\, } i=r+1
     \end{cases} \quad =\\
   & \qquad \quad \prod_{x\in \BP^1\backslash\{\infty\}}(z-x)^{-\epsilon^\vee_i(\lambda_x)} \cdot
     \begin{cases}
       \frac{P_i(z)}{P_{i-1}(z-1)} & \mbox{if\, } i\leq r-2 \\
       \frac{P_{r-1}(z)P_r(z)}{P_{r-2}(z-1)} & \mbox{if\, } i=r-1 \\
       \frac{P_r(z)}{P_{r-1}(z-1)} & \mbox{if\, } i=r \\
       \frac{P_{r-1}(z)}{P_{r}(z-1)} & \mbox{if\, } i=r+1
     \end{cases} \, .
\end{split}
\end{equation}
\end{Thm}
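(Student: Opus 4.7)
The plan is to verify that the assignment~(\ref{eq:homom assignment}) respects each defining relation of $X_{-\mu}(\sso_{2r})$; uniqueness is immediate since the formulas are specified on generators. The existence verification splits naturally into two groups of relations.

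\textbf{Reduction via the non-extended shifted Yangian.} Relations~(\ref{eY1-shifted}), (\ref{eY4.1})--(\ref{eY7}), together with the consequences of (\ref{eY2.1})--(\ref{eY3.2}) involving only the ratios $K_i(z)$ of~(\ref{eq:shifted extended DK generating series}), correspond precisely to the defining relations of the shifted Drinfeld Yangian $Y_{-\bar\mu}(\sso_{2r})$ under the embedding $\iota_{-\mu}$ of Proposition~\ref{prop:relating shifted yangians}. The first step is to compute the composition $\Psi_D\circ \iota_{-\mu}$ on $\sE_i(z),\sF_i(z),\sH_i(z)$ using (\ref{eq:iota-null explicitly}) and (\ref{eq:homom assignment}), and to match the resulting formulas with the GKLO-type homomorphism $\Phi^{\lambda,\underline{x}}_{-\bar\mu}\colon Y_{-\bar\mu}(\sso_{2r})\to\CA$ of~\cite[Theorem~B.15]{bfnb}. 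The spectral shifts $z+\tfrac{i-1}{2}$ (and $z+\tfrac{r-2}{2}$ for $i=r$) built into~(\ref{eq:iota-null explicitly}) are precisely engineered to compensate the asymmetric shifts $P_{i-1}(z-1)$ versus $P_i(z)$ in~(\ref{eq:homom assignment}), yielding the standard rational form of the GKLO formulas after a routine computation. All ``non-extended'' relations then follow.

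\textbf{Direct verification of the extended relations.} It remains to check (\ref{eY0}) and the individual-$D_i$ relations (\ref{eY2.1})--(\ref{eY3.2}). Relation~(\ref{eY0}) is immediate, since every $\Psi_D(D_i(z))$ lies in the commutative subalgebra generated by $\{p_{j,k}\}$. For (\ref{eY2.1})--(\ref{eY3.2}), the central tool is the shift identity
\begin{equation*}
  f(p_{j,k})\,e^{\pm q_{j,k}} \,=\, e^{\pm q_{j,k}}\,f(p_{j,k}\pm 1),
\end{equation*}
which converts $[\Psi_D(D_i(z)),\Psi_D(E_j(w))]$ into a sum over $k$ of rational functions in $z,w,\{p_{j,\ell}\}$ multiplied by $e^{q_{j,k}}$. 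Reading off the residue at $z=p_{j,k}$ of $\Psi_D(D_i(z))$ and comparing it with the residue of $\Psi_D(E_j(z))$ at the same pole identifies the sum with the required right-hand side, the pairing $(\epsilon^\vee_i,\alphavee_j)$ emerging naturally as the signed multiplicity with which $p_{j,k}$ appears inside $\Psi_D(D_i(z))$. The $F$-side is entirely parallel, and all the $Z_k(z)$ factors simply pass through the computation as central multipliers.

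\textbf{Main obstacle.} The most delicate bookkeeping arises at the spinor nodes: $i\in\{r-1,r,r+1\}$ paired with $j\in\{r-1,r\}$. There the formulas (\ref{eq:homom assignment}) mix $P_{r-1}(z)$ and $P_r(z)$ in ``crossed'' ways---$D_{r-1}$ carries the product $P_{r-1}(z)P_r(z)$ while $D_r$ and $D_{r+1}$ swap their roles of $P_{r-1}$ vs.\ $P_r$---so the piecewise right-hand sides of (\ref{eY2.2}) and (\ref{eY3.2}) must be matched case by case. In particular, the non-vanishing of $[\Psi_D(D_{r+1}(z)),\Psi_D(E_{r-1}(w))]$ with coefficient $+1$ (rather than a weight pairing) arises from the presence of the factor $P_{r-1}(z)$ in $\Psi_D(D_{r+1}(z))$, and pinning down its exact coefficient and sign requires a careful residue computation that distinguishes it from the $[\Psi_D(D_{r+1}(z)),\Psi_D(E_r(w))]$ case. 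All remaining cases reduce to the ``classical'' pattern already handled in the type $A$ analogue of \cite[\S2.2]{fpt}.
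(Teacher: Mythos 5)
Your overall strategy---reduce the relations of $Y_{-\bar{\mu}}(\sso_{2r})$ to the GKLO-type homomorphism of~\cite[Theorem B.15]{bfnb} and treat the extended Cartan relations separately---is workable and close in spirit to the paper's proof, but it has one genuine omission. Nowhere do you verify that $\Psi_D(D_i(z))$ has leading term $z^{\epsilon^\vee_i(\mu)}$ with coefficient exactly $1$, i.e.\ that the assignment is even compatible with the normalization $D_i^{(d_i)}=1$ built into the definition of the shifted Cartan generators in~\eqref{eq:shifted extended DK generating series}. This is not a formality: it is precisely the step where the assumption~\eqref{eq:assumption} (and the specific choice of the lifts $\hat{\alpha}_i$ in~\eqref{eq:lifted coroots D}) is used, via the degree bookkeeping \eqref{eq:degrees 1}--\eqref{eq:degrees match}, and your proposal never invokes that hypothesis at all. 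A proof that does not use~\eqref{eq:assumption} cannot be complete, since without it the numbers $a_i$ need not be non-negative integers and the Cartan images would have the wrong leading power of $z$.

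On the extended relations your route genuinely diverges from the paper's, at a cost. You propose to check the individual relations \eqref{eY2.1}--\eqref{eY3.2} by residue computations, which is feasible but forces exactly the delicate spinor-node case analysis you flag as the ``main obstacle.'' The paper sidesteps this entirely: by the change of Cartan generators from $\{D_i(z)\}$ to $\{K_i(z)\}\cup\{C_r(z)\}$ underlying the decomposition~\eqref{eq:shifted decomposition}, the relations \eqref{eY0}--\eqref{eY3.2} are equivalent to the $Y_{-\bar{\mu}}$-relations for the $K_i$'s (already covered by~\cite{bfnb}) together with the centrality of $C_r(z)$; and $\Psi_D(C_r(z))$ is the scalar-valued series~\eqref{image of C-series}, so its centrality is immediate. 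Finally, note that the match with~\cite[Theorem B.15]{bfnb} is not on the nose: as recorded in Remark~\ref{rmk:relating to BFNb homom}, the signs of all $E_i(z),F_i(z)$ are reversed and the $\sZ_i$-factors are moved from the $E$-currents to the $F$-currents, so one should justify why these modifications still yield a homomorphism (cf.~\cite[Remark C.3]{ft1}) rather than absorb them into ``a routine computation.''
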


\medskip

\begin{Rem}\label{rmk:relating to BFNb homom}
To compare this with~\cite[\S B(ii)]{bfnb}, let us identify $\CA$ with $\tilde{\CA}$ of~\emph{loc.cit.}
and the points $x_s$ with the parameters $z_s$ of~\emph{loc.cit.}\ (assigned to the summands of
$\bar{\lambda} = \sum_{1\leq s\leq N}^{i_s\ne 0} \omega_{i_s}$)~via:
\begin{equation}\label{eq:pq vs wu}
\begin{split}
  & p_{i,k}\leftrightarrow
    \begin{cases}
      w_{i,k}+\frac{i-1}{2} & \mbox{if } i<r\\
      w_{r,k}+\frac{r-2}{2} & \mbox{if } i=r
    \end{cases} \, , \\
  & e^{\pm q_{i,k}}\leftrightarrow \sfu_{i,k}^{\mp 1} \, , \\
  & x_s \leftrightarrow
    \begin{cases}
      z_s+\frac{i_s}{2} & \mbox{if } 1\leq i_s<r \\
      z_s+\frac{r-1}{2} & \mbox{if } i_s=r
    \end{cases} \, .
\end{split}
\end{equation}
Then, the (restriction) composition
\begin{equation}\label{eq:BFN via ours}
  Y_{-\bar{\mu}}(\sso_{2r})\xrightarrow{\iota_{-\mu}} X_{-\mu}(\sso_{2r}) \xrightarrow{\Psi_D} \CA
\end{equation}
is explicitly given by:
\begin{equation}\label{eq:BFN formulas}
\begin{split}
  & \sE_i(z) \, \mapsto \,
    \sum_{k=1}^{a_i}\frac{\prod_{h\in Q:\mathrm{i}(h)=i}W_{\mathrm{o}(h)}(w_{i,k}-\frac{1}{2})}{(z-w_{i,k})W_{i,k}(w_{i,k})} \sfu^{-1}_{i,k} \, , \\
  & \sF_i(z) \, \mapsto \,
    -\sum_{k=1}^{a_i}\frac{\sZ_i(w_{i,k}+1)\prod_{h\in Q:\mathrm{o}(h)=i}W_{\mathrm{i}(h)}(w_{i,k}+\frac{1}{2})}{(z-w_{i,k}-1)W_{i,k}(w_{i,k})} \sfu_{i,k} \, , \\
  & \sH_i(z) \, \mapsto \,
    \frac{\sZ_i(z)\prod_{h\in Q\cup \bar Q:\mathrm{o}(h)=i} W_{\mathrm{i}(h)}(z-\frac{1}{2})}{W_i(z)W_{i}(z-1)} \, ,
\end{split}
\end{equation}
where $Q$ (resp.\ $\bar{Q}$) denotes the set of oriented (resp.\ oppositely oriented) edges
of the Dynkin diagram from Fig.~1, the notation $\mathrm{i}(h)=i$ (resp.\ $\mathrm{o}(h)=i$) for
an edge $h\in Q$ (or $h\in Q\cup \bar{Q}$) is to indicate that $h$ points towards (resp.\ away from)
the $i$-th node, and the generating series in~\eqref{eq:BFN formulas} are defined via:
\begin{equation*}
  W_{i}(z)=\prod_{k=1}^{a_i}(z-w_{i,k})\, ,\
  W_{i,\ell}(z)\, =\prod_{1\leq k\leq a_i}^{k\ne \ell}(z-w_{i,k}) \, ,\
  \sZ_i(z)\, =\prod_{1\leq s\leq N}^{i_s=i} \left(z-z_s-\tfrac{1}{2}\right) \, .
\end{equation*}
Thus, the composition
\begin{equation*}
  \Psi_D\circ \iota_{-\mu}\colon Y_{-\bar{\mu}}(\sso_{2r})\longrightarrow \CA
\end{equation*}
essentially coincides with the version of the homomorphism $\Phi^{\bar{\lambda}}_{-\bar{\mu}}$
of~\cite[Theorem~B.15]{bfnb}, where the signs of all $E_i(z)$ and $F_i(z)$ are reversed, and the
$\sZ_i(w_{i,k})$-factors in $E_i(z)$-currents are now replaced with the $\sZ_i(w_{i,k}+1)$-factors
in $F_i(z)$-currents, cf.~\cite[Remark C.3]{ft1}.
\end{Rem}

\medskip

\begin{proof}[Proof of Theorem~\ref{thm:extended homom D}]
First, let us verify that under the above assignment~(\ref{eq:homom assignment}),
the image of $D_i(z)$ is of the form $z^{d_i}+(\mathrm{lower\ order\ terms\ in}\ z)$
for all $1\leq i\leq r+1$. Let $\deg_i$ denote the leading power of $z$ in the image
of $D_i(z)$ (clearly the coefficient of $z^{\deg_i}$ equals $1$). Then, we have:
\begin{equation}\label{eq:degrees 1}
  \deg_i \ = \, -\sum_{x\in \BP^1\backslash\{\infty\}} \epsilon^\vee_i(\lambda_x) \, +\,
  \begin{cases}
    a_i-a_{i-1} & \mbox{if } i\ne r\pm 1 \\
    a_{r-1}+a_r-a_{r-2} & \mbox{if } i=r-1 \\
    a_{r-1}-a_r & \mbox{if } i=r+1
  \end{cases} \, .
\end{equation}
Note that
  $\sum_{x\in \BP^1\backslash\{\infty\}} \lambda_x +\mu=\lambda + \mu =
   a_1\hat{\alpha}_1 + \ldots +a_r\hat{\alpha}_r$~\eqref{eq:assumption},
so that:
\begin{equation}\label{eq:degrees 2}
  \sum_{x\in \BP^1\backslash\{\infty\}} \epsilon^\vee_i(\lambda_x) \, +\,  \epsilon^\vee_i(\mu)=
  \epsilon^\vee_i(a_1\hat{\alpha}_1 + \ldots +a_r\hat{\alpha}_r)=
  \begin{cases}
    a_i-a_{i-1} & \mbox{if } i\ne r\pm 1 \\
    a_{r-1}+a_r-a_{r-2} & \mbox{if } i=r-1 \\
    a_{r-1}-a_r & \mbox{if } i=r+1
  \end{cases} \, .
\end{equation}
Combining~(\ref{eq:degrees 1},~\ref{eq:degrees 2}), we thus obtain the desired equality:
\begin{equation}\label{eq:degrees match}
  \deg_i=\epsilon^\vee_i(\mu)=d_i \, .
\end{equation}

\medskip
\noindent
Evoking the algebra decomposition~(\ref{eq:shifted decomposition})
\begin{equation*}
  X_{-\mu}(\sso_{2r})\simeq Y_{-\bar{\mu}}(\sso_{2r})\otimes_{\BC} \BC[\{C_r^{(k)}\}_{k> -d_r-d_{r+1}}] \, ,
\end{equation*}
it suffices to prove that the restrictions of the assignment~(\ref{eq:homom assignment})
to the subalgebras $Y_{-\bar{\mu}}(\sso_{2r})$ and $\BC[\{C^{(k)}_r\}_{k>-d_r-d_{r+1}}]$ determine
algebra homomorphisms, whose images commute. The former is clear for the restriction to
$Y_{-\bar{\mu}}(\sso_{2r})$, due to Theorem B.15 of~\cite{bfnb} combined with
Remark~\ref{rmk:relating to BFNb homom} above. On the other hand, we have:
\begin{equation}\label{image of C-series}
  \Psi_D(C_r(z)) \, =\, \prod_{i=0}^{r-2} \Big(Z_i(z)Z_i(z+i-r+1)\Big)\cdot Z_{r-1}(z)Z_r(z) \, .
\end{equation}
Thus, the restriction of $\Psi_D$ to the polynomial algebra $\BC[\{C^{(k)}_r\}_{k>-d_r-d_{r+1}}]$
defines an algebra homomorphism, whose image is central in $\CA$.
This completes our proof of Theorem~\ref{thm:extended homom D}.
\end{proof}

\medskip

\begin{Rem}
Our choice of $\hat{\alpha}_i\in \Lambda$ in~\eqref{eq:lifted coroots D} ``lifting''
$\alpha_i\in \bar{\Lambda}$ of~\eqref{eq:coroots D} in the sense of~\eqref{eq:lift}
is exactly to guarantee the equality~\eqref{eq:degrees 2}; moreover, the latter
determines $\hat{\alpha}_i$ uniquely.
\end{Rem}

\medskip


\subsubsection{Antidominantly shifted extended RTT Yangians of $\sso_{2r}$}
\label{sssec RTT extended Yangians D}
\

Fix $\mu\in \Lambda^+$. Define the \emph{antidominantly shifted extended RTT Yangian of $\sso_{2r}$},
denoted by $X^\rtt_{-\mu}(\sso_{2r})$, to be the associative $\BC$-algebra generated by
$\{t^{(k)}_{ij}\}_{1\leq i,j\leq 2r}^{k\in \BZ}$ subject to the following two families of relations:

\medskip
\noindent
$\bullet$
The RTT relation~\eqref{eq:rtt} with
  $T(z)\in X^\rtt_{-\mu}(\sso_{2r})[[z,z^{-1}]]\otimes_\BC \End\ \BC^{2r}$
defined via:
\begin{equation}\label{eq:shifted T-matrix}
  T(z)=\sum_{i,j} t_{ij}(z)\otimes E_{ij}\qquad \mathrm{with}\qquad
  t_{ij}(z):=\sum_{k\in \BZ} t^{(k)}_{ij}z^{-k} \, .
\end{equation}

\medskip
\noindent
$\bullet$
The second family of relations encodes the fact that $T(z)$ admits the Gauss decomposition:
\begin{equation}\label{eq:shifted Gauss product}
  T(z)=F(z)\cdot H(z)\cdot E(z) \, ,
\end{equation}
where
  $F(z),H(z),E(z)\in X^\rtt_{-\mu}(\sso_{2r})((z^{-1}))\otimes_\BC \End\ \BC^{2r}$
are of the form
\begin{equation*}
  F(z)=\sum_{i} E_{ii}+\sum_{i<j} f_{j,i}(z)\otimes E_{ji} \, ,\
  H(z)=\sum_{i} h_i(z)\otimes E_{ii} \, ,\
  E(z)=\sum_{i} E_{ii}+\sum_{i<j} e_{i,j}(z)\otimes E_{ij}
\end{equation*}
with the matrix coefficients having the following expansions in $z$:
\begin{equation}\label{eq:t-modes shifted}
\begin{split}
  & e_{i,j}(z)=\sum_{k\geq 1} e^{(k)}_{i,j}z^{-k} \, , \quad
    f_{j,i}(z)=\sum_{k\geq 1} f^{(k)}_{j,i}z^{-k} \, \quad \mathrm{for}\ 1\leq i<j\leq 2r \, ,\\
  & h_i(z)=z^{d_i}\, + \sum_{k\geq 1-d_i} h^{(k)}_i z^{-k} \, ,\quad
    h_{i'}(z)=z^{d'_i}\, + \sum_{k\geq 1-d'_i} h^{(k)}_{i'} z^{-k} \, \quad \mathrm{for}\ 1\leq i\leq r \, ,
\end{split}
\end{equation}
with $i'=2r+1-i$ as in~\eqref{eq:N,kappa,prime} and $d'_i\in \BZ$ defined via:
\begin{equation}\label{eq:d-prime}
  d'_i:=d_r+d_{r+1}-d_i \qquad \mathrm{for}\ 1\leq i\leq r \, .
\end{equation}
Note that $d'_r=d_{r+1}$. We also note that $\mu\in \Lambda^+$ implies the following inequalities:
\begin{equation}\label{eq:d-inequalities}
  d_1\geq d_2\geq \dots\geq d_{r-1}\geq \max\{d_{r},d'_{r}\}\geq
  \min\{d_{r},d'_{r}\} \geq d'_{r-1}\geq \dots \geq d'_1 \, .
\end{equation}

\medskip

\begin{Rem}\label{rmk:zero shift}
(a) For $\mu=0$, the second family of relations~(\ref{eq:shifted Gauss product},~\ref{eq:t-modes shifted})
is equivalent to the relations $t_{ij}^{(k)}=0$ for $k<0$ and $t_{ij}^{(0)}=\delta_{i,j}$, so that
$X^\rtt_0(\sso_{2r})\simeq X^\rtt(\sso_{2r})$.

\medskip
\noindent
(b) If $\mu_1,\mu_2\in \Lambda^+$ satisfy $\bar{\mu}_1=\bar{\mu}_2\in \bar{\Lambda}$, that is,
$\mu_2=\mu_1+c\varpi_0$ with $c\in \BZ$, then the assignment
$$T(z)\mapsto z^{c}T(z)$$
gives rise to a $\BC$-algebra isomorphism $X^\rtt_{-\mu_1}(\sso_{2r})\iso X^\rtt_{-\mu_2}(\sso_{2r})$,
cf.\ Lemma~\ref{identifying extended Yangians}.
\end{Rem}

\medskip
\noindent
Similar to the $\mu=0$ case, $X^\rtt_{-\mu}(\sso_{2r})$ is generated by
\begin{equation}\label{eq:rtt generators}
  e_{i,i+1}^{(k)}\, ,\ e_{r-1,r+1}^{(k)}\, ,\
  f_{i+1,i}^{(k)} \, ,\ f_{r+1,r-1}^{(k)} \, ,\
  h_j^{(s_j)}
\end{equation}
for all $1\leq i\leq r-1,\, 1\leq j\leq r+1,\, k\geq 1,\, s_j\geq 1-d_j$. Furthermore,
all the other generators $e_{i,j}^{(k)}, f_{j,i}^{(k)}, h_i^{(k)}$ of~\eqref{eq:t-modes shifted}
are expressed via~\eqref{eq:rtt generators} by exactly the same formulas as in the $\mu=0$ case,
treated in details in Subsection~\ref{sssec: Drinfeld-to-RTT-D}.
This immediately implies the following result:

\medskip

\begin{Prop}\label{prop:epimorphism of shifted Yangians}
For any $\mu\in \Lambda^+$, there is a unique $\BC$-algebra epimorphism
\begin{equation*}
  \Upsilon_{-\mu}\colon X_{-\mu}(\sso_{2r})\twoheadrightarrow X^\rtt_{-\mu}(\sso_{2r})
\end{equation*}
defined by the formulas~(\ref{eq:explicit identification 1},~\ref{eq:explicit identification 2}).
\end{Prop}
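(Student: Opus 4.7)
The plan is to verify that the assignment in \eqref{eq:explicit identification 1}--\eqref{eq:explicit identification 2} defines a well-defined algebra homomorphism $\Upsilon_{-\mu}$ and then to establish its surjectivity; both will reduce essentially to the unshifted case handled by Theorem~\ref{thm:Dr=RTT-unshifted-D}. Uniqueness is automatic since the formulas specify the images on the generating set of $X_{-\mu}(\sso_{2r})$.

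For well-definedness, I would check that the images of $E_i(z), F_i(z), D_i(z)$ satisfy all defining relations of $X_{-\mu}(\sso_{2r})$. The relations \eqref{eY0} and \eqref{eY2.1}--\eqref{eY7} have exactly the same form in $X_{-\mu}(\sso_{2r})$ as in $X(\sso_{2r})$, and their verification in $X^\rtt(\sso_{2r})$ in \cite{jlm1} consists of purely algebraic manipulations from the RTT relation \eqref{eq:rtt explicit} together with the Gauss decomposition \eqref{eq:Gauss-D}. Since \eqref{eq:rtt} and the Gauss factorization \eqref{eq:shifted Gauss product} are imposed as defining relations of $X^\rtt_{-\mu}(\sso_{2r})$, and since those arguments never use the specific form of the $z^{-1}$-expansion of $h_i(z)$ at infinity (only its invertibility in the appropriate localization, which still holds thanks to the monic leading term $z^{d_i}$), the derivations transfer verbatim. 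For the shifted commutator relation \eqref{eY1-shifted}, the same RTT manipulation produces an identity whose right-hand side is $-\delta_{i,j}(K_i(z)-K_i(w))/(z-w)$, with $K_i(z)$ defined by \eqref{eq:shifted extended DK generating series}. Because $e_i(z)$ and $f_j(w)$ are power series in $z^{-1}$ and $w^{-1}$ with no constant terms by \eqref{eq:t-modes shifted}, their commutator cannot contain any contribution from the non-principal part of $K_i(z)-K_i(w)$, so the identity automatically reduces to \eqref{eY1-shifted} with the truncation $\underline{K_i(z)}$.

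For surjectivity, the distinguished generators of $X^\rtt_{-\mu}(\sso_{2r})$ listed in \eqref{eq:rtt generators} are direct images: $e_{i,i+1}^{(k)}, e_{r-1,r+1}^{(k)}, f_{i+1,i}^{(k)}, f_{r+1,r-1}^{(k)}$ are coefficients of $\Upsilon_{-\mu}(E_i(z))$ and $\Upsilon_{-\mu}(F_i(z))$, while $h_j^{(s_j)}$ for $s_j \geq 1-d_j$ come from the non-leading coefficients of $\Upsilon_{-\mu}(D_j(z))$. All remaining matrix entries of $E(z), F(z), H(z)$ are polynomial expressions in these generators via Lemmas~\ref{lem:all-H}, \ref{lem:all-E-known}, \ref{lem:all-E-new}, \ref{lem:all-F-known}, \ref{lem:all-F-new}, whose derivations again use only the RTT relation and the Gauss factorization and so carry over to the shifted setting. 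Finally, the entries $t_{ij}(z)$ of $T(z)$ are recovered from the Gauss factors via \eqref{eq:shifted Gauss product}. The main technical point I expect to require care is the reduction of the full-$K_i$ identity to the principal-part form \eqref{eY1-shifted}: one must carefully track that the polynomial-and-constant tail of $K_i(z)$ contributes nothing on account of the a priori expansion properties of $e_i(z), f_j(w)$, rather than via an additional algebraic identity. All other relations transfer transparently from the unshifted setting.
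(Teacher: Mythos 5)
Your proposal is correct and takes essentially the same route as the paper: the paper likewise obtains well-definedness by transporting the unshifted RTT computations (with the only genuinely new point being the appearance of the principal part $\underline{K_i}$ in~\eqref{eY1-shifted}, forced exactly as you say by the fact that $e_i(z),f_j(w)$ contain no nonnegative powers of the spectral parameters), and surjectivity from the observation that the generators~\eqref{eq:rtt generators} of $X^\rtt_{-\mu}(\sso_{2r})$ are precisely the images of the Drinfeld generators, all remaining Gauss coordinates being expressed through them by the same formulas as in the $\mu=0$ case of Subsection~\ref{sssec: Drinfeld-to-RTT-D}. The only cosmetic caveat is that one should not literally derive an identity with the full $K_i(z)$ (which would be inconsistent with the expansion of the left-hand side) and then truncate, but rather note that the careful division by $z-w$ directly produces the principal part; your closing paragraph shows you are aware of this.
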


\medskip
\noindent
One of our key results (the proof of which is deferred to Subsection~\ref{sssec Proof via Lax matrices D})
is the following generalization of Theorem~\ref{thm:Dr=RTT-unshifted-D} (corresponding to the case $\mu=0$):

\medskip

\begin{Thm}\label{thm:Dr=RTT-shifted-D}
$\Upsilon_{-\mu}\colon X_{-\mu}(\sso_{2r})\iso X^\rtt_{-\mu}(\sso_{2r})$ is
a $\BC$-algebra isomorphism for any $\mu\in \Lambda^+$.
\end{Thm}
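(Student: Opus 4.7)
The plan is to prove injectivity of $\Upsilon_{-\mu}$ (surjectivity being Proposition~\ref{prop:epimorphism of shifted Yangians}) by constructing, for each $\Lambda^+$-valued divisor $D$ with $D|_\infty=\mu$, an algebra homomorphism $\Theta_D\colon X^\rtt_{-\mu}(\sso_{2r})\to\CA$ so that the composition $\Theta_D\circ\Upsilon_{-\mu}$ recovers the homomorphism $\Psi_D$ of Theorem~\ref{thm:extended homom D}. The injectivity of $\Upsilon_{-\mu}$ will then follow from the recent theorem of Weekes~\cite{w} asserting that $\bigcap_\lambda \ker\Psi_D=0$ as the $\lambda$-part of $D$ varies over $\Lambda^+$; this argument parallels the type $A$ treatment in \cite{fpt} and is exactly as outlined in the introduction.

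Concretely, I would define the Lax matrix $T_D(z)$ via its Gauss decomposition $T_D(z)=F_D(z)H_D(z)E_D(z)$, setting $h_i(z)\mapsto\Psi_D(D_i(z))$ and $e_{i,i+1}(z),e_{r-1,r+1}(z)\mapsto\Psi_D(E_i(z))$, $f_{i+1,i}(z),f_{r+1,r-1}(z)\mapsto\Psi_D(F_i(z))$, with the remaining Gauss entries reconstructed by the explicit recursive formulas of Lemmas~\ref{lem:all-H}--\ref{lem:all-F-new}. The degree calculation~(\ref{eq:degrees 1}--\ref{eq:degrees match}) in the proof of Theorem~\ref{thm:extended homom D} guarantees that $h_i(z)$ so defined has the expansion required in~\eqref{eq:t-modes shifted}. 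By construction, if $T_D(z)$ satisfies the RTT relation, then the assignment $T(z)\mapsto T_D(z)$ is a well-defined algebra map $\Theta_D\colon X^\rtt_{-\mu}(\sso_{2r})\to\CA$, and $\Theta_D\circ\Upsilon_{-\mu}=\Psi_D$ holds by the very design of the Gauss factors.

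The heart of the argument — and the main obstacle — is verifying that each $T_D(z)$ does satisfy the RTT relation~\eqref{eq:rtt}. I would handle this by the renormalization limit argument~\eqref{eq:limit construction intro}. First treat the base case $\mu=0$: when the support of $D$ avoids $\infty\in\BP^1$, the matrix $T_D(z)$ is a specialization under the composition $\CA\xleftarrow{\Psi_D}X_0(\sso_{2r})\xrightarrow[\sim]{\Upsilon_0}X^\rtt(\sso_{2r})$, where $\Upsilon_0$ is the unshifted isomorphism of Theorem~\ref{thm:Dr=RTT-unshifted-D}; hence $T_D(z)$ automatically satisfies RTT in this case. Next, for a divisor $D$ containing a summand $\omega_i[x]$, I would verify directly from the explicit Gauss-factor formulas that
\[
T_{D-\omega_i[x]+\omega_i[\infty]}(z)=\lim_{x\to\infty}\bigl\{(-x)^{\omega_i}\cdot T_D(z)\bigr\},
\]
which requires a careful tracking of how the factors $(z-p_{i,k})$, $Z_i(z)$, and the $e^{\pm q_{i,k}}$ oscillators transform when one of the points $x_s$ is sent to infinity; by Corollary~\ref{cor:YBE invariance} the $z$-independent diagonal prefactor $(-x)^{\omega_i}$ preserves the RTT relation, so the limit of RTT is RTT. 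Induction on the $\infty$-support of $D$ then yields the RTT property for all $\Lambda^+$-valued divisors.

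With $\Theta_D$ in hand for every admissible $D$ and the matching $\Psi_D=\Theta_D\circ\Upsilon_{-\mu}$ established, the conclusion is immediate: fixing $\mu\in\Lambda^+$ and letting $\lambda\in\Lambda^+$ (equivalently the finite-support part of $D$) range freely, the Weekes result gives $\bigcap_\lambda\ker\Psi_D=0$, so $\ker\Upsilon_{-\mu}\subseteq\bigcap_\lambda\ker\Psi_D=0$, finishing the proof. I expect the most delicate technical step to be the verification of the renormalized-limit identity for the Gauss factors, particularly keeping track of the two half-spinor nodes $r-1$ and $r$ where the formulas of Theorem~\ref{thm:extended homom D} differ from the uniform pattern; the auxiliary identities~(\ref{eq:e-recursive},~\ref{eq:f-recursive}) should make this a finite computation propagated from the base entries $e_i(z),f_i(z),h_j(z)$.
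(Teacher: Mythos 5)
Your proposal follows essentially the same route as the paper: construct $T_D(z)$ from the explicit Gauss factors, establish the RTT relation by combining the unshifted base case (where $\Upsilon_0$ is the isomorphism of Theorem~\ref{thm:Dr=RTT-unshifted-D}) with the normalized-limit argument~\eqref{eq:limit relation} and Corollary~\ref{cor:YBE invariance}, and then deduce injectivity of $\Upsilon_{-\mu}$ from the factorization $\Psi_D=\Theta_D\circ\Upsilon_{-\mu}$ together with Weekes' theorem. The only point to flag is that Weekes' result is stated for the non-extended $Y_{-\bar{\mu}}(\sso_{2r})$, so one needs its (immediate) extension to $X_{-\mu}(\sso_{2r})$ via the decomposition~\eqref{eq:shifted decomposition} and the formula~\eqref{image of C-series}, as the paper notes in a footnote.
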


\medskip


\subsubsection{Coproduct homomorphisms}
\label{sssec coproduct D}
\

One of the key benefits of the RTT realization is that it immediately endows the (extended)
Yangian of $\sso_{2r}$ with the Hopf algebra structure, in particular, the coproduct homomorphism:
\begin{equation}\label{eq:unshifted rtt-coproduct}
  \Delta^\rtt\colon X^\rtt(\sso_{2r})\longrightarrow X^\rtt(\sso_{2r})\otimes X^\rtt(\sso_{2r})
  \, , \qquad T(z)\mapsto T(z)\otimes T(z) \, .
\end{equation}

\medskip
\noindent
The main observation of this Subsection is that~\eqref{eq:unshifted rtt-coproduct}
naturally admits a shifted version:

\medskip

\begin{Prop}\label{prop:shifted rtt coproduct}
For any $\mu_1,\mu_2\in \Lambda^+$, there is a unique $\BC$-algebra homomorphism
\begin{equation}\label{eq:shifted rtt-coproduct}
  \Delta^\rtt_{-\mu_1,-\mu_2}\colon X^\rtt_{-\mu_1-\mu_2}(\sso_{2r})\longrightarrow
  X^\rtt_{-\mu_1}(\sso_{2r})\otimes X^\rtt_{-\mu_2}(\sso_{2r})
\end{equation}
defined by:
\begin{equation}\label{eq:RTT coproduct}
  \Delta^\rtt_{-\mu_1,-\mu_2}(T(z))=T(z)\otimes T(z) \, .
\end{equation}
\end{Prop}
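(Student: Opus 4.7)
I verify that the image $A(z) := T^{(1)}(z) T^{(2)}(z)$ (where $T^{(k)}$ denotes $T$ living in the $k$-th tensor factor) satisfies the two families of defining relations of $X^\rtt_{-\mu_1-\mu_2}(\sso_{2r})$: the RTT relation~\eqref{eq:rtt} and the Gauss-decomposition structure~\eqref{eq:shifted Gauss product}--\eqref{eq:t-modes shifted}. The RTT relation follows from the standard argument, using that the two tensor factors commute and each $T^{(k)}$ satisfies RTT individually:
\begin{align*}
R_{12}(z-w)\, T^{(1)}_1(z) T^{(2)}_1(z)\, T^{(1)}_2(w) T^{(2)}_2(w)
&= R_{12}(z-w)\, T^{(1)}_1(z) T^{(1)}_2(w)\, T^{(2)}_1(z) T^{(2)}_2(w) \\
&= T^{(1)}_2(w) T^{(1)}_1(z)\, R_{12}(z-w)\, T^{(2)}_1(z) T^{(2)}_2(w) \\
&= T^{(1)}_2(w) T^{(2)}_2(w)\, T^{(1)}_1(z) T^{(2)}_1(z)\, R_{12}(z-w).
\end{align*}

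For the Gauss decomposition, write $T^{(k)} = F^{(k)} H^{(k)} E^{(k)}$ and observe that the middle product $E^{(1)}(z) F^{(2)}(z) = I + O(z^{-1})$ admits a unique Gauss decomposition $F^\circ(z) H^\circ(z) E^\circ(z)$ over $R := X^\rtt_{-\mu_1}(\sso_{2r}) \otimes X^\rtt_{-\mu_2}(\sso_{2r})$, with $F^\circ, E^\circ$ unitriangular having off-diagonal entries in $z^{-1} R[[z^{-1}]]$ and $H^\circ \in I + z^{-1} R[[z^{-1}]]$ diagonal. Re-associating yields
\[A(z) = \bigl(F^{(1)}\cdot H^{(1)} F^\circ (H^{(1)})^{-1}\bigr)\bigl(H^{(1)} H^\circ H^{(2)}\bigr)\bigl((H^{(2)})^{-1} E^\circ H^{(2)}\cdot E^{(2)}\bigr) = F^A H^A E^A,\]
which, by uniqueness of the Gauss decomposition, is \emph{the} Gauss decomposition of $A(z)$. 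The diagonal entries $h^A_i(z) = h^{(1)}_i(z) h^\circ_i(z) h^{(2)}_i(z)$ have leading term $z^{d_i(\mu_1+\mu_2)}$ with coefficient $1$, as required by~\eqref{eq:t-modes shifted}.

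It remains to verify that all off-diagonal entries of $F^A$ and $E^A$ lie in $z^{-1} R[[z^{-1}]]$. Since $A(z)$ satisfies RTT, the arguments proving Lemmas~\ref{lem:all-E-known}--\ref{lem:all-F-new} (each of which is a direct consequence of the RTT relation in type $D_r$) apply verbatim to $A(z)$; they yield both the RTT-forced vanishings $f^A_{r+1, r}(z) = 0$ and $e^A_{r, r+1}(z) = 0$, and the recursive expressions~\eqref{eq:e-recursive}--\eqref{eq:f-recursive} presenting every remaining off-diagonal entry of $F^A$ or $E^A$ as a nested commutator of the simple generators $e^A_i, f^A_i$ ($1 \leq i \leq r$). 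Since commutators of $z^{-1} R[[z^{-1}]]$-series remain in $z^{-1} R[[z^{-1}]]$, it suffices to verify the required expansion for these simple generators, which reduces to a straightforward leading-power computation of the corresponding quasi-determinantal expressions in the entries of $A(z)$.

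The main obstacle is the non-monotonicity of the shifts $d_i(\mu_k)$ between the $r$-th and $(r+1)$-th diagonal positions: without this non-monotonicity, the conjugations $H^{(k)} F^\circ (H^{(k)})^{-1}$ and $(H^{(k)})^{-1} E^\circ H^{(k)}$ would directly preserve the $z^{-1}$-triangular form of $F^\circ$ and $E^\circ$, finishing the verification at once. In the non-monotonic case, the RTT-enforced vanishings $f^A_{r+1,r}(z) = e^A_{r,r+1}(z) = 0$ absorb precisely the problematic entries, allowing the argument to go through via the recursive expressions above.
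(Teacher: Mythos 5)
Your proposal is correct and takes essentially the same route as the paper: the published proof simply invokes the type $A$ argument of \cite[Proposition 2.136]{fpt} (the RTT relation for $T(z)\otimes T(z)$ is formal, and the Gauss factors of the product are controlled by re-associating through the middle factor $E^{(1)}(z)F^{(2)}(z)$), observing that the inequalities \eqref{eq:d-inequalities} make the conjugations by $H^{(1)},H^{(2)}$ harmless at every off-diagonal position except $(r,r+1)$, which is handled by the vanishings $e_{r,r+1}(z)=0=f_{r+1,r}(z)$ of Lemmas~\ref{lem:all-E-known}(a),~\ref{lem:all-F-known}(a). You have identified and spelled out exactly these two ingredients.
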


\medskip

\begin{proof}
The proof is completely analogous to its type $A$ counterpart established
in~\cite[Proposition 2.136]{fpt}: the arguments of~\emph{loc.cit.}\ apply on the nose,
due to~\eqref{eq:d-inequalities} as well as $e_{r,r+1}(z)=0=f_{r+1,r}(z)$
(to treat the possible case $d_r<d'_r$), cf.~Lemmas~\ref{lem:all-E-known}(a),~\ref{lem:all-F-known}(a).
\end{proof}

\medskip
\noindent
Similar to~\cite[Corollary 2.141]{fpt}, we note that $\Delta^\rtt_{\ast,\ast}$~\eqref{eq:shifted rtt-coproduct}
satisfy the natural coassociativity:

\medskip

\begin{Cor}\label{cor:coassociativity}
For any $\mu_1,\mu_2,\mu_3\in \Lambda^+$, the following diagram is commutative:
\begin{equation*}
 \begin{CD}
 X^\rtt_{-\mu_1-\mu_2-\mu_3}(\sso_{2r})
    @>{\Delta^\rtt_{-\mu_1,-\mu_2-\mu_3}}>>
 X^\rtt_{-\mu_1}(\sso_{2r})\otimes X^\rtt_{-\mu_2-\mu_3}(\sso_{2r})\\
 @V{\Delta^\rtt_{-\mu_1-\mu_2,-\mu_3}}VV   @VV{\on{Id}\otimes\, \Delta^\rtt_{-\mu_2,-\mu_3}}V\\
 X^\rtt_{-\mu_1-\mu_2}(\sso_{2r})\otimes X^\rtt_{-\mu_3}(\sso_{2r})
    @>>{\Delta^\rtt_{-\mu_1,-\mu_2}\otimes\, \on{Id}}>
 X^\rtt_{-\mu_1}(\sso_{2r})\otimes X^\rtt_{-\mu_2}(\sso_{2r})\otimes X^\rtt_{-\mu_3}(\sso_{2r})
 \end{CD}
\end{equation*}
\end{Cor}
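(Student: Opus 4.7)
The plan is to check the diagram on the generators of the source, namely on the matrix coefficients of $T(z)$. Indeed, $X^\rtt_{-\mu_1-\mu_2-\mu_3}(\sso_{2r})$ is generated as a $\BC$-algebra by its RTT generators $\{t^{(k)}_{ij}\}$ (equivalently, by the distinguished subset~\eqref{eq:rtt generators}), so once the two compositions agree on the entries of $T(z)$ they agree on everything.

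Following the top-right path, $\Delta^\rtt_{-\mu_1,-\mu_2-\mu_3}$ sends $T(z)$ to $T(z)\otimes T(z)$ in $X^\rtt_{-\mu_1}(\sso_{2r})\otimes X^\rtt_{-\mu_2-\mu_3}(\sso_{2r})$ by~\eqref{eq:RTT coproduct}, after which $\mathrm{Id}\otimes \Delta^\rtt_{-\mu_2,-\mu_3}$ replaces the second tensor slot by $T(z)\otimes T(z)\in X^\rtt_{-\mu_2}(\sso_{2r})\otimes X^\rtt_{-\mu_3}(\sso_{2r})$. Unwinding this coefficient-wise produces
\begin{equation*}
  t_{ij}(z)\ \longmapsto\ \sum_{k,\ell}\, t_{ik}(z)\otimes t_{k\ell}(z)\otimes t_{\ell j}(z).
\end{equation*}
The bottom-left path applies $\Delta^\rtt_{-\mu_1-\mu_2,-\mu_3}$ first and $\Delta^\rtt_{-\mu_1,-\mu_2}\otimes \mathrm{Id}$ second, and a symmetric computation (or simply the associativity of matrix multiplication applied to the symbolic product $T(z)\otimes T(z)\otimes T(z)$) yields exactly the same triple sum.

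There is no real obstacle. The substantive content --- that each individual $\Delta^\rtt_{-\mu',-\mu''}$ is well-defined on the antidominantly shifted extended RTT Yangian, including the compatibility with the Gauss-decomposition expansions~\eqref{eq:t-modes shifted} --- has already been established in Proposition~\ref{prop:shifted rtt coproduct}. Well-definedness of the ``tensor with $\mathrm{Id}$'' extensions is automatic since tensoring a homomorphism with the identity on a third factor still produces a homomorphism, and the target categories match because the shifts add up correctly on both routes. Thus the argument is purely formal, mirroring the proof of~\cite[Corollary~2.141]{fpt}.
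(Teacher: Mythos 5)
Your argument is correct and matches the paper's own treatment: the paper gives no separate proof, simply noting that coassociativity follows formally from $T(z)\mapsto T(z)\otimes T(z)$ exactly as in the type $A$ case of~\cite[Corollary 2.141]{fpt}, which is precisely your generator-wise computation yielding $\sum_{k,\ell}t_{ik}(z)\otimes t_{k\ell}(z)\otimes t_{\ell j}(z)$ along both routes. Your added remark that the only substantive input is the well-definedness of each $\Delta^\rtt_{-\mu',-\mu''}$ from Proposition~\ref{prop:shifted rtt coproduct} is exactly the right observation.
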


\medskip
\noindent
Evoking the key isomorphism
  $\Upsilon_{-\mu}\colon X_{-\mu}(\sso_{2r})\iso X^\rtt_{-\mu}(\sso_{2r})$
of Theorem~\ref{thm:Dr=RTT-shifted-D} for $\mu=\mu_1$, $\mu_2$, $\mu_1+\mu_2$,
we conclude that $\Delta^\rtt_{-\mu_1,-\mu_2}$ of~\eqref{eq:shifted rtt-coproduct}
gives rise to the $\BC$-algebra homomorphism
\begin{equation}\label{eq:Dr-coproduct-D}
  \Delta_{-\mu_1,-\mu_2}\colon
  X_{-\mu_1-\mu_2}(\sso_{2r})\longrightarrow X_{-\mu_1}(\sso_{2r})\otimes X_{-\mu_2}(\sso_{2r}) \, .
\end{equation}

\medskip

\begin{Prop}\label{prop:shifted Drinfeld coproduct D-ext}
For any $\mu_1,\mu_2\in \Lambda^+$, the above $\BC$-algebra homomorphism~\eqref{eq:Dr-coproduct-D}
\begin{equation*}
  \Delta_{-\mu_1,-\mu_2}\colon
  X_{-\mu_1-\mu_2}(\sso_{2r})\longrightarrow X_{-\mu_1}(\sso_{2r})\otimes X_{-\mu_2}(\sso_{2r})
\end{equation*}
is uniquely determined by specifying the image of the central series $C_r(z)$ of~\eqref{eq:central C shifted} via:
\begin{equation}\label{coproduct Yangian center}
  C_r(z)\mapsto C_r(z)\otimes C_r(z) \,,
\end{equation}
and the following formulas (for any $1\leq i\leq r$ and $1\leq j\leq r+1$):
\begin{equation}\label{coproduct Yangian generators}
\begin{split}
  & F^{(k)}_i \, \mapsto \, F^{(k)}_i\otimes 1
    \qquad \mathrm{for}\quad 1\leq k\leq \hat{\alpha}^\vee_i(\mu_1) \, ,\\
  & F^{(\hat{\alpha}^\vee_i(\mu_1)+1)}_i \, \mapsto \,
    F^{(\hat{\alpha}^\vee_i(\mu_1)+1)}_i\otimes 1 \, +\, 1\otimes F^{(1)}_i \, ,\\
  & E^{(k)}_i \, \mapsto \, 1\otimes E^{(k)}_i
    \qquad \mathrm{for}\quad 1\leq k\leq \hat{\alpha}^\vee_i(\mu_2) \, ,\\
  & E^{(\hat{\alpha}^\vee_i(\mu_2)+1)}_i \, \mapsto \,
    1\otimes E^{(\hat{\alpha}^\vee_i(\mu_2)+1)}_i \, +\, E^{(1)}_i\otimes 1 \, ,\\
  & D^{(-\epsilon^\vee_j(\mu_1+\mu_2)+1)}_j \, \mapsto \,
    D^{(-\epsilon^\vee_j(\mu_1)+1)}_j \otimes 1 \, +\, 1\otimes D^{(-\epsilon^\vee_j(\mu_2)+1)}_j \, ,\\
  & D^{(-\epsilon^\vee_j(\mu_1+\mu_2)+2)}_j \, \mapsto \,
    D^{(-\epsilon^\vee_j(\mu_1)+2)}_j \otimes 1 \, +\, 1\otimes D^{(-\epsilon^\vee_j(\mu_2)+2)}_j \, + \\
  & \ \ \ \ \ \ \ \ \ \ \ \ \ \ \ \ \ \  \ \ \ \ \ \
    D^{(-\epsilon^\vee_j(\mu_1)+1)}_j\otimes D^{(-\epsilon^\vee_j(\mu_2)+1)}_j \, +\,
    \sum_{\gamma^\vee\in \Delta^+} (\tilde{\epsilon}^\vee_j,\gamma^\vee) E^{(1)}_{\gamma^\vee}\otimes F^{(1)}_{\gamma^\vee} \, ,
\end{split}
\end{equation}
with
\begin{equation}\label{eq:tilde-epsilon type D}
  \tilde{\epsilon}^\vee_j=\epsilon^\vee_j \quad \mathrm{for}\ j\leq r\, ,
  \qquad \tilde{\epsilon}^\vee_{r+1}=-\epsilon^\vee_r \, ,
\end{equation}
and the root generators $\{E^{(1)}_{\gamma^\vee}, F^{(1)}_{\gamma^\vee}\}_{\gamma\in \Delta^+}$
defined via (cf.~(\ref{eq:e-recursive},~\ref{eq:f-recursive})):
\begin{equation}\label{eq:higher root generators D}
\begin{split}
  & E^{(1)}_{\epsilon^\vee_i - \epsilon^\vee_j} = [E^{(1)}_{j-1},[E^{(1)}_{j-2},[E^{(1)}_{j-3}, \,\cdots, [E^{(1)}_{i+1},E^{(1)}_i]\cdots]]] \, , \\
  & F^{(1)}_{\epsilon^\vee_i - \epsilon^\vee_j} = [[[\cdots[F_i^{(1)},F_{i+1}^{(1)}], \,\cdots, F^{(1)}_{j-3}],F^{(1)}_{j-2}],F_{j-1}^{(1)}] \, , \\
  & E^{(1)}_{\epsilon^\vee_i + \epsilon^\vee_j} =
    [\cdots[[E^{(1)}_{r},[E^{(1)}_{r-2},[E^{(1)}_{r-3}, \,\cdots, [E^{(1)}_{i+1},E^{(1)}_i]\cdots]]], E^{(1)}_{r-1}], \,\cdots,E^{(1)}_{j}] \, , \\
  & F^{(1)}_{\epsilon^\vee_i + \epsilon^\vee_j} =
    [F^{(1)}_j, \,\cdots, [F^{(1)}_{r-1},[[[\cdots[F_i^{(1)},F_{i+1}^{(1)}], \,\cdots, F^{(1)}_{r-3}],F^{(1)}_{r-2}],F_{r-1}^{(1)}]]\cdots]\,
\end{split}
\end{equation}
for $1\leq i<j\leq r$, where
  $\Delta^+=\Big\{\epsilon^\vee_i \pm \epsilon^\vee_j\Big\}_{1\leq i<j\leq r}$
is the set of positive roots of $\sso_{2r}$.
\end{Prop}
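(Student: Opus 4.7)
The plan is to realize $\Delta_{-\mu_1,-\mu_2}$ as the composition $(\Upsilon_{-\mu_1}^{-1}\otimes \Upsilon_{-\mu_2}^{-1})\circ \Delta^\rtt_{-\mu_1,-\mu_2}\circ \Upsilon_{-\mu_1-\mu_2}$ and then to compute its values on the specified generators. \emph{Uniqueness} is immediate once one observes that, by combining the Cartan-type relations~\eqref{eY2.1}--\eqref{eY3.2} with~\eqref{eY4.2} and~\eqref{eY5.2}, iterated commutators involving $D_j^{(-\epsilon^\vee_j(\mu_1+\mu_2)+2)}$ produce all higher modes $E_i^{(k)}, F_i^{(k)}, D_j^{(k_j)}$ from the listed bottom modes; together with the central coefficients $C_r^{(k)}$ these generate $X_{-\mu_1-\mu_2}(\sso_{2r})$ in view of the decomposition~\eqref{eq:shifted decomposition}.

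For the image of $C_r(z)$, I would apply $\Delta^\rtt_{-\mu_1,-\mu_2}$ to the RTT identity $T'(z-\kappa)T(z)=\sz_{2r}(z)\,\ID_{2r}$: the left-hand side becomes $(T'(z-\kappa)T(z))\otimes (T'(z-\kappa)T(z))=\sz_{2r}(z)\otimes \sz_{2r}(z)\cdot(\ID_{2r}\otimes \ID_{2r})$, forcing $\Delta^\rtt(\sz_{2r}(z))=\sz_{2r}(z)\otimes \sz_{2r}(z)$; the shifted analogue of~\eqref{eq:JLM-center} then translates this to~\eqref{coproduct Yangian center}. For the $E_i$'s, $F_i$'s, and $D_j$'s, I would Gauss-decompose $\Delta^\rtt(T(z))$ mode-by-mode, using that its $(i,j)$-entry equals $\sum_k t_{ik}(z)\otimes t_{kj}(z)$. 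The inequalities~\eqref{eq:d-inequalities} ensure that, for $k\neq j$, the off-diagonal contributions $t_{jk}\otimes t_{kj}$ start at leading order at least two below $t_{jj}\otimes t_{jj}$ in most cases, which readily yields the stated ``additive'' coproduct formulas at the lowest modes of $F_i, E_i, D_j$, as well as the mixed $D\otimes D$ quadratic term at the second-lowest mode of $D_j$. The first five identities in~\eqref{coproduct Yangian generators} follow directly after translation via $\Upsilon^{-1}_{-\mu_i}$ and~\eqref{eq:explicit identification 1}--\eqref{eq:explicit identification 2}.

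The main obstacle is matching the correction term $\sum_{\gamma^\vee\in \Delta^+}(\tilde{\epsilon}^\vee_j,\gamma^\vee)\,E^{(1)}_{\gamma^\vee}\otimes F^{(1)}_{\gamma^\vee}$ in the image of $D_j^{(-\epsilon^\vee_j(\mu_1+\mu_2)+2)}$. This contribution arises from the off-diagonal sum $\sum_{k\neq j}t_{jk}(z)\otimes t_{kj}(z)$ after its projection onto $h_j(z)$ via the Gauss decomposition. My plan is: first, to express every $e^{(1)}_{k,j}$ and $f^{(1)}_{j,k}$ as an iterated bracket of $e^{(1)}_i, f^{(1)}_i$ using Lemmas~\ref{lem:all-E-known}--\ref{lem:all-F-new}, so that they match the root vectors~\eqref{eq:higher root generators D} under $\Upsilon^{-1}$; second, to carefully track the fork nodes $r-1, r, r+1$ (where $h_{r+1}=h_{r'}$ and the sign convention $\tilde{\epsilon}^\vee_{r+1}=-\epsilon^\vee_r$ of~\eqref{eq:tilde-epsilon type D} enters); and third, to reduce via Corollary~\ref{cor:JLM reduction} to the type-$A$ coproduct formula established in~\cite{fpt} for the sub-$\gl_r$ quadrants of $T(z)$, after which the remaining cross-terms at the fork are verified by direct computation using the explicit Gauss entries of Subsection~\ref{sssec: Drinfeld-to-RTT-D}. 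The delicate combinatorial point will be that the pairing coefficient $(\tilde{\epsilon}^\vee_j,\gamma^\vee)$ emerges precisely from counting which $h_k$-indices in the product $f^{(1)}_{j,k}\cdot h_k^{(\cdot)}\otimes h_k^{(\cdot)}\cdot e^{(1)}_{k,j}$ contribute to $h_j$ after Gauss inversion, accounting for the shift-by-$1$ in the argument of $D_i(z+i-r)$ in the defining formula~\eqref{eq:central C shifted} for $C_r(z)$.
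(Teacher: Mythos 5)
Your proposal is correct and follows essentially the same route as the paper: the paper defines $\Delta_{-\mu_1,-\mu_2}$ exactly as $(\Upsilon_{-\mu_1}^{-1}\otimes\Upsilon_{-\mu_2}^{-1})\circ\Delta^\rtt_{-\mu_1,-\mu_2}\circ\Upsilon_{-\mu_1-\mu_2}$, derives \eqref{coproduct Yangian center} from the crossing relation of Proposition~\ref{prop:crossymmetry D} together with $\sz_N(z)=\Upsilon_{-\mu}(C_r(z))$, and obtains \eqref{coproduct Yangian generators} by the mode-by-mode Gauss-decomposition argument of \cite[Proposition~2.143]{fpt}, with the only new input being the identifications $E^{(1)}_{\epsilon^\vee_i\pm\epsilon^\vee_j}\mapsto e^{(1)}_{i,j}, e^{(1)}_{i,j'}$ (and likewise for $F$) and the skew-symmetries $e^{(1)}_{i,j}=-e^{(1)}_{j',i'}$, $f^{(1)}_{j,i}=-f^{(1)}_{i',j'}$ that handle the fork/$h_{r+1}=h_{r'}$ bookkeeping and produce the sign in $\tilde{\epsilon}^\vee_{r+1}=-\epsilon^\vee_r$. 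Your additional use of Corollary~\ref{cor:JLM reduction} to offload part of the cross-term computation onto the type-$A$ case is a reasonable refinement of, but not a departure from, the paper's argument.
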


\medskip
\noindent
The proof of this result is completely analogous to that of~\cite[Proposition 2.143]{fpt} with
the only non-trivial computation of $\Delta_{-\mu_1,-\mu_2}(D^{(-\epsilon^\vee_j(\mu_1+\mu_2)+2)}_j)$
based on the identifications:
\begin{equation*}
  \Upsilon_{-\mu}\colon\
  E^{(1)}_{\epsilon^\vee_i - \epsilon^\vee_j} \mapsto e^{(1)}_{i,j} \, ,\
  E^{(1)}_{\epsilon^\vee_i + \epsilon^\vee_j} \mapsto e^{(1)}_{i,j'} \, ,\
  F^{(1)}_{\epsilon^\vee_i - \epsilon^\vee_j} \mapsto f^{(1)}_{j,i} \, ,\
  F^{(1)}_{\epsilon^\vee_i + \epsilon^\vee_j} \mapsto f^{(1)}_{j',i} \, \quad \forall\, 1\leq i<j\leq r
\end{equation*}
and the equalities $e^{(1)}_{i,j}=-e^{(1)}_{j',i'},\, f^{(1)}_{j,i}=-f^{(1)}_{i',j'}$ for $1\leq i<j\leq 2r$,
cf.~(\ref{eq:E-linear-skew},~\ref{eq:F-linear-skew},~\ref{eq:C-linear-skew},~\ref{eq:B-linear-skew}).
Let us note that the formula~\eqref{coproduct Yangian center} is a direct corollary of
the formulas $\sz_N(z)=\Upsilon_{-\mu}(C_r(z))$ and $T(z)T'(z-\kappa)=\sz_N(z)\ID_N$
established in Lemma~\ref{lem:shifted z-vs-C} and Proposition~\ref{prop:crossymmetry D} below.

\medskip
\noindent
The above result provides a conceptual and elementary proof of~\cite[Theorem~4.8]{fkp}:

\medskip

\begin{Prop}\label{prop:shifted Drinfeld coproduct D}
(a) For any $\nu_1,\nu_2\in \bar{\Lambda}^+$, there is a unique $\BC$-algebra homomorphism
\begin{equation}\label{fkprw homom}
  \Delta_{-\nu_1,-\nu_2}\colon
  Y_{-\nu_1-\nu_2}(\sso_{2r})\longrightarrow Y_{-\nu_1}(\sso_{2r})\otimes Y_{-\nu_2}(\sso_{2r})
\end{equation}
such that the following diagram is commutative for any $\mu_1,\mu_2\in \Lambda^+$:
\begin{equation}\label{compatibility with FKPRW}
  \begin{CD}
  Y_{-\bar{\mu}_1-\bar{\mu}_2}(\sso_{2r}) @>{\Delta_{-\bar{\mu}_1,-\bar{\mu}_2}}>> Y_{-\bar{\mu}_1}(\sso_{2r})\otimes Y_{-\bar{\mu}_2}(\sso_{2r})\\
  @V{\iota_{-\mu_1-\mu_2}}VV   @VV{\iota_{-\mu_1}\otimes\, \iota_{-\mu_2}}V\\
  X_{-\mu_1-\mu_2}(\sso_{2r}) @>{\Delta_{-\mu_1,-\mu_2}}>> X_{-\mu_1}(\sso_{2r})\otimes X_{-\mu_2}(\sso_{2r})
  \end{CD}
\end{equation}

\medskip
\noindent
(b) The homomorphism $\Delta_{-\nu_1,-\nu_2}$
is uniquely determined by the following formulas:
\begin{equation}\label{coproduct Yangian generators sl}
\begin{split}
  & \sF^{(k)}_i \, \mapsto \, \sF^{(k)}_i\otimes 1
    \qquad \mathrm{for}\quad 1\leq k\leq \alphavee_i(\nu_1) \, ,\\
  & \sF^{(\alphavee_i(\nu_1)+1)}_i \, \mapsto \,
    \sF^{(\alphavee_i(\nu_1)+1)}_i\otimes 1 \, +\, 1\otimes \sF^{(1)}_i \, ,\\
  & \sE^{(k)}_i \, \mapsto \, 1\otimes \sE^{(k)}_i
    \qquad \mathrm{for}\quad 1\leq k\leq \alphavee_i(\nu_2) \, ,\\
  & \sE^{(\alphavee_i(\nu_2)+1)}_i \, \mapsto \,
    1\otimes \sE^{(\alphavee_i(\nu_2)+1)}_i \, +\, \sE^{(1)}_i\otimes 1 \, ,\\
  & \sH^{(\alphavee_i(\nu_1+\nu_2)+1)}_i \, \mapsto \,
    \sH^{(\alphavee_i(\nu_1)+1)}_i \otimes 1 \, +\, 1\otimes \sH^{(\alphavee_i(\nu_2)+1)}_i \, ,\\
  & \sH^{(\alphavee_i(\nu_1+\nu_2)+2)}_i \, \mapsto \,
    \sH^{(\alphavee_i(\nu_1)+2)}_i \otimes 1 \, +\, 1\otimes \sH^{(\alphavee_i(\nu_2)+2)}_i \, +\\
  & \ \ \ \ \ \ \ \ \ \ \ \ \ \ \ \ \ \  \ \ \ \
    \sH^{(\alphavee_i(\nu_1)+1)}_i\otimes \sH^{(\alphavee_i(\nu_2)+1)}_i \, -\,
    \sum_{\gamma^\vee\in \Delta^+} (\alphavee_i,\gamma^\vee) \sE^{(1)}_{\gamma^\vee}\otimes \sF^{(1)}_{\gamma^\vee} \, ,
\end{split}
\end{equation}
with the root generators $\{\sE^{(1)}_{\gamma^\vee}, \sF^{(1)}_{\gamma^\vee}\}_{\gamma\in \Delta^+}$
defined exactly as in~\eqref{eq:higher root generators D}, but using $\sE^{(1)}_{i}$ and $\sF^{(1)}_i$
instead of $E^{(1)}_i$ and $F^{(1)}_i$, respectively.
\end{Prop}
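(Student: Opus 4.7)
The plan is to deduce this non-extended statement from its extended counterpart (Proposition~\ref{prop:shifted Drinfeld coproduct D-ext}). Given $\nu_1,\nu_2\in \bar{\Lambda}^+$, fix any lifts $\mu_1,\mu_2\in \Lambda^+$ with $\bar{\mu}_j=\nu_j$ (such lifts exist since the projection~\eqref{non-ext coweight from ext} is surjective and preserves dominance), and consider the composition
\[
  X_{-\mu_1-\mu_2}(\sso_{2r}) \xrightarrow{\Delta_{-\mu_1,-\mu_2}} X_{-\mu_1}(\sso_{2r}) \otimes X_{-\mu_2}(\sso_{2r}) \xrightarrow{\pi_{\mu_1}\otimes \pi_{\mu_2}} Y_{-\nu_1}(\sso_{2r}) \otimes Y_{-\nu_2}(\sso_{2r}),
\]
where $\pi_\mu\colon X_{-\mu}(\sso_{2r})\twoheadrightarrow Y_{-\bar{\mu}}(\sso_{2r})$ is the quotient of Corollary~\ref{cor:sub and quotient} (setting $c_k=0$). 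Formula~\eqref{coproduct Yangian center} shows that $\Delta_{-\mu_1,-\mu_2}$ sends every non-leading coefficient of $C_r(z)$ into the kernel of $\pi_{\mu_1}\otimes \pi_{\mu_2}$, since in each tensor factor only the leading monomial of $C_r(z)$ survives. Hence the composition descends through the quotient of $X_{-\mu_1-\mu_2}(\sso_{2r})$ by the central ideal generated by the non-leading $C_r^{(k)}$'s, which by Corollary~\ref{cor:sub and quotient} is precisely $Y_{-\nu_1-\nu_2}(\sso_{2r})$, yielding the desired $\Delta_{-\nu_1,-\nu_2}$. Commutativity of~\eqref{compatibility with FKPRW} is then automatic because $\pi_{\mu_j}\circ \iota_{-\mu_j}=\on{Id}$, and uniqueness follows from injectivity of $\iota_{-\mu_1}\otimes \iota_{-\mu_2}$. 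Independence from the choice of lifts follows from Lemma~\ref{identifying extended Yangians}, whose isomorphisms restrict to the identity on each $Y_{-\nu_j}(\sso_{2r})$-subalgebra and commute with the RTT coproduct $T(z)\mapsto T(z)\otimes T(z)$ under the rescaling $T(z)\mapsto z^c T(z)$.

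For part (b), one substitutes the generator identifications of~\eqref{eq:iota-null explicitly} into the explicit formulas~\eqref{coproduct Yangian generators} of the extended version. The formulas for $\sE_i^{(k)}$ and $\sF_i^{(k)}$ transfer directly, since $\hat{\alpha}^\vee_i(\mu_j)=\alphavee_i(\nu_j)$ and $\iota_{-\mu}$ preserves individual modes of the $E_i(z)$ and $F_i(z)$ currents (up to an overall spectral shift that is absorbed symmetrically on both sides of the coproduct). For the Cartan formulas, $\iota_{-\mu}$ sends $\sH_i(z)$ to $D_i(z+c_i)^{-1}D_{i+1}(z+c_i)$ (or its $i=r$ analogue with $D_{r-1},D_{r+1}$), so expanding the image of this rational expression under $\Delta_{-\mu_1,-\mu_2}$ produces four kinds of contributions: the two ``single-tensor'' Cartan terms $\sH^{(\ast)}_i\otimes 1$ and $1\otimes \sH^{(\ast)}_i$, the ``cross Cartan'' term $\sH^{(\ast)}_i\otimes \sH^{(\ast)}_i$, and sums of root-pair cross terms $\sum_j (\pm)\sum_{\gamma^\vee}(\tilde{\epsilon}^\vee_j,\gamma^\vee)E^{(1)}_{\gamma^\vee}\otimes F^{(1)}_{\gamma^\vee}$ coming from the two Cartan indices involved. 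These root-pair contributions collapse to $-\sum_{\gamma^\vee}(\alphavee_i,\gamma^\vee)\sE^{(1)}_{\gamma^\vee}\otimes \sF^{(1)}_{\gamma^\vee}$ upon invoking the lattice identities $\alphavee_i=\tilde{\epsilon}^\vee_{i+1}-\tilde{\epsilon}^\vee_i$ for $i<r$ and $\alphavee_r=\tilde{\epsilon}^\vee_{r+1}-\tilde{\epsilon}^\vee_{r-1}$ (recalling $\tilde{\epsilon}^\vee_{r+1}=-\epsilon^\vee_r$), with the overall sign flip originating from the inversion of $D_i$ in the defining ratio for $\sH_i$.

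The main obstacle is this last bookkeeping step: unpacking the cross-term contributions to the image of $\sH^{(\alphavee_i(\nu_1+\nu_2)+2)}_i$ and collecting them with the correct coefficients. This requires identifying the root generators $E^{(1)}_{\gamma^\vee}\in X_{-\mu}(\sso_{2r})$ defined by the iterated brackets~\eqref{eq:higher root generators D} with their $Y$-counterparts $\sE^{(1)}_{\gamma^\vee}$ under $\iota_{-\mu}$, which is immediate from $\iota_{-\mu}(\sE^{(1)}_i)=E^{(1)}_i$ combined with the identical shape of the bracket expressions on both sides; meanwhile the skew-symmetries $e^{(1)}_{i,j}=-e^{(1)}_{j',i'}$ and $f^{(1)}_{j,i}=-f^{(1)}_{i',j'}$ (cf.\ Lemmas~\ref{lem:all-E-known}(c) and~\ref{lem:all-F-known}(c)) ensure that contributions indexed by positive roots of the form $\epsilon^\vee_i+\epsilon^\vee_j$ assemble with the correct sign, completing the matching.
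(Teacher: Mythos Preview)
Your approach is essentially the same as the paper's: both derive the non-extended coproduct from Proposition~\ref{prop:shifted Drinfeld coproduct D-ext} via the embeddings $\iota_{-\mu}$ of~\eqref{eq:iota-null explicitly}, and both isolate the last Cartan formula as the only nontrivial computation, resolved by the identity expressing $\alphavee_i$ through the $\tilde\epsilon^\vee_j$'s. One small correction: the lattice identity you state is off by a sign---it should read $\alphavee_i=\tilde\epsilon^\vee_i-\tilde\epsilon^\vee_{i+1}$ for $i<r$ and $\alphavee_r=\tilde\epsilon^\vee_{r-1}-\tilde\epsilon^\vee_{r+1}$ (as in the paper), so that the combination $(\tilde\epsilon^\vee_{i+1}-\tilde\epsilon^\vee_i,\gamma^\vee)$ arising from the inversion of $D_i$ directly equals $-(\alphavee_i,\gamma^\vee)$ without any additional ``overall sign flip''.
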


\medskip

\begin{proof}
This follows immediately from the formulas~(\ref{coproduct Yangian generators})
of Proposition~\ref{prop:shifted Drinfeld coproduct D-ext} combined with the
formulas~(\ref{eq:iota-null explicitly}) for the embedding
$\iota_{-\mu}\colon Y_{-\bar{\mu}}(\sso_{2r})\hookrightarrow X_{-\mu}(\sso_{2r})$
of Proposition~\ref{prop:relating shifted yangians}.
In particular, the proof of the last formula in~\eqref{coproduct Yangian generators sl}
uses the equality
  $\alpha^\vee_i=
   \begin{cases}
     \tilde{\epsilon}^\vee_i-\tilde{\epsilon}^\vee_{i+1} & \mbox{if } i<r \\
     \tilde{\epsilon}^\vee_{r-1}-\tilde{\epsilon}^\vee_{r+1} & \mbox{if } i=r
   \end{cases}$
with $\tilde{\epsilon}^\vee_j$ defined in~\eqref{eq:tilde-epsilon type D}.
\end{proof}

\medskip

\begin{Rem}\label{rmk:coproduct coincidence D}
(a) As our formulas~\eqref{coproduct Yangian generators sl} coincide with those
of~\cite[Theorem 4.8]{fkp}, this provides a confirmative answer to the question raised
in the end of~\cite[\S8]{cgy}, in type $D$.

\medskip
\noindent
(b) A simple argument (see~\cite[Theorem 4.12]{fkp}) shows that the coproduct homomorphisms
$\Delta_{-\nu_1,-\nu_2}$ of~\eqref{fkprw homom} with $\nu_1,\nu_2\in \bar{\Lambda}^+$
give rise to a family of coproduct homomorphisms
  $\Delta_{\nu_1,\nu_2}\colon Y_{\nu_1+\nu_2}(\sso_{2r})\to Y_{\nu_1}(\sso_{2r})\otimes Y_{\nu_2}(\sso_{2r})$
for any pair of $\sso_{2r}$--coweights $\nu_1,\nu_2\in \bar{\Lambda}$.
However, let us note that $\Delta_{\nu_1,\nu_2}\ (\nu_1,\nu_2\in \bar{\Lambda})$ are
not coassociative, in contrast to Corollary~\ref{cor:coassociativity}.
\end{Rem}

\medskip


\subsection{Lax matrices}
\


\subsubsection{Motivation, explicit construction, and the normalized limit description}
\label{sssec Lax}
\

Consider a $\Lambda^+$-valued divisor $D$ on $\BP^1$, see~\eqref{eq:divisor def1}, satisfying the assumption~(\ref{eq:assumption}).
Note that $\mu:=D|_\infty\in \Lambda^+$. Assuming the validity of Theorem~\ref{thm:Dr=RTT-shifted-D},
let us compose $\Psi_D\colon X_{-\mu}(\sso_{2r})\to \CA$ of~(\ref{eq:homom psi}) with
$\Upsilon_{-\mu}^{-1}\colon X^\rtt_{-\mu}(\sso_{2r})\iso X_{-\mu}(\sso_{2r})$
to get a homomorphism:
\begin{equation}\label{eq:Theta-homom}
  \Theta_D=\Psi_D\circ \Upsilon_{-\mu}^{-1}\colon X^\rtt_{-\mu}(\sso_{2r})\longrightarrow \CA \, .
\end{equation}
Such a homomorphism is uniquely determined by
  $T_D(z)\in \CA((z^{-1}))\otimes_\BC \End\ \BC^{2r}$
defined via:
\begin{equation}\label{eq:Lax motivation}
  T_D(z):=\Theta_D(T(z))=\Theta_D(F(z))\cdot \Theta_D(H(z))\cdot \Theta_D(E(z)) \, .
\end{equation}

\medskip
\noindent
While the above definition~\eqref{eq:Lax motivation} of $T_D(z)$ is based on yet unproved
Theorem~\ref{thm:Dr=RTT-shifted-D}, we can combine the formulas~\eqref{eq:homom assignment} for $\Psi_D$
with those of Subsection~\ref{sssec: Drinfeld-to-RTT-D} to recover the explicit sought-after images
$f^D_{j,i}(z),e^D_{i,j}(z),h^D_i(z)\in \CA((z^{-1}))$ of the generating series $f_{j,i}(z),e_{i,j}(z),h_i(z)$,
the matrix coefficients of $F(z),H(z),E(z)$ in~\eqref{eq:shifted Gauss product}.
Thus, we amend~\eqref{eq:Lax motivation} and define:
\begin{equation}\label{eq:Lax definition}
  T_D(z):=F^D(z)\cdot H^D(z)\cdot E^D(z) \,
\end{equation}
with $F^D(z), E^D(z), H^D(z)$ being the lower-triangular, upper-triangular, and diagonal
matrices with matrix coefficients $f^D_{j,i}(z),e^D_{i,j}(z),h^D_i(z)$ obtained from
the explicit formulas~\eqref{eq:homom assignment} for the images of
$\{e_i(z),f_i(z)\}_{i=1}^r\cup \{h_j(z)\}_{j=1}^{r+1}$ combined with
Lemmas~\ref{lem:all-H},~\ref{lem:all-E-known},~\ref{lem:all-E-new},~\ref{lem:all-F-known},~\ref{lem:all-F-new}.
The explicit formulas for $f^D_{j,i}(z),e^D_{i,j}(z),h^D_i(z)$ are presented in
Appendix~\ref{Appendix A: Lax explicitly}, cf.~\cite[\S2.4.1]{fpt}.
Therefore, the matrix coefficients of $T_D(z)$ are given by:
\begin{equation}\label{eq:Lax explicit}
  T_D(z)_{\alpha,\beta}\ =
  \sum_{i=1}^{\min\{\alpha,\beta\}}
  f^D_{\alpha,i}(z)\cdot h^D_i(z)\cdot e^D_{i,\beta}(z)
\end{equation}
for any $1\leq \alpha,\beta\leq 2r$, with the conventions
$f^D_{\alpha,\alpha}(z)=1=e^D_{\beta,\beta}(z)$.

\medskip

\begin{Def}
For an associative algebra $\CB$, a $\CB((z^{-1}))$-valued $2r\times 2r$ matrix $\mathsf{T}(z)$
is called \textbf{Lax} (of type $D_r$) if it satisfies the RTT relation~\eqref{eq:rtt} with
the $R$-matrix $R(z)$ of~\eqref{eq:R-matrix}.
\end{Def}

\medskip
\noindent
Following the arguments of~\cite[\S2.4.2]{fpt}, let us show that $T_D(z)\in \CA((z^{-1}))$
\eqref{eq:Lax explicit} are Lax. To this end, consider a $\Lambda^+$-valued divisor
  $D=\sum_{s=1}^{N} \gamma_s\varpi_{i_s} [x_s] + \mu [\infty]$.
As the point $x_N$ tends to $\infty$ ($x_N\to \infty$), we obtain another $\Lambda^+$-valued divisor
  $D'=\sum_{s=1}^{N-1} \gamma_s\varpi_{i_s} [x_s] + (\mu+\gamma_N\varpi_{i_N}) [\infty]$.
Similar to~\cite[Proposition 2.75]{fpt}, the matrix $T_{D'}(z)$ of~\eqref{eq:Lax explicit}
is related to $T_D(z)$ via:
\begin{equation}\label{eq:limit relation}
  T_{D'}(z)\ =\underset{x_N\to \infty}{\lim}\, \Big\{(-x_N)^{\gamma_N\varpi_{i_N}}\cdot\, T_D(z)\Big\} \, ,
\end{equation}
where $x^\nu$ (with $x\in \BC^\times, \nu\in \Lambda$) denotes the following
$2r\times 2r$ diagonal $z$-independent matrix:
\begin{equation}\label{eq:normalization factor}
  x^{\nu}=
  \mathrm{diag}\left(x^{\epsilon^\vee_1(\nu)},\ldots, x^{\epsilon^\vee_{r-1}(\nu)}, x^{\epsilon^\vee_r(\nu)},
  x^{{\epsilon'}^{\vee}_{r}(\nu)}, x^{{\epsilon'}^{\vee}_{r-1}(\nu)},\ldots, x^{{\epsilon'}^{\vee}_{1}(\nu)} \right)
\end{equation}
with
\begin{equation*}
  {\epsilon'}^{\vee}_i:=\epsilon^\vee_r+\epsilon^\vee_{r+1}-\epsilon^\vee_i
  \qquad \mathrm{for}\ 1\leq i\leq r \, .
\end{equation*}

\medskip

\begin{Rem}
In contrast to~\cite{fpt}, we note that the normalization factor $(-x_N)^{\gamma_N\varpi_{i_N}}$ appears
on the left of $T_D(z)$ in~\eqref{eq:limit relation}, due to our present choice~\eqref{eq:homom assignment}
of using $Z_k(z)$-factors in the $\Psi_D$-images of $F_k(z)$-currents rather than $E_k(z)$-currents,
cf.~Remark~\ref{rmk:relating to BFNb homom}.
\end{Rem}

\medskip
\noindent
In view of~\eqref{eq:limit relation}, $T_{D'}(z)$ can be constructed as a
\emph{normalized limit} of $T_D(z)$, hence we get:

\medskip

\begin{Cor}\label{cor:rational Lax via unshifted}
For any $\Lambda^+$-valued divisor $D$ on $\BP^1$ satisfying~(\ref{eq:assumption}),
the matrix $T_{D}(z)$ of~\eqref{eq:Lax explicit} is a normalized limit of $T_{\bar{D}}(z)$
with a $\Lambda^+$-valued divisor $\bar{D}$ satisfying $\bar{D}|_\infty=0$.
\end{Cor}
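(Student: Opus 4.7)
The plan is a direct induction using the normalized-limit relation~\eqref{eq:limit relation}. Given $D$ as in~\eqref{eq:divisor def1} with $\mu = D|_\infty \in \Lambda^+$, I first write $\mu = \sum_{i=0}^{r} c_i\varpi_i$ where $c_i \in \BN$ for $1\leq i\leq r$ and $c_0\in \BZ$. I then pick pairwise distinct points $\{y_{i,k}\}\subset \BC$, disjoint from $\{x_s\}_{s=1}^N$, indexed by pairs $(i,k)$ with $1\leq i\leq r$, $1\leq k\leq c_i$, together with auxiliary points $\{y_{0,k}\}_{1\leq k\leq |c_0|}$, and define
$$
  \bar{D}\, =\, \sum_{s=1}^N \gamma_s\varpi_{i_s}[x_s]\, +\, \sum_{i=1}^r \sum_{k=1}^{c_i} \varpi_i[y_{i,k}]\, +\, \sum_{k=1}^{|c_0|} \mathrm{sgn}(c_0)\varpi_0[y_{0,k}]\,.
$$
By construction $\bar{D}|_\infty = 0$, and since $\lambda_{\bar D} = \lambda + \mu$ (both in $\Lambda^+$ outside $\infty$ and at $\infty$ respectively), the assumption~\eqref{eq:assumption} for $D$ is automatically inherited by $\bar{D}$.

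Next I apply~\eqref{eq:limit relation} iteratively. Enumerate the extra points $\{y_{i,k}\}$ in any order, and send them one at a time to $\infty$. Each step replaces the current divisor $D^{(t)}$ by
$D^{(t+1)} = D^{(t)} - \gamma\varpi_i[y_{i,k}] + \gamma\varpi_i[\infty]$
(with $\gamma = \mathrm{sgn}(c_0)$ if $i = 0$ and $\gamma=1$ otherwise), and by~\eqref{eq:limit relation} yields
$$
  T_{D^{(t+1)}}(z)\, =\, \lim_{y_{i,k}\to\infty}\, \Bigl\{(-y_{i,k})^{\gamma\varpi_i}\cdot T_{D^{(t)}}(z)\Bigr\}.
$$
Every intermediate $D^{(t)}$ still satisfies~\eqref{eq:assumption}, since the sum $\lambda + \mu$ is invariant under relocating a summand between a finite point and $\infty$, so each $T_{D^{(t)}}(z)$ is well defined via~\eqref{eq:Lax definition}. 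After performing all $\sum_{i=1}^r c_i + |c_0|$ such steps, $D^{(0)} = \bar{D}$ has been transformed into $D^{(\text{final})} = D$.

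Finally, since all the normalization factors~\eqref{eq:normalization factor} are diagonal and $z$-independent, they commute with one another; the iterated limits can therefore be packaged into a single joint limit
$$
  T_D(z)\, =\, \lim_{\text{all } y_{i,k}\to\infty}\, \Bigl(\prod_{i,k} (-y_{i,k})^{\gamma_{i,k}\varpi_i}\Bigr)\cdot T_{\bar{D}}(z)\,,
$$
which exhibits $T_D(z)$ as a normalized limit of $T_{\bar{D}}(z)$ in the required sense. I do not anticipate any substantive obstacle here: the content of the corollary is entirely packaged in the previously established relation~\eqref{eq:limit relation}, and the only thing to verify is the bookkeeping above—namely that the explicit $\bar{D}$ constructed in the first paragraph satisfies both $\bar{D}|_\infty = 0$ and assumption~\eqref{eq:assumption}, which is straightforward.
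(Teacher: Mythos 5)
Your argument is correct and is essentially the paper's own: the corollary is obtained by iterating the normalized-limit relation~\eqref{eq:limit relation}, moving each summand of $\mu=\sum_i c_i\varpi_i$ from $[\infty]$ to a fresh finite point (which preserves both $\lambda+\mu$ and hence assumption~\eqref{eq:assumption}, and keeps each local coefficient in $\Lambda^+$ since $\bar\varpi_0=0$ and $\bar\varpi_i=\omega_i$). The only superfluous step is the final repackaging of the iterated limits into a single joint limit, which the statement does not require.
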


\medskip
\noindent
Note that the condition $\bar{D}|_\infty=0$ corresponds to the unshifted case ($\mu=0$),
in which case Theorem~\ref{thm:Dr=RTT-shifted-D} holds due to Theorem~\ref{thm:Dr=RTT-unshifted-D}.
Therefore, $T_{\bar{D}}(z)$ defined via~\eqref{eq:Lax explicit} can also be recovered
via~(\ref{eq:Theta-homom},~\ref{eq:Lax motivation}), hence, $T_{\bar{D}}(z)$ is Lax. Since multiplication by
$x^{\nu}$ preserves~(\ref{eq:rtt}) (due to Corollary~\ref{cor:YBE invariance} and the equality
$\epsilon^\vee_1+{\epsilon'}^\vee_1=\epsilon^\vee_2+{\epsilon'}^\vee_2=\ldots=\epsilon^\vee_r+{\epsilon'}^\vee_r$),
we finally obtain:

\medskip

\begin{Prop}\label{prop:preserving RTT}
For any $\Lambda^+$-valued divisor $D$ on $\BP^1$ satisfying the assumption~(\ref{eq:assumption}), the matrix
$T_D(z)$ defined via~(\ref{eq:Lax explicit}) is Lax, i.e.\ it satisfies the RTT relation~(\ref{eq:rtt}).
\end{Prop}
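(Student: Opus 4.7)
The plan is to bootstrap from the unshifted case, where $\Upsilon_0$ of Theorem~\ref{thm:Dr=RTT-unshifted-D} is known to be a genuine isomorphism, to the general shifted case via the normalized-limit description of $T_D(z)$ furnished by Corollary~\ref{cor:rational Lax via unshifted}.

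First I would treat the base case of a divisor $\bar{D}$ with $\bar{D}|_\infty=0$. For such $\bar{D}$ one has $\mu=0$, and the composition $\Theta_{\bar{D}}=\Psi_{\bar{D}}\circ \Upsilon_0^{-1}$ of~\eqref{eq:Theta-homom} is a bona fide $\BC$-algebra homomorphism, since $\Upsilon_0$ is invertible. Unwinding the Gauss-decomposition recipes of Lemmas~\ref{lem:all-H}--\ref{lem:all-F-new}, which express every matrix coefficient $e_{i,j},f_{j,i},h_\ell$ of $T(z)$ as an iterated commutator or rational expression in the generating currents, and substituting the explicit formulas~\eqref{eq:homom assignment}, one verifies that the matrix $T_{\bar{D}}(z)$ defined in~\eqref{eq:Lax explicit} coincides with $\Theta_{\bar{D}}(T(z))$ entry-by-entry. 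Since $T(z)$ satisfies the RTT relation in $X^\rtt(\sso_{2r})[[z^{-1}]]\otimes \End\,\BC^{2r}$ by definition, applying $\Theta_{\bar{D}}$ shows that $T_{\bar{D}}(z)$ is Lax.

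For a general $\Lambda^+$-valued divisor $D$ satisfying~\eqref{eq:assumption}, Corollary~\ref{cor:rational Lax via unshifted} expresses $T_D(z)$ as an iterated normalized limit~\eqref{eq:limit relation} of the unshifted Lax matrix $T_{\bar{D}}(z)$ constructed above. Each such limit multiplies the matrix on the left by a $z$-independent diagonal factor of the form~\eqref{eq:normalization factor}, and the key check is that its diagonal entries $\mathsf{t}_1,\ldots,\mathsf{t}_{2r}$ obey the pairing condition $\mathsf{t}_1\mathsf{t}_{2r}=\mathsf{t}_2\mathsf{t}_{2r-1}=\cdots=\mathsf{t}_r\mathsf{t}_{r+1}$ demanded by Corollary~\ref{cor:YBE invariance}. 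This reduces to the elementary identity $\epsilon^\vee_i(\nu)+{\epsilon'}^\vee_i(\nu)=(\epsilon^\vee_r+\epsilon^\vee_{r+1})(\nu)$, which is independent of $i\in\{1,\ldots,r\}$ and follows directly from the definition of ${\epsilon'}^\vee_i$. Since the RTT relation is a coefficient-wise polynomial identity in $\CA((z^{-1}))\otimes \End\,\BC^{2r}$, it survives the limits, so $T_D(z)$ is Lax.

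I do not anticipate serious obstacles here: the conceptual weight is already absorbed into Corollaries~\ref{cor:YBE invariance} and~\ref{cor:rational Lax via unshifted} together with Theorem~\ref{thm:Dr=RTT-unshifted-D}. The only genuine task is careful bookkeeping, namely verifying that the explicit formulas in Appendix~\ref{Appendix A: Lax explicitly} really do reproduce $\Theta_{\bar{D}}(T(z))$ matrix-entry by matrix-entry, so that the identification $T_{\bar{D}}(z)=\Theta_{\bar{D}}(T(z))$ in the base case is justified.
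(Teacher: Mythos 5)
Your proposal follows essentially the same route as the paper: establish the case $\bar{D}|_\infty=0$ by identifying $T_{\bar{D}}(z)$ with $\Theta_{\bar{D}}(T(z))$ using the invertibility of $\Upsilon_0$ from Theorem~\ref{thm:Dr=RTT-unshifted-D}, and then pass to general $D$ via the normalized limits of Corollary~\ref{cor:rational Lax via unshifted}, with Corollary~\ref{cor:YBE invariance} and the identity $\epsilon^\vee_i+{\epsilon'}^\vee_i=\epsilon^\vee_r+\epsilon^\vee_{r+1}$ guaranteeing that the diagonal prefactors preserve the RTT relation. This matches the paper's argument, including the observation that the only remaining work is the bookkeeping check that the explicit formulas reproduce $\Theta_{\bar{D}}(T(z))$, which the paper notes is how the definition of $T_D(z)$ was engineered in the first place.
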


\medskip


\subsubsection{Proof of the key isomorphism}
\label{sssec Proof via Lax matrices D}
\

Reversing the argument from the previous Subsection, we note that the Lax matrix
$T_D(z)$ (Proposition~\ref{prop:preserving RTT}) gives rise to the algebra homomorphism
$\Theta_D\colon X^\rtt_{-\mu}(\sso_{2r})\to \CA$, whose composition with the epimorphism
  $\Upsilon_{-\mu}\colon X_{-\mu}(\sso_{2r})\twoheadrightarrow X^\rtt_{-\mu}(\sso_{2r})$
of Proposition~\ref{prop:epimorphism of shifted Yangians} coincides with the homomorphism
$\Psi_D$~(\ref{eq:homom psi}). Thus, for $\mu\in \Lambda^+$ and any $\Lambda^+$-valued
divisor $D$ on $\BP^1$, see~\eqref{eq:divisor def1}, satisfying~(\ref{eq:assumption})
and $D|_\infty=\mu$, the homomorphism $\Psi_D$ does factor through $\Upsilon_{-\mu}$.
This observation immediately implies the injectivity of $\Upsilon_{-\mu}$,
due to the recent result of~\cite{w}:\footnote{Actually, we need the extended version
of Theorem~\ref{thm:alex's theorem} (now with the points $z_i$, cf.~\eqref{eq:pq vs wu},
ranging over all $\BC$), which nevertheless follows immediately from the
algebra decomposition~\eqref{eq:shifted decomposition} and the formula~\eqref{image of C-series}.}

\medskip

\begin{Thm}[\cite{w}]\label{thm:alex's theorem}
For any coweight $\nu$ of a semisimple Lie algebra $\fg$, the intersection of kernels
of the homomorphisms $\Phi^{\ast}_{-\nu}$ of~\cite[Theorem B.15]{bfnb} is zero:
  $\bigcap_{\lambda}\ \mathrm{Ker}(\Phi^{\lambda}_{-\nu})=0$,
where $\lambda$ ranges through all dominant coweights of $\fg$ such that
$\lambda+\nu=\sum a_i\alpha_i$ with $a_i\in \BN$, $\alpha_i$ being simple coroots
of $\fg$, and points $\{z_i\}$ of~\emph{loc.cit.}\ specialized to arbitrary complex parameters.
\end{Thm}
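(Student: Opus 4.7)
The plan is to follow the Poisson-geometric/filtration strategy that was carried out by Weekes. First, equip the (extended) shifted Yangian $Y_{-\nu}(\fg)$ with its natural loop-filtration, in which $\sH_i^{(k)},\sE_i^{(k)},\sF_i^{(k)}$ are placed in filtration degree $k$ (with suitable shifts to accommodate the negative modes of $\sH_i$ when $-b_i<0$). A PBW-type theorem for antidominantly shifted Yangians then identifies the associated graded $\on{gr} Y_{-\nu}(\fg)$ with a commutative Poisson algebra, which can moreover be matched, via the results of Braverman--Finkelberg--Nakajima (\cite{bfnb}) on Coulomb branches, with the coordinate ring of the generalized affine Grassmannian slice $\ol{\mathcal{W}}^{\infty}_{-\nu}:=\bigcup_\lambda \ol{\mathcal{W}}^{\lambda}_{\lambda-\nu}$ (where $\lambda$ ranges over all dominant coweights satisfying $\lambda+\nu\in\BN\text{-span of }\alpha_i$).

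Second, one checks that every GKLO homomorphism $\Phi^{\lambda}_{-\nu}$ is compatible with this filtration (each elementary factor in the formulas of~\cite[Theorem B.15]{bfnb} has a well-defined principal symbol), so that it induces a graded homomorphism $\on{gr}\Phi^{\lambda}_{-\nu}\colon \on{gr} Y_{-\nu}(\fg)\to\on{gr}\CA$. Geometrically, the associated graded map dualises to the closed embedding $\ol{\mathcal{W}}^{\lambda}_{\lambda-\nu}\hookrightarrow \ol{\mathcal{W}}^{\infty}_{-\nu}$ determined by the parameters $\{z_i\}$, so that $\on{Ker}(\on{gr}\Phi^{\lambda}_{-\nu})$ is precisely the (graded) vanishing ideal of this slice inside $\on{gr}Y_{-\nu}(\fg)\simeq \BC[\ol{\mathcal{W}}^\infty_{-\nu}]$.

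Third, invoke the geometric density statement: as $\lambda$ varies over all admissible dominant coweights and $\{z_i\}$ varies over $\BC$, the union of the closed subschemes $\ol{\mathcal{W}}^{\lambda}_{\lambda-\nu}\subset \ol{\mathcal{W}}^{\infty}_{-\nu}$ is Zariski-dense (in fact, they exhaust all symplectic leaves). Consequently the intersection of their defining ideals is zero, which gives $\bigcap_\lambda \on{Ker}(\on{gr}\Phi^{\lambda}_{-\nu})=0$. A standard filtration argument — if $a\in Y_{-\nu}(\fg)$ lies in every $\on{Ker}(\Phi^{\lambda}_{-\nu})$, its principal symbol lies in every $\on{Ker}(\on{gr}\Phi^{\lambda}_{-\nu})$, hence vanishes, forcing $a$ to lie in a lower filtration piece and allowing induction on filtration degree — then lifts the vanishing from the associated graded back to $Y_{-\nu}(\fg)$ itself.

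The main obstacle is the geometric density in Step 3: one has to verify that as $\lambda$ grows and the $z_i$'s vary, the corresponding slices really do cover a Zariski-dense subset of $\ol{\mathcal{W}}^\infty_{-\nu}$. This is delicate because in the antidominant setting $\ol{\mathcal{W}}^\infty_{-\nu}$ is typically infinite-dimensional, and one must argue stratum-by-stratum, relating the Poisson stratification to the transverse slice description from~\cite{bfnb}. The extension needed for our purposes — allowing $z_i$ to range over all of $\BC$ — is automatic from the algebra decomposition~\eqref{eq:shifted decomposition} together with the explicit formula~\eqref{image of C-series} for the image of the central series $C_r(z)$, which shows that shifting the $z_i$'s only twists $\Phi^\lambda_{-\nu}$ by a central character.
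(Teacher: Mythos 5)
You should first be aware that the paper does not prove this theorem at all: it is imported verbatim from Weekes's work~\cite{w} (still in preparation), and the only argument the authors supply is the footnote explaining that the \emph{extended} version (with the points $z_i$ ranging over all of $\BC$ and with the central series adjoined) reduces to the quoted statement via the decomposition~\eqref{eq:shifted decomposition} and the formula~\eqref{image of C-series}. Your closing paragraph correctly reproduces that reduction, which is the one piece of reasoning actually present in the paper.

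As a proof of the theorem itself, however, your proposal has a genuine gap exactly where you admit it does. Steps~1 and~2 rest on substantial external inputs that you invoke but do not establish: a PBW theorem for antidominantly shifted Yangians identifying $\on{gr}\,Y_{-\nu}(\fg)$ with a polynomial ring (this is~\cite[Corollary 3.15]{fkp} and its Cartan analogue), and the identification of $\on{gr}$ of the image of $\Phi^{\lambda}_{-\nu}$ with $\BC[\ol{\mathcal{W}}{}^{\lambda}_{\lambda-\nu}]$, which is the main theorem of~\cite[Appendix B]{bfnb} rather than a routine check of principal symbols. More importantly, Step~3 --- that the graded ideals of the slices $\ol{\mathcal{W}}{}^{\lambda}_{\lambda-\nu}$ inside $\on{gr}\,Y_{-\nu}(\fg)$ intersect trivially as $\lambda$ grows --- is precisely the content of the cited result; asserting ``the union is Zariski-dense'' (or ``they exhaust all symplectic leaves'') without an argument is a restatement of the theorem in geometric language, not a proof of it. In the antidominant setting the ambient object is an infinite-dimensional ind-scheme and the required injectivity of $\BC[\ol{\mathcal{W}}{}^{\infty}_{-\nu}]\to \varprojlim_{\lambda}\BC[\ol{\mathcal{W}}{}^{\lambda}_{\lambda-\nu}]$ is the delicate point. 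The surrounding filtration bookkeeping (lifting vanishing from the associated graded by induction on degree) is fine, but it only transfers the problem to the associated graded; it does not solve it. So your write-up is a faithful road map of the strategy one expects~\cite{w} to follow, but it cannot stand in for the missing reference.
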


\medskip
\noindent
This completes our proof of Theorem~\ref{thm:Dr=RTT-shifted-D}.
Combining this with Lemma~\ref{center of shifted yangians}(b), we obtain:

\medskip

\begin{Lem}\label{lem:shifted z-vs-C}
For any $\mu\in \Lambda^+$, the center of $X^\rtt_{-\mu}(\sso_{2r})$ is
a polynomial algebra in the coefficients $\{\sz_N^{(k)}\}_{k>d_r+d_{r+1}}$ of the series:
\begin{equation}\label{eq:shifted z-central}
  \sz_N(z)\ =\sum_{k\geq d_r+d_{r+1}}\sz_N^{(k)}z^{-k}=\Upsilon_{-\mu}(C_r(z))=\\
  \prod_{i=1}^{r-1}\frac{h_i(z+i-r)}{h_i(z+i-r+1)} \cdot h_r(z) h_{r+1}(z) \, .
\end{equation}
\end{Lem}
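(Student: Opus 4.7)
The plan is to deduce this as a direct corollary of the two results established just above: the isomorphism $\Upsilon_{-\mu}\colon X_{-\mu}(\sso_{2r})\iso X^\rtt_{-\mu}(\sso_{2r})$ of Theorem~\ref{thm:Dr=RTT-shifted-D}, and the description of the center of $X_{-\mu}(\sso_{2r})$ provided by Lemma~\ref{center of shifted yangians}(b). Any $\BC$-algebra isomorphism restricts to a bijection between centers, so the center of $X^\rtt_{-\mu}(\sso_{2r})$ is the $\Upsilon_{-\mu}$-image of the center of $X_{-\mu}(\sso_{2r})$, and the latter is already known to be a polynomial algebra in the coefficients $\{C_r^{(k)}\}_{k>d_r+d_{r+1}}$ of the series $C_r(z)$ from~\eqref{eq:central C shifted}.

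Second, I would compute the image $\Upsilon_{-\mu}(C_r(z))$ explicitly by applying $\Upsilon_{-\mu}$ term-wise to the defining factorization of $C_r(z)$ and invoking the identification $D_j(z)\mapsto h_j(z)$ of Proposition~\ref{prop:epimorphism of shifted Yangians} (i.e.~the shifted analogue of~\eqref{eq:explicit identification 2}). This yields
\begin{equation*}
  \Upsilon_{-\mu}(C_r(z)) = \prod_{i=1}^{r-1}\frac{h_i(z+i-r)}{h_i(z+i-r+1)} \cdot h_r(z)\, h_{r+1}(z) \,,
\end{equation*}
which I denote $\sz_N(z)$. Since the leading $z$-monomial of $C_r(z)$ is $z^{-d_r-d_{r+1}}$ with coefficient $1$, the same is true of its image, producing the expansion $\sz_N(z)=\sum_{k\geq d_r+d_{r+1}}\sz_N^{(k)}z^{-k}$ asserted in the lemma. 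Together with the preceding paragraph this identifies $\{\sz_N^{(k)}\}_{k>d_r+d_{r+1}}$ as a system of free polynomial generators for the center of $X^\rtt_{-\mu}(\sso_{2r})$.

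No substantive obstacle is present --- the heavy lifting was completed in Theorem~\ref{thm:Dr=RTT-shifted-D}. The only bookkeeping to verify is that the series inverses $D_i(z+i-r+1)^{-1}$ and $h_i(z+i-r+1)^{-1}$ make sense in the appropriate completions of $X_{-\mu}(\sso_{2r})$ and $X^\rtt_{-\mu}(\sso_{2r})$, and that $\Upsilon_{-\mu}$ intertwines these inversions; both are immediate from the fact that the leading $z$-monomials of the Cartan series $D_i(z+c)$ and $h_i(z+c)$ are monomial units with matching powers under the assignment $D_j\mapsto h_j$.
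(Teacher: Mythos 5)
Your proposal is correct and follows exactly the route the paper takes: the lemma is stated there as an immediate consequence of the isomorphism $\Upsilon_{-\mu}$ of Theorem~\ref{thm:Dr=RTT-shifted-D} combined with the description of the center of $X_{-\mu}(\sso_{2r})$ in Lemma~\ref{center of shifted yangians}(b), with the explicit formula for $\sz_N(z)$ obtained by applying $D_j(z)\mapsto h_j(z)$ to the factorization~\eqref{eq:central C shifted}. Your additional remarks on leading monomials and invertibility of the Cartan series are sensible bookkeeping that the paper leaves implicit.
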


\medskip
\noindent
The above argument can be also used to establish the \emph{crossing relation} for $X^\rtt_{-\mu}(\sso_{2r})$:

\medskip

\begin{Prop}\label{prop:crossymmetry D}
For any $\mu\in \Lambda^+$, the matrix $T(z)$ of~\eqref{eq:shifted T-matrix} satisfies:
\begin{equation}\label{eq:crossymetry}
  T(z)T'(z-\kappa)=T'(z-\kappa)T(z)=\sz_N(z)\ID_N \, .
\end{equation}
\end{Prop}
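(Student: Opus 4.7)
The plan is to establish the crossing relation by testing it against the family of homomorphisms $\Theta_D \colon X^\rtt_{-\mu}(\sso_{2r}) \to \CA$ attached to all $\Lambda^+$-valued divisors $D$ on $\BP^1$ satisfying~\eqref{eq:assumption} with $D|_\infty = \mu$. By Theorem~\ref{thm:Dr=RTT-shifted-D} combined with Theorem~\ref{thm:alex's theorem} applied through the composition $\Psi_D = \Theta_D \circ \Upsilon_{-\mu}$, the intersection of kernels of these $\Theta_D$ is trivial. Hence it will suffice to establish, for each such $D$, the matrix identity $T_D(z)\, T'_D(z-\kappa) = T'_D(z-\kappa)\, T_D(z) = \Psi_D(C_r(z))\cdot \ID_N$ in $\CA((z^{-1}))\otimes \End\, \BC^{2r}$, using that $\Theta_D(\sz_N(z)) = \Psi_D(C_r(z))$ via Lemma~\ref{lem:shifted z-vs-C}.

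I would first treat the base case $\mu = 0$: then~(\ref{eq:Theta-homom},~\ref{eq:Lax motivation}) realize $T_{\bar D}(z)$ as the image of the unshifted $T$-matrix under $\Psi_{\bar D} \circ \Upsilon_0^{-1}$, so the classical crossing relation~\eqref{eq:zenter} in $X^\rtt(\sso_{2r})$, combined with $\sz_N(z) = \Upsilon_0(C_r(z))$ from Remark~\ref{rmk:center comparison}, yields both desired equalities after applying $\Theta_{\bar D}$.

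For general $\mu$, I would then induct on the multiplicity of $\infty$ in the divisor, using the normalized-limit formula~\eqref{eq:limit relation} to move one summand $\gamma_N \varpi_{i_N}[x_N]$ of $D$ to the point at infinity, producing $D' = D - \gamma_N \varpi_{i_N}[x_N] + \gamma_N \varpi_{i_N}[\infty]$. Since antidiagonal transposition reverses products, multiplying the inductive identity $T_D(z)\, T'_D(z-\kappa) = \Psi_D(C_r(z))\,\ID_N$ on the left by $(-x_N)^{\gamma_N \varpi_{i_N}}$ and on the right by its antidiagonal transpose, then letting $x_N \to \infty$, will yield on the left the matrix $T_{D'}(z)\, T'_{D'}(z-\kappa)$, while on the right one picks up $\Psi_D(C_r(z))$ times the diagonal matrix $(-x_N)^{\gamma_N \varpi_{i_N}} \cdot \bigl((-x_N)^{\gamma_N \varpi_{i_N}}\bigr)'$, which is a scalar multiple of $\ID_N$ equal to $(-x_N)^{\gamma_N (\epsilon^\vee_r + \epsilon^\vee_{r+1})(\varpi_{i_N})}\,\ID_N$. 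The reversed product $T'(z-\kappa)T(z)$ is handled identically, as the two diagonal factors commute.

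The main, essentially bookkeeping, obstacle will be to verify that this scalar prefactor exactly cancels the $x_N$-dependence of $\Psi_D(C_r(z))$ relative to $\Psi_{D'}(C_r(z))$, so that the limit produces the clean identity $T_{D'}(z)\, T'_{D'}(z-\kappa) = \Psi_{D'}(C_r(z))\,\ID_N$. By the explicit formula~\eqref{image of C-series}, as $x_N \to \infty$ the factor $(z - x_N)^{\gamma_N} \sim (-x_N)^{\gamma_N}$ appearing inside $Z_{i_N}$ of~\eqref{ZW-series} enters twice if $i_N \leq r-2$ (from both $Z_{i_N}(z)$ and $Z_{i_N}(z+i_N-r+1)$) and once if $i_N \in \{r-1, r\}$, and these multiplicities precisely match $-(\epsilon^\vee_r + \epsilon^\vee_{r+1})(\varpi_{i_N})$ via~\eqref{eq:varpis}. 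Once this bookkeeping is confirmed, the induction closes and the proposition follows.
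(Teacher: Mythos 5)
Your proposal is correct and follows essentially the same route as the paper's proof: reduction via the faithfulness of the family $\{\Theta_D\}$ (Theorem~\ref{thm:alex's theorem} through the isomorphism $\Upsilon_{-\mu}$), the base case $D|_\infty=0$ from the unshifted relation~\eqref{eq:zenter}, and induction via the normalized limit~\eqref{eq:limit relation} with the scalar identity $x^{\varpi_i}\cdot(x^{\varpi_i})'=x^{-2+\delta_{i,r-1}+\delta_{i,r}}\ID_N$ cancelling against the $Z_{i_N}$-multiplicities in~\eqref{image of C-series}. Your final bookkeeping paragraph just makes explicit what the paper compresses into the citation of~(\ref{image of C-series},~\ref{eq:shifted z-central}).
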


\medskip

\begin{proof}
According to (the extended version of) Theorem~\ref{thm:alex's theorem}, it suffices to verify:
\begin{equation}\label{eq:crossymetry Lax}
  T_D(z)T'_D(z-\kappa)=T'_D(z-\kappa)T_D(z)=\Theta_D(\sz_N(z))\ID_N
\end{equation}
for any $\Lambda^+$-valued divisor $D$ on $\BP^1$ satisfying~\eqref{eq:assumption} and $D|_\infty=\mu$.
According to~\eqref{eq:zenter}, the equality~\eqref{eq:crossymetry Lax} obviously holds for $D$ such that $D|_\infty=0$.
Therefore, the validity of~\eqref{eq:crossymetry Lax} for any $D$ follows now from the ``normalized limit''
construction~\eqref{eq:limit relation}. To this end, using the notations of~\emph{loc.cit.},
the validity of~\eqref{eq:crossymetry Lax} for $D$ implies the one for $D'$ as follows from:
\begin{equation*}
\begin{split}
  & T_{D'}(z)T'_{D'}(z-\kappa) \ =
    \underset{x_N\to \infty}{\lim}\, \Big( (-x_N)^{\gamma_N\varpi_{i_N}}\cdot T_D(z)T'_D(z-\kappa)\cdot \left((-x_N)^{\gamma_N\varpi_{i_N}}\right)' \Big) = \\
  & \underset{x_N\to \infty}{\lim}\, \Big( (-x_N)^{\gamma_N\varpi_{i_N}}\cdot \left((-x_N)^{\gamma_N\varpi_{i_N}}\right)' \cdot \Theta_{D}(\sz_N(z)) \ID_N \Big) =
    \Theta_{D'}(\sz_N(z)) \ID_N\, ,
\end{split}
\end{equation*}
where the last equality follows from
\begin{equation*}
  x^{\varpi_i}\cdot \left(x^{\varpi_i}\right)' = x^{-2+\delta_{i,r-1}+\delta_{i,r}}\cdot\, \ID_N
\end{equation*}
and the explicit formulas~(\ref{image of C-series},~\ref{eq:shifted z-central}).
\end{proof}

\medskip


\subsubsection{Regularity of Lax matrices}
\label{sssec Lax regularity}
\

Consider the following normalized version of $T_D(z)$:
\begin{equation}\label{eq:renormalized Lax}
  \sT_D(z):=T_D(z)/Z_0(z) \,,
\end{equation}
with the normalization factor $Z_0(z)$ defined in~\eqref{ZW-series}.
The key property of these matrices is their regularity in (the spectral parameter) $z$:

\medskip

\begin{Thm}\label{Main Theorem 1}
We have $\sT_D(z)\in \CA[z]\otimes_\BC \End\, \BC^{2r}$.
\end{Thm}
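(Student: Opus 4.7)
The plan is to reduce the theorem first to the unshifted case $\mu=0$ via the renormalized limit construction~\eqref{eq:limit relation}, and then to ``minimal'' divisors via the coproduct factorization of Proposition~\ref{prop:shifted rtt coproduct}. By the explicit formulas in Appendix~\ref{Appendix A: Lax explicitly}, the matrix entries of $T_D(z)$ are rational functions in $z$ with possible poles only at $z=p_{i,k}$, $z=p_{i,k}+1$, and at zeros of $Z_0(z)$; dividing by $Z_0(z)$ removes the latter, so the content of the theorem is that the residues at the former two families of points actually cancel in the full Gauss product $F^D(z)H^D(z)E^D(z)$.

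Step 1 (reduction to $\mu=0$): any $D$ is obtained from some $\tilde{D}$ with $\tilde{D}|_\infty=0$ by iteratively moving finite points to infinity, each step passing from some $D$ to $D'=D-\gamma\varpi_{i_N}[x_N]+\gamma\varpi_{i_N}[\infty]$. Unpacking~\eqref{eq:limit relation} and distinguishing the cases $i_N\ne 0$ (where $Z_0^{D'}=Z_0^D$) from $i_N=0$ (where $Z_0^D=(z-x_N)^\gamma Z_0^{D'}$, and $x^{\varpi_0}=x^{-1}\cdot\ID_N$ by~\eqref{eq:normalization factor}), one sees that $\sT_{D'}(z)$ is the $x_N\to\infty$ limit of $(-x_N)^{\gamma\varpi_{i_N}}\sT_D(z)$, possibly up to a factor $(1-z/x_N)^\gamma\to 1$ when $i_N=0$. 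Since $(-x_N)^{\gamma\varpi_{i_N}}$ is a $z$-independent diagonal matrix and the degree of $\sT_D(z)$ in $z$ is uniformly bounded (by the total number $\sum_i a_i$ of $(p,q)$-oscillators, which is invariant under this operation), the limit preserves polynomiality, reducing the theorem to divisors with $D|_\infty=0$.

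Step 2 (factorization via coproduct): for $D$ supported on $\BA^1\subset\BP^1$, write $D=D_1+D_2$ with both $D_j$ being $\Lambda^+$-valued divisors satisfying~\eqref{eq:assumption} and $D_j|_\infty=0$. A direct check on the generators~\eqref{eq:rtt generators} combined with Proposition~\ref{prop:shifted rtt coproduct} shows that $\Theta_D=(\Theta_{D_1}\otimes\Theta_{D_2})\circ\Delta^\rtt_{0,0}$ after the natural identification of the oscillator algebras, whence $T_D(z)=T_{D_1}(z)\cdot T_{D_2}(z)$ as a matrix product in $(\CA_1\otimes\CA_2)((z^{-1}))\otimes\End\,\BC^{2r}$; since $Z_0^D=Z_0^{D_1}\cdot Z_0^{D_2}$, polynomiality is inherited from the factors. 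Iterating, the claim reduces to a handful of minimal divisors (small total weight and support), for which $\sT_D(z)$ admits a closed-form expression from Appendix~\ref{Appendix A: Lax explicitly} and polynomiality (of degree $1$ or $2$ in $z$) can be verified by direct inspection; these match the minuscule and quadratic Lax matrices of the physics literature~\cite{ikk,f,kk}.

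The main obstacle lies in Step~2: the parity and integrality constraints in~\eqref{eq:a explicitly} together with $(\epsilon^\vee_r+\epsilon^\vee_{r+1})(\lambda+\mu)=0$ (Remark~\ref{rmk:assumption explicitly}) are not preserved under an arbitrary splitting $D=D_1+D_2$, since for $D_r$ no single $\varpi_i[x]$ summand satisfies~\eqref{eq:assumption} on its own. One must therefore split $D$ carefully, grouping $\pm\varpi_0$ summands or pairing $\varpi_{r-1}+\varpi_r$ type terms so that each piece $D_j$ remains $\Lambda^+$-valued and satisfies the assumption. This combinatorial bookkeeping parallels~\cite[\S2.4]{fpt}.
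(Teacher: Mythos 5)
Your route is genuinely different from the paper's: the paper proves Theorem~\ref{Main Theorem 1} by a direct (admittedly tedious) verification that the poles at $z=p_{i,k}$ and $z=p_{i,k}+1$ cancel in the sum~\eqref{eq:Lax explicit}, using the closed formulas for $f^D_{j,i},h^D_i,e^D_{i,j}$ recorded in Appendix~\ref{Appendix A: Lax explicitly}; no limit or coproduct argument enters. Your Step~1 is sound as far as it goes: \eqref{eq:limit relation} together with the uniform degree bound $\deg_z \sT_D(z)\leq a_1$ (cf.\ Proposition~\ref{prop:linear Lax D}(a)) does reduce regularity to divisors with $D|_\infty=0$.

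The gap is in Step~2. The identity $\Theta_D=(\Theta_{D_1}\otimes\Theta_{D_2})\circ\Delta^\rtt_{0,0}$, equivalently $T_D(z)=T_{D_1}(z)\cdot T_{D_2}(z)$, is not ``a direct check on the generators'': on the RTT side it amounts to recomputing the Gauss decomposition of a product of two Gauss-decomposed matrices, and on the Drinfeld side it is the compatibility of $\Psi_D$ with $\Delta_{0,0}$, whose explicit form (Proposition~\ref{prop:shifted Drinfeld coproduct D-ext}) already involves the nontrivial term $\sum_{\gamma^\vee}(\tilde{\epsilon}^\vee_j,\gamma^\vee)E^{(1)}_{\gamma^\vee}\otimes F^{(1)}_{\gamma^\vee}$. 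In type $A$ the analogous factorization of $T_D(z)$ into elementary blocks is a theorem of~\cite{fpt} with its own proof; nothing of the sort is stated or proved in the present paper for types $B,C,D$, and there is moreover a domain issue, since the entries of $\Psi_D$ carry denominators $P_{i,k}(p_{i,k})$ mixing oscillators that you wish to distribute between $\CA_{D_1}$ and $\CA_{D_2}$. On top of this, the combinatorial obstacle you yourself flag is real and unresolved: in type $D_r$ a single summand $\varpi_j[x]$ with $j$ odd never satisfies~\eqref{eq:assumption} (e.g.\ $a_{r-1}=\tfrac{1}{2}$ for $\varpi_1[x]-\varpi_0[y]$), so a general assumption-satisfying $D$ need not split into assumption-satisfying pieces drawn from a bounded list, and the surviving ``minimal'' base cases ($\varpi_j$ for all even $j\leq r-2$, the spin cases, the quadratic ones) would each still require exactly the explicit computation from Appendix~\ref{Appendix A: Lax explicitly} that the paper's argument performs once and uniformly. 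As written, the proposal therefore does not constitute a proof.
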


\medskip
\noindent
This straightforward verification, based on the explicit formulas of Appendix~\ref{Appendix A: Lax explicitly},
is completely analogous (though is more tedious) to its type $A$ counterpart of~\cite[Theorem~2.67]{fpt}.

\medskip

\begin{Rem}\label{rmk:other orientations}
Similar to~\cite[Theorem B.15]{bfnb}, Theorem~\ref{thm:extended homom D} can be generalized
by constructing the homomorphisms $\Psi_D\colon X_{-\mu}(\sso_{2r})\to \CA$ for any orientation
of $D_r$ Dynkin diagram, so that $\Psi_D\circ \iota_{-\mu}$ is to $\Phi^{\bar{\lambda}}_{-\bar{\mu}}$
of~\cite{bfnb} as in Remark~\ref{rmk:relating to BFNb homom} (note that the images of $D_i(z)$
are independent of the orientation, hence, so is the image of $C_r(z)$,
see~(\ref{eq:central C shifted},~\ref{image of C-series})). However, extending $\CA$ to
its localization $\CA_{\loc}$ by the multiplicative set generated by
  $\{p_{i,k}-p_{j,\ell}+m\}_{k\leq a_i,\ell\leq a_{j}}^{m\in \BZ}$
with $(i,j)$ connected by an edge, these homomorphisms are compositions of~(\ref{eq:homom psi})
with algebra automorphisms of $\CA_{\mathrm{loc}}$. Thus, similar to~\cite[Remark 2.73]{fpt},
the resulting Lax matrices are equivalent, up to algebra automorphisms of $\CA_{\mathrm{loc}}$,
to the above $T_D(z)$, cf.~Remark~\ref{rmk:about other orientations}.
\end{Rem}

\medskip


\subsubsection{Linear Lax matrices}
\label{ssec linear Lax}
\

The regularity of Theorem~\ref{Main Theorem 1} provides a shortcut to the computation
of the Lax matrices $T_D(z)$ defined, in general, as a product of three complicated
matrices $F^{D}(z),H^D(z),E^D(z)$ in~\eqref{eq:Lax definition}. Let us illustrate this
in the case of the linear ones, i.e.\ those of degree $1$ in the spectral parameter $z$.
We shall use the following notations:
\begin{equation}\label{eq:modes}
  e^{D}_{i,j}(z)=\sum_{k\geq 1} e^{(D)k}_{i,j}z^{-k} \, ,\quad
  f^{D}_{j,i}(z)=\sum_{k\geq 1} f^{(D)k}_{j,i}z^{-k} \, ,\quad
  h^{D}_{i}(z)=\sum_{k\in \BZ} h^{(D)k}_{i}z^{-k} \, .
\end{equation}
Let us also recall the coefficients $a_i\in \BN$ from~(\ref{eq:assumption},~\ref{eq:a explicitly}).

\medskip

\begin{Prop}\label{prop:linear Lax D}
(a) The normalized Lax matrix $\sT_D(z)$ of~\eqref{eq:renormalized Lax} is linear iff $a_1=1$.

\medskip
\noindent
(b) Any linear normalized Lax matrix $\sT_D(z)$ is explicitly determined as follows:

\medskip
$\bullet$
The diagonal entries are:
\begin{equation}\label{eq:linear diagonal}
  \sT_{D}(z)_{i,i}=z\cdot h^{(D),-1}_i + h^{(D)0}_i\, , \qquad 1\leq i\leq 2r \, .
\end{equation}

\medskip
$\bullet$
The entries above the main diagonal are:
\begin{equation}\label{eq:linear upper}
  \sT_{D}(z)_{i,j}=
  \begin{cases}
     e^{(D)1}_{i,j} & \mbox{if } h^{(D),-1}_i \ne 0 \\
     0 & \mbox{otherwise}
  \end{cases} \, , \qquad 1\leq i<j\leq 2r \, .
\end{equation}

\medskip
$\bullet$
The entries below the main diagonal are:
\begin{equation}\label{eq:linear lower}
  \sT_{D}(z)_{j,i}=
  \begin{cases}
     f^{(D)1}_{j,i} & \mbox{if } h^{(D),-1}_i \ne 0 \\
     0 & \mbox{otherwise}
  \end{cases} \, , \qquad 1\leq i<j\leq 2r \, .
\end{equation}
\end{Prop}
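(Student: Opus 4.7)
My plan is to combine the regularity Theorem~\ref{Main Theorem 1} asserting $\sT_D(z)\in \CA[z]\otimes_\BC \End\, \BC^{2r}$ with explicit degree estimates on the Gauss decomposition $T_D(z)=F^D(z)H^D(z)E^D(z)$ from \eqref{eq:Lax explicit}. The key structural inputs are: the off-diagonal entries of $F^D(z), E^D(z)$ lie in $z^{-1}\CA[[z^{-1}]]$ by \eqref{eq:t-modes shifted}; the dominance $d_1\geq d_2\geq\cdots$ of \eqref{eq:d-inequalities}; and the identities $d_1=\epsilon^\vee_1(\mu)$ together with $\deg Z_0(z)=-\epsilon^\vee_1(\lambda)$.

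For part (a), the $(1,1)$-entry of $T_D(z)$ collapses to $h^D_1(z)$ in the Gauss sum \eqref{eq:Lax explicit}, so $\deg_z \sT_D(z)_{1,1}=d_1-\deg Z_0=\epsilon^\vee_1(\lambda+\mu)=a_1$. For an arbitrary entry $(\alpha,\beta)$, each term $f^D_{\alpha,k}(z)h^D_k(z)e^D_{k,\beta}(z)$ has $z$-degree at most $d_k\leq d_1$, with strict inequality whenever at least one of $f^D_{\alpha,k}, e^D_{k,\beta}$ is off-diagonal. Therefore $\deg_z \sT_D(z)=a_1$, and linearity is equivalent to $a_1=1$.

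For part (b), assume $a_1=1$. The central observation is that off-diagonal factors force subleading contributions: any term $f^D_{\alpha,k}(z)h^D_k(z)e^D_{k,\beta}(z)$ in which both $f$ and $e$ are off-diagonal has $z$-degree at most $d_k-2\leq d_1-2=\deg Z_0-1$, hence contributes only to $z^{-1}$ and lower modes of $\sT_D(z)$. Applied to a diagonal entry, this shows that the $z^1$ and $z^0$ coefficients of $\sT_D(z)_{i,i}$ are precisely those of $h^D_i(z)/Z_0(z)$, which (read off via the Gauss decomposition of $\sT_D$ itself) are the modes $h^{(D),-1}_i$ and $h^{(D)0}_i$; this gives \eqref{eq:linear diagonal}. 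For an off-diagonal upper entry $\sT_D(z)_{i,j}$ with $i<j$, only the $k=i$ term $h^D_i(z)e^D_{i,j}(z)/Z_0(z)$ can contribute to the $z^0$ mode, yielding the product of the leading coefficient $h^{(D),-1}_i$ of $h^D_i/Z_0$ with the first mode $e^{(D)1}_{i,j}$ of $e^D_{i,j}(z)$; this is \eqref{eq:linear upper}. The lower-triangular case \eqref{eq:linear lower} is strictly analogous using $f^D_{j,i}(z)h^D_i(z)/Z_0(z)$.

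The argument is essentially bookkeeping; the one subtle point is consistently identifying the modes of $\sT_D(z)$ after normalization by $Z_0$ with those labelled $h^{(D)k}_i$, which is why the off-diagonal formulas are gated by the condition $h^{(D),-1}_i\neq 0$: this simply tracks whether $h^D_i/Z_0$ has genuine degree $1$ or is a constant. No single step is difficult, and the proof closely parallels its type-$A$ analogue in~\cite{fpt}.
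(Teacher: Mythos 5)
Your argument is correct and follows essentially the same route as the paper's: part (a) via $\sT_D(z)_{1,1}=h^D_1(z)/Z_0(z)=P_1(z)$ of degree $a_1$ together with the dominance $\deg_z h^D_1\geq \deg_z h^D_i$ from~\eqref{eq:d-inequalities} and $\deg_z e^D_{i,j},\deg_z f^D_{j,i}<0$, and part (b) by combining the regularity of Theorem~\ref{Main Theorem 1} with the degree bookkeeping in~\eqref{eq:Lax explicit}. The paper states (b) more tersely ("follows immediately"), and you merely spell out the same counting, including the correct reading of $h^{(D),-1}_i\in\{0,1\}$ as the leading coefficient of the normalized diagonal entry.
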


\medskip

\begin{proof}
(a) As $\sT_D(z)_{1,1}=\frac{h^D_1(z)}{Z_0(z)}=P_1(z)$ is a polynomial in $z$ of degree $a_1$,
the condition $a_1\leq 1$ is necessary for $\sT_D(z)$ to be of degree $\leq 1$ in $z$.
On the other hand, $\deg_z h^{D}_1(z)\geq \deg_z h^{D}_i(z)$ for any $1<i\leq 2r$,
due to~\eqref{eq:d-inequalities}. Combining this with $\deg_z e^{D}_{i,j}(z),\deg_z f^D_{j,i}(z)<0$,
we conclude that $a_1\leq 1$ is also a sufficient condition for $\sT_D(z)$ to be of degree $\leq 1$ in $z$.
Moreover, $\sT_D(z)$ is actually of degree $<1$ in $z$ if and only if $a_1<1$.
This concludes our proof of part (a).

(b) This follows immediately from the regularity result of Theorem~\ref{Main Theorem 1} combined
with the formula~\eqref{eq:Lax explicit} and the observation that $\deg_z e^D_{i,j}(z),\deg_z f^D_{j,i}(z)<0$
for any $i<j$.
\end{proof}

\medskip
\noindent
Let us now describe all $\Lambda^+$-valued divisors $D$ on $\BP^1$ satisfying~(\ref{eq:assumption})
such that $\deg_z \sT_D(z)=1$. Define $\lambda,\mu\in \Lambda^+$ via~(\ref{eq:mu from divisor},~\ref{eq:lambda from divisor}),
so that $\lambda+\mu=\sum_{j=0}^r b_j\varpi_j$ with $b_0\in\BZ,\ b_1,\ldots,b_r\in \BN$.
Then, the assumption~\eqref{eq:assumption} implies:
\begin{equation}\label{eq:ab}
  \sum_{j=0}^{r} b_j\varpi_j\, =\, \sum_{i=1}^{r} a_i\hat{\alpha}_i
  \qquad \mathrm{with}\quad a_i\in \BN \, .
\end{equation}
Decomposing both sides of~\eqref{eq:ab} in the basis $\{\epsilon_i\}_{i=1}^{r+1}$,
we can express $b_j$'s via $a_i$'s:
\begin{equation}\label{eq: b via a}
\begin{split}
  & b_0=-a_1 \, ,\ b_1=2a_1-a_{2} \, ,\ b_i=-a_{i-1}+2a_i-a_{i+1} \quad \mathrm{for}\ 2\leq i\leq r-3 \, ,\\
  & b_{r-2}=-a_{r-3}+2a_{r-2}-a_{r-1}-a_{r} \, ,\ b_{r-1}=-a_{r-2}+2a_{r-1} \, ,\ b_{r}=-a_{r-2}+2a_{r} \, .
\end{split}
\end{equation}
Likewise, let us also express $a_i$'s via $b_j$'s:
\begin{equation}\label{eq:a via b}
\begin{split}
  & a_i = \sum_{k=0}^{i} (k-i)b_k =
    \sum_{k=1}^{i-1} kb_k+ i \sum_{k=i}^{r-2} b_k + \frac{i}{2}\left(b_{r-1}+b_r\right) \quad \mathrm{for}\ 1\leq i\leq r-2 \, ,\\
  & a_{r-1} = \frac{1}{2} \sum_{k=0}^{r-1} (k+2-r)b_k =
    \frac{1}{2}\left(\sum_{k=1}^{r-2}kb_k+\frac{r}{2}b_{r-1}+\frac{r-2}{2}b_{r}\right) \, ,\\
  & a_{r} = \frac{1}{2} \sum_{k=0}^{r} (k-r)b_k =
    \frac{1}{2}\left(\sum_{k=1}^{r-2}kb_k+\frac{r-2}{2}b_{r-1}+\frac{r}{2}b_{r}\right) \, ,
\end{split}
\end{equation}
where we used the equality (arising from the comparison of the coefficients of $\epsilon_r$ and $\epsilon_{r+1}$):
\begin{equation}\label{eq:b-null}
  b_0=-b_1-\ldots-b_{r-2}-\frac{1}{2}(b_{r-1}+b_r) \, .
\end{equation}
Note that~\eqref{eq:b-null} uniquely recovers $b_0$ in terms of $b_1,\ldots,b_r$
and forces $b_{r-1}+b_r$ to be even. We also note that the total number of pairs
of $(p,q)$-oscillators in the algebra $\CA$~\eqref{algebra A} equals:
\begin{equation}\label{eq:counting oscillators}
  \sum_{i=1}^r a_i = - \sum_{k=0}^{r} \frac{(r-k)(r-k-1)}{2} b_k =
  \sum_{k=1}^{r-2}k\left(r-\frac{k+1}{2}\right)b_k+\frac{r(r-1)}{4}\left(b_{r-1}+b_r\right) \, .
\end{equation}

\medskip
\noindent
Combining the above formulas~(\ref{eq:a via b},~\ref{eq:b-null}) with Proposition~\ref{prop:linear Lax D}(a),
we thus conclude that the normalized Lax matrix $\sT_D(z)$ is linear only for the
following configurations of $b_i$'s:
\begin{enumerate}
  \item[(1)] $b_0=-1,\ b_j=1,\ b_1=\ldots=b_{j-1}=b_{j+1}=\ldots=b_r=0$ for an even $1\leq j\leq r-2$,

  \item[(2)]
  $\begin{cases}
    b_0=-1,\ b_1=\ldots=b_{r-2}=0,\ b_{r-1}=b_r=1 & \mbox{if } r \ \mathrm{is\ odd} \\
    b_0=-1,\ b_1=\ldots=b_{r-2}=0,\ \{b_{r-1},b_r\}=\{0,2\} & \mbox{if } r \ \mathrm{is\ even}
  \end{cases}\, .$
\end{enumerate}
As $b_0$ is uniquely determined via~\eqref{eq:b-null} and does not affect the normalized
Lax matrix $\sT_D(z)$, we shall rather focus on the corresponding values of the dominant
$\sso_{2r}$-coweights $\bar{\lambda},\bar{\mu}\in \bar{\Lambda}^+$.

\medskip
\noindent
$\bullet$ \emph{\underline{Case (1)}:} $\bar{\lambda}+\bar{\mu}=\omega_j$ for even $1\leq j\leq r-2$.
\

In this case, we have $a_1=1,\ldots,a_{j-1}=j-1,a_j=\ldots=a_{r-2}=j,a_{r-1}=a_r=j/2$,
and the total number of pairs of $(p,q)$-oscillators is $\frac{j(2r-j-1)}{2}$,
see~(\ref{eq:a via b},~\ref{eq:counting oscillators}).

\medskip
\noindent
For $\bar{\lambda}=\omega_j,\bar{\mu}=0$ we get a \emph{non-degenerate}
Lax matrix with $z$ appearing on the entire diagonal:
\begin{equation}\label{eq:nondeg D1}
  \sT_{\varpi_j[x]-\varpi_0[y]}(z)=z(E_{11}+\ldots+E_{2r,2r})+O(1) \, ,
\end{equation}
depending on the additional parameter $x\in \BC$ (note that it is independent of the point $y\in \BP^1$).

\medskip
\noindent
The normalized limit~\eqref{eq:limit relation} of~\eqref{eq:nondeg D1} as $x\to \infty$ recovers
the Lax matrix corresponding to $\bar{\lambda}=0,\bar{\mu}=\omega_j$, which is \emph{degenerate}
as it contains $z$ only in the first $j$ diagonal entries:
\begin{equation}\label{eq:deg D1}
  \sT_{\varpi_j[\infty]-\varpi_0[y]}=z(E_{11}+\ldots+E_{jj})+O(1)
\end{equation}
and also satisfies:
\begin{equation}\label{eq:deg D1 others}
  \sT_{\varpi_j[\infty]-\varpi_0[y]}(z)_{k,k}=
  \begin{cases}
    1 & \mbox{if } j+1\leq k\leq (j+1)' \\
    0 & \mbox{if } j'\leq k\leq 1'
  \end{cases}\, .
\end{equation}

\medskip
\noindent
$\bullet$ \emph{\underline{Case (2) for odd $r$}:} $\bar{\lambda}+\bar{\mu}=\omega_{r-1}+\omega_r$.
\

In this case, we have $a_1=1,\ldots,a_{r-2}=r-2, a_{r-1}=a_r=\frac{r-1}{2}$,
and the total number of pairs of $(p,q)$-oscillators is $\frac{r(r-1)}{2}$,
see~(\ref{eq:a via b},~\ref{eq:counting oscillators}).

\medskip
\noindent
For $\bar{\lambda}=\omega_{r-1}+\omega_r,\bar{\mu}=0$ we get a \emph{non-degenerate}
Lax matrix with $z$ on the entire diagonal:
\begin{equation}\label{eq:nondeg D2}
  \sT_{\varpi_{r-1}[x_1]+\varpi_r[x_2]-\varpi_0[y]}(z)=z(E_{11}+\ldots+E_{2r,2r})+O(1) \, ,
\end{equation}
which depends on two additional parameters $x_1,x_2\in \BC$ (but is independent of $y\in \BP^1$).

\medskip
\noindent
The normalized limit~\eqref{eq:limit relation} of~\eqref{eq:nondeg D2} as $x_2\to \infty$
recovers the Lax matrix corresponding to $\bar{\lambda}=\omega_{r-1},\bar{\mu}=\omega_r$,
which is \emph{degenerate} as it contains $z$ only in the first $r$ diagonal entries:
\begin{equation}\label{eq:deg D2.1}
  \sT_{\varpi_{r-1}[x_1]+\varpi_r[\infty]-\varpi_0[y]}(z)=z(E_{11}+\ldots+E_{rr})+O(1)
\end{equation}
and also satisfies:
\begin{equation}\label{eq:deg D2.1 others}
  \sT_{\varpi_{r-1}[x_1]+\varpi_r[\infty]-\varpi_0[y]}(z)_{k,k}=1
  \qquad \mathrm{for}\quad r'\leq k\leq 1' \, .
\end{equation}
Likewise, the normalized limit of~\eqref{eq:nondeg D2} as $x_1\to \infty$ recovers
the Lax matrix corresponding to $\bar{\lambda}=\omega_{r},\bar{\mu}=\omega_{r-1}$,
which is \emph{degenerate} as it contains $z$ only in $r$ of its diagonal entries:
\begin{equation}\label{eq:deg D2.2}
  \sT_{\varpi_{r}[x_2]+\varpi_{r-1}[\infty]-\varpi_0[y]}(z)=z(E_{11}+\ldots+E_{r-1,r-1}+E_{r+1,r+1})+O(1)
\end{equation}
and also satisfies:
\begin{equation}\label{eq:deg D2.2 others}
  \sT_{\varpi_{r}[x_2]+\varpi_{r-1}[\infty]-\varpi_0[y]}(z)_{k,k}=1
  \qquad \mathrm{for}\quad k\in \{r,r+2,r+3,\ldots,2r\} \, .
\end{equation}
Finally, the normalized limit of~\eqref{eq:deg D2.1} as $x_1\to \infty$, or equivalently
of~\eqref{eq:deg D2.2} as $x_2\to \infty$, recovers the Lax matrix corresponding to
$\bar{\lambda}=0,\bar{\mu}=\omega_{r-1}+\omega_{r}$, which is even more \emph{degenerate}:
\begin{equation}\label{eq:deg D2.3}
  \sT_{\varpi_{r-1}[\infty]+\varpi_r[\infty]-\varpi_0[y]}(z)=z(E_{11}+\ldots+E_{r-1,r-1})+O(1)
\end{equation}
and also satisfying:
\begin{equation}\label{eq:deg D2.3 others}
  \sT_{\varpi_{r-1}[\infty]+\varpi_r[\infty]-\varpi_0[y]}(z)_{k,k}=
  \begin{cases}
    1 & \mbox{if } k=r,r' \\
    0 & \mbox{if } r'<k\leq 1'
  \end{cases}\, .
\end{equation}

\medskip
\noindent
$\bullet$ \emph{\underline{Case (2) for even $r$}:} $\bar{\lambda}+\bar{\mu}=2\omega_{r-1}$ or $2\omega_r$.
\

In this case, we have $a_1=1,\ldots,a_{r-2}=r-2$ and $\{a_{r-1},a_r\}=\{\frac{r}{2},\frac{r}{2}-1\}$,
and the total number of pairs of $(p,q)$-oscillators is again $\frac{r(r-1)}{2}$,
see~(\ref{eq:a via b},~\ref{eq:counting oscillators}).

\medskip
\noindent
For $\bar{\lambda}=2\omega_{r-1},\bar{\mu}=0$ we get a \emph{non-degenerate}
Lax matrix with $z$ on the entire diagonal:
\begin{equation}\label{eq:nondeg D3}
  \sT_{\varpi_{r-1}([x_1]+[x_2])-\varpi_0[y]}(z)=z(E_{11}+\ldots+E_{2r,2r})+O(1) \, ,
\end{equation}
which depends, in a symmetric way, on additional parameters $x_1,x_2\in \BC$, but not on $y\in \BP^1$.

\medskip
\noindent
The normalized limit~\eqref{eq:limit relation} of~\eqref{eq:nondeg D3} as $x_2\to \infty$ recovers
the Lax matrix corresponding to $\bar{\lambda}=\omega_{r-1},\bar{\mu}=\omega_{r-1}$, which is
\emph{degenerate} as it contains $z$ only in half of its diagonal entries:
\begin{equation}\label{eq:deg D3.1}
  \sT_{\varpi_{r-1}([x_1]+[\infty])-\varpi_0[y]}(z)=z(E_{11}+\ldots+E_{r-1,r-1}+E_{r+1,r+1})+O(1)
\end{equation}
and also satisfies:
\begin{equation}\label{eq:deg D3.1 others}
  \sT_{\varpi_{r-1}([x_1]+[\infty])-\varpi_0[y]}(z)_{k,k}=1
  \qquad \mathrm{for}\quad k\in \{r,r+2,r+3,\ldots,2r\} \, .
\end{equation}
Finally, the normalized limit~\eqref{eq:limit relation} of~\eqref{eq:deg D3.1} as $x_1\to \infty$
recovers the Lax matrix corresponding to $\bar{\lambda}=0,\bar{\mu}=2\omega_{r-1}$,
which also contains $z$ in half of its diagonal entries:
\begin{equation}\label{eq:deg D3.2}
  \sT_{2\varpi_{r-1}[\infty]-\varpi_0[y]}(z)=z(E_{11}+\ldots+E_{r-1,r-1}+E_{r+1,r+1})+O(1) \, ,
\end{equation}
but is more degenerate in the other diagonal entries:
\begin{equation}\label{eq:deg D3.2 others}
  \sT_{2\varpi_{r-1}[\infty]-\varpi_0[y]}(z)_{k,k}=0
  \qquad \mathrm{for}\quad k\in \{r,r+2,r+3,\ldots,2r\} \, .
\end{equation}

\medskip
\noindent
For $\bar{\lambda}=2\omega_{r},\bar{\mu}=0$ we get a \emph{non-degenerate}
Lax matrix with $z$ on the entire diagonal:
\begin{equation}\label{eq:nondeg D4}
  \sT_{\varpi_{r}([x_1]+[x_2])-\varpi_0[y]}(z)=z(E_{11}+\ldots+E_{2r,2r})+O(1) \, ,
\end{equation}
which depends, in a symmetric way, on additional parameters $x_1,x_2\in \BC$, but not on $y\in \BP^1$.

\medskip
\noindent
The normalized limit~\eqref{eq:limit relation} of~\eqref{eq:nondeg D4} as $x_2\to \infty$ recovers
the Lax matrix corresponding to $\bar{\lambda}=\omega_{r},\bar{\mu}=\omega_{r}$, which is
\emph{degenerate} as it contains $z$ only in half of its diagonal entries:
\begin{equation}\label{eq:deg D4.1}
  \sT_{\varpi_{r}([x_1]+[\infty])-\varpi_0[y]}(z)=z(E_{11}+\ldots+E_{rr})+O(1)
\end{equation}
and also satisfies:
\begin{equation}\label{eq:deg D4.1 others}
  \sT_{\varpi_{r}([x_1]+[\infty])-\varpi_0[y]}(z)_{k,k}=1
  \qquad \mathrm{for}\quad r'\leq k\leq 1' \, .
\end{equation}
Finally, the normalized limit~\eqref{eq:limit relation} of~\eqref{eq:deg D4.1} as $x_1\to \infty$
recovers the Lax matrix corresponding to $\bar{\lambda}=0,\bar{\mu}=2\omega_{r}$,
which also contains $z$ in half of its diagonal entries:
\begin{equation}\label{eq:deg D4.2}
  \sT_{2\varpi_{r}[\infty]-\varpi_0[y]}(z)=z(E_{11}+\ldots+E_{rr})+O(1) \,
\end{equation}
but is more degenerate in the other diagonal entries:
\begin{equation}\label{eq:deg D4.2 others}
  \sT_{2\varpi_{r}[\infty]-\varpi_0[y]}(z)_{k,k}=0
  \qquad \mathrm{for}\quad r'\leq k\leq 1' \, .
\end{equation}

\medskip
\noindent
We conclude this Subsection with the following important \emph{unitarity} property of the above
non-degenerate linear Lax matrices (recall the parameter $\kappa=r-1$, see~\eqref{eq:N,kappa,prime}),
cf.~\cite[(3.8)]{r}:

\medskip

\begin{Prop}\label{prop:property linear D}
(a) For any even $1\leq \jmath\leq r-2$, the corresponding non-degenerate linear Lax matrix
$L_\jmath(z):=\sT_{\varpi_\jmath[x]-\varpi_0[y]}\left(z+x+\frac{\kappa-\jmath}{2}\right)$ is unitary:
\begin{equation*}
  L_{\jmath}(z)L_{\jmath}(-z)=\left[ \left(\frac{\kappa-\jmath}{2}\right)^2 - \, z^2 \right] \ID_N \, .
\end{equation*}

\noindent
(b) Consider $D=\varpi_{\imath}[x_1]+\varpi_{\jmath}[x_2]-\varpi_0[y]$ with
$\imath=\jmath\in \{r-1,r\}$ if $r$ is even or $\{\imath,\jmath\}=\{r-1,r\}$ if $r$ is odd.
Then, the non-degenerate linear Lax matrix $L(z):=\sT_{D}\left(z+\frac{x_1+x_2}{2}\right)$ is unitary:
\begin{equation*}
  L(z)L(-z)=\left[ \left(\frac{x_1-x_2}{2}\right)^2 - \, z^2 \right] \ID_N \, .
\end{equation*}
\end{Prop}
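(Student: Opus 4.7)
The plan is to exploit the crossing relation of Proposition~\ref{prop:crossymmetry D}, specialized to the unshifted case. Both divisors in (a) and (b) satisfy $D|_\infty=0$, so $\mu=0$. Dividing~\eqref{eq:crossymetry} by $Z_0(w)Z_0(w-\kappa)$ yields the normalized crossing identity
\begin{equation*}
  \sT_D(w) \, \sT'_D(w-\kappa) \, =\, P(w)\,\ID_N \, , \qquad P(w) := \frac{\Theta_D(\sz_N(w))}{Z_0(w) Z_0(w-\kappa)} \, .
\end{equation*}
By Proposition~\ref{prop:linear Lax D} together with non-degeneracy, in both cases $\sT_D(w)=w\,\ID_N+M$ for some $w$-independent matrix $M\in \CA\otimes \on{End}\, \BC^{2r}$. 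Since $\ID'_N=\ID_N$, expanding the product above in powers of $w$ and matching against $P(w)=w^2+\alpha_1 w+\alpha_0$ gives the relations $M+M'=(\alpha_1+\kappa)\ID_N$ and $MM'-\kappa M=\alpha_0 \ID_N$; eliminating $M'$ produces the single quadratic identity $M^2=\alpha_1 M-\alpha_0 \ID_N$.

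Writing $L(z)=\sT_D(z+s)=z\,\ID_N+(M+s\,\ID_N)$ and using this quadratic relation, a direct expansion gives
\begin{equation*}
  L(z)\,L(-z)\, =\, -z^2\ID_N+(M+s\,\ID_N)^2\, =\, (s^2-\alpha_0-z^2)\,\ID_N\,+\,(\alpha_1+2s)\,M \, ,
\end{equation*}
so the unitarity claim reduces to the single scalar condition $s=-\alpha_1/2$, in which case the scalar multiplier of $\ID_N$ is automatically $s^2-\alpha_0-z^2$.

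It then remains to compute $P(w)$ from~\eqref{image of C-series}. Only the $Z_i$'s indexed by $i\in \{0,\jmath,r-1,r\}$ contribute non-trivially, and the $Z_0$-part is cancelled by the denominator $Z_0(w)Z_0(w-\kappa)$. The upshot is $P(w)=(w-x)(w-x-\kappa+\jmath)$ in case (a), and $P(w)=(w-x_1)(w-x_2)$ in all three sub-cases of (b). Reading off $\alpha_1, \alpha_0$ and substituting, the shifts $s=x+(\kappa-\jmath)/2$ and $s=(x_1+x_2)/2$ together with the scalar factors $((\kappa-\jmath)/2)^2-z^2$ and $((x_1-x_2)/2)^2-z^2$ are matched exactly. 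The main bookkeeping obstacle is verifying the uniform form of $P(w)$ in part (b), which reduces to observing that $Z_{r-1}(w)Z_r(w)$ assembles into $(w-x_1)(w-x_2)$ regardless of whether $(\imath,\jmath)$ equals $(r-1,r)$, $(r-1,r-1)$, or $(r,r)$.
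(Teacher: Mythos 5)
Your proof is correct, and the key input is the same as in the paper --- the crossing relation of Proposition~\ref{prop:crossymmetry D} in the form~\eqref{eq:crossymetry Lax}, normalized by $Z_0(w)Z_0(w-\kappa)$, together with the computation of $P(w)$ from~\eqref{image of C-series} (your values $(w-x)(w-x-\kappa+\jmath)$ and $(w-x_1)(w-x_2)$ agree with~\eqref{eq:TT' lin Lax} and its analogue). Where you genuinely diverge is in the second ingredient: the paper supplements the crossing relation with the antidiagonal-transpose symmetry $\sT'_{D}(z)=-\sT_{D}(-z+2x-\jmath)$, which it extracts from the explicit skew-symmetry of the linear Gauss coefficients ($h^{(D)0}_{i}=-h^{(D)0}_{i'}-2x+\jmath$, $e^{(D)1}_{i,j}=-e^{(D)1}_{j',i'}$, $f^{(D)1}_{j,i}=-f^{(D)1}_{i',j'}$, i.e.\ the Appendix~\ref{Appendix A: Lax explicitly} formulas), and then unitarity is an immediate substitution. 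You instead eliminate $M'$ between the two coefficient identities coming from $(w\ID_N+M)((w-\kappa)\ID_N+M')=P(w)\ID_N$ to obtain the Cayley--Hamilton-type relation $M^2=\alpha_1 M-\alpha_0\ID_N$, from which unitarity follows for the unique shift $s=-\alpha_1/2$. Your route is more self-contained --- it uses only the linearity of $\sT_D$ (Theorem~\ref{Main Theorem 1} plus Proposition~\ref{prop:linear Lax D}) and needs none of the explicit Appendix formulas --- at the cost of not producing the structural identity~\eqref{eq:T vs T'} as a byproduct; it also makes transparent why the prescribed shifts of the spectral parameter are the only ones for which unitarity can hold.
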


\medskip

\begin{Rem}
We note that such unitarity property can be regarded as a consequence of the natural constraints
that arise for a linear solution when inserted into the RTT relation~\eqref{eq:rtt}, see~\cite[(18)]{Kirsch}.
\end{Rem}

\medskip

\begin{proof}
(a) Combining Theorem~\ref{Main Theorem 1} and Proposition~\ref{prop:linear Lax D} with the equalities
\begin{equation*}
  h^{(D)0}_{i}=-h^{(D)0}_{i'}-2x+\jmath \, ,\quad
  e^{(D)1}_{i,j}=-e^{(D)1}_{j',i'}\, ,\quad
  f^{(D)1}_{j,i}=-f^{(D)1}_{i',j'} \, ,
\end{equation*}
due to~(\ref{eq:H-part Lax},~\ref{eq:E-linear-skew},~\ref{eq:F-linear-skew}), we obtain:
\begin{equation}\label{eq:T vs T'}
  \sT'_{\varpi_\jmath[x]-\varpi_0[y]}(z)=-\sT_{\varpi_\jmath[x]-\varpi_0[y]}(-z+2x-\jmath) \, .
\end{equation}
According to the crossing relation of Proposition~\ref{prop:crossymmetry D},
see formula~\eqref{eq:crossymetry Lax}, we also have:
\begin{equation}\label{eq:TT' lin Lax}
  \sT_{\varpi_\jmath[x]-\varpi_0[y]}(z)\sT'_{\varpi_\jmath[x]-\varpi_0[y]}(z-r+1)=(z-x)(z-x-r+\jmath+1)\ID_N \, .
\end{equation}
The result now follows by combining~(\ref{eq:T vs T'},~\ref{eq:TT' lin Lax}).

(b) The proof is completely analogous to that of part (a) and is left to the interested reader
(in particular, $h^{(D)0}_{i}=-h^{(D)0}_{i'}+r-1-x_1-x_2$).
\end{proof}

\medskip


\subsubsection{Examples}
\label{ssec examples}
\

In this Subsection, we explain how the type $D_r$ linear and quadratic Lax matrices
recently constructed by the first author in~\cite{f} arise as particular examples of
our general construction.


\medskip
\noindent
$\bullet$ \emph{\underline{Example 1}:}
Consider the following $\Lambda^+$-valued divisor on $\BP^1$:
\begin{equation}\label{eq:divisor D1}
  D=
  \begin{cases}
    \varpi_{r-1}[x]+\varpi_r[\infty]-\varpi_0[y] & \mbox{if } r\ \mathrm{is\ odd} \\[0.2cm]
    \varpi_{r}[x]+\varpi_r[\infty]-\varpi_0[y] & \mbox{if } r\ \mathrm{is\ even}
  \end{cases}
\end{equation}
depending on $x\in \BC$ (note that $\sT_D(z)$ is independent of $y\in \BP^1$),
so that the total number of $(p,q)$-oscillators in the algebra $\CA$ equals
$a_1+\ldots+a_r=\frac{r(r-1)}{2}$.

\medskip
\noindent
According to~(\ref{eq:deg D2.1}--\ref{eq:deg D2.1 others},~\ref{eq:deg D4.1}--\ref{eq:deg D4.1 others}), the corresponding
normalized Lax matrix $\sT_D(z)$ has the block form:
\begin{equation}\label{Matrix Example D1}
\sT_D(z)=
\begin{pmatrix}
  z\ID_{r}+\sF & \sB \\
  \sC & \ID_{r}
\end{pmatrix},
\end{equation}
where $\sB,\sC,\sF$ are $z$-independent $r\times r$ matrices. We have the following properties of $\sB,\sC$:

\medskip

\begin{Lem}\label{lem:BC-rel type D}
(a) The matrices $\sB,\sC$ are skew-symmetric with respect to their antidiagonals:
\begin{equation*}
  \sB_{ij}=-\sB_{r+1-j,r+1-i}\, ,\qquad \sC_{ij}=-\sC_{r+1-j,r+1-i}\, .
\end{equation*}
In particular, $\sB_{i,r+1-i}=\sC_{i,r+1-i}=0$ for any $1\leq i\leq r$.

\medskip
\noindent
(b) The matrix coefficients $\{\sB_{ij}\}_{i,j=1}^r$ of the matrix $\sB$ pairwise commute.

\medskip
\noindent
(c) The matrix coefficients $\{\sC_{ij}\}_{i,j=1}^r$ of the matrix $\sC$ pairwise commute.

\medskip
\noindent
(d) The commutation among the matrix coefficients of $\sB$ and $\sC$ is given by:
\begin{equation*}
  [\sB_{ij},\sC_{k\ell}]=\delta_{i,\ell}\delta_{j,k}-\delta_{i,r+1-k}\delta_{j,r+1-\ell} \, .
\end{equation*}
\end{Lem}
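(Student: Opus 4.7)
The plan is to derive all four parts from the two identities already available for the Lax matrix $\sT_D(z)$ of~\eqref{Matrix Example D1}: the RTT relation~\eqref{eq:rtt explicit} (Proposition~\ref{prop:preserving RTT}) and the crossing relation of Proposition~\ref{prop:crossymmetry D} (which survives normalization by the scalar $Z_0(z)$, since rescaling $T(z)$ by a scalar function preserves both). The decisive input is the block structure~\eqref{Matrix Example D1}: the top-right and bottom-left blocks are the $z$-independent matrices $\sB$ and $\sC$, the bottom-right block equals $\ID_r$, and only the top-left block is linear in $z$, of the form $z\ID_r+\sF$.

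For part (a), I would examine the $(i,j)$-entry of $\sT_D(z)\sT_D'(z-\kappa)=(\mathrm{scalar})\cdot\ID_{2r}$ for $1\le i\le r<j\le 2r$, which must vanish as it lies in an off-diagonal block. Using $(X')_{ab}=X_{b',a'}$ with $b'=2r+1-b$, the top-right block of $\sT_D'(z-\kappa)$ turns out to be a reordered copy of $\sB$ (with entries $\sB_{2r+1-\cdot,\,r+1-\cdot}$), and the bottom-right block is $(z-\kappa)\ID_r$ plus a reordered copy of $\sF$. Expanding the matrix product, the only contributions linear in $z$ come from pairing $z\ID_r$ in the top-left of $\sT_D(z)$ against the top-right of $\sT_D'(z-\kappa)$, and pairing the top-right $\sB$ of $\sT_D(z)$ against $(z-\kappa)\ID_r$ in the bottom-right of $\sT_D'(z-\kappa)$. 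Setting the coefficient of $z^1$ to zero yields $\sB_{i,j-r}+\sB_{r+1-(j-r),r+1-i}=0$, which is the required antidiagonal skew-symmetry of $\sB$. The relation for $\sC$ is obtained analogously from the $(i,j)$-entry with $r+1\le i\le 2r$ and $1\le j\le r$; no $z$-terms arise there, and vanishing of the constant part gives $\sC_{a,b}+\sC_{r+1-b,r+1-a}=0$ directly.

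For parts (b) and (c), the plan is simply to substitute the relevant entries into the RTT relation~\eqref{eq:rtt explicit}. In both cases the first rational term $\tfrac{1}{z-w}\bigl(t_{kj}(w)t_{i\ell}(z)-t_{kj}(z)t_{i\ell}(w)\bigr)$ collapses to zero because $t_{kj}$ and $t_{i\ell}$ themselves live in a $z$-independent block, so the numerator is an antisymmetric combination of identical constants. The second rational term vanishes identically because the Kronecker prefactors $\delta_{k,i'}$ and $\delta_{\ell,j'}$ would force one index into the half of $\{1,\ldots,2r\}$ opposite to its partner, which the chosen top-right/top-right (resp.\ bottom-left/bottom-left) index ranges preclude. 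Hence $[\sB_{ij},\sB_{k\ell}]=0$ and $[\sC_{ij},\sC_{k\ell}]=0$.

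Part (d) is the only computationally substantive case. I would apply~\eqref{eq:rtt explicit} to $[\sB_{ij},\sC_{k\ell}]=[t_{i,j+r}(z),t_{k+r,\ell}(w)]$, clear denominators by multiplying through by $(z-w)(z-w+\kappa)$, and compare the coefficients of $z^2$ on both sides. The left-hand side delivers $[\sB_{ij},\sC_{k\ell}]$, and on the right, the first rational term contributes $\delta_{j,k}\delta_{i,\ell}$ (from pairing the constant $t_{k+r,j+r}=\delta_{j,k}$ with the $z$-linear $t_{i,\ell}(z)=z\delta_{i,\ell}+\sF_{i,\ell}$), while the sums in the second rational term contribute $-\delta_{i,r+1-k}\delta_{j,r+1-\ell}$ via the unique $p$-summand in which a $z$-linear top-left entry meets the Kronecker prefactors $\delta_{k+r,i'}$ and $\delta_{\ell,(j+r)'}$. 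The main bookkeeping obstacle, and essentially the only real work in the proof, is correctly identifying which of the four blocks each of $t_{p,j+r}(z)$, $t_{p',\ell}(w)$, $t_{k+r,p'}(w)$, $t_{i,p}(z)$ belongs to as $p$ ranges from $1$ to $2r$; once that is organized, the $z^2$-coefficient comparison is immediate.
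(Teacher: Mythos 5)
Your proposal is correct, and for parts (b), (c), (d) it follows exactly the paper's route: substitute the block ansatz~\eqref{Matrix Example D1} into the RTT relation~\eqref{eq:rtt explicit}, note that the first rational term collapses because the relevant entries are $z$-independent (or, in (d), because $t_{r+k,r+j}=\delta_{j,k}$ pairs against the $z$-linear top-left entry to give $\delta_{i,\ell}\delta_{j,k}$), and that the prefactors $\delta_{k,i'}$, $\delta_{\ell,j'}$ either kill the $\Qop$-sums outright or isolate the single $p$-summand producing $-\delta_{i,r+1-k}\delta_{j,r+1-\ell}$; your block bookkeeping is precisely the computation the paper leaves implicit. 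The only genuine divergence is part (a): the paper reads the antidiagonal skew-symmetry directly off the first Fourier modes, $\sB_{ij}=e^{(D)1}_{i,r+j}$ combined with $e^{(D)1}_{i,j}=-e^{(D)1}_{j',i'}$ from~\eqref{eq:E-linear-skew}, which is a byproduct of the explicit formulas of Appendix~\ref{Appendix A: Lax explicitly}, whereas you extract it from the crossing relation of Proposition~\ref{prop:crossymmetry D} by comparing the $z$-linear (for $\sB$) and constant (for $\sC$) parts of the vanishing off-diagonal blocks of $\sT_D(z)\sT'_D(z-\kappa)$. Your route is self-contained modulo Proposition~\ref{prop:crossymmetry D} (which is established earlier and independently of this lemma, so there is no circularity) and avoids the explicit oscillator formulas; the paper's route is a one-line citation but leans on Appendix~A. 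Both arguments are valid, and your index description of the antitransposed block of $\sT'_D$ is only a cosmetic imprecision that does not affect the conclusion $\sB+\sB^{\mathrm{at}}=0$.
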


\medskip

\begin{proof}
(a) According to Proposition~\ref{prop:linear Lax D}, we have
$\sB_{ij}=e^{(D)1}_{i,r+j}, \sB_{r+1-j,r+1-i}=e^{(D)1}_{r+1-j,2r+1-i}$.
Combining this with $e^{(D)1}_{i,r+j}=-e^{(D)1}_{r+1-j,2r+1-i}$,
due to~\eqref{eq:E-linear-skew}, we obtain the desired equality $\sB_{ij}=-\sB_{r+1-j,r+1-i}$.
The proof of $\sC_{ij}=-\sC_{r+1-j,r+1-i}$ is completely analogous.

(b, c) Those follow immediately from the ansatz~\eqref{Matrix Example D1} and
the RTT relation~\eqref{eq:rtt explicit} applied to the evaluation of
$[\sB_{ij},\sB_{k\ell}]=[t_{i,r+j}(z),t_{k,r+\ell}(w)]$ or
$[\sC_{ij},\sC_{k\ell}]=[t_{r+i,j}(z),t_{r+k,\ell}(w)]$.

(d) This also follows from the ansatz~\eqref{Matrix Example D1} and the RTT relation~\eqref{eq:rtt explicit}.
Indeed, evaluating $[\sB_{ij},\sC_{k\ell}]=[t_{i,r+j}(z),t_{r+k,\ell}(w)]$ via~\eqref{eq:rtt explicit},
the first summand is easily seen to equal $\delta_{i,\ell}\delta_{j,k}$, while computing
the leading term of the second summand, we get $-\delta_{i,r+1-k}\delta_{j,r+1-\ell}$.
\end{proof}

\medskip
\noindent
It will be convenient to relabel the matrices $\sB,\sC$ as $\ap,-\am$, respectively:
\begin{equation}\label{eq:ApAm D-type}
\begin{split}
  \sB=&\ap=
    \left(\begin{array}{cccc}
          \oad_{1,r'} & \cdots & \oad_{1,2'} & 0\\
          \vdots & \iddots & \iddots & -\oad_{1,2'}\\
          \oad_{r-1,r'} & \iddots & \iddots & \vdots\\
          0 & -\oad_{r-1,r'} & \cdots & -\oad_{1,r'}
    \end{array}\right)\, ,\\[0.15cm]
  -\sC=&\am=
    \left(\begin{array}{cccc}
              \oa_{r',1} & \cdots & \oa_{r',r-1} & 0 \\
              \vdots & \iddots & \iddots & -\oa_{r',r-1} \\
              \oa_{2',1} & \iddots & \iddots & \vdots \\
              0 & -\oa_{2',1} & \cdots & -\oa_{r',1}
    \end{array}\right)\, ,
\end{split}
\end{equation}
with the matrix coefficients satisfying the following relations:
\begin{equation}\label{eq:aa-osc example D1}
  [\oa_{i',j},\oad_{k,\ell'}]=\delta_{i,\ell}\delta_{j,k}\, ,\quad
  [\oa_{i',j},\oa_{k',\ell}]=0\, ,\quad
  [\oad_{i,j'},\oad_{k,\ell'}]=0 \, ,
\end{equation}
due to Lemma~\ref{lem:BC-rel type D}.
Then, a tedious straightforward calculation (cf.~\cite[Theorem~2.133]{fpt}) yields:
\begin{equation}\label{Matrix Example D1 a-osc}
  \sT_D(z)=
  \left(\begin{BMAT}[5pt]{c:c}{c:c}
    (z+x)\ID_r-\ap\am & \ap\\
    -\am & \ID_r
  \end{BMAT}
\right)\,
\end{equation}
which can also be written in the following factorized form:
\begin{equation}\label{Matrix Example D1 factorized}
  \sT_D(z)=
  \left(\begin{BMAT}[5pt]{c:c}{c:c}
    \ID_r & \ap_{}\\
    0 & \ID_r
  \end{BMAT}\right)
  \left(\begin{BMAT}[5pt]{c:c}{c:c}
    (z+x) \ID_r & 0\\
    0 & \ID_r
  \end{BMAT}\right)
  \left(\begin{BMAT}[5pt]{c:c}{c:c}
    \ID_r & 0\\
    -\am_{} & \ID_r
  \end{BMAT}\right)\, .
\end{equation}

\medskip
\noindent
The type $D_r$ Lax matrix of the form~(\ref{Matrix Example D1 a-osc},~\ref{Matrix Example D1 factorized})
was recently discovered in~\cite[(4.3)]{f}.


\medskip
\noindent
$\bullet$ \emph{\underline{Example 2}:}
Consider the following $\Lambda^+$-valued divisor on $\BP^1$:
\begin{equation}\label{eq:divisor D2}
  D=
  \begin{cases}
     \varpi_{r-1}[x_1]+\varpi_r[x_2]-\varpi_0[y] & \mbox{if } r\ \mathrm{is\ odd} \\[0.2cm]
     \varpi_{r}[x_1]+\varpi_r[x_2]-\varpi_0[y] & \mbox{if } r\ \mathrm{is\ even}
  \end{cases}
\end{equation}
depending on $x_1,x_2\in \BC$ (while $\sT_D(z)$ does not depend of $y\in \BP^1$),
so that the total number of $(p,q)$-oscillators in the algebra $\CA$ equals
$a_1+\ldots+a_r=\frac{r(r-1)}{2}$.

\medskip
\noindent
We expect that the normalized non-degenerate linear Lax matrix $\sT_{D}(z)$,
see~(\ref{eq:nondeg D2},~\ref{eq:nondeg D4}), is equivalent, up to a (nontrivial)
canonical transformation, to the Lax matrix $\mathcal{L}(z)$ of~\cite[(5.4)]{f}
(cf.~\cite[(3.6)]{r}). The latter was defined via:
\begin{equation}\label{Matrix Example D2 a-osc 2}
  \mathcal{L}(z)=
  \left(\begin{BMAT}[5pt]{c:c}{c:c}
    (z+x_1) \ID_r -\ap_{}\am_{} & \ap_{}(x_2-x_1+\am_{}\ap_{}) \\
     -\am_{} & (z+x_2) \ID_r+\am_{}\ap_{}
  \end{BMAT} \right)
\end{equation}
with the matrices $\ap,\am$ as in~\eqref{eq:ApAm D-type} encoding $\frac{r(r-1)}{2}$ pairs
of oscillators~\eqref{eq:aa-osc example D1}, which can also be written in the following factorized form:
\begin{equation}\label{Matrix Example D2 a-osc 1}
  \mathcal{L}^{}(z)=
  \left(\begin{BMAT}[5pt]{c:c}{c:c}
    \ID_r & \ap_{}\\
    0 & \ID_r
  \end{BMAT}\right)
  \left(\begin{BMAT}[5pt]{c:c}{c:c}
    (z+x_1) \ID_r & 0\\
    -\am_{} & (z+x_2)\ID_r
  \end{BMAT}\right)
  \left(\begin{BMAT}[5pt]{c:c}{c:c}
    \ID_r & -\ap_{}\\
    0 & \ID_r
  \end{BMAT}\right)\, .
\end{equation}


\medskip
\noindent
$\bullet$ \emph{\underline{Example 3}:}
Consider the following $\Lambda^+$-valued divisor on $\BP^1$:
\begin{equation}\label{eq:divisor D3}
  D=\varpi_1([x]+[\infty])-\varpi_0([y_1]+[y_2])
\end{equation}
depending on $x\in \BC$ (while $\sT_D(z)$ does not depend of $y_1,y_2\in \BP^1$),
so that the total number of $(p,q)$-oscillators in the algebra $\CA$ equals
$a_1+\ldots+a_r=2(r-1)$.

\medskip
\noindent
We expect that the normalized quadratic Lax matrix $\sT_{D}(z)$ is equivalent, up to a
(nontrivial) canonical transformation, to the Lax matrix $\mathsf{L}(z+x)$ of~\cite[(4.12)]{f}.
The latter was defined via:
\begin{equation}\label{Matrix Example D3 a-osc}
  \mathsf{L}(z)=
  \left(\begin{BMAT}[5pt]{c|c|c}{c|c|c}
    z^2+z(2-\tfrac{N}{2}-\wp\wm)+\tfrac{1}{4}\wp\idb\wp^t\wm^t\idb\wm&z\wp-\tfrac{1}{2}\wp\idb\wp^t\wm^t\idb&-\tfrac{1}{2}\wp\idb\wp^t\\\
    -z\wm+\tfrac{1}{2}\idb\wp^t\wm^t\idb\wm&z\ID-\idb\wp^t\wm^t\idb&-\idb\wp^t\\
    -\tfrac{1}{2}\wm^t\idb\wm&\wm^t\idb&1\\
  \end{BMAT}\right)
\end{equation}
with
\begin{equation}\label{eq:J-matrix}
  \ID=\ID_{N-2}=
  \left(\begin{array}{cccc}
              1 & \cdots & 0 & 0 \\
              \vdots & \ddots & \vdots & \vdots \\
              0 & \cdots & 1 & 0 \\
              0 &\cdots & 0 & 1
  \end{array}\right) \, , \qquad
  \idb=\idb_{N-2}=
  \left(\begin{array}{cccc}
              0 & \cdots & 0 & 1 \\
              \vdots & \iddots & \iddots & 0 \\
              0 & \iddots & \iddots & \vdots \\
              1 & 0 & \cdots & 0
  \end{array}\right) \, ,
\end{equation}
and $\wm,\wp$ encoding $N-2=2(r-1)$ pairs of oscillators:
\begin{equation}\label{eq:wmwp D2}
  \wp=(\oad_{2},\ldots,\oad_{r},\oad_{r'},\ldots,\oad_{2'})\, ,\qquad
  \wm=(\oa_{2},\ldots,\oa_{r},\oa_{r'},\ldots,\oa_{2'})^t \, ,
\end{equation}
so that
\begin{equation}\label{eq:oaoad D4}
  [\oa_i,\oad_j]=\delta_{i,j}\, ,\qquad [\oa_i,\oa_j]=0\, ,\qquad [\oad_i,\oad_j]=0 \, .
\end{equation}

\medskip
\noindent
The matrix $\mathsf{L}(z)$ of~\eqref{Matrix Example D3 a-osc} can also be written
in the following factorized form, see~\cite[(4.15)]{f}:
\begin{multline}\label{Matrix Example D3 a-osc factroized}
  \mathsf{L}(z)=\\
  \left(\begin{BMAT}[5pt]{c|c|c}{c|c|c}
    1 & \wp & -\tfrac{1}{2}\wp\idb\wp^t\\\
    0 & \ID & -\idb\wp^t\\
    0 & 0 & 1\\
  \end{BMAT}\right)
  \left(\begin{BMAT}[5pt]{c|c|c}{c|c|c}
    z(z-\tfrac{N}{2}+2) & 0 & 0\\\
    0 & z\ID & 0\\
    0 & 0 & 1\\
  \end{BMAT}\right)
  \left(\begin{BMAT}[5pt]{c|c|c}{c|c|c}
    1 & 0 & 0 \\
    -\wm & \ID & 0 \\
    -\tfrac{1}{2}\wm^t\idb\wm & \wm^t\idb & 1\\
  \end{BMAT}\right) \, .
\end{multline}


\medskip
\noindent
$\bullet$ \emph{\underline{Example 4}:}
Consider the following $\Lambda^+$-valued divisor on $\BP^1$:
\begin{equation}\label{eq:divisor D4}
  D=\varpi_1([x_1]+[x_2])-\varpi_0([y_1]+[y_2])
\end{equation}
depending on $x_1,x_2\in \BC$ (while $\sT_D(z)$ does not depend of $y_1,y_2\in \BP^1$),
so that the total number of $(p,q)$-oscillators in the algebra $\CA$ equals
$a_1+\ldots+a_r=2(r-1)$.

\medskip
\noindent
We expect that the normalized quadratic Lax matrix $\sT_{D}(z)$ is equivalent, up to a (nontrivial)
canonical transformation, to the Lax matrix $\mathfrak{L}_{x_1,-x_2}(z+x_1)$ of~\cite[(5.36, 5.38)]{f}
(see Remark~\ref{rem:reation to Reshetikhin} where a relation to~\cite[(3.11)]{r} is discussed).
The latter was defined via:
\begin{equation}\label{Matrix Example D4 a-osc factroized}
  \mathfrak{L}_{x_1,x_2}(z)=
  \left(\begin{BMAT}[5pt]{c|c|c}{c|c|c}
    1 & \wp & -\tfrac{1}{2}\wp\idb\wp^t\\\
    0 & \ID & -\idb\wp^t\\
    0 & 0 & 1\\
  \end{BMAT}\right)
  \cdot\, D_{x_1,x_2}(z)\, \cdot
  \left(\begin{BMAT}[5pt]{c|c|c}{c|c|c}
    1 & -\wp & -\tfrac{1}{2}\wp\idb\wp^t\\\
    0 & \ID & \idb\wp^t\\
    0 & 0 & 1\\
  \end{BMAT}\right)
\end{equation}
with $\ID,\idb,\wm,\wp$ as in~(\ref{eq:J-matrix})--(\ref{eq:oaoad D4}) and the middle factor explicitly given by:
\begin{equation*}
  D_{x_1,x_2}(z)=
  \left(\begin{BMAT}[5pt]{c|c|c}{c|c|c}
    (z-x_1)(z-x_1-\tfrac{N}{2}+2) & 0 & 0\\\
    -\wm_{}(z-x_1) & (z-x_1)(z-x_2)\ID & 0\\
    -\tfrac{1}{2}\wm^t\idb\wm & \wm^t\idb(z-x_2) & (z-x_2)(z-x_2-\tfrac{N}{2}+2)\\
  \end{BMAT}\right) \, .
\end{equation*}

\medskip
\noindent
We conclude this Subsection with the following observation:

\medskip

\begin{Rem}
Note that the following degeneration phenomena observed in~\cite[(5.11, 5.42)]{f}:
\begin{itemize}
  \item[(1)]
    degeneration of the Lax matrix~\eqref{Matrix Example D2 a-osc 2}
    into the one of~\eqref{Matrix Example D1 a-osc}
  \item[(2)]
    degeneration of the Lax matrix~\eqref{Matrix Example D4 a-osc factroized}
    into the one of~\eqref{Matrix Example D3 a-osc}
\end{itemize}
exactly agree with our general normalized limit construction~\eqref{eq:limit relation}.
\end{Rem}

\medskip


\section{Type C}\label{sec Rational Lax matrices C-type}
\label{sec C-type}

The type $C_r$ is completely similar to the type $D_r$, which we considered in details above.
Thus, we'll be brief, only stating the key results and highlighting the few technical differences.


\subsection{Classical (unshifted) story}
\label{ssec unshifted story C}
\

We shall realize the simple positive roots $\{\alphavee_i\}_{i=1}^r$ of
the Lie algebra $\ssp_{2r}$ in $\bar{\Lambda}^\vee$ via:
\begin{equation}\label{eq:alpha-vee C}
  \alphavee_1=\epsilon^\vee_1-\epsilon^\vee_2\, ,\
  \alphavee_2=\epsilon^\vee_2-\epsilon^\vee_3\, ,\ \ldots\, ,\
  \alphavee_{r-1}=\epsilon^\vee_{r-1}-\epsilon^\vee_r\, ,\
  \alphavee_r=2\epsilon^\vee_r\, .
\end{equation}

\medskip
\noindent
The \emph{Drinfeld Yangian} of $\ssp_{2r}$, denoted by $Y(\ssp_{2r})$, is defined similarly to $Y(\sso_{2r})$:
it is generated by $\{\sE_i^{(k)},\sF_i^{(k)},\sH_i^{(k)}\}_{1\leq i\leq r}^{k\geq 1}$ subject to
the defining relations~\eqref{Y0}--\eqref{Y7}, with $\alphavee_i$ of~\eqref{eq:alpha-vee C}.
The \emph{extended Drinfeld Yangian} of $\ssp_{2r}$, denoted by $X(\ssp_{2r})$,
is defined alike $X(\sso_{2r})$: it is generated by
  $\{E_i^{(k)},F_i^{(k)}\}_{1\leq i\leq r}^{k\geq 1}\cup \{D_i^{(k)}\}_{1\leq i\leq r+1}^{k\geq 1}$
subject to~(\ref{eY0})--(\ref{eY7}) with the modification:
\begin{equation}\label{eY23-C}
\begin{split}
  & [D_{r+1}(z), E_{r-1}(w)] =
    \frac{D_{r+1}(z)(E_{r-1}(z-2)-E_{r-1}(w))}{z-w-2} \, ,\\
  & [D_{r+1}(z), F_{r-1}(w)] =
    -\frac{(F_{r-1}(z-2)-F_{r-1}(w))D_{r+1}(z)}{z-w-2} \, .
\end{split}
\end{equation}
The central elements $\{C_r^{(k)}\}_{k\geq 1}$ of $X(\ssp_{2r})$ are now defined via (cf.~\eqref{eq:central C}):
\begin{equation}\label{eq:central C type C}
  C_r(z)=1+\sum_{k\geq 1} C_r^{(k)}z^{-k}:=
  \prod_{i=1}^{r-1}\frac{D_i(z+i-r-2)}{D_i(z+i-r-1)} \cdot D_r(z-2) D_{r+1}(z) \, .
\end{equation}
Furthermore, a natural analogue of Lemma~\ref{lem:embedding} holds with
$\iota_0\colon Y(\ssp_{2r})\hookrightarrow X(\ssp_{2r})$ given by:
\begin{equation}\label{eq:iota-null explicitly C}
\begin{split}
  & \sE_i(z)\mapsto
    \begin{cases}
      E_{i}(z+\frac{i-1}{2}) & \mbox{if } i<r \\
      E_{r}(z+\frac{r}{2}) & \mbox{if } i=r
    \end{cases} \, , \qquad
    \sF_i(z)\mapsto
    \begin{cases}
      F_i(z+\frac{i-1}{2}) & \mbox{if } i<r \\
      F_r(z+\frac{r}{2}) & \mbox{if } i=r
    \end{cases} \, , \\
  & \sH_i(z)\mapsto
    \begin{cases}
      D_i(z+\frac{i-1}{2})^{-1}D_{i+1}(z+\frac{i-1}{2}) & \mbox{if } i<r \\
      D_{r}(z+\frac{r}{2})^{-1}D_{r+1}(z+\frac{r}{2}) & \mbox{if } i=r
    \end{cases} \, .
\end{split}
\end{equation}

\medskip
\noindent
The \emph{extended RTT Yangian} of $\ssp_{2r}$, denoted by $X^\rtt(\ssp_{2r})$, is defined similarly
to $X^\rtt(\sso_{2r})$: it is generated by $\{t_{ij}^{(k)}\}_{1\leq i,j\leq N}^{k\geq 1}$
($N=2r$) subject to the RTT relation~\eqref{eq:rtt} with the $R$-matrix $R(z)$ given
by~\eqref{eq:R-matrix}, but with the following modifications of $\kappa\in \BC$ and
$\Qop\in \mathrm{End}\, \BC^N \otimes\, \mathrm{End}\, \BC^N$:
\begin{equation}\label{eq:varepsilons}
  \kappa=r+1 \, , \qquad
  \Qop\ = \sum_{i,j=1}^{N} \varepsilon_i \varepsilon_j\, E_{ij}\otimes E_{i'j'} \,
  \quad \mathrm{with} \quad
  \varepsilon_i=
  \begin{cases}
     1 & \mbox{if } 1\leq i\leq r \\
     -1 & \mbox{if } r'\leq i\leq 1'
  \end{cases} \, .
\end{equation}
The \emph{RTT Yangian} of $\ssp_{2r}$, denoted by $Y^\rtt(\ssp_{2r})$, is defined alike $Y^\rtt(\sso_{2r})$:
it is the subalgebra of $X^\rtt(\ssp_{2r})$ consisting of the elements stable under the automorphisms~\eqref{eq:f-autom}.
However, it can be also realized as a quotient of $X^\rtt(\ssp_{2r})$ as in~\eqref{eq:killing center}, due to
the natural analogue of~\eqref{eq:extended to usual}, where the center $ZX^\rtt(\ssp_{2r})$
of $X^\rtt(\ssp_{2r})$ is explicitly described as a polynomial algebra in the coefficients
$\{\sz_N^{(k)}\}_{k\geq 1}$ of the series $\sz_N(z)=1+\sum_{k\geq 1} \sz_N^{(k)}z^{-k}$ determined from:
\begin{equation}\label{eq:zenter C}
  T'(z-\kappa)T(z)= T(z)T'(z-\kappa)=\sz_N(z)\ID_N \, ,
\end{equation}
where in the present setup the matrix transposition~\eqref{eq:prime} should be redefined via:
\begin{equation}\label{eq:prime C}
  (X')_{ij}=\varepsilon_i \varepsilon_j X_{j'i'}
  \quad \mathrm{for\ any} \quad
  N\times N\ \mathrm{matrix}\ X \, .
\end{equation}

\medskip
\noindent
In the notations of Subsection~\ref{sssec: RTT-to-Drinfeld-D}, the analogue of
Theorem~\ref{thm:Dr=RTT-unshifted-D} still holds, explicitly:
\begin{equation}\label{eq:explicit extended identification C}
  \Upsilon_0 \colon \quad
  E_k(z)\mapsto
  \begin{cases}
    e_{k,k+1}(z) & \mbox{if } k<r \\
    \frac{e_{r,r+1}(z)}{2} & \mbox{if } k=r
  \end{cases} \, , \quad
  F_i(z)\mapsto f_{i+1,i}(z) \, , \quad D_j(z)\mapsto h_j(z)
\end{equation}
for all $i\leq r$, $j\leq r+1$. Hence, a natural analogue of Theorem~\ref{thm:JLM Main thm} holds
with $\Upsilon_0\circ \iota_0$ given~by:
\begin{equation}\label{eq:explicit identification C}
\begin{split}
  & \sE_i(z)\mapsto
    \begin{cases}
      e_{i,i+1}(z+\frac{i-1}{2}) & \mbox{if } i<r \\
      \frac{1}{2} e_{r,r+1}(z+\frac{r}{2}) & \mbox{if } i=r
    \end{cases} \, , \qquad
    \sF_i(z)\mapsto
    \begin{cases}
      f_{i+1,i}(z+\frac{i-1}{2}) & \mbox{if } i<r \\
      f_{r+1,r}(z+\frac{r}{2}) & \mbox{if } i=r
    \end{cases} \, , \\
  & \sH_i(z)\mapsto
    \begin{cases}
      h_i(z+\frac{i-1}{2})^{-1}h_{i+1}(z+\frac{i-1}{2}) & \mbox{if } i<r \\
      h_{r}(z+\frac{r}{2})^{-1}h_{r+1}(z+\frac{r}{2}) & \mbox{if } i=r
    \end{cases} \, .
\end{split}
\end{equation}
We note that our conventions are to those of~\cite{jlm1} as in type $D_r$,
see Remark~\ref{rmk:comparison to JLM} for details.

\medskip
\noindent
Accordingly, the algebra $X^\rtt(\ssp_{2r})$ is generated by the coefficients
of $\{h_j(z)\}_{j=1}^{r+1}$ as well as of:
\begin{equation}\label{eq:generating ef C}
  e_i(z)=\sum_{k\geq 1} e_i^{(k)}z^{-k}:=e_{i,i+1}(z) \, , \quad
  f_i(z)=\sum_{k\geq 1} f_i^{(k)}z^{-k}:=f_{i+1,i}(z) \, , \quad 1\leq i\leq r \, .
\end{equation}
We shall now record the explicit formulas for all other entries of the matrices $F(z), H(z), E(z)$
from~\eqref{eq:Gauss-D}--\eqref{eq:upper triangular}.
The following result is essentially due to~\cite{jlm1}\footnote{Note the missing summands
in the equalities from parts (g, m) as stated in~\cite{jlm1}.}:

\medskip

\begin{Lem}\label{lem:known type C}
(a) $h_{i'}(z)=\frac{1}{h_i(z+i-r-1)}\cdot \prod_{j=i+1}^{r-1} \frac{h_j(z+j-r-2)}{h_j(z+j-r-1)}
               \cdot h_r(z-2) h_{r+1}(z)$ for $1\leq i\leq r-1$.

\medskip
\noindent
(b) $e_{(i+1)',i'}(z)=-e_i(z+i-r-1)$ for $1\leq i\leq r-1$.

\medskip
\noindent
(c) $e_{i,j+1}(z)=-[e_{i,j}(z),e_j^{(1)}]$ for $1\leq i<j\leq r-1$.

\medskip
\noindent
(d) $e_{i,j'}(z)=[e_{i,(j+1)'}(z),e_j^{(1)}]$ for $1\leq i<j \leq r-1$.

\medskip
\noindent
(e) $e_{i,r'}(z)=-\frac{1}{2}[e_{i,r}(z),e_r^{(1)}]$ for $1\leq i\leq r-1$.

\medskip
\noindent
(f) $e_{i',j'}(z) = [e_{i',(j+1)'}(z),e_{j}^{(1)}]$ for $1\leq j\leq i-2 \leq r-2$.

\medskip
\noindent
(g) $e_{i,i'}(z) = [e_{i,(i+1)'}(z),e_{i}^{(1)}]-e_{i}(z)e_{i,(i+1)'}(z)$ for $1\leq i\leq r-1$.

\medskip
\noindent
(h) $f_{i',(i+1)'}(z)=-f_{i}(z+i-r-1)$ for $1\leq i\leq r-1$.

\medskip
\noindent
(i) $f_{j+1,i}(z)=-[f_j^{(1)},f_{j,i}(z)]$ for $1\leq i<j\leq r-1$.

\medskip
\noindent
(j) $f_{j',i}(z)=[f_j^{(1)},f_{(j+1)',i}(z)]$ for $1\leq i<j \leq r-1$.

\medskip
\noindent
(k) $f_{r',i}(z)=-\frac{1}{2}[f_r^{(1)},f_{r,i}(z)]$ for $1\leq i\leq r-1$.

\medskip
\noindent
(l) $f_{j',i'}(z) = [f_{j}^{(1)},f_{(j+1)',i'}(z)]$ for $1\leq j\leq i-2\leq r-2$.

\medskip
\noindent
(m) $f_{i',i}(z) = [f_{i}^{(1)},f_{(i+1)',i}(z)]-f_{(i+1)',i}(z)f_{i}(z)$ for $1\leq i\leq r-1$.
\end{Lem}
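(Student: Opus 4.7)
The plan is to adapt the arguments proving Lemmas \ref{lem:all-H}, \ref{lem:all-E-known}, \ref{lem:all-E-new}, \ref{lem:all-F-known}, \ref{lem:all-F-new} in the type $D_r$ setup, with the following two key differences that must be tracked carefully. First, in type $C_r$ the matrix transposition \eqref{eq:prime C} carries the sign $\varepsilon_i\varepsilon_j$, and the $R$-matrix parameter is $\kappa=r+1$ rather than $r-1$, which shifts the argument of $h_i$ in part (a) and of $e_i$, $f_i$ in parts (b), (h) by $-2$ instead of $+1$. Second, the simple coroot $\alphavee_r=2\epsilon^\vee_r$ is long, so the identification \eqref{eq:explicit extended identification C} sends $E_r(z)\mapsto e_{r,r+1}(z)$ with a factor $\tfrac{1}{2}$ appearing between $\sE_r$ and $e_{r,r+1}$, but $e_r(z):=e_{r,r+1}(z)$ in our notation \eqref{eq:generating ef C}; consequently the relations \eqref{eY2.2}, \eqref{eY3.2} get modified as in \eqref{eY23-C}, and this is precisely what produces the coefficient $\tfrac{1}{2}$ in parts (e) and (k).

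The first step is to establish the type $C_r$ analogue of Corollary \ref{cor:JLM reduction}: the subalgebra generated by the matrix coefficients of the truncated matrix $T^{[s]}(z):=F^{[s]}(z)H^{[s]}(z)E^{[s]}(z)$ (with sizes $2(r-s)\times 2(r-s)$) is isomorphic to $X^\rtt(\ssp_{2(r-s)})$. This result is proved in \cite{jlm1} for type $C$ exactly as for type $D$. Armed with it, each of the identities (b)--(f), (h)--(l) need only be verified for the smallest admissible index ($i=1$, or $j=r$, etc.), in which case the argument reduces to a direct computation inside $X^\rtt(\ssp_{2r})$ for small $r$.

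Next, parts (a), (b), (h) are obtained by comparing matrix coefficients of the crossing relation $T'(z-\kappa)T(z)=\sz_N(z)\ID_N$ from \eqref{eq:zenter C}. Part (a) follows from comparing the $(N,N)$ entry together with the type $C$ analogue of \eqref{eq:JLM-center}, namely $\sz_N(z)=\Upsilon_0(C_r(z))$ with $C_r(z)$ given by \eqref{eq:central C type C}. Parts (b) and (h) follow by comparing the $(N-1,N)$ and $(N,N-1)$ entries respectively, combined with the RTT computation of $[h_1(z),e_{1,2}(z')]$ and $[h_1(z),f_{2,1}(z')]$ via \eqref{eq:rtt explicit}; note that the presence of the signs $\varepsilon_i\varepsilon_j$ in \eqref{eq:prime C} does not affect these particular entries since none of the indices $1, 2, N-1, N$ straddle the middle of the range asymmetrically.

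The identities (c), (d), (f), (i), (j), (l) are established by evaluating the $w^{-1}$-coefficient of the relevant instance of \eqref{eq:rtt explicit} applied respectively to $[t_{1j}(z),t_{j,j+1}(w)]$, $[t_{1,(j+1)'}(z),t_{(j+1)',j'}(w)]$, and their lower-triangular counterparts, exactly as in the proof of Lemma \ref{lem:all-E-known}(d,e,g) and Lemma \ref{lem:all-E-new}(c). The half-factor formulas (e) and (k) arise from the same scheme applied to $[t_{i,r}(z),t_{r,r+1}(w)]$ and $[t_{r+1,r}(w),t_{r,i}(z)]$, but when one expresses the commutator $[e_{i,r}(z),e_r^{(1)}]$ in terms of $e_{i,r'}$, the normalization $e_r^{(1)}=2E_r^{(1)}$ forced by \eqref{eq:explicit extended identification C} produces the $\tfrac{1}{2}$; equivalently, one may simply compute $[t_{i,r}(z),t_{r,r+1}^{(1)}]$ and read off the factor from $t_{r,r+1}^{(1)}=e_{r,r+1}^{(1)}=e_r^{(1)}$ versus $e_{(r+1)',r'}^{(1)}=-e_r^{(1)}$ (cf.\ part (b)). Finally parts (g) and (m) are proved exactly as Lemma \ref{lem:all-E-new}(a) and Lemma \ref{lem:all-F-new}(a): by comparing the $w^{-1}$-coefficient of $[t_{1,2r-1}(z),t_{2r-1,2r}(w)]$ and using $[h_1(z),e_1^{(1)}]=-h_1(z)e_1(z)$ (which follows from \eqref{eq:rtt explicit} applied to $[t_{11}(z),t_{12}(w)]$ with no modification in type $C$, since indices $1,2$ do not produce contributions from the $\Qop$-term).

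The main obstacle is bookkeeping the sign conventions and the coefficient $\tfrac{1}{2}$ in the parts (e), (g), (k), (m) involving the long simple root $\alphavee_r$, which is why the formulas in \cite{jlm1} contained the errors flagged in the footnote: one must consistently keep track of whether a given matrix coefficient corresponds to $E_r$ (Drinfeld generator) or $e_{r,r+1}$ (Gauss factor) and apply the factor $\tfrac{1}{2}$ appropriately, and whether a given index $i'=N+1-i$ triggers the $\varepsilon$ sign in \eqref{eq:prime C}. No deeper idea is needed beyond the type $D$ proofs.
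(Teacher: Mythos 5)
Your proposal is correct and takes essentially the same route as the paper, which states this lemma without proof as ``essentially due to \cite{jlm1}'' and relies on transporting the type-$D_r$ arguments exactly as you describe: the type-$C$ analogue of Corollary~\ref{cor:JLM reduction} to reduce to the smallest index, the crossing relation \eqref{eq:zenter C} with $\kappa=r+1$ and the signs \eqref{eq:prime C} for parts (a), (b), (h), and $w^{-1}$-coefficient extractions from the type-$C$ version of \eqref{eq:rtt explicit} for the rest. One small slip worth flagging: in your discussion of parts (e), (k) the asserted identity $e^{(1)}_{(r+1)',r'}=-e^{(1)}_r$ is false (since $(r+1)'=r$ and $r'=r+1$ this entry is $e^{(1)}_r$ itself, and part (b) does not cover $i=r$), and the factor $\tfrac12$ actually arises because for $\ell=r+1=r'$ both the $\Pop$- and $\Qop$-terms of the RTT relation contribute, yielding $[t_{i,r}(z),t^{(1)}_{r,r+1}]=-2\,t_{i,r'}(z)$ --- but your alternative direct computation of this commutator is the correct one and the proof goes through.
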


\medskip
\noindent
The remaining matrix coefficients of $E(z)$ and $F(z)$ are recovered via the following analogues
of Lemmas~\ref{lem:all-E-new} and~\ref{lem:all-F-new}:

\medskip

\begin{Lem}\label{lem:new type C}
(a) $e_{i,j'}(z) = [e_{i,(j+1)'}(z),e_{j}^{(1)}]$ for $1\leq j\leq i-2 \leq r-2$.

\medskip
\noindent
(b) $e_{i+1,i'}(z) = [e_{i+1,(i+1)'}(z),e_{i}^{(1)}] + e_{i}(z)e_{i+1,(i+1)'}(z)-e_{i,(i+1)'}(z)$ for $1\leq i\leq r-1$.

\medskip
\noindent
(c) $f_{j',i}(z) = [f_{j}^{(1)},f_{(j+1)',i}(z)]$ for $1\leq j\leq i-2 \leq r-2$.

\medskip
\noindent
(d) $f_{i',i+1}(z) = [f_{i}^{(1)},f_{(i+1)',i+1}(z)] + f_{(i+1)',i+1}(z)f_{i}(z)-f_{(i+1)',i}(z)$ for $1\leq i\leq r-1$.
\end{Lem}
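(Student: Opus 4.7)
The plan is to adapt the proofs of Lemmas~\ref{lem:all-E-new} and~\ref{lem:all-F-new} (from type $D_r$) to the type $C_r$ setting. First, I would invoke the type $C_r$ analogue of Corollary~\ref{cor:JLM reduction} (the rank-reduction coming from the embeddings $X^\rtt(\ssp_{2(r-s)})\hookrightarrow X^\rtt(\ssp_{2r})$ via Gauss-decomposed submatrices), which reduces parts~(a),(c) to the case $j=1$ and parts~(b),(d) to the case $i=1$, with residual rank allowed to be as small as $2$. This reduction absorbs the new boundary case $i=r-1$ of parts~(b),(d), which has no direct analogue in type $D_r$ (where the vanishing $e_{r,r+1}(z)=0$ truncated the range to $1\leq i\leq r-2$).

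For each reduced case, I would evaluate the appropriate commutator of matrix entries of $T(z)$ using the $C_r$-type RTT relation~\eqref{eq:rtt explicit} (now with $\kappa=r+1$ and a signed quantum correction coming from $\Qop$ of~\eqref{eq:varepsilons}), and extract the $w^{-1}$-coefficient. Concretely, for part~(b) at $i=1$ I would use $[t_{2,2r-1}(z),t_{2r-1,2r}(w)]$; for part~(a) at $j=1$ I would use $[t_{i,2r-1}(z),t_{2r-1,2r}(w)]$ and induct on~$i$; parts~(c),(d) are handled symmetrically via the ``transposed'' commutators $[t_{2r-1,2}(z),t_{2r,2r-1}(w)]$ and $[t_{2r-1,i}(z),t_{2r,2r-1}(w)]$. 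The key identification $t_{2r-1,2r}^{(1)}=e_{2r-1,2r}^{(1)}=-e_1^{(1)}$ from Lemma~\ref{lem:known type C}(b) turns the right commutator into a commutator with $e_1^{(1)}$.

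The next step is to expand each $t_{k,2r-1}(z)$ via the Gauss decomposition $T(z)=F(z)H(z)E(z)$ and push the commutator with $e_1^{(1)}$ through the product, using auxiliary relations such as $[h_1(z),e_1^{(1)}]=-h_1(z)e_1(z)$ and $[f_{k,1}(z)h_1(z),e_1^{(1)}]=h_1(z)-t_{kk}(z)$, obtained by specializing the RTT relation to $2\times 2$ principal minors exactly as in the type $D$ proof. Collecting the terms and invoking Lemma~\ref{lem:known type C} for the prior identities then yields the four target formulas. Part~(a) in particular proceeds by induction on $i$, with the inductive hypothesis feeding back into the Gauss-decomposition expansion of $t_{i,2r-1}(z)$.

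The main technical obstacle is the sign bookkeeping inside the $\Qop$-correction: the factors $\varepsilon_p=\pm 1$ in the sum $\sum_p \varepsilon_p t_{pj}(z)t_{p'\ell}(w)$ depend on whether $p$ lies in the upper or lower half of $\{1,\ldots,2r\}$, and the value $\kappa=r+1$ (versus $r-1$ in type~$D$) alters the position of the pole in the RTT relation. Fortunately, the $w^{-1}$-coefficient we extract is insensitive to the precise value of $\kappa$ (only the leading $\frac{1}{z-w}$-behaviour contributes to this order, while the $\Qop$-contribution at order $w^{-1}$ is a finite sum of bilinears in the $t_{ab}$), and the relevant $\varepsilon$-signs combine to $+1$ in the dominant summands. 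Consequently the type $D_r$ arguments transfer essentially verbatim, with only the final bookkeeping verifying that the four formulas in Lemma~\ref{lem:new type C} indeed coincide with their type $D_r$ counterparts as stated.
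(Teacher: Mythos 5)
Your proposal is correct and is essentially the paper's own (implicit) argument: the paper supplies no separate proof of Lemma~\ref{lem:new type C}, introducing it only as the analogue of Lemmas~\ref{lem:all-E-new} and~\ref{lem:all-F-new}, and your transplantation of those type $D_r$ proofs --- rank reduction via the $C_r$ version of Corollary~\ref{cor:JLM reduction}, extraction of $w^{-1}$-coefficients from the commutators $[t_{i,2r-1}(z),t_{2r-1,2r}(w)]$ with $t^{(1)}_{2r-1,2r}=-e_1^{(1)}$, Gauss-decomposition expansion, and induction on $i$ for part (a) --- is exactly what is intended, including your observations that the $w^{-1}$-coefficient is insensitive to $\kappa$ and that $\varepsilon_1\varepsilon_2=+1$ disposes of the sign issue in the single surviving $\Qop$-summand. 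One small correction: your auxiliary identity $[f_{k,1}(z)h_1(z),e_1^{(1)}]=h_1(z)-t_{kk}(z)$ holds only for $k=2$; for $k\geq 3$ the RTT relation instead gives $[t_{k,1}(z),e_1^{(1)}]=-t_{k,2}(z)$, which is the relation the induction in part (a) actually requires (cf.\ the display preceding~\eqref{eq:e-new-3c}).
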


\medskip


\subsection{Shifted story}
\label{ssec shifted story C}
\

We shall use the same \emph{extended} lattice $\Lambda^\vee$,
but $\{\hat{\alpha}^\vee_i\}_{i=1}^r$ of $\Lambda^\vee$ are now defined via:
\begin{equation}\label{eq:hat-alpha-vee type C}
  \hat{\alpha}^\vee_i=\epsilon^\vee_i-\epsilon^\vee_{i+1} \qquad \mathrm{for} \quad 1\leq i\leq r \, .
\end{equation}
We shall also use the same notation for the dual lattice
  $\Lambda=\bigoplus_{j=1}^{r+1} \BZ\epsilon_j=\bigoplus_{i=0}^{r} \BZ\varpi_i$
with
\begin{equation}\label{eq:varpis type C}
  \varpi_i=-\epsilon_{i+1}-\epsilon_{i+2}-\ldots-\epsilon_{r+1}
  \qquad \mathrm{for} \quad 0\leq i\leq r \, .
\end{equation}
For $\mu\in \Lambda$, define $\unl{d}=\{d_j\}_{j=1}^{r+1}\in \BZ^{r+1}, \unl{b}=\{b_i\}_{i=1}^{r}\in \BZ^{r}$
via~(\ref{extended D shifts},~\ref{extended D shifts 1}); so $b_i=d_i-d_{i+1}\ \forall i$.

\medskip
\noindent
The \emph{shifted extended Drinfeld Yangian of $\ssp_{2r}$}, denoted by $X_\mu(\ssp_{2r})$,
is defined similarly: it is generated by
  $\{E_i^{(k)},F_i^{(k)}\}_{1\leq i\leq r}^{k\geq 1}\cup \{D_i^{(k_i)}\}_{1\leq i\leq r+1}^{k_i\geq d_i+1}$
subject to~(\ref{eY0},~\ref{eY2.1}--\ref{eY7},~\ref{eY1-shifted},~\ref{eY23-C}).
Like in Lemma~\ref{identifying extended Yangians}, $X_\mu(\ssp_{2r})$ depends only
on the image of $\mu$ under~\eqref{non-ext coweight from ext}, up to an isomorphism.

\medskip
\noindent
For $\nu\in \bar{\Lambda}$, the \emph{shifted Drinfeld Yangian of $\ssp_{2r}$},
denoted by $Y_\nu(\ssp_{2r})$, is defined likewise. It is related to $X_\mu(\ssp_{2r})$
via a natural analogue of Proposition~\ref{prop:relating shifted yangians} with
$\iota_\mu\colon Y_{\bar{\mu}}(\ssp_{2r})\hookrightarrow X_\mu(\ssp_{2r})$ determined
by~\eqref{eq:iota-null explicitly C} and the central elements
$\{C_r^{(k)}\}_{k\geq d_r+d_{r+1}+1}$ of $X_\mu(\ssp_{2r})$ defined via:
\begin{equation}\label{eq:central C shifted type C}
  C_r(z)=z^{-d_r-d_{r+1}}\, + \sum_{k > d_r+d_{r+1}} C_r^{(k)}z^{-k}:=
  \prod_{i=1}^{r-1}\frac{D_i(z+i-r-2)}{D_i(z+i-r-1)} \cdot D_r(z-2) D_{r+1}(z) \, .
\end{equation}
The natural analogues of Corollary~\ref{cor:sub and quotient} and
Lemma~\ref{center of shifted yangians} still hold in the present setup.

\medskip
\noindent
We shall use the same notations~(\ref{eq:divisor def1})--(\ref{eq:lambda from divisor}) for
\emph{$\Lambda$-valued divisors $D$ on $\BP^1$, $\Lambda^+$-valued outside $\{\infty\}\in \BP^1$}.
The simple coroots $\{\alpha_i\}_{i=1}^r\subset \bar{\Lambda}$ of $\ssp_{2r}$ are explicitly given by:
\begin{equation}\label{eq:coroots C}
  \alpha_1=\epsilon_1-\epsilon_2 \, ,\ \ldots \, ,\
  \alpha_{r-2}=\epsilon_{r-2}-\epsilon_{r-1} \, ,\
  \alpha_{r-1}=\epsilon_{r-1}-\epsilon_r \, ,\ \alpha_r=\epsilon_r \, .
\end{equation}
We also consider $\{\hat{\alpha}_i\}_{i=1}^r\subset \Lambda$, which are
the ``lifts'' of $\{\alpha_i\}$ from~\eqref{eq:coroots C} in the sense of~\eqref{eq:lift}:
\begin{equation}\label{eq:lifted coroots C}
  \hat{\alpha}_1=\epsilon_1-\epsilon_2 \, ,\ \ldots \, ,\
  \hat{\alpha}_{r-2}=\epsilon_{r-2}-\epsilon_{r-1}\, ,\
  \hat{\alpha}_{r-1}=\epsilon_{r-1}-\epsilon_{r}+\epsilon_{r+1} \, ,\
  \hat{\alpha}_r=\epsilon_{r}-\epsilon_{r+1} \, .
\end{equation}
From now on, we shall impose the following assumption on $D$ (cf.~\eqref{eq:assumption}):
\begin{equation}\label{eq:assumption type C}
  \textbf{Assumption}:\qquad
  \lambda+\mu=a_1\hat{\alpha}_1+\ldots+a_{r}\hat{\alpha}_{r}\quad \mathrm{with}\quad a_i\in \BN \, .
\end{equation}
The above coefficients $a_i$ in~\eqref{eq:assumption type C} are explicitly given by:
\begin{equation}\label{eq:a explicitly type C}
  a_i = (\epsilon^\vee_1+\ldots+\epsilon^\vee_i)(\lambda+\mu) \qquad \mathrm{for}\quad 1\leq i\leq r \, .
\end{equation}
Thus,~(\ref{eq:assumption type C}) is equivalent to $(\epsilon^\vee_r+\epsilon^\vee_{r+1})(\lambda+\mu)=0$
and $\sum_{k=1}^i\epsilon^\vee_k(\lambda+\mu)\in \BN$ for all $1\leq i\leq r$.

\medskip
\noindent
Consider the algebra $\CA$ defined as in~\eqref{algebra A} with the following important \underline{modification}:
\begin{equation}\label{eq:general pq relation}
  [e^{\pm q_{i,k}},p_{j,\ell}]=
  \mp \frac{(\alphavee_i,\alphavee_i)}{2}\delta_{i,j}\delta_{k,\ell} e^{\pm q_{i,k}} \, ,
\end{equation}
so that $[e^{\pm q_{r,k}},p_{r,k}]=\mp 2 e^{\pm q_{r,k}}$.
Then, as in Theorem~\ref{thm:extended homom D}, we have an algebra homomorphism
\begin{equation}\label{eq:homom psi type C}
  \Psi_D\colon X_{-\mu}(\ssp_{2r})\longrightarrow \CA \, ,
\end{equation}
determined by the following assignment (keeping the notations~(\ref{ZW-series},~\ref{eq:aP conventions})):
\begin{equation}\label{eq:homom assignment type C}
\begin{split}
   & E_i(z)\mapsto
     \begin{cases}
       \sum_{k=1}^{a_i}\frac{P_{i-1}(p_{i,k}-1)}{(z-p_{i,k})P_{i,k}(p_{i,k})} e^{q_{i,k}} & \mbox{if } i<r \\
       \sum_{k=1}^{a_{r}}\frac{P_{r-1}(p_{r,k}-1)P_{r-1}(p_{r,k}-2)}{2(z-p_{{r},k})P_{{r},k}(p_{{r},k})} e^{q_{r,k}} & \mbox{if } i=r
     \end{cases} \, , \\
   & F_i(z)\mapsto
     \begin{cases}
       -\sum_{k=1}^{a_i}\frac{Z_i(p_{i,k}+1)P_{i+1}(p_{i,k}+1)}{(z-p_{i,k}-1)P_{i,k}(p_{i,k})} e^{-q_{i,k}} & \mbox{if } i<r \\
       -\sum_{k=1}^{a_{r}}\frac{Z_{r}(p_{{r},k}+2)}{(z-p_{{r},k}-2)P_{{r},k}(p_{{r},k})} e^{-q_{r,k}} & \mbox{if } i=r
     \end{cases} \, , \\
   & D_i(z)\mapsto
     \begin{cases}
       \frac{P_i(z)}{P_{i-1}(z-1)} \cdot \prod_{k=0}^{i-1} Z_k(z) & \mbox{if } i\leq r \\
       \frac{P_{r-1}(z-2)}{P_{r}(z-2)} \cdot \prod_{k=0}^{r} Z_k(z) & \mbox{if } i=r+1
     \end{cases} \, .
\end{split}
\end{equation}
The proof is analogous to that of Theorem~\ref{thm:extended homom D} and is based on
the explicit formula
\begin{equation}\label{image of C-series type C}
  \Psi_D(C_r(z)) \, =\, \prod_{i=0}^{r-1} \Big(Z_i(z)Z_i(z+i-r-1)\Big)\cdot Z_r(z) \,
\end{equation}
as well as the comparison to the homomorphisms of~\cite{nw}. Precisely, identifying $\CA$ with
$\tilde{\CA}$ of~\emph{loc.cit.} and the points $x_s$ with the parameters $z_s$ of~\emph{loc.cit.}\ via:
\begin{equation*}
  p_{i,k}\leftrightarrow
  \begin{cases}
    w_{i,k}+\frac{i-1}{2} & \mbox{if } i<r\\
    w_{r,k}+\frac{r}{2} & \mbox{if } i=r
  \end{cases} \, , \qquad
  e^{\pm q_{i,k}}\leftrightarrow \sfu_{i,k}^{\mp 1} \, , \qquad
  x_s \leftrightarrow
  \begin{cases}
     z_s+\frac{i_s}{2} & \mbox{if } 1\leq i_s<r \\
     z_s+\frac{r+1}{2} & \mbox{if } i_s=r
  \end{cases} \, ,
\end{equation*}
the (restriction) composition
  $Y_{-\bar{\mu}}(\ssp_{2r})\xrightarrow{\iota_{-\mu}} X_{-\mu}(\ssp_{2r}) \xrightarrow{\Psi_D} \CA$
is given by the formulas~\eqref{eq:nw homom assignment} of Appendix~\ref{Appendix B: shuffle realization}
(applied to the type $C_r$ Dynkin diagram with the arrows pointing $i\to i+1$ for $1\leq i<r$),
which essentially coincide with the homomorphisms $\Phi^{\bar{\lambda}}_{-\bar{\mu}}$ of~\cite[Theorem 5.4]{nw}.

\medskip
\noindent
For $\mu\in \Lambda^+$, the \emph{antidominantly shifted extended RTT Yangian of $\ssp_{2r}$},
denoted by $X^\rtt_{-\mu}(\ssp_{2r})$, is defined similarly to $X^\rtt_{-\mu}(\sso_{2r})$:
it is generated by $\{t^{(k)}_{ij}\}_{1\leq i,j\leq 2r}^{k\in \BZ}$ subject to the
RTT relation~\eqref{eq:rtt} and the restriction~\eqref{eq:t-modes shifted} on the
matrix coefficients of the matrices $F(z), H(z), E(z)$ with $d'_i\in \BZ$ defined
as in~\eqref{eq:d-prime}. We note that $\mu\in \Lambda^+$ implies now the following inequalities:
\begin{equation}\label{eq:d-inequalities type C}
  d_1\geq d_2\geq \dots\geq d_{r-1}\geq d_r\geq d'_r \geq d'_{r-1}\geq \dots \geq d'_1 \, .
\end{equation}
One of our key results in the type $C_r$ is the natural analogue of Theorem~\ref{thm:Dr=RTT-shifted-D}:

\medskip

\begin{Thm}\label{thm:Dr=RTT-shifted-C}
For any $\mu\in \Lambda^+$, the assignment~(\ref{eq:explicit extended identification C})
gives rise to the algebra isomorphism
  $\Upsilon_{-\mu}\colon X_{-\mu}(\ssp_{2r})\iso X^\rtt_{-\mu}(\ssp_{2r})$.
\end{Thm}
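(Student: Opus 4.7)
The plan is to mimic the strategy executed in detail for type $D_r$ in Subsections~\ref{sssec RTT extended Yangians D}--\ref{sssec Proof via Lax matrices D}. First, surjectivity of $\Upsilon_{-\mu}$ is immediate: the matrix entries $e_{i,j}(z), f_{j,i}(z), h_{i'}(z)$ not corresponding to simple-root data are recovered from $\{e_i(z), f_i(z), h_k(z)\}$ by exactly the same bracket identities as in the unshifted case, namely those of Lemmas~\ref{lem:known type C}--\ref{lem:new type C}. These identities involve only mode-one generators on the right-hand side, so they are insensitive to the antidominant shift, and $\Upsilon_{-\mu}$ is an algebra epimorphism. The theorem therefore reduces to injectivity.

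For injectivity, I would construct, for every $\Lambda^+$-valued divisor $D$ on $\BP^1$ satisfying~\eqref{eq:assumption type C} with $D|_\infty=\mu$, an $\CA((z^{-1}))$-valued Lax matrix $T_D(z)$ by substituting the explicit formulas~\eqref{eq:homom assignment type C} for $\Psi_D$ into the Gauss decomposition~\eqref{eq:Lax definition} via the type $C$ rank-reduction identities of Lemmas~\ref{lem:known type C}--\ref{lem:new type C}. The objective is to verify that $T_D(z)$ satisfies the RTT relation~\eqref{eq:rtt} for the type $C$ $R$-matrix of~\eqref{eq:R-matrix},~\eqref{eq:varepsilons}, which is done in two steps: (i) for $\bar D$ with $\bar D|_\infty=0$, the matrix $T_{\bar D}(z)$ coincides with the image of the RTT generating matrix $T(z)$ under $\Psi_{\bar D}\circ \Upsilon_0^{-1}$, where $\Upsilon_0$ is the unshifted isomorphism of~\eqref{eq:explicit extended identification C}, and so is automatically Lax; (ii) the general case follows from a normalized-limit relation $T_{D'}(z)=\lim_{x\to\infty}\{(-x)^{\gamma\varpi_i}\cdot T_D(z)\}$ analogous to~\eqref{eq:limit relation}, provided left multiplication by $(-x)^{\gamma\varpi_i}$ preserves~\eqref{eq:rtt}. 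For the latter one needs the type $C$ analog of Corollary~\ref{cor:YBE invariance}: the condition on $\diag(\mathsf{t}_1,\ldots,\mathsf{t}_{2r})$ is that $\mathsf{t}_j\mathsf{t}_{j'}$ be $j$-independent (a direct check shows that the sign factors $\varepsilon_i\varepsilon_j$ in the modified $\Qop$ of~\eqref{eq:varepsilons} appear symmetrically and cancel in the commutation with $\mathsf{T}_1\mathsf{T}_2$), and this holds for $(-x)^{\gamma\varpi_i}$ since by~\eqref{eq:varpis type C} the sum $(\epsilon^\vee_j+\epsilon^\vee_{j'})(\varpi_i)$ is independent of $j$. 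Once $T_D(z)$ is Lax, it yields a homomorphism $\Theta_D\colon X^\rtt_{-\mu}(\ssp_{2r})\to \CA$ with $\Psi_D=\Theta_D\circ \Upsilon_{-\mu}$, whence $\Ker(\Upsilon_{-\mu})\subset \bigcap_D \Ker(\Psi_D)$, and the triviality of this intersection---via the extended form of Theorem~\ref{thm:alex's theorem}, combined with the decomposition $X_{-\mu}(\ssp_{2r})\simeq Y_{-\bar\mu}(\ssp_{2r})\otimes \BC[\{C_r^{(k)}\}]$ and the explicit image~\eqref{image of C-series type C} of the central series---delivers injectivity.

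The main obstacle I anticipate is the entry-by-entry verification of the normalized-limit identity in the type $C$ Gauss decomposition. In type $D_r$ this rests on the explicit formulas of Appendix~\ref{Appendix A: Lax explicitly} together with the rescaling behavior of the factors $P_i(z), P_{i,k}(z), Z_i(z)$ of~\eqref{ZW-series}; in type $C_r$ the bookkeeping is technically somewhat simpler (the Dynkin diagram has no branching), but the ``double shift'' $z\mapsto z-2$ at the rightmost node, visible in~\eqref{eY23-C} and in the central series~\eqref{eq:central C shifted type C}, must be carefully tracked through every Gauss factor recovered via Lemma~\ref{lem:known type C}(a). All remaining ingredients---the crossing relation (now with $\kappa=r+1$ and the sign-modified transposition~\eqref{eq:prime C}), the identification $\sz_N(z)=\Upsilon_{-\mu}(C_r(z))$, and the coassociative compatibility with the coproduct---then transfer essentially verbatim from the type $D_r$ treatment.
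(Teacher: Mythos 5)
Your proposal is correct and follows essentially the same route as the paper: surjectivity via the shift-insensitive recursion formulas of Lemmas~\ref{lem:known type C} and~\ref{lem:new type C}, and injectivity by building the Lax matrices $T_D(z)$ from $\Psi_D$, reducing the RTT verification to the unshifted case through the normalized-limit construction (with the type-$C$ analogue of Corollary~\ref{cor:YBE invariance}, where the $\varepsilon_i\varepsilon_j$ signs indeed cancel), and concluding via the extended faithfulness result of Theorem~\ref{thm:alex's theorem} together with the tensor decomposition over the center and the formula~\eqref{image of C-series type C}. This is precisely the argument the paper invokes in Subsection~\ref{ssec Lax matrrices C} by reference to the type $D_r$ treatment.
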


\medskip
\noindent
Similarly to the type $D_r$, the assignment~\eqref{eq:RTT coproduct} gives rise to the coproduct homomorphisms
\begin{equation*}
  \Delta^\rtt_{-\mu_1,-\mu_2}\colon X^\rtt_{-\mu_1-\mu_2}(\ssp_{2r})\longrightarrow
  X^\rtt_{-\mu_1}(\ssp_{2r})\otimes X^\rtt_{-\mu_2}(\ssp_{2r})
\end{equation*}
for any $\mu_1,\mu_2\in \Lambda^+$,
coassociative in the sense of Corollary~\ref{cor:coassociativity}.
Evoking the isomorphism of Theorem~\ref{thm:Dr=RTT-shifted-C} and the algebra embedding
$\iota_{\mu}\colon Y_{\bar{\mu}}(\ssp_{2r}) \hookrightarrow X_{\mu}(\ssp_{2r})$,
we obtain the coproduct homomorphisms
\begin{equation}\label{fkprw homom type C}
  \Delta_{-\nu_1,-\nu_2}\colon Y_{-\nu_1-\nu_2}(\ssp_{2r})\longrightarrow
  Y_{-\nu_1}(\ssp_{2r})\otimes Y_{-\nu_2}(\ssp_{2r})
\end{equation}
for any $\nu_1,\nu_2\in \bar{\Lambda}^+$. Explicitly, the homomorphism~\eqref{fkprw homom type C}
is uniquely determined by the formulas~\eqref{coproduct Yangian generators sl} with the root generators
$\{\sE_{\gamma^\vee}^{(1)},\sF_{\gamma^\vee}^{(1)}\}_{\gamma^\vee\in \Delta^+}$ defined via:
\begin{equation}\label{eq:higher root generators C part 1}
\begin{split}
  & \sE^{(1)}_{\epsilon^\vee_i - \epsilon^\vee_j} =
    [\sE^{(1)}_{j-1},[\sE^{(1)}_{j-2},[\sE^{(1)}_{j-3}, \,\cdots, [\sE^{(1)}_{i+1},\sE^{(1)}_i]\cdots]]] \, , \\
  & \sF^{(1)}_{\epsilon^\vee_i - \epsilon^\vee_j} =
    [[[\cdots[\sF_i^{(1)},\sF_{i+1}^{(1)}], \,\cdots, \sF^{(1)}_{j-3}],\sF^{(1)}_{j-2}],\sF_{j-1}^{(1)}] \, , \\
  & \qquad \qquad \qquad \qquad \qquad \qquad \qquad \qquad \qquad \qquad \qquad 1\leq i<j\leq r
\end{split}
\end{equation}
and
\begin{equation}\label{eq:higher root generators C part 2}
\begin{split}
  & \sE^{(1)}_{\epsilon^\vee_i + \epsilon^\vee_j} =
    -[\cdots[[\sE^{(1)}_{r-1},[\sE^{(1)}_{r-2},[\sE^{(1)}_{r-3}, \,\cdots, [\sE^{(1)}_{i+1},\sE^{(1)}_i]\cdots]]], \sE^{(1)}_{r}], \,\cdots, \sE^{(1)}_{j}] \, , \\
  & \sF^{(1)}_{\epsilon^\vee_i + \epsilon^\vee_j} =
    -2^{-\delta_{i,j}}
    [\sF^{(1)}_j, \,\cdots, [\sF^{(1)}_{r},[[[\cdots[\sF_i^{(1)},\sF_{i+1}^{(1)}], \,\cdots, \sF^{(1)}_{r-3}],\sF^{(1)}_{r-2}],\sF_{r-1}^{(1)}]]\cdots]\, ,\\
  & \qquad \qquad \qquad \qquad \qquad \qquad \qquad \qquad \qquad \qquad \qquad \qquad \qquad 1\leq i\leq j\leq r  \, ,
\end{split}
\end{equation}
where
  $\Delta^+=\Big\{\epsilon^\vee_i - \epsilon^\vee_j\Big\}_{1\leq i<j\leq r}\cup\
   \Big\{\epsilon^\vee_i + \epsilon^\vee_j\Big\}_{1\leq i\leq j\leq r}$
is the set of positive roots of $\ssp_{2r}$.

\medskip

\begin{Rem}\label{rmk:coproduct coincidence C}
As our formulas~\eqref{coproduct Yangian generators sl} coincide with those of~\cite[Theorem 4.8]{fkp},
this provides a confirmative answer to the question raised in the end of~\cite[\S8]{cgy}, in the type $C_r$.
\end{Rem}

\medskip


\subsection{Lax matrices}
\label{ssec Lax matrrices C}
\

Similar to the type $D_r$, the proof of Theorem~\ref{thm:Dr=RTT-shifted-C} goes through the
faithfulness result of~\cite{w}, see Theorem~\ref{thm:alex's theorem}, and the construction
of the Lax matrices $T_D(z)$. To this end, for any $\Lambda^+$-valued divisor $D$ on $\BP^1$
satisfying~(\ref{eq:assumption type C}), we construct the matrix $T_D(z)$
via~(\ref{eq:Lax definition},~\ref{eq:Lax explicit}) with the matrix coefficients
$f^D_{j,i}(z),e^D_{i,j}(z),h^D_i(z)$ obtained from the explicit formulas~\eqref{eq:homom assignment type C}
combined with Lemmas~\ref{lem:known type C},~\ref{lem:new type C}.
Using the ``normalized limit'' procedure~\eqref{eq:limit relation}, we conclude
that Corollary~\ref{cor:rational Lax via unshifted} applies in the present setup.
Combining this with $\Upsilon_0$ being an isomorphism~\cite{jlm1}, we conclude
(as in Proposition~\ref{prop:preserving RTT}) that $T_D(z)$ are Lax (of type~$C_r$).

\noindent
Similarly to Proposition~\ref{prop:crossymmetry D}, the matrix $T(z)$ (encoding all generators
of $X^\rtt_{-\mu}(\ssp_{2r})$) still satisfies the \emph{crossing} relation~\eqref{eq:crossymetry}
with the central series $\sz_N(z)$ defined via (cf.~\eqref{eq:shifted z-central}):
\begin{equation}\label{eq:shifted z-central C}
  \sz_N(z) = \Upsilon_{-\mu}(C_r(z)) =
  \prod_{i=1}^{r-1}\frac{h_i(z+i-r-2)}{h_i(z+i-r-1)} \cdot h_r(z-2) h_{r+1}(z) \, .
\end{equation}
In Appendix~\ref{Appendix B: shuffle realization} (see Theorem~\ref{thm:shuffle homomorphism},
Lemma~\ref{lem:EF explicit shuffle C}), we use the \emph{shuffle algebra} approach to derive
the uniform formulas for the matrix coefficients $e_{i,j}^D(z), f^D_{j,i}(z)$, which are rather
inaccessible if derived iteratively via Lemmas~\ref{lem:known type C},~\ref{lem:new type C}.
This allows to prove the analogue of Theorem~\ref{Main Theorem 1}:

\medskip

\begin{Thm}\label{thm: regularity C}
The Lax matrix $\sT_D(z)=\frac{T_D(z)}{Z_0(z)}$ is regular in $z$, i.e.\
$\sT_D(z)\in \CA[z]\otimes_\BC \End\, \BC^{2r}$.
\end{Thm}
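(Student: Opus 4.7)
The plan is to reduce the regularity statement to a direct verification based on the explicit formulas for the Gauss factors, following the pattern of Theorem~\ref{Main Theorem 1}. By the Gauss decomposition~\eqref{eq:Lax definition}--\eqref{eq:Lax explicit}, each matrix coefficient of $T_D(z)$ is a finite sum
\[
  T_D(z)_{\alpha,\beta}=\sum_{i=1}^{\min\{\alpha,\beta\}} f^D_{\alpha,i}(z)\, h^D_i(z)\, e^D_{i,\beta}(z),
\]
so it suffices to show that each such sum, divided by $Z_0(z)$, is a polynomial in $z$. First I would deal with the diagonal factors $h^D_i(z)$: for $1\leq i\leq r+1$ these are given explicitly by~\eqref{eq:homom assignment type C}, and for the ``primed'' indices by Lemma~\ref{lem:known type C}(a). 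In all cases $h^D_i(z)/Z_0(z)$ is a rational function whose only possible poles lie at the translated oscillator points (shifted by $-1$ for most rows, and by $-2$ for the row corresponding to the long simple root), and whose zeros at the points $x_s$ are precisely compensated by the overall factor $Z_0(z)^{-1}$ once we account for the products $\prod_k Z_k(z)$ appearing in the images of the $D_i(z)$.

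Next, for the off-diagonal factors $e^D_{i,j}(z)$ and $f^D_{j,i}(z)$, instead of chasing the iterative commutator recipes of Lemmas~\ref{lem:known type C},~\ref{lem:new type C}, I would invoke the shuffle realization of $\Psi_D\circ \iota_{-\mu}$ proved in Appendix~\ref{Appendix B: shuffle realization} (Theorem~\ref{thm:shuffle homomorphism} and Lemma~\ref{lem:EF explicit shuffle C}), which provides uniform rational closed-form expressions for these matrix elements. Each such expression is a sum of terms of the form $\frac{(\text{rational in the }p_{\cdot,\cdot})}{z-p_{k,\ell}-c}\, e^{\pm q_{k,\ell}}\cdots$ with a controlled set of apparent pole locations in $z$, depending on whether the entry lies above/below or crosses the antidiagonal and whether the last simple root has been used.

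The core of the argument is then to show, for each pair $(\alpha,\beta)$, that the residues at each apparent $z$-pole in $T_D(z)_{\alpha,\beta}/Z_0(z)$ sum to zero. Following the type $A$ template of~\cite[Theorem~2.67]{fpt} and its type $D$ analogue, at a pole $z=p_{k,\ell}+c$ only finitely many terms in the sum over $i$ contribute; telescoping identities among the oscillator-valued numerators (coming from $P_{i,k}(p_{i,k})=\prod_{m\ne k}(p_{i,k}-p_{i,m})$ and its neighbors) produce pairwise cancellations, exactly as in the unshifted case, and the reduction of the proof to $\mu=0$ via the normalized limit~\eqref{eq:limit relation} (combined with the evident polynomiality of $x^{\gamma_N\varpi_{i_N}}$ acting by left multiplication) guarantees nothing pathological occurs in the shifted setting.

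The hard part will be the bookkeeping for the entries straddling the antidiagonal and involving the short/long root asymmetry: the long simple root at node $r$ introduces a shift by $2$ instead of $1$ in both $E_r(z),F_r(z)$ (see the numerators $P_{r-1}(p_{r,k}-1)P_{r-1}(p_{r,k}-2)$ and $Z_r(p_{r,k}+2)$ in~\eqref{eq:homom assignment type C}), so the singularity positions in $e^D_{i,j'}(z)$ and $f^D_{j',i}(z)$ are shifted by $1$ relative to the ``generic'' ones, and one must verify that this shift still lines up with the appropriate factors of $h^D_i(z)$ to produce cancellation. Once the residues at all candidate pole locations are checked to vanish via the uniform shuffle formulas from Appendix~\ref{Appendix B: shuffle realization}, regularity of $\sT_D(z)$ follows, completing the proof.
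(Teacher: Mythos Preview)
Your proposal is essentially correct and follows the same approach as the paper: the paper states only that the result is obtained by direct verification using the explicit shuffle-algebra formulas of Appendix~\ref{Appendix B: shuffle realization} (Theorem~\ref{thm:shuffle homomorphism} and Lemma~\ref{lem:EF explicit shuffle C}), analogous to the type~$A$ computation in \cite[Theorem~2.67]{fpt} and the type~$D$ case of Theorem~\ref{Main Theorem 1}. Your outline of checking residue cancellation in each entry $T_D(z)_{\alpha,\beta}/Z_0(z)$ using those uniform formulas is precisely this strategy.

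One minor point: the paragraph invoking the normalized limit~\eqref{eq:limit relation} to ``reduce to $\mu=0$'' is unnecessary here and somewhat misplaced. That limit argument is used in the paper to establish the Lax property (Proposition~\ref{prop:preserving RTT}) and the crossing relation, not regularity; the explicit formulas from Appendix~\ref{Appendix B: shuffle realization} are valid for arbitrary $\mu\in\Lambda^+$ and the residue verification works uniformly in the shift without any reduction. You can simply drop that sentence.
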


\medskip
\noindent
Similar to type $D_r$, the result above provides a shortcut to the computation of the Lax matrices
$T_D(z)$ defined, in general, as a product of three complicated matrices $F^{D}(z),H^D(z),E^D(z)$.
In particular, the natural analogue of Proposition~\ref{prop:linear Lax D} holds. To this end,
let us now describe all $\Lambda^+$-valued divisors $D$ on $\BP^1$ satisfying~(\ref{eq:assumption type C})
such that $\deg_z \sT_D(z)=1$. Define $\lambda,\mu\in \Lambda^+$ via~(\ref{eq:mu from divisor},~\ref{eq:lambda from divisor}),
so that $\lambda+\mu=\sum_{j=0}^r b_j\varpi_j$ with $b_0\in\BZ,\ b_1,\ldots,b_r\in \BN$.
Then, the assumption~\eqref{eq:assumption type C} implies that the corresponding coefficients
$a_i\in \BN$ are related to $b_j$'s via:
\begin{equation}\label{eq:a via b type C}
  a_i = b_1+2b_2+\ldots+(i-1)b_{i-1}+i(b_i+\ldots+b_{r-1})+\frac{i}{2}b_r
  \, , \qquad 1\leq i\leq r \, ,
\end{equation}
and $b_0=-b_1-\ldots-b_{r-1}-\frac{b_r}{2}$, which uniquely recovers $b_0$ in terms of
$b_1,\ldots,b_r$ and forces $b_r$ to be even. We also note that the total number of pairs
of $(p,q)$-oscillators in $\CA$ equals:
\begin{equation}\label{eq:total osc C}
  \sum_{i=1}^{r} a_i = \sum_{k=1}^{r-1} \frac{k(2r-k+1)}{2} b_k+\frac{r(r+1)}{4}b_r \, .
\end{equation}
Combining the above formulas~\eqref{eq:a via b type C} with Proposition~\ref{prop:linear Lax D}(a)
in type $C_r$, we thus conclude that the normalized Lax matrix $\sT_D(z)$ is linear only for the
following configurations of $b_i$'s:
\begin{enumerate}
  \item[(1)] $b_0=-1,\ b_j=1,\ b_1=\ldots=b_{j-1}=b_{j+1}=\ldots=b_r=0$ for some $1\leq j\leq r-1$,

  \item[(2)] $b_0=-1,\ b_1=\ldots=b_{r-1}=0,\ b_r=2$.
\end{enumerate}
As $b_0$ is uniquely determined via $b_1,\ldots,b_r$ and does not affect the Lax matrix $\sT_D(z)$,
we shall rather focus on the corresponding values of the dominant
$\ssp_{2r}$-coweights $\bar{\lambda},\bar{\mu}\in \bar{\Lambda}^+$.

\medskip
\noindent
$\bullet$ \emph{\underline{Case (1)}:} $\bar{\lambda}+\bar{\mu}=\omega_j$ for $1\leq j\leq r-1$.

In this case, $a_1=1,\ldots,a_{j-1}=j-1,a_j=\ldots=a_{r}=j$, so that the total number of pairs
of $(p,q)$-oscillators is $\frac{j(2r-j+1)}{2}$, see~(\ref{eq:a via b type C},~\ref{eq:total osc C}).
We obtain two Lax matrices: the \emph{non-degenerate} one, depending on the additional parameter $x\in \BC$
(but independent of the parameter $y\in \BP^1$):
\begin{equation}\label{eq:nondeg C1}
  \sT_{\varpi_j[x]-\varpi_0[y]}(z)=z(E_{11}+\ldots+E_{2r,2r})+O(1) \, ,
\end{equation}
and its normalized limit as $x\to \infty$, which is \emph{degenerate} with $z$ in the first $j$ diagonal entries:
\begin{equation}\label{eq:deg C1}
  \sT_{\varpi_j[\infty]-\varpi_0[y]}(z)=z(E_{11}+\ldots+E_{jj})+O(1)
\end{equation}
and also satisfying:
\begin{equation*}
   \sT_{\varpi_j[\infty]-\varpi_0[y]}(z)_{k,k}=
   \begin{cases}
     1 & \mbox{if } j+1\leq k\leq (j+1)' \\
     0 & \mbox{if } j'\leq k\leq 1'
   \end{cases} \, .
\end{equation*}

\medskip
\noindent
$\bullet$ \emph{\underline{Case (2)}:} $\bar{\lambda}+\bar{\mu}=2\omega_r$.
\

\noindent
In this case, we have $a_1=1,\ldots,a_r=r$, and the total number of pairs of $(p,q)$-oscillators
is $\frac{r(r+1)}{2}$, see~(\ref{eq:a via b type C},~\ref{eq:total osc C}).
We thus obtain three Lax matrices: the \emph{non-degenerate} one, depending in a symmetric way
on additional parameters $x_1,x_2\in \BC$ (but independent of $y\in \BP^1$):
\begin{equation}\label{eq:nondeg C2}
  \sT_{\varpi_r([x_1]+[x_2])-\varpi_0[y]}(z)=z(E_{11}+\ldots+E_{2r,2r})+O(1) \, ,
\end{equation}
its normalized limit as $x_2\to \infty$, which is \emph{degenerate} with $z$ only in the first $r$ diagonal entries:
\begin{equation}\label{eq:deg C2.1}
  \sT_{\varpi_r[x_1]+\varpi_r[\infty]-\varpi_0[y]}(z)=z(E_{11}+\ldots+E_{rr})+O(1) \, ,
\end{equation}
and its further $x_1\to \infty$ normalized limit, which also contains $z$ only in $r$ of its diagonal entries:
\begin{equation}\label{eq:deg C2.2}
  \sT_{2\varpi_r[\infty]-\varpi_0[y]}(z)=z(E_{11}+\ldots+E_{rr})+O(1) \, .
\end{equation}
The diagonal $z$-independent entries of these degenerate Lax matrices are explicitly given by:
\begin{equation}\label{eq:deg C2.12}
   \sT_{\varpi_r[x_1]+\varpi_r[\infty]-\varpi_0[y]}(z)_{k,k}=1 \,, \quad
   \sT_{2\varpi_r[\infty]-\varpi_0[y]}(z)_{k,k}=0 \qquad \mathrm{for}\quad r'\leq k\leq 1' \, .
\end{equation}

\medskip
\noindent
Completely analogously to Proposition~\ref{prop:property linear D}, we have the following
\emph{unitarity} property of the corresponding non-degenerate linear Lax matrices
(recall the parameter $\kappa=r+1$, see~\eqref{eq:varepsilons}):

\medskip

\begin{Prop}\label{prop:property linear C}
The non-degenerate Lax matrices
  $L_\jmath(z):=\sT_{\varpi_\jmath[x]-\varpi_0[y]}\left(z+x+\frac{\kappa-\jmath}{2}\right)$
for $1\leq \jmath<r$, as well as
  $L_r(z):=\sT_{\varpi_r([x_1]+[x_2])-\varpi_0[\infty]}\left(z+\frac{x_1+x_2}{2}\right)$,
are unitary:
\begin{equation*}
  L_\jmath(z)L_\jmath(-z)=\Big[\Big(\frac{\kappa-\jmath}{2}\Big)^2 - \, z^2\Big]\ID_N \, ,\qquad
  L_r(z)L_r(-z)=\Big[\Big(\frac{x_1-x_2}{2}\Big)^2 - \, z^2\Big]\ID_N \, .
\end{equation*}
\end{Prop}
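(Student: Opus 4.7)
The plan is to closely mirror the argument from the proof of Proposition~\ref{prop:property linear D}, combining the crossing relation (the type $C_r$ version of~\eqref{eq:crossymetry}, established from~\eqref{eq:zenter C} by the same ``normalized limit'' reasoning as in Subsection~\ref{sssec Proof via Lax matrices D}) with an explicit identification of the prime-transposed Lax matrix $\sT'_D(u)$ as a spectrally-reflected copy of $\sT_D$ shifted by a constant. The only technical novelty compared with the type $D_r$ setup is that the prime transpose~\eqref{eq:prime C} now carries the sign factors $\varepsilon_i\varepsilon_j$, which must be tracked carefully in all off-diagonal identifications.

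As a preliminary step, I would read off from the explicit linear-Lax formulas (the type $C_r$ analogues of~\eqref{eq:H-part Lax},~\eqref{eq:E-linear-skew},~\eqref{eq:F-linear-skew}, which follow from the shuffle-algebra realization of Appendix~\ref{Appendix B: shuffle realization}) the identities
\begin{equation*}
  h^{(D),-1}_i=h^{(D),-1}_{i'}=1,\qquad
  e^{(D)1}_{i,j}=-\varepsilon_i\varepsilon_j\,e^{(D)1}_{j',i'},\qquad
  f^{(D)1}_{j,i}=-\varepsilon_i\varepsilon_j\,f^{(D)1}_{i',j'},
\end{equation*}
supplemented by the scalar identities $h^{(D)0}_i+h^{(D)0}_{i'}=\jmath-2x$ in part~(a) and $h^{(D)0}_i+h^{(D)0}_{i'}=r+1-x_1-x_2$ in part~(b). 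Combined with the type $C_r$ analogue of Proposition~\ref{prop:linear Lax D}(b) (noted in Subsection~\ref{ssec Lax matrrices C}) and the definition~\eqref{eq:prime C} of the prime transpose, these identities yield the key reflection equations
\begin{equation*}
  \sT'_{\varpi_\jmath[x]-\varpi_0[y]}(u)=-\sT_{\varpi_\jmath[x]-\varpi_0[y]}(-u+2x-\jmath)
\end{equation*}
in part~(a), and an entirely analogous relation with shift $x_1+x_2-r-1$ in part~(b).

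The second ingredient is the crossing relation, which after dividing through by $Z_0(z)Z_0(z-r-1)$ and invoking the explicit formula~\eqref{image of C-series type C} for $\Psi_D(C_r(z))$ specializes to
\begin{equation*}
  \sT_D(z)\,\sT'_D(z-r-1)=(z-x)(z-x+\jmath-r-1)\,\ID_N
\end{equation*}
in part~(a) and to $\sT_D(z)\,\sT'_D(z-r-1)=(z-x_1)(z-x_2)\,\ID_N$ in part~(b). Substituting the reflection equations from the previous step and reparametrizing $z\mapsto z+x+\tfrac{r+1-\jmath}{2}$ (respectively $z\mapsto z+\tfrac{x_1+x_2}{2}$) transforms the right-hand sides into $\bigl(\tfrac{r+1-\jmath}{2}\bigr)^2-z^2$ (respectively $\bigl(\tfrac{x_1-x_2}{2}\bigr)^2-z^2$) after absorbing the overall minus sign, producing the claimed unitarity relations.

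The main potential obstacle is the careful bookkeeping of the $\varepsilon_i\varepsilon_j$ signs in the skew-identities for the $e^{(D)1}$ and $f^{(D)1}$ coefficients, particularly around the middle indices where $\varepsilon$ changes sign. Once these identities have been verified against the explicit shuffle-algebra formulas of Appendix~\ref{Appendix B: shuffle realization}, the remainder of the argument is an essentially mechanical adaptation of the type $D_r$ proof.
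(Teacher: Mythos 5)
Your proposal is correct and follows essentially the same route as the paper, which proves Proposition~\ref{prop:property linear C} ``completely analogously'' to Proposition~\ref{prop:property linear D}: combine the type $C_r$ crossing relation (normalized via~\eqref{image of C-series type C} to $\sT_D(z)\sT'_D(z-r-1)=(z-x)(z-x+\jmath-r-1)\ID_N$, resp.\ $(z-x_1)(z-x_2)\ID_N$) with the reflection identity $\sT'_D(u)=-\sT_D(-u+c)$ obtained from the $\varepsilon_i\varepsilon_j$-twisted skew-symmetries of Corollary~\ref{cor:BC1-symmetry}(a) and the diagonal identities for $h^{(D)0}_i+h^{(D)0}_{i'}$. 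All the intermediate formulas you state (the shifts $2x-\jmath$ and $x_1+x_2-r-1$, and the final reparametrizations) check out.
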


\medskip
\noindent
We conclude this Section by presenting a few interesting examples of the Lax matrices $\sT_D(z)$.


\medskip
\noindent
$\bullet$ \emph{\underline{Example 1}:} $D=\varpi_1[\infty]-\varpi_0[y]$
(note that $\sT_D(z)$ is independent of $y\in \BP^1$, as before).
\

In this case, $a_1=\ldots=a_r=1$. To simplify our notations, let us relabel
$\{p_{i,1},e^{\pm q_{i,1}}\}_{i=1}^{r}$ by $\{p_i,e^{\pm q_i}\}_{i=1}^{r}$,
so that
  $[p_i,e^{q_j}]=\delta_{i,j}\cdot
   \begin{cases}
     e^{q_i} & \mbox{if } i<r \\
     2e^{q_r} & \mbox{if } i=r
   \end{cases}\, .$
Then, we immediately find:
\begin{equation}\label{Matrix Example C1}
  \sT_D(z)=
  \begin{pmatrix}
  z-p_1 & * & * & \cdots & * & * \\
  * & 1 & 0 & \cdots & 0 & 0\\
  * & 0 & 1 & \cdots & 0 & 0\\
  \vdots &  \vdots & \vdots & \ddots & \vdots & \vdots\\
  * & 0 & 0 & \cdots & 1 & 0\\
  * & 0 & 0 & \cdots & 0 & 0
\end{pmatrix}
\end{equation}
with the nontrivial entries, marked by $*$ above, explicitly given by:
\begin{equation*}
\begin{split}
  & \sT_D(z)_{1,j}=(-1)^{j}e^{\sum_{k=1}^{j-1} q_k}\, ,\
    \sT_D(z)_{1,j'}=(-1)^{j+1}(p_j-p_{j-1}-1)e^{\sum_{k=1}^r q_k + \sum_{k=j}^{r-1} q_k}\, ,\ 1<j\leq r\, , \\
  & \sT_D(z)_{j,1}=(-1)^{j}(p_j-p_{j-1}-1)e^{-\sum_{k=1}^{j-1} q_k}\, ,\
    \sT_D(z)_{j',1}=(-1)^{j}e^{-\sum_{k=1}^{r} q_k - \sum_{k=j}^{r-1} q_k}\, ,\ 1<j\leq r\, ,\\
  & \sT_D(z)_{1,1'}=e^{2\sum_{k=1}^{r-1}q_k+q_r}\, ,\ \sT_D(z)_{1',1}=-e^{-2\sum_{k=1}^{r-1} q_k - q_r}\, .
\end{split}
\end{equation*}
These entries may be written more invariantly as:
\begin{equation}\label{eq:C1-equivalent}
\begin{split}
  & \sT_D(z)_{1,j}=-\oad_1\oa_j \, ,\quad
    \sT_D(z)_{1,j'}=\oad_1\oad_j \, ,\quad 1<j\leq r \,, \\
  & \sT_D(z)_{j,1}=-\oad_1^{-1}\oad_j\, ,\quad
    \sT_D(z)_{j',1}=-\oad_1^{-1}\oa_j\, ,\quad 1<j\leq r\, ,\\
  & \sT_D(z)_{1,1'}=\oad_1^2 \, ,\quad \sT_D(z)_{1',1}=-\oad_1^{-2}\, ,\quad
    \sT_D(z)_{1,1}=z-1-\oad_1\oa_1 \, ,
\end{split}
\end{equation}
where we used the following canonical transformation
($[\oa_i,\oad_j]=\delta_{i,j}$, $[\oa_i,\oa_j]=0=[\oad_i,\oad_j]$):
\begin{equation}\label{eq:a-coordinates C1}
\begin{split}
  & \oad_1=-e^{\sum_{k=1}^{r-1} q_k + \frac{1}{2}q_r}\, ,\quad
    \oad_i=(-1)^i(p_i-p_{i-1}-1)e^{\sum_{k=i}^{r-1}q_k+\frac{1}{2}q_r}\, ,\quad 1<i\leq r\, ,\\
  & \oa_1=-p_1e^{-\sum_{k=1}^{r-1} q_k - \frac{1}{2}q_r}\, ,\quad
    \oa_i=(-1)^ie^{-\sum_{k=i}^{r-1} q_k - \frac{1}{2}q_r}\, ,\quad 1<i\leq r\, .
\end{split}
\end{equation}


\medskip
\noindent
$\bullet$ \emph{\underline{Example 2}:} $D=\varpi_1[x]-\varpi_0[y]$ with $x\in \BC$
($\sT_D(z)$ is independent of $y\in \BP^1$, as before).
\

As in the previous example, we have $a_1=\ldots=a_r=1$, and we shall use $\{p_i,e^{\pm q_i}\}_{i=1}^{r}$
instead of $\{p_{i,1},e^{\pm q_{i,1}}\}_{i=1}^{r}$. Then, the corresponding Lax matrix becomes:
\begin{equation}\label{Matrix Example C2}
  \sT_D(z)=
  (z-x-1)\ID_{2r} +
  (\oad_{1},\ldots,\oad_{r},\oa_{r},\ldots,\oa_{1})^t\cdot (-\oa_{1},\ldots,-\oa_{r},\oad_{r},\ldots,\oad_{1})
\end{equation}
after the following canonical transformation ($[\oa_i,\oad_j]=\delta_{i,j}$, $[\oa_i,\oa_j]=0=[\oad_i,\oad_j]$):
\begin{equation}\label{eq:a-coordinates C2}
\begin{split}
  & \oad_1=-e^{\sum_{k=1}^{r-1} q_k + \frac{1}{2}q_r}\, ,\quad
    \oad_i=(-1)^i(p_i-p_{i-1}-1)e^{\sum_{k=i}^{r-1}q_k+\frac{1}{2}q_r}\, ,\quad 1<i\leq r\, ,\\
  & \oa_1=-(p_1-x)e^{-\sum_{k=1}^{r-1} q_k - \frac{1}{2}q_r}\, ,\quad
    \oa_i=(-1)^ie^{-\sum_{k=i}^{r-1} q_k - \frac{1}{2}q_r}\, ,\quad 1<i\leq r\, .
\end{split}
\end{equation}
The type $C_r$ Lax matrix of the form~\eqref{Matrix Example C2} first appeared in~\cite[(4.34)]{ikk}.

\medskip

\begin{Rem}\label{rem normalized limit C-pol}
In contrast to the natural ``normalized limit'' relation~\eqref{eq:limit relation} in the
$(p,q)$-oscillators, such construction in the ``polynomial'' $(\oa,\oad)$-oscillators is more involved.
In particular, to recover the Lax matrix~\eqref{eq:C1-equivalent} from~\eqref{Matrix Example C2},
one should first apply the canonical transformation $\oa_1\rightsquigarrow \oa_1-x\oad_1^{-1}$
(preserving all other $\oa,\oad$-oscillators), and only afterwards consider the $x\to \infty$ limit
of the product on the left with the diagonal factor $\mathrm{diag}(1,-x^{-1},\ldots,-x^{-1},x^{-2})$.
\end{Rem}


\medskip
\noindent
$\bullet$ \emph{\underline{Example 3}:} $D=\varpi_r[x]+\varpi_r[\infty]-\varpi_0[y]$
with $x\in \BC$ ($\sT_D(z)$ is independent of $y\in \BP^1$).
\

In this case, $a_1=1,\ldots,a_r=r$. According to~(\ref{eq:deg C2.1},~\ref{eq:deg C2.12}), $\sT_D(z)$ has the block form:
\begin{equation}\label{Matrix Example C3}
  \sT_D(z)=
  \begin{pmatrix}
    z\ID_{r}+\sF & \sB \\
    \sC & \ID_{r}
  \end{pmatrix},
\end{equation}
where $\sB,\sC,\sF$ are $z$-independent $r\times r$ matrices.
The following properties of $\sB,\sC$ are established exactly as in Lemma~\ref{lem:BC-rel type D}:

\medskip

\begin{Lem}\label{lem:BC-rel typeC}
(a) The matrices $\sB$ and $\sC$ are symmetric with respect to their antidiagonals:
\begin{equation*}
  \sB_{ij}=\sB_{r+1-j,r+1-i}\, ,\qquad \sC_{ij}=\sC_{r+1-j,r+1-i}\, .
\end{equation*}

\noindent
(b) The matrix coefficients $\{\sB_{ij}\}_{i,j=1}^r$ of the matrix $\sB$ pairwise commute.

\medskip
\noindent
(c) The matrix coefficients $\{\sC_{ij}\}_{i,j=1}^r$ of the matrix $\sC$ pairwise commute.

\medskip
\noindent
(d) The commutation among the matrix coefficients of $\sB$ and $\sC$ is given by:
\begin{equation*}
  [\sB_{ij},\sC_{k\ell}]=\delta_{i,\ell}\delta_{j,k}+\delta_{i,r+1-k}\delta_{j,r+1-\ell} \, .
\end{equation*}
\end{Lem}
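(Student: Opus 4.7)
\medskip

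\textbf{Proof proposal.} The argument will be completely parallel to that of Lemma~\ref{lem:BC-rel type D}, the only subtlety being the systematic tracking of the $\varepsilon_i$-signs from~\eqref{eq:varepsilons} that distinguish the type $C_r$ $\Qop$-matrix and its associated RTT relation from their type $D_r$ counterparts. As announced in the excerpt preceding the statement, no genuinely new idea is required.

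For part (a), the block form~\eqref{Matrix Example C3}, combined with the type $C_r$ analogue of Proposition~\ref{prop:linear Lax D}, yields the identifications
\begin{equation*}
  \sB_{ij} \, = \, e^{(D)1}_{i,r+j}\, , \qquad \sC_{ij} \, = \, f^{(D)1}_{r+i,j}
\end{equation*}
for $1\leq i,j\leq r$. The remaining ingredient is the antidiagonal symmetry of the leading modes of the off-diagonal Gauss factors in type $C_r$, i.e.\ the analogue of the type $D_r$ relations~\eqref{eq:E-linear-skew}--\eqref{eq:F-linear-skew}; this can be read off from the uniform shuffle-algebra formulas of Lemma~\ref{lem:EF explicit shuffle C}, and takes the form $e^{(D)1}_{\alpha,\beta} = -\varepsilon_\alpha \varepsilon_\beta \, e^{(D)1}_{\beta',\alpha'}$ (and likewise for $f^{(D)1}$), with the extra factor $\varepsilon_\alpha\varepsilon_\beta$ arising exactly from the $\varepsilon$-twist in $\Qop$. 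For $(\alpha,\beta)=(i,r+j)$ one has $\varepsilon_i\varepsilon_{r+j} = -1$, so this sign cancels the leading minus and gives $\sB_{ij}=+\sB_{r+1-j,r+1-i}$; the computation for $\sC$ is identical. The Lemma~\ref{lem:known type C}(b) relation $e_{(i+1)',i'}(z)=-e_i(z+i-r-1)$, with both indices in the lower half so that $\varepsilon_\alpha\varepsilon_\beta=+1$, is a useful consistency check.

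For parts (b) and (c), one applies the type $C_r$ RTT relation to
\begin{equation*}
  [\sB_{ij},\sB_{k\ell}] \, = \, [t_{i,r+j}(z),t_{k,r+\ell}(w)]\, , \qquad [\sC_{ij},\sC_{k\ell}] \, = \, [t_{r+i,j}(z),t_{r+k,\ell}(w)]\, ,
\end{equation*}
exactly as in the proof of Lemma~\ref{lem:BC-rel type D}(b,c). Since $\sT_D(z)$ is linear in $z$ and the entries $\sB_{ij},\sC_{ij}$ are $z$-independent, expanding the RTT relation in powers of $z,w$ and extracting the constant-in-$z,w$ part forces each of these commutators to vanish.

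For (d), we evaluate $[\sB_{ij},\sC_{k\ell}]=[t_{i,r+j}(z),t_{r+k,\ell}(w)]$ via the type $C_r$ RTT relation. The $\tfrac{1}{z-w}$-piece contributes $\delta_{i,\ell}\delta_{j,k}$ exactly as in type $D_r$. The $\tfrac{1}{z-w+\kappa}$-piece carries the $\varepsilon$-twisted $\Qop$; tracing the Kronecker conditions $r+k=i'$ and $\ell=(r+j)'$ together with the overall product $\varepsilon_i\varepsilon_{r+k}\cdot\varepsilon_\ell\varepsilon_{r+j}=(-1)\cdot(-1)=+1$, one finds that the extra $\varepsilon$-factors conspire to flip the overall sign of the leading contribution, turning the type $D_r$ term $-\delta_{i,r+1-k}\delta_{j,r+1-\ell}$ into $+\delta_{i,r+1-k}\delta_{j,r+1-\ell}$. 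I expect the main obstacle throughout to be the careful bookkeeping of these $\varepsilon$-signs and the simultaneous verification that the would-be contributions from the sums $\sum_p \varepsilon_p\varepsilon_{p'}\, t_{pj}(z)t_{p'\ell}(w)$ collapse appropriately to the single Kronecker-$\delta$ leading term; once this is done, all remaining manipulations are formally identical to the type $D_r$ calculation.
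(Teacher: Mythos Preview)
Your proposal is correct and follows essentially the same approach as the paper: the paper's own proof consists entirely of the sentence ``The following properties of $\sB,\sC$ are established exactly as in Lemma~\ref{lem:BC-rel type D}'', and you have carried out precisely that adaptation, correctly invoking the $\varepsilon$-twisted skew-symmetry $e^{(D)1}_{\alpha,\beta}=-\varepsilon_\alpha\varepsilon_\beta\,e^{(D)1}_{\beta',\alpha'}$ (recorded in Corollary~\ref{cor:BC1-symmetry}(a)) for part~(a) and the $\varepsilon$-modified $\Qop$-term in the RTT relation for the sign flip in part~(d).
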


\medskip
\noindent
It will be convenient to relabel the matrices $\sB,\sC$ as $\ap,-\am$, respectively, cf.~\eqref{eq:ApAm D-type}:
\begin{equation*}
  \sB=\ap=
  \left(\begin{array}{cccc}
          \oad_{1,r'} & \cdots & \oad_{1,2'} & 2\oad_{1,1'}\\
          \vdots & \iddots & \iddots & \oad_{1,2'}\\
          \oad_{r-1,r'} & \iddots & \iddots & \vdots\\
          2\oad_{r,r'} & \oad_{r-1,r'} & \cdots & \oad_{1,r'}
  \end{array}\right)\, ,\
  -\sC=\am=
  \left(\begin{array}{cccc}
              \oa_{r',1} & \cdots & \oa_{r',r-1} & \oa_{r',r} \\
              \vdots & \iddots & \iddots & \oa_{r',r-1} \\
              \oa_{2',1} & \iddots & \iddots & \vdots \\
              \oa_{1',1} & \oa_{2',1} & \cdots & \oa_{r',1}
  \end{array}\right)\,
\end{equation*}
with the matrix coefficients satisfying the following relations:
\begin{equation}
  [\oa_{i',j},\oad_{k,\ell'}]=\delta_{i,\ell}\delta_{j,k}\, ,\quad
  [\oa_{i',j},\oa_{k',\ell}]=0\, , \quad
  [\oad_{i,j'},\oad_{k,\ell'}]=0 \, ,
\end{equation}
due to Lemma~\ref{lem:BC-rel typeC}. Then, a tedious straightforward calculation yields:
\begin{equation}\label{Matrix Example C3 a-osc}
  \sT_D(z)=
  \left(\begin{BMAT}[5pt]{c:c}{c:c}
    (z+x)\ID_r-\ap\am & \ap\\
    -\am & \ID_r
  \end{BMAT}
\right)\, .
\end{equation}

\medskip
\noindent
We note that~\eqref{Matrix Example C3 a-osc} is the exact $\ssp_{2r}$-analogue of
the type $D_r$ Lax matrix of~(\ref{Matrix Example D1 a-osc},~\ref{Matrix Example D1 factorized}).


\medskip
\noindent
$\bullet$ \emph{\underline{Example 4}:} $D=\varpi_r([x_1]+[x_2])-\varpi_0[y]$ with $x_1,x_2\in \BC$
($\sT_D(z)$ is independent of $y\in \BP^1$).
\

Applying the arguments of~\cite{f} (see~\cite{fkt} for more details) to the Lax matrix
of~\eqref{Matrix Example C3 a-osc} and keeping the same notations for the matrices $\ap,\am$,
we immediately obtain the following non-degenerate Lax matrix of type $C_r$ (cf.~\cite[(3.7)]{r}):
\begin{equation}\label{Matrix Example C4 a-osc}
\begin{split}
  & \mathcal{L}(z)=
    \left(\begin{BMAT}[5pt]{c:c}{c:c}
      (z+x_1) \ID_r -\ap_{}\am_{} & \ap_{}(x_2-x_1+\am_{}\ap_{}) \\
       -\am_{} & (z+x_2) \ID_r+\am_{}\ap_{}
    \end{BMAT} \right) =\\
  & \qquad \quad
    \left(\begin{BMAT}[5pt]{c:c}{c:c}
      \ID_r & \ap_{}\\
      0 & \ID_r
    \end{BMAT}\right)
    \left(\begin{BMAT}[5pt]{c:c}{c:c}
      (z+x_1) \ID_r & 0\\
      -\am_{} & (z+x_2)\ID_r
    \end{BMAT}\right)
    \left(\begin{BMAT}[5pt]{c:c}{c:c}
      \ID_r & -\ap_{}\\
      0 & \ID_r
    \end{BMAT}\right)\, .
\end{split}
\end{equation}
The type $C_r$ Lax matrix of the form~(\ref{Matrix Example C4 a-osc}) was recently discovered
in~\cite[\S6.2]{kk} and can be viewed as the exact $\ssp_{2r}$-analogue of the type $D_r$ Lax matrix
of~(\ref{Matrix Example D2 a-osc 2},~\ref{Matrix Example D2 a-osc 1}). We expect that $\mathcal{L}(z)$
is equivalent, up to a canonical transformation, to $\sT_{\varpi_r([x_1]+[x_2])-\varpi_0[y]}(z)$.

\medskip


\section{Type B}\label{sec Rational Lax matrices B-type}
\label{sec B-type}

The type $B_r$ is also quite similar to the type $D_r$, which we considered in details above.
Thus, we'll be brief, only stating the key results and highlighting the few technical differences.


\subsection{Classical (unshifted) story}
\label{ssec unshifted story B}
\

We shall realize the simple positive roots $\{\alphavee_i\}_{i=1}^r$
of the Lie algebra $\sso_{2r+1}$ in $\bar{\Lambda}^\vee$ via:
\begin{equation}\label{eq:alpha-vee B}
  \alphavee_1=\epsilon^\vee_1-\epsilon^\vee_2 \, ,\
  \alphavee_2=\epsilon^\vee_2-\epsilon^\vee_3 \, ,\ \ldots \, ,\
  \alphavee_{r-1}=\epsilon^\vee_{r-1}-\epsilon^\vee_r \, ,\
  \alphavee_r=\epsilon^\vee_r \, .
\end{equation}

\medskip
\noindent
The \emph{Drinfeld Yangian} of $\sso_{2r+1}$, denoted by $Y(\sso_{2r+1})$, is defined similarly
to $Y(\sso_{2r})$: it is generated by $\{\sE_i^{(k)},\sF_i^{(k)},\sH_i^{(k)}\}_{1\leq i\leq r}^{k\geq 1}$
subject to the relations~\eqref{Y0}--\eqref{Y7}, with $\alphavee_i$ of~\eqref{eq:alpha-vee B}.
The \emph{extended Drinfeld Yangian} of $\sso_{2r+1}$, denoted by $X(\sso_{2r+1})$,
is defined alike $X(\sso_{2r})$: it is generated by
  $\{E_i^{(k)},F_i^{(k)}\}_{1\leq i\leq r}^{k\geq 1}\cup \{D_i^{(k)}\}_{1\leq i\leq r+1}^{k\geq 1}$
subject to~(\ref{eY0})--(\ref{eY7}) with the modification:
\begin{equation}\label{eY23-B}
\begin{split}
  & [D_{r+1}(z), E_j(w)] =
    \begin{cases}
      -\frac{D_{r+1}(z)(E_r(z)-E_r(w))}{2(z-w)} + \frac{(E_r(z+1)-E_r(w))D_{r+1}(z)}{2(z-w+1)} & \mbox{if } j=r \\
      0 & \mbox{if } j<r
    \end{cases} \, ,\\
  & [D_{r+1}(z), F_j(w)] =
    \begin{cases}
      \frac{D_{r+1}(z)(F_r(z)-F_r(w))}{2(z-w)} - \frac{(F_r(z+1)-F_r(w))D_{r+1}(z)}{2(z-w+1)} & \mbox{if } j=r \\
      0 & \mbox{if } j<r
    \end{cases} \, .
\end{split}
\end{equation}
The central elements $\{C_r^{(k)}\}_{k\geq 1}$ of $X(\sso_{2r+1})$ are now defined via
(cf.~\eqref{eq:central C}):
\begin{equation}\label{eq:central C type B}
  C_r(z)=1+\sum_{k\geq 1} C_r^{(k)}z^{-k}:=
  \prod_{i=1}^{r}\frac{D_i(z+i-r-\tfrac{1}{2})}{D_i(z+i-r+\tfrac{1}{2})} \cdot D_{r+1}(z) D_{r+1}(z+\tfrac{1}{2}) \, .
\end{equation}
Also, a natural analogue of Lemma~\ref{lem:embedding} holds with
$\iota_0\colon Y(\sso_{2r+1})\hookrightarrow X(\sso_{2r+1})$ defined via:
\begin{equation}\label{eq:iota-null explicitly B}
\begin{split}
  & \sE_i(z)\mapsto E_{i}\left(z+\tfrac{i-1}{2}\right) \, ,\quad
    \sF_i(z)\mapsto F_{i}\left(z+\tfrac{i-1}{2}\right) \, ,\\
  & \sH_i(z)\mapsto D_i\left(z+\tfrac{i-1}{2}\right)^{-1}D_{i+1}\left(z+\tfrac{i-1}{2}\right) \, ,
    \quad \mathrm{for\ any} \quad 1\leq i\leq r \, .
\end{split}
\end{equation}

\medskip
\noindent
Define $N$ and $\kappa$ in the present setup via:
\begin{equation}\label{eq:kappa B}
  N=2r+1 \, , \qquad \kappa=r-\tfrac{1}{2} \, .
\end{equation}
The \emph{extended RTT Yangian} of $\sso_{2r+1}$, denoted by $X^\rtt(\sso_{2r+1})$, is
defined alike $X^\rtt(\sso_{2r})$: it is generated by $\{t_{ij}^{(k)}\}_{1\leq i,j\leq N}^{k\geq 1}$
subject to the RTT relation~\eqref{eq:rtt} with the $R$-matrix $R(z)$ given by~\eqref{eq:R-matrix}.
The \emph{RTT Yangian} of $\sso_{2r+1}$, denoted by $Y^\rtt(\sso_{2r+1})$, is defined similarly to
$Y^\rtt(\sso_{2r})$: it is the subalgebra of $X^\rtt(\sso_{2r+1})$ consisting of the elements stable
under the automorphisms~\eqref{eq:f-autom}. However, it can be also realized as a quotient of
$X^\rtt(\sso_{2r+1})$ as in~\eqref{eq:killing center}, due to the natural analogue of~\eqref{eq:extended to usual},
where the center $ZX^\rtt(\sso_{2r+1})$ of $X^\rtt(\sso_{2r+1})$ is explicitly described as a polynomial
algebra in the coefficients $\{\sz_N^{(k)}\}_{k\geq 1}$ of the series
$\sz_N(z)=1+\sum_{k\geq 1} \sz_N^{(k)}z^{-k}$ determined from (keeping the notations~\eqref{eq:prime}):
\begin{equation}\label{eq:zenter B}
  T'(z-\kappa)T(z)= T(z)T'(z-\kappa)=\sz_N(z)\ID_N \, .
\end{equation}

\medskip
\noindent
In the notations of Subsection~\ref{sssec: RTT-to-Drinfeld-D},
the analogue of Theorem~\ref{thm:Dr=RTT-unshifted-D} still holds, explicitly:
\begin{equation}\label{eq:explicit extended identification B}
  \Upsilon_0 \colon \quad
  E_i(z)\mapsto e_{i,i+1}(z) \,, \quad
  F_i(z)\mapsto f_{i+1,i}(z) \,, \quad
  D_j(z)\mapsto h_j(z) \,
\end{equation}
for all $i\leq r$, $j\leq r+1$. Hence, a natural analogue of Theorem~\ref{thm:JLM Main thm} holds
with $\Upsilon_0\circ \iota_0$ given~by:
\begin{equation}\label{eq:explicit identification B}
\begin{split}
  & \sE_i(z)\mapsto e_{i,i+1}(z+\tfrac{i-1}{2}) \, ,\quad
    \sF_i(z)\mapsto f_{i+1,i}(z+\tfrac{i-1}{2}) \, ,\\
  & \sH_i(z)\mapsto h_i(z+\tfrac{i-1}{2})^{-1}h_{i+1}(z+\tfrac{i-1}{2}) \, ,
    \quad \mathrm{for\ any}\quad 1\leq i\leq r \, .
\end{split}
\end{equation}
We note that our conventions are to those of~\cite{jlm1} as in type $D_r$,
see Remark~\ref{rmk:comparison to JLM} for details.

\medskip
\noindent
Accordingly, $X^\rtt(\sso_{2r+1})$ is generated by the coefficients of $\{h_j(z)\}_{j=1}^{r+1}$
as well as of:
\begin{equation}\label{eq:generating ef B}
  e_i(z)=\sum_{k\geq 1} e_i^{(k)}z^{-k}:=e_{i,i+1}(z) \, , \quad
  f_i(z)=\sum_{k\geq 1} f_i^{(k)}z^{-k}:=f_{i+1,i}(z) \, , \quad 1\leq i\leq r \, .
\end{equation}
We shall now record the explicit formulas for all other entries of the matrices $F(z), H(z), E(z)$.
The following result, the $B$ type analogue of Lemmas~\ref{lem:all-E-known} and~\ref{lem:all-F-known},
is essentially due to~\cite{jlm1}:

\medskip

\begin{Lem}\label{lem:known type B}
(a) $h_{i'}(z)=\frac{1}{h_i(z+i-r+\tfrac{1}{2})}\cdot \prod_{j=i+1}^{r} \frac{h_j(z+j-r-\tfrac{1}{2})}{h_j(z+j-r+\tfrac{1}{2})}
               \cdot h_{r+1}(z) h_{r+1}(z+\tfrac{1}{2})$ for $1\leq i\leq r$.

\medskip
\noindent
(b) $e_{(i+1)',i'}(z)=-e_i(z+i-r+\tfrac{1}{2})$ for $1\leq i\leq r$.

\medskip
\noindent
(c) $e_{i,j+1}(z)=-[e_{i,j}(z),e_j^{(1)}]$ for $1\leq i<j\leq r$.

\medskip
\noindent
(d) $e_{i,j'}(z)=[e_{i,(j+1)'}(z),e_j^{(1)}]$ for $1\leq i<j \leq r$.

\medskip
\noindent
(e) $e_{i',j'}(z) = [e_{i',(j+1)'}(z),e_{j}^{(1)}]$ for $1\leq j\leq i-2 \leq r-2$.

\medskip
\noindent
(f) $f_{i',(i+1)'}(z)=-f_{i}(z+i-r+\tfrac{1}{2})$ for $1\leq i\leq r$.

\medskip
\noindent
(g) $f_{j+1,i}(z)=-[f_j^{(1)},f_{j,i}(z)]$ for $1\leq i<j\leq r$.

\medskip
\noindent
(h) $f_{j',i}(z)=[f_j^{(1)},f_{(j+1)',i}(z)]$ for $1\leq i<j \leq r$.

\medskip
\noindent
(i) $f_{j',i'}(z) = [f_{j}^{(1)},f_{(j+1)',i'}(z)]$ for $1\leq j\leq i-2\leq r-2$.
\end{Lem}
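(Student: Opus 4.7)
The plan is to adapt, step by step, the type $D_r$ proofs of Lemmas~\ref{lem:all-H}, \ref{lem:all-E-known}, and \ref{lem:all-F-known} to the present $B_r$ setting. The two main technical ingredients are again: (i) the crossing relation $T'(z-\kappa)T(z)=\sz_N(z)\ID_N$ of~\eqref{eq:zenter B} with $\kappa=r-\tfrac{1}{2}$, together with the explicit formula $\sz_N(z)=\Upsilon_0(C_r(z))$ obtained from~\eqref{eq:central C type B}; and (ii) a rank-reduction statement analogous to Corollary~\ref{cor:JLM reduction}, namely that the coefficients of the matrix $T^{[s]}(z):=F^{[s]}(z)\cdot H^{[s]}(z)\cdot E^{[s]}(z)$ generate a subalgebra isomorphic to $X^\rtt(\sso_{2(r-s)+1})$ under the quasideterminantal embedding of~\cite{jlm1}. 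Both will be used throughout to reduce each claim to its $i=1$ (or $i=1,\, j=2$) case.

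First I would establish part (a) by comparing the $(N,N)$ matrix coefficients of the crossing relation $T'(z-\kappa)=\sz_N(z)T(z)^{-1}$, yielding $h_N(z)=\sz_N(z)/h_1(z-\kappa)$; substituting the explicit product formula for $\sz_N(z)$ gives the $i=1$ case, and then rank reduction yields the general case. For parts (b) and (f), I would extract the $(N-1,N)$ (respectively $(N,N-1)$) matrix coefficients of the crossing relation, compute the image of the left-hand side using the Gauss factorization~\eqref{eq:Gauss-D}, and invoke the identity $h_1(z)e_1(z)=e_1(z+1)h_1(z)$ (coming from RTT applied to $[t_{11}(z),t_{12}(z+1)]$) together with the half-integer shift $\kappa=r-\tfrac{1}{2}$ to read off the claimed formulas; rank reduction then propagates the result to all $i$.

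Next, for the ``known'' recursive identities (c), (d), (g), (h), I would reduce to $i=1$ via rank reduction and then extract the $w^{-1}$-coefficient of the appropriate RTT commutator~\eqref{eq:rtt explicit}: for (c) of $[t_{1j}(z),t_{j,j+1}(w)]$, for (d) of $[t_{1,(j+1)'}(z),t_{(j+1)',j'}(w)]$, and dually for (g), (h). The key inputs are the expansions $t_{1k}(z)=h_1(z)e_{1,k}(z)$ (and the lower-triangular analogue), combined with $e_{(j+1)',j'}^{(1)}=-e_j^{(1)}$ from part (b). For parts (e) and (i), the argument of~\cite[Proposition 5.6]{jlm1} applies: the bottom-right $(r+1)\times(r+1)$ submatrices of $F(z),H(z),E(z)$ multiply to give a matrix satisfying a type $A$ RTT relation (the $\Qop$-correction in the $R$-matrix~\eqref{eq:R-matrix} drops out for these indices), so the type $A$ Gauss decomposition yields the claim upon invoking $e_{(j+1)',j'}^{(1)}=-e_j^{(1)}$ once more.

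The main obstacle is purely bookkeeping: in type $B_r$ the self-dual middle index $r+1=(r+1)'$ and the half-integer shift $\kappa=r-\tfrac{1}{2}$ enter the $R$-matrix computations~\eqref{eq:rtt explicit} in ways slightly different from type $D_r$, so the RTT relations involving the indices $r$, $r+1$, and $r'$ must be checked carefully (e.g., in parts (c), (d) when $j$ is near $r$ and in part (a) when verifying the compatibility with~\eqref{eq:central C type B}). Once these cases are handled, the structure of the argument is parallel to~\cite{jlm1}, which is why the statement is recorded as being ``essentially due to'' \emph{loc.cit}.
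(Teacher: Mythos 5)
Your proposal follows exactly the route the paper intends: the lemma is stated without proof as ``essentially due to'' \cite{jlm1}, with the understanding that one repeats the type $D_r$ arguments of Lemmas~\ref{lem:all-H}, \ref{lem:all-E-known}, \ref{lem:all-F-known} verbatim --- rank reduction to the $i=1$ case via the quasideterminantal embeddings, the crossing relation \eqref{eq:zenter B} with $\kappa=r-\tfrac{1}{2}$ for parts (a), (b), (f), and extraction of $w^{-1}$-coefficients from \eqref{eq:rtt explicit} for the rest. One small correction: for parts (e) and (i) you should take the bottom-right $r\times r$ (not $(r+1)\times(r+1)$) submatrices, since the indices $i',j'$ with $i,j\leq r$ all lie in $\{r+2,\ldots,2r+1\}$ and the $\Qop$-term only drops out there --- including the self-dual index $r+1$ would reintroduce it.
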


\medskip
\noindent
The remaining matrix coefficients of $E(z)$ and $F(z)$ are recovered via the following
analogues of Lemmas~\ref{lem:all-E-new} and~\ref{lem:all-F-new}:

\medskip

\begin{Lem}\label{lem:new type B}
(a) $e_{i,i'}(z) = [e_{i,(i+1)'}(z),e_{i}^{(1)}]-e_{i}(z)e_{i,(i+1)'}(z)$ for $1\leq i\leq r$.

\medskip
\noindent
(b) $e_{i+1,i'}(z) = [e_{i+1,(i+1)'}(z),e_{i}^{(1)}] + e_{i}(z)e_{i+1,(i+1)'}(z)-e_{i,(i+1)'}(z)$
for $1\leq i\leq r-1$.

\medskip
\noindent
(c) $e_{i,j'}(z) = [e_{i,(j+1)'}(z),e_{j}^{(1)}]$ for $1\leq j\leq i-2 \leq r-1$.

\medskip
\noindent
(d) $f_{i',i}(z) = [f_{i}^{(1)},f_{(i+1)',i}(z)]-f_{(i+1)',i}(z)f_{i}(z)$ for $1\leq i\leq r$.

\medskip
\noindent
(e) $f_{i',i+1}(z) = [f_{i}^{(1)},f_{(i+1)',i+1}(z)] + f_{(i+1)',i+1}(z)f_{i}(z)-f_{(i+1)',i}(z)$
for $1\leq i\leq r-1$.

\medskip
\noindent
(f) $f_{j',i}(z) = [f_{j}^{(1)},f_{(j+1)',i}(z)]$ for $1\leq j\leq i-2 \leq r-1$.
\end{Lem}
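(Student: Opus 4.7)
The plan is to adapt, essentially verbatim, the type $D_r$ proofs of Lemmas~\ref{lem:all-E-new} and~\ref{lem:all-F-new}. Two structural features of those arguments persist here: the rank-reduction Corollary~\ref{cor:JLM reduction} continues to hold for $\sso_{2r+1}$ by the type $B_r$ results of~\cite{jlm1}, and the RTT relation~\eqref{eq:rtt explicit} retains its form with only $N=2r+1$ and $\kappa=r-\tfrac{1}{2}$ replacing $N=2r$ and $\kappa=r-1$. Using the rank reduction, parts (a), (b), (d), (e) reduce to their $i=1$ instances, and parts (c), (f) reduce to $j=1$ (then treated by induction on $i$).

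For (a) with $i=1$, extracting the $w^{-1}$-coefficient of the RTT computation of $[t_{1,N-1}(z),t_{N-1,N}(w)]$ produces $[t_{1,N-1}(z), t_{N-1,N}^{(1)}] = -t_{1,N}(z)$. Substituting $t_{N-1,N}^{(1)} = e_{N-1,N}^{(1)} = -e_1^{(1)}$ from Lemma~\ref{lem:known type B}(b), together with the Gauss expansion $t_{1,k}(z) = h_1(z)e_{1,k}(z)$ and the identity $[h_1(z), e_1^{(1)}] = -h_1(z) e_1(z)$ (obtained as in~\eqref{eq:e-new-1b} from $[t_{11}(z),t_{12}(w)]$), cancelling $h_1(z)$ yields the claimed formula.

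Part (b) with $i=1$ runs along the same lines, but with an additional $\Qop$-contribution: since $2'=N-1$ in the present setup, the second summand of~\eqref{eq:rtt explicit} activates in the computation of $[t_{2,N-1}(z), t_{N-1,N}(w)]$, contributing at the $w^{-1}$-order exactly the single term $t_{1,N-1}(z)$ (the change of $\kappa$ plays no role here, as only the leading $-w^{-1}$ term of $(z-w+\kappa)^{-1}$ survives). One thus obtains the exact analogue of~\eqref{eq:e-new-2a}: $[t_{2,N-1}(z), e_1^{(1)}] = t_{2,N}(z) + t_{1,N-1}(z)$. The argument is then completed, as in~\eqref{eq:e-new-2b}--\eqref{eq:e-new-2d}, by expanding $t_{2,k}(z) = h_2(z)e_{2,k}(z) + f_1(z) h_1(z) e_{1,k}(z)$ and invoking the auxiliary identities $[f_1(z) h_1(z), e_1^{(1)}] = h_1(z) - t_{22}(z)$ and $[h_2(z), e_1^{(1)}] = h_2(z) e_1(z)$. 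Part (c) with $j=1$ is handled by induction on $i$, following~\eqref{eq:e-new-3a}--\eqref{eq:e-new-3e} verbatim.

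Parts (d), (e), (f) for the matrix $F(z)$ are established by the symmetric $F$-side computations used to prove Lemma~\ref{lem:all-F-new}, with the type $B_r$ indices substituted throughout. The main (and essentially the only) subtlety in the whole argument is the $\Qop$-bookkeeping in (b) and (e); since this bookkeeping is insensitive to the precise value of $\kappa$ at the $w^{-1}$-order, there is no genuine obstacle, and the type $B_r$ formulas come out identical in shape to their type $D_r$ counterparts.
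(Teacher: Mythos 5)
Your proposal is correct and coincides with the paper's intended argument: the paper states Lemma~\ref{lem:new type B} without proof, deferring to the type $D_r$ proofs of Lemmas~\ref{lem:all-E-new} and~\ref{lem:all-F-new}, and your write-up simply spells out that analogy. You correctly identify the only two points requiring a check — that the rank-reduction of Corollary~\ref{cor:JLM reduction} persists for $\sso_{2r+1}$ via~\cite{jlm1}, and that the $\Qop$-contribution in parts (b), (e) is insensitive to the new value $\kappa=r-\tfrac{1}{2}$ at the relevant $w^{-1}$-order — so nothing further is needed.
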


\medskip


\subsection{Shifted story}
\label{ssec shifted story B}
\

We shall use the same \emph{extended} lattice $\Lambda^\vee$,
but $\{\hat{\alpha}^\vee_i\}_{i=1}^r$ of $\Lambda^\vee$ are now defined via:
\begin{equation}\label{eq:hat-alpha-vee type B}
  \hat{\alpha}^\vee_i=\epsilon^\vee_i-\epsilon^\vee_{i+1}
  \qquad \mathrm{for} \quad 1\leq i\leq r \, .
\end{equation}
We shall also use the same notation for the dual lattice
  $\Lambda=\bigoplus_{j=1}^{r+1} \BZ\epsilon_j=\bigoplus_{i=0}^{r} \BZ\varpi_i$
with
\begin{equation}\label{eq:varpis type B}
  \varpi_i=-\epsilon_{i+1}-\epsilon_{i+2}-\ldots-\epsilon_{r+1}
  \qquad \mathrm{for} \quad 0\leq i\leq r \, .
\end{equation}
For $\mu\in \Lambda$, define $\unl{d}=\{d_j\}_{j=1}^{r+1}\in \BZ^{r+1}, \unl{b}=\{b_i\}_{i=1}^{r}\in \BZ^{r}$
via~(\ref{extended D shifts},~\ref{extended D shifts 1}); so $b_i=d_i-d_{i+1}\ \forall i$.

\medskip
\noindent
The \emph{shifted extended Drinfeld Yangian of $\sso_{2r+1}$}, denoted by $X_\mu(\sso_{2r+1})$,
is defined similarly: it is generated by
  $\{E_i^{(k)},F_i^{(k)}\}_{1\leq i\leq r}^{k\geq 1}\cup \{D_i^{(k_i)}\}_{1\leq i\leq r+1}^{k_i\geq d_i+1}$
subject to~(\ref{eY0},~\ref{eY2.1}--\ref{eY7},~\ref{eY1-shifted},~\ref{eY23-B}).
Up to an isomorphism, $X_\mu(\sso_{2r+1})$ depends only on the image of $\mu$
under~\eqref{non-ext coweight from ext}, cf.~Lemma~\ref{identifying extended Yangians}.

\medskip
\noindent
For $\nu\in \bar{\Lambda}$, the \emph{shifted Drinfeld Yangian of $\sso_{2r+1}$},
denoted by $Y_\nu(\sso_{2r+1})$, is defined likewise. We note that a
natural analogue of Proposition~\ref{prop:relating shifted yangians} holds with
$\iota_\mu\colon Y_{\bar{\mu}}(\sso_{2r+1})\hookrightarrow X_\mu(\sso_{2r+1})$
determined by~\eqref{eq:iota-null explicitly B} and the central elements
$\{C_r^{(k)}\}_{k\geq 2d_{r+1}+1}$ of $X_\mu(\sso_{2r+1})$ defined via:
\begin{equation}\label{eq:central C shifted type B}
  C_r(z)=z^{-2d_{r+1}}\, + \sum_{k > 2d_{r+1}} C_r^{(k)}z^{-k}:=
  \prod_{i=1}^{r}\frac{D_i(z+i-r-\tfrac{1}{2})}{D_i(z+i-r+\tfrac{1}{2})} \cdot D_{r+1}(z) D_{r+1}(z+\tfrac{1}{2}) \, .
\end{equation}
The natural analogues of Corollary~\ref{cor:sub and quotient} and
Lemma~\ref{center of shifted yangians} still hold in the present setup.

\medskip
\noindent
We shall use the same notations~(\ref{eq:divisor def1})--(\ref{eq:lambda from divisor}) for
\emph{$\Lambda$-valued divisors $D$ on $\BP^1$, $\Lambda^+$-valued outside $\{\infty\}\in \BP^1$}.
The simple coroots $\{\alpha_i\}_{i=1}^r\subset \bar{\Lambda}$ of $\sso_{2r+1}$ are explicitly given by:
\begin{equation}\label{eq:coroots B}
  \alpha_1=\epsilon_1-\epsilon_2 \, ,\ \ldots \, ,\
  \alpha_{r-2}=\epsilon_{r-2}-\epsilon_{r-1} \, ,\
  \alpha_{r-1}=\epsilon_{r-1}-\epsilon_r \, ,\
  \alpha_r=2\epsilon_r \, .
\end{equation}
We also consider $\{\hat{\alpha}_i\}_{i=1}^r\subset \Lambda$, which are the ``lifts''
of $\{\alpha_i\}$ from~\eqref{eq:coroots B} in the sense of~\eqref{eq:lift}:
\begin{equation}\label{eq:extended coroots B}
  \hat{\alpha}_1=\epsilon_1-\epsilon_2 \, ,\ \ldots \, ,\
  \hat{\alpha}_{r-2}=\epsilon_{r-2}-\epsilon_{r-1} \, ,\
  \hat{\alpha}_{r-1}=\epsilon_{r-1}-\epsilon_r \, ,\
  \hat{\alpha}_r=2\epsilon_r \, .
\end{equation}
From now on, we shall impose the following assumption on $D$ (cf.~\eqref{eq:assumption}):
\begin{equation}\label{eq:assumption type B}
  \textbf{Assumption}:\qquad
  \lambda+\mu=a_1\hat{\alpha}_1+\ldots+a_{r}\hat{\alpha}_{r}\quad \mathrm{with}\quad a_i\in \BN \, .
\end{equation}
The above coefficients $a_i$ are explicitly given by:
\begin{equation}\label{eq:a explicitly type B}
\begin{split}
  & a_i = (\epsilon^\vee_1+\ldots+\epsilon^\vee_i)(\lambda+\mu)
    \quad \mathrm{for}\quad 1\leq i\leq r-1 \, ,\\
  & a_r = \frac{(\epsilon^\vee_1+\ldots+\epsilon^\vee_r)(\lambda+\mu)}{2} \, .
\end{split}
\end{equation}
Thus,~(\ref{eq:assumption type B}) is equivalent to $\epsilon^\vee_{r+1}(\lambda+\mu)=0$
and $2^{-\delta_{i,r}} \sum_{k=1}^i\epsilon^\vee_k(\lambda+\mu)\in \BN$ for all $1\leq i\leq r$.

\medskip
\noindent
Consider the algebra $\CA$ defined as in~\eqref{algebra A} but with the modified
relation~\eqref{eq:general pq relation} in place, so that
$[e^{\pm q_{r,k}},p_{r,k}]=\mp \frac{1}{2} e^{\pm q_{r,k}}$.
Then, as in Theorem~\ref{thm:extended homom D}, we have an algebra homomorphism
\begin{equation}\label{eq:homom psi type B}
  \Psi_D\colon X_{-\mu}(\sso_{2r+1})\longrightarrow \CA \, ,
\end{equation}
determined by the following assignment (keeping the notations~(\ref{ZW-series},~\ref{eq:aP conventions})):
\begin{equation}\label{eq:homom assignment type B}
\begin{split}
   & E_i(z)\mapsto
     2^{\delta_{i,r}}\cdot\,
     \sum_{k=1}^{a_i}\frac{P_{i-1}(p_{i,k}-1)}{(z-p_{i,k})P_{i,k}(p_{i,k})} e^{q_{i,k}}\, , \\
   & F_i(z)\mapsto
     \begin{cases}
       -\sum_{k=1}^{a_i}\frac{Z_i(p_{i,k}+1)P_{i+1}(p_{i,k}+1)}{(z-p_{i,k}-1)P_{i,k}(p_{i,k})} e^{-q_{i,k}} & \mbox{if } i\leq r-2 \\
       -\sum_{k=1}^{a_{r-1}}\frac{Z_{r-1}(p_{r-1,k}+1)P_{r}(p_{r-1,k}+1)P_{r}(p_{r-1,k}+\frac{3}{2})}{(z-p_{r-1,k}-1)P_{r-1,k}(p_{r-1,k})} e^{-q_{r-1,k}}
         & \mbox{if } i=r-1 \\
       -\sum_{k=1}^{a_{r}}\frac{Z_{r}(p_{{r},k}+\frac{1}{2})}{(z-p_{{r},k}-\frac{1}{2})P_{{r},k}(p_{{r},k})} e^{-q_{r,k}} & \mbox{if } i=r
     \end{cases} \, , \\
   & D_i(z)\mapsto
     \begin{cases}
       \frac{P_i(z)}{P_{i-1}(z-1)} \cdot \prod_{k=0}^{i-1} Z_k(z) & \mbox{if } i\leq r-1 \\
       \frac{P_r(z)P_r(z+\frac{1}{2})}{P_{r-1}(z-1)} \cdot \prod_{k=0}^{r-1} Z_k(z) & \mbox{if } i=r \\
       \frac{P_{r}(z+\frac{1}{2})}{P_{r}(z-\frac{1}{2})} \cdot \prod_{k=0}^{r} Z_k(z) & \mbox{if } i=r+1
     \end{cases} \, .
\end{split}
\end{equation}
The proof is analogous to that of Theorem~\ref{thm:extended homom D} and is based on the explicit formula
\begin{equation}\label{image of C-series type B}
  \Psi_D(C_r(z)) \, =\, \prod_{i=0}^{r} \Big(Z_i(z)Z_i(z+i-r+\tfrac{1}{2})\Big) \,
\end{equation}
as well as the comparison to the homomorphisms of~\cite{nw}. Precisely, identifying $\CA$ with
$\tilde{\CA}$ of~\emph{loc.cit.} and the points $x_s$ with the parameters $z_s$ of~\emph{loc.cit.}\ via:
\begin{equation*}
  p_{i,k}\leftrightarrow w_{i,k}+\frac{i-1}{2}\, , \qquad
  e^{\pm q_{i,k}}\leftrightarrow \sfu_{i,k}^{\mp 1} \, , \qquad
  x_s \leftrightarrow z_s+\frac{i_s}{2} \, ,
\end{equation*}
the (restriction) composition
  $Y_{-\bar{\mu}}(\sso_{2r+1})\xrightarrow{\iota_{-\mu}} X_{-\mu}(\sso_{2r+1}) \xrightarrow{\Psi_D} \CA$
is given by the formulas~\eqref{eq:nw homom assignment} of Appendix~\ref{Appendix B: shuffle realization}
(applied to the type $B_r$ Dynkin diagram with the arrows pointing $i\to i+1$ for $1\leq i<r$),
which essentially coincide with the homomorphisms $\Phi^{\bar{\lambda}}_{-\bar{\mu}}$ of~\cite{nw}.

\medskip
\noindent
The \emph{antidominantly shifted extended RTT Yangian of $\sso_{2r+1}$}, denoted by
$X^\rtt_{-\mu}(\sso_{2r+1})$ (with $\mu\in \Lambda^+$), is defined similarly to $X^\rtt_{-\mu}(\sso_{2r})$:
it is generated by $\{t^{(k)}_{ij}\}_{1\leq i,j\leq 2r+1}^{k\in \BZ}$ subject to the
RTT relation~\eqref{eq:rtt} and the restriction~\eqref{eq:t-modes shifted} on
the matrix coefficients of the matrices $F(z), H(z), E(z)$ with $d'_i\in \BZ$
defined in the present setup via:
\begin{equation}\label{eq:d-prime type B}
  d'_i:=2d_{r+1}-d_i \qquad \mathrm{for}\quad 1\leq i\leq r \, .
\end{equation}
We note that $\mu\in \Lambda^+$ implies now the following inequalities:
\begin{equation}\label{eq:d-inequalities type B}
  d_1\geq d_2\geq \dots\geq d_{r-1}\geq d_r \geq d_{r+1}\geq d'_r \geq d'_{r-1}\geq \dots \geq d'_1 \, .
\end{equation}
One of our key results in the type $B_r$ is the natural analogue of Theorem~\ref{thm:Dr=RTT-shifted-D}:

\medskip

\begin{Thm}\label{thm:Dr=RTT-shifted-B}
For any $\mu\in \Lambda^+$, the assignment~(\ref{eq:explicit extended identification B}) gives rise to
the algebra isomorphism $\Upsilon_{-\mu}\colon X_{-\mu}(\sso_{2r+1})\iso X^\rtt_{-\mu}(\sso_{2r+1})$.
\end{Thm}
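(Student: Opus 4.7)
The plan is to follow verbatim the type $D_r$ strategy of Subsection~\ref{sssec Proof via Lax matrices D}. Surjectivity is immediate: the assignment~\eqref{eq:explicit extended identification B} is well-defined since all matrix coefficients of $F(z),H(z),E(z)$ can be expressed in terms of $\{e_i(z),f_i(z),h_j(z)\}$ via Lemmas~\ref{lem:known type B}, \ref{lem:new type B}, and these formulas depend only on the Drinfeld-type relations which hold in $X_{-\mu}(\sso_{2r+1})$ just as in the unshifted case. Compatibility with the defining relations of $X^\rtt_{-\mu}(\sso_{2r+1})$ follows from the corresponding computations in $X^\rtt(\sso_{2r+1})$ available after localizing at the central series; thus we obtain an epimorphism $\Upsilon_{-\mu}\colon X_{-\mu}(\sso_{2r+1})\twoheadrightarrow X^\rtt_{-\mu}(\sso_{2r+1})$, the $B$-type analogue of Proposition~\ref{prop:epimorphism of shifted Yangians}.

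The core of the argument is the construction of Lax matrices $T_D(z)$ for any $\Lambda^+$-valued divisor $D$ on $\BP^1$ satisfying~\eqref{eq:assumption type B}, with $D|_\infty=\mu$. Concretely, I would set $T_D(z):=F^D(z)H^D(z)E^D(z)$ where the three Gauss factors are obtained by feeding the explicit formulas~\eqref{eq:homom assignment type B} for the images of $e_i(z),f_i(z),h_j(z)$ into the recursive expressions of Lemmas~\ref{lem:known type B}, \ref{lem:new type B}. The next key step is to verify the normalized limit relation
\begin{equation*}
  T_{D'}(z)\ =\underset{x\to \infty}{\lim}\, \Big\{(-x)^{\gamma\varpi_{i}}\cdot\, T_D(z)\Big\}
\end{equation*}
for $D'=D-\gamma\varpi_i[x]+\gamma\varpi_i[\infty]$, where $x^\nu$ is the diagonal matrix defined by the $B_r$-analogue of~\eqref{eq:normalization factor} (now of size $(2r+1)\times(2r+1)$, with a $1$ in the middle entry coming from $\epsilon^\vee_{r+1}=0$ on all relevant weights). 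This is a direct comparison of the explicit Gauss factors, completely parallel to~\cite[Proposition~2.75]{fpt}.

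Granting the limit relation, one reduces to the unshifted case $D|_\infty=0$. There the matrix $T_D(z)$ obtained via~\eqref{eq:Lax explicit} coincides with $\Theta_D(T(z))=\Psi_D\circ \Upsilon_0^{-1}(T(z))$, since $\Upsilon_0$ is an isomorphism by the $B_r$-analogue of Theorem~\ref{thm:Dr=RTT-unshifted-D}. Hence $T_D(z)$ is Lax for $D|_\infty=0$, and Corollary~\ref{cor:YBE invariance} (whose hypothesis $\mathsf{t}_1\mathsf{t}_{2r+1}=\ldots=\mathsf{t}_r\mathsf{t}_{r+2}=\mathsf{t}_{r+1}^2$ is easily seen to hold for $\mathsf{T}=(-x)^{\gamma\varpi_i}$ in the $B$-type normalization) guarantees that the limit is still Lax. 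Thus $T_D(z)$ satisfies~\eqref{eq:rtt} for all admissible $D$, giving rise to an algebra homomorphism $\Theta_D\colon X^\rtt_{-\mu}(\sso_{2r+1})\to \CA$ whose composition with $\Upsilon_{-\mu}$ equals $\Psi_D$.

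The concluding step is to invoke the extended version of Theorem~\ref{thm:alex's theorem} (noting that the extension to arbitrary shifts of the $\lambda$-parameters is guaranteed by the $B$-type analogue of the decomposition~\eqref{eq:shifted decomposition} together with~\eqref{image of C-series type B}): as $D$ ranges over all $\Lambda^+$-valued divisors with $D|_\infty=\mu$, the intersection of the kernels of $\Psi_D\circ \iota_{-\mu}$ is trivial, hence so is the intersection of kernels of $\Psi_D$ on $X_{-\mu}(\sso_{2r+1})$. Since each $\Psi_D$ factors through $\Upsilon_{-\mu}$, this forces $\Upsilon_{-\mu}$ to be injective. The main obstacle I anticipate is bookkeeping in verifying the normalized limit relation in the $B$-type setting, because the shift conventions for $D_{r+1}(z)$ involve both $z$ and $z+\tfrac{1}{2}$ (see~\eqref{eY23-B} and~\eqref{eq:central C shifted type B}), and one must check that the half-integer shifts in~\eqref{eq:homom assignment type B} interact correctly with the diagonal prefactor $(-x)^{\gamma\varpi_i}$; this is however completely analogous to---and in fact strictly simpler than---the parallel verifications already implicit in the type $C_r$ argument of Section~\ref{sec C-type}.
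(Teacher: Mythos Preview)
Your proposal is correct and follows essentially the same approach as the paper: construct the Lax matrices $T_D(z)$ from the explicit $\Psi_D$-images via Lemmas~\ref{lem:known type B}--\ref{lem:new type B}, reduce to the unshifted case $D|_\infty=0$ via the normalized limit~\eqref{eq:limit relation} (preserving the RTT relation by the $B$-type analogue of Corollary~\ref{cor:YBE invariance}), conclude that each $\Psi_D$ factors through $\Upsilon_{-\mu}$, and then deduce injectivity from the extended version of Theorem~\ref{thm:alex's theorem}. The half-integer shift bookkeeping you flag is indeed routine and causes no new difficulties.
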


\medskip
\noindent
Similarly to the type $D_r$, the assignment~\eqref{eq:RTT coproduct} gives rise to the coproduct homomorphisms
\begin{equation*}
  \Delta^\rtt_{-\mu_1,-\mu_2}\colon X^\rtt_{-\mu_1-\mu_2}(\sso_{2r+1})\longrightarrow
  X^\rtt_{-\mu_1}(\sso_{2r+1})\otimes X^\rtt_{-\mu_2}(\sso_{2r+1})
  \, \qquad \forall\, \mu_1,\mu_2\in \Lambda^+ \, ,
\end{equation*}
coassociative in the sense of Corollary~\ref{cor:coassociativity}.
Evoking the isomorphism of Theorem~\ref{thm:Dr=RTT-shifted-B} and the algebra embedding
  $\iota_{\mu}\colon Y_{\bar{\mu}}(\sso_{2r+1}) \hookrightarrow X_{\mu}(\sso_{2r+1})$,
we obtain the coproduct homomorphisms
\begin{equation}\label{fkprw homom type B}
  \Delta_{-\nu_1,-\nu_2}\colon Y_{-\nu_1-\nu_2}(\sso_{2r+1})\longrightarrow
  Y_{-\nu_1}(\sso_{2r+1})\otimes Y_{-\nu_2}(\sso_{2r+1})
\end{equation}
for any $\nu_1,\nu_2\in \bar{\Lambda}^+$. Explicitly, the homomorphism~\eqref{fkprw homom type B}
is uniquely determined by the formulas \eqref{coproduct Yangian generators sl} with the root generators
$\{\sE_{\gamma^\vee}^{(1)},\sF_{\gamma^\vee}^{(1)}\}_{\gamma\in \Delta^+}$ defined via:
\begin{equation}\label{eq:higher root generators B}
\begin{split}
  & \sE^{(1)}_{\epsilon^\vee_i - \epsilon^\vee_j} = [\sE^{(1)}_{j-1},[\sE^{(1)}_{j-2},[\sE^{(1)}_{j-3}, \,\cdots, [\sE^{(1)}_{i+1},\sE^{(1)}_i]\cdots]]] \, , \\
  & \sF^{(1)}_{\epsilon^\vee_i - \epsilon^\vee_j} = [[[\cdots[\sF_i^{(1)},\sF_{i+1}^{(1)}], \,\cdots, \sF^{(1)}_{j-3}],\sF^{(1)}_{j-2}],\sF_{j-1}^{(1)}] \, , \\
  & \sE^{(1)}_{\epsilon^\vee_i} = [\sE^{(1)}_{r},[\sE^{(1)}_{r-1},[\sE^{(1)}_{r-2}, \,\cdots, [\sE^{(1)}_{i+1},\sE^{(1)}_i]\cdots]]] \, , \\
  & \sF^{(1)}_{\epsilon^\vee_i} = [[[\cdots[\sF_i^{(1)},\sF_{i+1}^{(1)}], \,\cdots, \sF^{(1)}_{r-2}],\sF^{(1)}_{r-1}],\sF_{r}^{(1)}] \, , \\
  & \sE^{(1)}_{\epsilon^\vee_i + \epsilon^\vee_j} =
    [\cdots[[\sE^{(1)}_{r},[\sE^{(1)}_{r-1},[\sE^{(1)}_{r-2}, \,\cdots, [\sE^{(1)}_{i+1},\sE^{(1)}_i]\cdots]]], \sE^{(1)}_{r}], \,\cdots,\sE^{(1)}_{j}] \, , \\
  & \sF^{(1)}_{\epsilon^\vee_i + \epsilon^\vee_j} =
    [\sF^{(1)}_j, \,\cdots, [\sF^{(1)}_{r},[[[\cdots[\sF_i^{(1)},\sF_{i+1}^{(1)}], \,\cdots, \sF^{(1)}_{r-2}],\sF^{(1)}_{r-1}],\sF_{r}^{(1)}]]\cdots]\, ,\\
  & \qquad \qquad \qquad \qquad \qquad \qquad \qquad \qquad \qquad \qquad \qquad \qquad \qquad 1\leq i<j\leq r  \, ,
\end{split}
\end{equation}
where
  $\Delta^+=
   \Big\{\epsilon^\vee_i \pm \epsilon^\vee_j\Big\}_{1\leq i<j\leq r}\cup\
   \Big\{\epsilon^\vee_i\Big\}_{1\leq i\leq r}$
is the set of positive roots of $\sso_{2r+1}$.

\medskip

\begin{Rem}
We note that the last formula of~\eqref{coproduct Yangian generators} holds with the
following update of the formula~\eqref{eq:tilde-epsilon type D}:
\begin{equation}\label{eq:tilde-epsilon type B}
  \tilde{\epsilon}^\vee_j=\epsilon^\vee_j \quad \mathrm{for}\quad j\leq r\, ,\qquad
  \tilde{\epsilon}^\vee_{r+1}=0 \, .
\end{equation}
To this end, let us point out that the $j=r+1$ case of the last formula of~\eqref{coproduct Yangian generators}
is due to the equalities $e^{(1)}_{r+1,r+i}=-e^{(1)}_{r+2-i,r+1}$ and $f^{(1)}_{r+i,r+1}=-f^{(1)}_{r+1,r+2-i}$
which follow from~\eqref{eq:B-linear-skew}.
\end{Rem}

\medskip

\begin{Rem}\label{rmk:coproduct coincidence B}
As our formulas~\eqref{coproduct Yangian generators sl} coincide with those of~\cite[Theorem 4.8]{fkp},
this provides a confirmative answer to the question raised in the end of~\cite[\S8]{cgy}, in the type $B_r$.
\end{Rem}

\medskip


\subsection{Lax matrices}
\label{ssec Lax matrrices B}
\

Similar to type $D_r$, the proof of Theorem~\ref{thm:Dr=RTT-shifted-B} goes through
the faithfulness result of~\cite{w}, see Theorem~\ref{thm:alex's theorem}, and the construction
of the Lax matrices $T_D(z)$. To this end, for any $\Lambda^+$-valued divisor $D$ on $\BP^1$
satisfying~(\ref{eq:assumption type B}), we construct the matrix $T_D(z)$
via~(\ref{eq:Lax definition},~\ref{eq:Lax explicit}) with the matrix coefficients
$f^D_{j,i}(z),e^D_{i,j}(z),h^D_i(z)$ obtained from the explicit formulas~\eqref{eq:homom assignment type B}
combined with Lemmas~\ref{lem:known type B},~\ref{lem:new type B}.
Using the same ``normalized limit'' procedure~\eqref{eq:limit relation}, we conclude
that Corollary~\ref{cor:rational Lax via unshifted} applies in the present setup.
Combining this with $\Upsilon_0$ being an isomorphism~\cite{jlm1}, we conclude
as in Proposition~\ref{prop:preserving RTT} that $T_D(z)$ are Lax (of type $B_r$).

\noindent
Similarly to Proposition~\ref{prop:crossymmetry D}, the matrix $T(z)$ (encoding all generators
of $X^\rtt_{-\mu}(\sso_{2r+1})$) still satisfies the \emph{crossing} relation~\eqref{eq:crossymetry}
with the central series $\sz_N(z)$ defined via (cf.~\eqref{eq:shifted z-central}):
\begin{equation}\label{eq:shifted z-central B}
  \sz_N(z) = \Upsilon_{-\mu}(C_r(z))=
  \prod_{i=1}^{r}\frac{h_i(z+i-r-\tfrac{1}{2})}{h_i(z+i-r+\tfrac{1}{2})} \cdot h_{r+1}(z) h_{r+1}(z+\tfrac{1}{2}) \, .
\end{equation}
In Appendix~\ref{Appendix B: shuffle realization} (see Theorem~\ref{thm:shuffle homomorphism},
Lemma~\ref{lem:EF explicit shuffle B}), we use the \emph{shuffle algebra} approach to derive
the uniform formulas for the matrix coefficients $e_{i,j}^D(z), f^D_{j,i}(z)$, which are
rather inaccessible if derived iteratively via Lemmas~\ref{lem:known type B},~\ref{lem:new type B}.
This allows to prove the analogue of Theorem~\ref{Main Theorem 1}:

\medskip

\begin{Thm}\label{thm: regularity B}
The Lax matrix $\sT_D(z)=\frac{T_D(z)}{Z_0(z)}$ is regular, i.e.~$\sT_D(z)\in \CA[z]\otimes_\BC \End\, \BC^{2r+1}$.
\end{Thm}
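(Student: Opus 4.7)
The plan is to follow the blueprint of Theorem~\ref{Main Theorem 1} (the type $D_r$ counterpart), proving regularity entry-by-entry via the Gauss decomposition
\begin{equation*}
  \sT_D(z)_{\alpha,\beta} \ =\ Z_0(z)^{-1} \sum_{i=1}^{\min\{\alpha,\beta\}} f^D_{\alpha,i}(z)\, h^D_i(z)\, e^D_{i,\beta}(z) \, ,
\end{equation*}
combined with the explicit formulas~\eqref{eq:homom assignment type B} for $h^D_i(z)$ and the uniform shuffle-algebra expressions for the off-diagonal entries $e^D_{i,\beta}(z)$, $f^D_{\alpha,i}(z)$ supplied by Theorem~\ref{thm:shuffle homomorphism} and Lemma~\ref{lem:EF explicit shuffle B}. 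The nested commutator formulas of Lemmas~\ref{lem:known type B},~\ref{lem:new type B} would in principle suffice, but they obscure the pole structure, whereas each shuffle expression displays its denominator explicitly.

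I would first treat the diagonal. After dividing by $Z_0(z)$, the ratio $h^D_i(z)/Z_0(z)$ is an explicit quotient of products $P_j(z+c_j)$ and $Z_k(z)$ with $k\geq 1$, polynomial up to the single denominator $P_{i-1}(z-1)$ (or $P_r(z-\tfrac{1}{2})$ when $i=r+1$). Hence $\sT_D(z)_{\alpha,\alpha}$ is already polynomial; its leading degree equals $a_1$ for $\alpha=1$ and is strictly smaller for $\alpha>1$ by~\eqref{eq:d-inequalities type B}, which will also yield the overall degree bound $\deg_z\sT_D(z)_{\alpha,\beta}\leq a_1$ once the off-diagonal cancellations have been secured, using that $\deg_z e^D_{i,j}(z), \deg_z f^D_{j,i}(z)<0$.

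For the off-diagonal entries I would then check that the only apparent singularities---the simple poles at $z=p_{i,k}$ coming from the $(z-p_{i,k})^{-1}$ factors in the shuffle expressions, and at $z=p_{i,k}+1$ (respectively $z=p_{r,k}+\tfrac{1}{2}$ for the short root) in the $F$-currents---cancel when the three adjacent summands $f^D_{\alpha,i-1}h^D_{i-1}e^D_{i-1,\beta}$, $f^D_{\alpha,i}h^D_{i}e^D_{i,\beta}$, $f^D_{\alpha,i+1}h^D_{i+1}e^D_{i+1,\beta}$ are combined. The shuffle formulas pin the residue at $z=p_{i,k}$ of the $i$-th summand to a restricted shuffle element that matches, up to sign, the corresponding residue of the $(i\pm 1)$-st summand; this is the shuffle-algebraic shadow of the RTT identity~\eqref{eq:rtt explicit}, and structurally it is the same computation that underlies the type $A$ argument of~\cite[Theorem~2.67]{fpt}.

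The main obstacle will be the bookkeeping at the node $i=r$, where the short coroot $\hat{\alpha}_r=2\epsilon_r$ halves the relevant shift (pole at $z=p_{r,k}+\tfrac{1}{2}$ rather than $p_{r,k}+1$) and inserts the factor $2^{\delta_{i,r}}$ from~\eqref{eq:homom assignment type B}, while $h^D_r(z)$ and $h^D_{r+1}(z)$ acquire the quadratic pieces $P_r(z)P_r(z+\tfrac{1}{2})$ and $P_r(z+\tfrac{1}{2})/P_r(z-\tfrac{1}{2})$ respectively. Consequently, one must verify that \emph{two} series of apparent poles, at $z=p_{r,k}$ and at $z=p_{r,k}+\tfrac{1}{2}$, both cancel. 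Here the shuffle description is indispensable: the symmetrization in the shuffle element for $e^D_{i,\beta}(z)$ (or $f^D_{\alpha,i}(z)$) containing a $p_r$-variable automatically produces a mirror pair of residues at these two half-integer-shifted loci whose sum vanishes by virtue of the short-root Serre relation~\eqref{Y6} (respectively~\eqref{Y7}). Once these cancellations are checked node by node, regularity follows for every entry of $\sT_D(z)$, completing the proof.
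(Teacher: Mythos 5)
Your overall route---verifying regularity of each entry of \eqref{eq:Lax explicit} directly from the explicit images \eqref{eq:homom assignment type B} of the Cartan currents and the shuffle-algebra formulas of Theorem~\ref{thm:shuffle homomorphism} and Lemma~\ref{lem:EF explicit shuffle B}, in the spirit of the type $A$ verification of \cite[Theorem~2.67]{fpt}---is exactly the one the paper intends for Theorem~\ref{thm: regularity B}. However, your treatment of the diagonal contains a step that fails. From the observation that $h^D_i(z)/Z_0(z)$ is polynomial ``up to the single denominator $P_{i-1}(z-1)$'' you conclude that $\sT_D(z)_{\alpha,\alpha}$ is ``already polynomial''; this is a non sequitur and is false for $\alpha>1$. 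Already for $\alpha=2$ one has
\begin{equation*}
  \sT_D(z)_{2,2}\;=\;f^D_{2,1}(z)\,P_1(z)\,e^D_{1,2}(z)\;+\;\frac{P_2(z)}{P_1(z-1)}\,Z_1(z)\,,
\end{equation*}
and a short computation shows that the first summand retains simple poles at $z=p_{1,k}+1$ (only the $\ell\neq k$ cross-terms are regularized by commuting $P_1(z)$ past $e^{-q_{1,k}}$), and these cancel precisely against the poles of $h^D_2(z)/Z_0(z)$ at the zeros of $P_1(z-1)$. So the diagonal entries require exactly the same residue cancellations between adjacent summands of \eqref{eq:Lax explicit} that you set up for the off-diagonal ones; they cannot be dispatched by inspecting $h^D_\alpha(z)$ alone. (Your degree bound $\deg_z\sT_D(z)_{\alpha,\beta}\leq a_1$ via \eqref{eq:d-inequalities type B} is fine once regularity is secured.)

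A smaller issue is the mechanism you propose at the short node $i=r$: the cancellation of the poles at $z=p_{r,k}$ and $z=p_{r,k}+\tfrac12$ is not a consequence of the Serre relations \eqref{Y6}, \eqref{Y7}. Their shuffle shadow is the wheel condition \eqref{wheel conditions}, which constrains a single $E$-type (or $F$-type) shuffle element in isolation, whereas the poles in question sit in the triple products $f^D_{\alpha,i}h^D_i e^D_{i,\beta}$ and cancel only after combining terms with different $i$; this rests on the interplay of the $E$-, $F$- and $H$-currents (ultimately on how \eqref{eq:homom assignment type B} is engineered around \eqref{eY1-shifted}), with $\sd_r=\tfrac12$ merely relocating the pole loci. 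With the diagonal shortcut replaced by the same cancellation argument and the appeal to the Serre relations dropped, your plan coincides with the paper's proof.
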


\medskip
\noindent
Similar to type $D_r$, the result above provides a shortcut to the computation of the Lax matrices
$T_D(z)$ defined, in general, as a product of three complicated matrices $F^{D}(z),H^D(z),E^D(z)$.
In particular, the natural analogue of Proposition~\ref{prop:linear Lax D} holds.
To this end, let us now describe all $\Lambda^+$-valued divisors $D$ on $\BP^1$
satisfying~(\ref{eq:assumption type B}) such that $\deg_z \sT_D(z)=1$. Define $\lambda,\mu\in \Lambda^+$
via~(\ref{eq:mu from divisor},~\ref{eq:lambda from divisor}), so that $\lambda+\mu=\sum_{j=0}^r b_j\varpi_j$
with $b_0\in\BZ,\ b_1,\ldots,b_r\in \BN$. Then, the assumption~\eqref{eq:assumption type B} implies
that the corresponding coefficients $a_i\in \BN$ are related to $b_j$'s via:
\begin{equation}\label{eq:a via b type B}
\begin{split}
  & a_i = b_1+2b_2+\ldots+(i-1)b_{i-1}+i(b_i+\ldots+b_{r}) \, , \qquad 1\leq i\leq r-1 \, ,\\
  & a_r = \frac{1}{2}\Big(b_1+2b_2+\ldots+(r-1)b_{r-1}+rb_{r}\Big) \, ,
\end{split}
\end{equation}
as well as $b_0=-b_1-\ldots-b_{r-1}-b_r$, which uniquely recovers $b_0$ in terms of $b_1,\ldots,b_r$.
We also note that the total number of pairs of $(p,q)$-oscillators in the algebra $\CA$ equals:
\begin{equation*}
  \sum_{i=1}^{r} a_i = \sum_{k=1}^{r} \frac{k(2r-k)}{2} b_k \, .
\end{equation*}
Combining the above formulas~\eqref{eq:a via b type B} with Proposition~\ref{prop:linear Lax D}(a)
in type $B_r$, we thus conclude that the normalized Lax matrix $\sT_D(z)$ is linear only for the
following configurations of $b_i$'s:
\begin{enumerate}
  \item[$\bullet$] $b_0=-1,\ b_j=1,\ b_1=\ldots=b_{j-1}=b_{j+1}=\ldots=b_r=0$ for an even $1\leq j\leq r$.
\end{enumerate}
As $b_0$ is uniquely determined via $b_1,\ldots,b_r$ and does not affect the Lax matrix $\sT_D(z)$,
we shall rather focus on the corresponding values of the dominant $\sso_{2r+1}$-coweights
$\bar{\lambda},\bar{\mu}\in \bar{\Lambda}^+$.

\medskip
\noindent
In the above case of $\bar{\lambda}+\bar{\mu}=\omega_j$,
we have $a_1=1,\ldots,a_{j-1}=j-1,a_j=\ldots=a_{r-1}=j,a_r=\frac{j}{2}$, and the total of $\frac{j(2r-j)}{2}$ pairs
of $(p,q)$-oscillators. We obtain two Lax matrices: the \emph{non-degenerate} one,
depending on the additional parameter $x\in \BC$ (but independent of $y\in \BP^1$):
\begin{equation}\label{eq:nondeg B1}
  \sT_{\varpi_j[x]-\varpi_0[y]}(z)=z(E_{11}+\ldots+E_{2r+1,2r+1})+O(1) \, ,
\end{equation}
and its normalized limit as $x\to \infty$, which is \emph{degenerate} with $z$ in the first $j$ diagonal entries:
\begin{equation}\label{eq:deg B1}
  \sT_{\varpi_j[\infty]-\varpi_0[y]}(z)=z(E_{11}+\ldots+E_{jj})+O(1)
\end{equation}
and also satisfying:
\begin{equation*}
   \sT_{\varpi_j[\infty]-\varpi_0[y]}(z)_{k,k}=
   \begin{cases}
     1 & \mbox{if } j+1\leq k\leq (j+1)' \\
     0 & \mbox{if } j'\leq k\leq 1'
   \end{cases} \, .
\end{equation*}

\medskip
\noindent
Completely analogously to Proposition~\ref{prop:property linear D}, we have the following
\emph{unitarity} property of the corresponding non-degenerate linear Lax matrices
(recall the parameter $\kappa=r-\frac{1}{2}$, see~\eqref{eq:kappa B}):

\medskip

\begin{Prop}\label{prop:property linear B}
For any even $1\leq \jmath\leq r$, the corresponding linear non-degenerate Lax matrix
  $L_\jmath(z):=\sT_{\varpi_\jmath[x]-\varpi_0[y]}\left(z+x+\frac{\kappa-\jmath}{2}\right)$
is unitary:
\begin{equation*}
  L_\jmath(z)L_\jmath(-z)=\Big[\Big(\frac{\kappa-\jmath}{2}\Big)^2 - \, z^2\Big] \ID_N \, .
\end{equation*}
\end{Prop}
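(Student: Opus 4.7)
The plan is to follow verbatim the proof of Proposition~\ref{prop:property linear D}(a), adapted to the type $B_r$ setting with $\kappa=r-\tfrac{1}{2}$ as in (\ref{eq:kappa B}). The two ingredients are a reflection identity for $\sT_D(z)$ and the crossing relation, which combine to yield the stated unitarity after the spectral shift $z \mapsto z + x + \tfrac{\kappa-\jmath}{2}$.

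For the reflection identity, I would invoke the regularity Theorem~\ref{thm: regularity B} and the type $B_r$ analogue of Proposition~\ref{prop:linear Lax D} to write $\sT_D(z) = z\cdot \ID_N + M_D$ for $D=\varpi_\jmath[x]-\varpi_0[y]$, where the entries of the $z$-independent matrix $M_D$ are $h^{(D)0}_i$ on the diagonal and $e^{(D)1}_{i,j}$, $f^{(D)1}_{j,i}$ off the diagonal. The first step is to verify the antidiagonal symmetry identities
\begin{equation*}
  h^{(D)0}_i + h^{(D)0}_{i'} = \jmath - 2x, \qquad e^{(D)1}_{i,j} = -e^{(D)1}_{j',i'}, \qquad f^{(D)1}_{j,i} = -f^{(D)1}_{i',j'},
\end{equation*}
where the self-paired middle index $(r+1)'=r+1$ forces $h^{(D)0}_{r+1}=(\jmath-2x)/2$. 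These identities are read off directly from the explicit formulas (\ref{eq:homom assignment type B}) combined with Lemmas~\ref{lem:known type B} and~\ref{lem:new type B}, and together they yield the reflection identity
\begin{equation*}
  \sT'_{\varpi_\jmath[x]-\varpi_0[y]}(z) = -\sT_{\varpi_\jmath[x]-\varpi_0[y]}(-z + 2x - \jmath).
\end{equation*}

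The type $B_r$ crossing relation $T_D(z)T'_D(z-\kappa) = \sz_N(z)\ID_N$ is the direct analogue of (\ref{eq:crossymetry}), proved by the same argument as Proposition~\ref{prop:crossymmetry D} (combining Theorem~\ref{thm:alex's theorem} with the normalized limit construction (\ref{eq:limit relation})). Using the explicit formula (\ref{image of C-series type B}) for $\Psi_D(C_r(z)) = \Theta_D(\sz_N(z))$ together with (\ref{eq:shifted z-central B}), and dividing by $Z_0(z)Z_0(z-\kappa) = (z-y)^{-1}(z-\kappa-y)^{-1}$, I obtain
\begin{equation*}
  \sT_D(z)\sT'_D(z-\kappa) = (z-x)(z-x-\kappa+\jmath)\,\ID_N.
\end{equation*}
Substituting the reflection identity into this crossing relation and setting $z = w + x + \tfrac{\kappa-\jmath}{2}$ produces
\begin{equation*}
  -L_\jmath(w)L_\jmath(-w) = \bigl(w+\tfrac{\kappa-\jmath}{2}\bigr)\bigl(w-\tfrac{\kappa-\jmath}{2}\bigr)\,\ID_N,
\end{equation*}
which is the claimed unitarity after multiplication by $-1$.

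The main obstacle is the bookkeeping in Step~1: verifying the identity $h^{(D)0}_i+h^{(D)0}_{i'}=\jmath-2x$ and the antidiagonal skew-symmetries of $e^{(D)1}$ and $f^{(D)1}$ directly from the recursive formulas in Lemmas~\ref{lem:known type B}, \ref{lem:new type B}. The half-integer shifts appearing in the type $B_r$ formulas (notably the $z+i-r+\tfrac{1}{2}$ in Lemma~\ref{lem:known type B}(a), and the $\tfrac{1}{2}$ and $\tfrac{3}{2}$ arguments in the $F_{r-1}, F_r$ images of (\ref{eq:homom assignment type B})) make this slightly more delicate than its type $D_r$ counterpart, but the final constants conspire so that the reflection constant is exactly $2x-\jmath$, matching the crossing shift $\kappa=r-\tfrac{1}{2}$ precisely to produce the clean unitarity formula.
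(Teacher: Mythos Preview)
Your proposal is correct and follows essentially the same approach as the paper, which simply states the proof is ``completely analogous to Proposition~\ref{prop:property linear D}'' without spelling out the type $B_r$ details. One minor difference: for the skew-symmetries $e^{(D)1}_{i,j}=-e^{(D)1}_{j',i'}$ and $f^{(D)1}_{j,i}=-f^{(D)1}_{i',j'}$ in type $B_r$, the paper obtains these via the shuffle algebra realization (Corollary~\ref{cor:BC1-symmetry}(b), equation~(\ref{eq:B-linear-skew})) rather than directly from the recursive Lemmas~\ref{lem:known type B},~\ref{lem:new type B} as you propose, but either route works.
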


\medskip
\noindent
Motivated by the Examples~3 and~4 in type $D_r$, we expect that
$\sT_{\varpi_1([x_1]+[x_2])-\varpi_0([y_1]+[y_2])}(z)$ and its normalized limit
$\sT_{\varpi_1([x]+[\infty])-\varpi_0([y_1]+[y_2])}(z)$ are equivalent, up to
canonical transformations, to the type $B_r$ quadratic Lax matrices given by
the formulas~(\ref{Matrix Example D4 a-osc factroized}) and~(\ref{Matrix Example D3 a-osc}),
respectively, with $\ID,\idb$ of~\eqref{eq:J-matrix} and $\wp,\wm$ encoding
$N-2=2r-1$ pairs of oscillators,~cf.~\eqref{eq:wmwp D2}:
\begin{equation}\label{eq:B-osc}
  \wp=(\oad_{2},\ldots,\oad_{r},\oad_{r+1},\oad_{r'},\ldots,\oad_{2'})\, ,\qquad
  \wm=(\oa_{2},\ldots,\oa_{r},\oa_{r+1},\oa_{r'},\ldots,\oa_{2'})^t \, .
\end{equation}

\medskip

\begin{Rem}\label{rem:reation to Reshetikhin}
Let us define $\mathfrak{L}_{x_1,x_2}(z)$ via~\eqref{Matrix Example D4 a-osc factroized}
with $\ID,\idb$ as in~(\ref{eq:J-matrix}) and $\wp,\wm$ as in~\eqref{eq:B-osc}.
Consider the expansion of the Lax matrix
\begin{equation}\label{eq:L vs fL}
  L_{x_1,x_2}(z)=\mathfrak{L}_{x_1,x_2}(z+a)=z^2+zM_{x_1,x_2}+G_{x_1,x_2}
\end{equation}
with the shift $a$ of the spectral parameter given by:
\begin{equation}\label{eq:a-mass}
  a=\frac{x_1+x_2-1}{2} \,.
\end{equation}
Using the equalities
\begin{equation*}
  \wm^t\wp^t=\wp\wm+N-2 \,,\qquad [\wp\idb\wp^t,\oa_i]=-2\oad_{i'} \,,
\end{equation*}
 cf.~\eqref{eq:B-osc}, it is straightforward to see from~\eqref{Matrix Example D4 a-osc factroized}
that the linear term in~\eqref{eq:L vs fL} reads:
\begin{equation*}
  M_{x_1,x_2} =
  \left(\begin{BMAT}[5pt]{c|c|c}{c|c|c}
    -x_1+x_2-\tfrac{N}{2}+1-\wp\wm&M_{[12]}&0\\\
    -\wm&\wm\wp-\idb\wp^t\wm^t\idb-\id&M_{[23]}\\
    0&\wm^t\idb&x_1-x_2+\tfrac{N}{2}-1+\wp\wm\\
  \end{BMAT}\right)
\end{equation*}
with the row $M_{[12]}$ and the column $M_{[23]}$ explicitly given by:
\begin{align}
  M_{[12]}&=\left(x_1-x_2+\frac{N}{2}-2+\wp\wm\right)\wp-\frac{1}{2}\wp\idb\wp^t\wm^t\idb \,,\\
  M_{[23]}&=-\left(x_1-x_2+\frac{N}{2}-2+\wp\wm\right)\idb\wp^t+\frac{1}{2}\wp\idb\wp^t\cdot \wm \,.
\end{align}
It is easily seen from the formulas above that the matrix coefficients $M_{ij}=(M_{x_1,x_2})_{i,j}$ satisfy
\begin{equation}\label{eq:transp}
  M_{ij}=-M_{j'i'}
\end{equation}
as well as obey the following commutation relations:
\begin{equation}\label{eq:comm-rel}
  [M_{ij},M_{k\ell}]=\delta_{i,\ell}M_{kj}-\delta_{j',\ell}M_{ki'}-\delta_{i,k'}M_{\ell'j}+\delta_{j,k}M_{\ell'i'} \,.
\end{equation}
Finally, we can show by direct computation that $M_{x_1,x_2}$ satisfies the characteristic identity:
\begin{equation}\label{eq:char id}
  (M_{x_1,x_2}+1) (2 M_{x_1,x_2}+N+2 x_1-2x_2-2) (2 M_{x_1,x_2}+N-2 x_1+2 x_2-2)=0 \,,
\end{equation}
while the free term $G_{x_1,x_2}$ in~\eqref{eq:L vs fL} is expressed via the linear term $M_{x_1,x_2}$ as follows:
\begin{equation}\label{eq:free via linear}
  G_{x_1,x_2}=\frac{1}{2}M_{x_1,x_2}^2+\frac{1}{4}(N-2)M_{x_1,x_2}+\frac{1}{4}\left(N-3-(x_1-x_2)^2\right)\ID_N \,.
\end{equation}

Let us further introduce the parameter $m$ via:
\begin{equation}
  x_1-x_2=1-m-\frac{N}{2} \,
\end{equation}
so that the characteristic identity~\eqref{eq:char id} for $M=M_{x_1,x_2}$ becomes
\begin{equation}
  (M-m)(M+N+m-2)(M+1)=0 \,,
\end{equation}
thus exactly coinciding with~\cite[(3.9)]{r}. Furthermore, after an additional shift in the spectral parameter,
the Lax matrix $L(z)=L_{x_1,x_2}(z)$ can be written as (taking~\eqref{eq:free via linear} into an account):
\begin{equation*}
  {L}\left(z+\frac{N-2}{4}\right)=z\left(z+\frac{N-2}{2}\right)+zM+\frac{1}{2}\left(M^2+(N-2)M-\frac{m(m+N-2)-N+3}{2}\right)
\end{equation*}
which coincides with~\cite[(3.11)]{r}. However, our oscillator realisation differs from that of~\cite{r}.
\end{Rem}

\medskip


\section{Further directions}
\label{sec further directions}


\subsection{Trigonometric version}
\

The constructions and results of the present paper admit natural trigonometric counterparts.
To this end, recall the \emph{shifted quantum affine algebras} $U_{\nu^+,\nu^-}(L\fg)$,
introduced in~\cite[\S5]{ft1}, which are associative $\BC(\vv)$-algebras depending on a pair of
shifts $\nu^+,\nu^- \in \bar{\Lambda}$. Based on and generalizing the isomorphism between
the new Drinfeld and the RTT realizations of extended quantum affine algebras in the classical types
$B_r,C_r,D_r$ recently established in~\cite{jlm2,jlm3}, it turns out that the \emph{shifted extended quantum
affine algebras} $U^\mathrm{ext}_{-\mu^+,-\mu^-}(L\fg)$ with $\mu^+,\mu^-\in \Lambda^+$ admit the RTT realization
$U^\mathrm{ext}_{-\mu^+,-\mu^-}(L\fg)\iso U^{\rtt,\mathrm{ext}}_{-\mu^+,-\mu^-}(L\fg)$ alike~\eqref{eq:key isom}.
This can be viewed as a natural generalization of~\cite[Theorem 3.51]{fpt} for type $A_r$ and shall be addressed elsewhere.

\medskip
\noindent
As an immediate corollary, we obtain the following two important structures:

$\bullet$
  coproduct homomorphisms
  $\Delta_{\nu^+_1,\nu^-_1,\nu^+_2,\nu^-_2}\colon
   U_{\nu^+_1+\nu^+_2,\nu^-_1+\nu^-_2}(L\fg)\to U_{\nu^+_1,\nu^-_1}(L\fg)\otimes U_{\nu^+_2,\nu^-_2}(L\fg)$

$\bullet$
  $\BZ[\vv,\vv^{-1}]$ integral forms $\mathfrak{U}_{\nu^+,\nu^-}(L\fg)\subset U_{\nu^+,\nu^-}(L\fg)$
  compatible with $\Delta_{\nu^+_1,\nu^-_1,\nu^+_2,\nu^-_2}$

\noindent
for classical types $B_r,C_r,D_r$, thus generalizing the only known type $A_r$ case of~\cite{ft1,ft2}.

\medskip
\noindent
Combining the above RTT realization with~\cite[Theorem 7.1]{ft1}, one obtains trigonometric Lax matrices
$T^{\mathrm{trig}}_D(z)$ which can be degenerated to $T_{D+D|_0([\infty]-[0])}(z)$, cf.~\cite[Proposition~3.94]{fpt}.

\medskip


\subsection{Integrable systems}
\

As yet another important application of our key isomorphism~\eqref{eq:key isom} and its aforementioned trigonometric version,
the RTT presentation provides (cf.~\cite{mm}) interesting algebraic quantum integrable systems that appear
on the corresponding quantized ($K$-theoretic) Coulomb branches of $4d$ supersymmetric $\mathcal{N}=2$ quiver
gauge theories, cf.~\cite{np,nps} and~\cite{bfna,bfnb}.
To this end, note that the $\BC[\hbar]$-version of the homomorphisms~\eqref{eq:nw homom} factor through
the quantized Coulomb branches~\cite[Theorem 5.6]{nw}, cf.~\cite[Theorem 8.5]{ft1} in the trigonometric~case.

\medskip
\noindent
Let us also note that $T_D(z)T'_D(-z)$ satisfy the reflection equation~\cite[(4.1)]{gr},
thus giving rise to shifted versions of reflection algebras (aka twisted extended Yangians~\cite[Theorem 4.2]{gr})
of types $B,C,D$. We expect the latter to be related to integrable systems with boundary.

\medskip


\subsection{Polynomial solutions and $Q$-operators}
\

As mentioned in the introduction (with more details provided in Subsections~\ref{ssec examples},~\ref{ssec Lax matrrices C},~\ref{ssec Lax matrrices B}),
some of the simplest examples of our Lax matrices $\sT_D(z)$ are equivalent (up to highly nontrivial canonical transformations)
to the \emph{polynomial} (as they take values in non-localized oscillator algebras) Lax matrices constructed quite recently in the physics literature.
A very interesting question is to understand which of our Lax matrices $\sT_D(z)$ can be transformed (up to canonical transformations) to the polynomial ones.
We note that one of the advantages of our construction is a natural limit procedure~\eqref{eq:limit relation} which becomes highly nontrivial
for the polynomial Lax matrices, see e.g.~Remark~\ref{rem normalized limit C-pol}. However, the polynomial Lax matrices have an obvious advantage
of allowing to take traces, thus leading to $Q$-operators as discussed~below.

\medskip
\noindent
As outlined in~\cite{f}, the polynomial solutions for $D_r$-type can be used to construct $Q$-operators following~\cite{blz,bflms}.
The corresponding $QQ$-system for $D_r$-type spin chains has been recently proposed in~\cite{ffk}, see also~\cite{esv} for a different approach.
We remark that only a subset of the $Q$-operators is constructed in~\cite{f}, namely those corresponding to the end nodes
of the $D_r$ Dynkin diagram for which the evaluation map does exist. While the remaining $Q$-operators are determined by
the $QQ$-system, a direct construction for those is not known. The asymptotic behaviour of the $Q$-operators in the spectral parameter
can be extracted from the algebraic Bethe ansatz, cf.~\cite{r1}, and solutions with the appropriate asymptotic behaviour are obtained
in the Table~\ref{tab:D} below,  suggesting that a construction from our Lax matrices may be possible.  However, the Lax matrices
$\sT_D(z)$ of the present paper are not polynomial in the oscillators and a trace prescription remains to be found.

\medskip
\noindent
The situation for $B_r$ and $C_r$ types is similar. The Lax matrices for $Q$-operators corresponding to the nodes of the Dynkin diagram
where the evaluation map exists are presented here in the polynomial form, see~\cite{fkt} for more details.
For the non-polynomial (in oscillators) solutions $\sT_D(z)$ of the present paper we are in the same position
as for $D_r$-type discussed above. The expected  asymptotic behaviour of the corresponding $Q$-operators is spelled out in
Tables~\ref{tab:C} and~\ref{tab:B} below. A study of the $QQ$-system for the spin chains of type $B_r$ and $C_r$ is outstanding.

\medskip


\begin{table}[h!]
\def\arraystretch{1.5}
{\small
\centering
 \begin{tabular}{|c|| c| c| c|c|}
 \hline
 $Q_i$ & $\vec{a}=(a_1,\ldots,a_r)$ & $\vec{b}=(b_1,\ldots,b_r)$ & $\vec{Z}=(\deg Z_1,\ldots,\deg Z_r)$ &\#osc. \\ [0.5ex]
 \hline\hline
 $1\leq i\leq r-2$ & $(\underbrace{2,4,6,\ldots,2i}_i,\underbrace{2i,\ldots,2i}_{r-i-2},i,i)$ & $(\underbrace{0,\ldots,0,2}_i,\underbrace{0,\ldots,0}_{r-i})$ & $(\underbrace{0,\ldots,0,1}_i,\underbrace{0,\ldots,0}_{r-i})$&$i(2r-i-1)$\\
 \hline $i=r-1$ &\cellcolor{lightgray!30} &\cellcolor{lightgray!30} &\cellcolor{lightgray!30} &\cellcolor{lightgray!30} \\
 \hline
 $\begin{array}{l}
                                      \text{even $r$ }\\
                                       \text{odd $r$}\end{array}$ &$\begin{array}{l}
                                      (1,2,3,\ldots,r-2,\frac{r}{2},\frac{r}{2}-1)\\
                                      (1,2,3,\ldots,r-2,\frac{r-1}{2},\frac{r-1}{2})
                                      \end{array}$& $\begin{array}{l}
                                       (0,\ldots,0,2,0)\\
                                     (0,\ldots,0,1,1)
                                      \end{array}$
 & $(0,\ldots,0,1,0)$&$\frac{r(r-1)}{2}$\\
 \hline
 $i=r$ &\cellcolor{lightgray!30}&\cellcolor{lightgray!30} &\cellcolor{lightgray!30} &\cellcolor{lightgray!30} \\
 \hline
 $\begin{array}{l}
                                      \text{even $r$}\\
                                       \text{odd  $r$}\end{array}$  & $\begin{array}{l}
                                       (1,2,3,\ldots,r-2,\frac{r}{2}-1,\frac{r}{2})\\
                                        (1,2,3,\ldots,r-2,\frac{r-1}{2},\frac{r-1}{2})
                                      \end{array}$& $\begin{array}{l}
                                     (0,\ldots,0,0,2)\\
                                     (0,\ldots,0,1,1)
                                      \end{array}$ & $(0,\ldots,0,0,1)$&$\frac{r(r-1)}{2}$\\ [1ex]
 \hline
 \end{tabular}
 }
 \caption{Solutions of $D_r$-type with the expected asymptotic behavior and \newline\protect\phantom{TABLE 2\,} number of oscillator pairs for $Q$-operator at the node $i$}
 \label{tab:D}
\par
\par
\def\arraystretch{1.5}{
\centering
 \begin{tabular}{|c|| c| c| c|c|}
 \hline
 $Q_i$ & $\vec{a}=(a_1,\ldots,a_r)$ & $\vec{b}=(b_1,\ldots,b_r)$ & $\vec{Z}=(\deg Z_1,\ldots,\deg Z_r)$ &\#osc. \\ [0.5ex]
 \hline\hline
 $1\leq i<r$ & $(\underbrace{2,4,6,\ldots,2i}_i,\underbrace{2i,\ldots,2i}_{r-i})$ & $(\underbrace{0,\ldots,0,2}_i,\underbrace{0,\ldots,0}_{r-i})$ & $(\underbrace{0,\ldots,0,1}_i,\underbrace{0,\ldots,0}_{r-i})$&$i(2r-i+1)$\\
 \hline
 $i= r$ & $(1,2,3,\ldots,r)$ & $(0,\ldots,0,2)$ & $(0,\ldots,0,1)$&$\frac{r(r+1)}{2}$\\
 \hline
 \end{tabular}
 }
 \caption{Solutions of $C_r$-type with the expected asymptotic behavior and \newline\protect\phantom{TABLE 2\,} number of oscillator pairs for $Q$-operator at the node $i$}
 \label{tab:C}
\par
\par
\def\arraystretch{1.5}
{
\centering
 \begin{tabular}{|c|| c| c| c|c|}
 \hline
 $Q_i$ & $\vec{a}=(a_1,\ldots,a_r)$ & $\vec{b}=(b_1,\ldots,b_r)$ & $\vec{Z}=(\deg Z_1,\ldots,\deg Z_r)$ &\#osc. \\ [0.5ex]
 \hline\hline
 $1\leq i<r$ & $(\underbrace{2,4,6,\ldots,2i}_i,\underbrace{2i,\ldots,2i}_{r-i-1},i)$ & $(\underbrace{0,\ldots,0,2}_i,\underbrace{0,\ldots,0}_{r-i})$ & $(\underbrace{0,\ldots,0,1}_i,\underbrace{0,\ldots,0}_{r-i})$&$i(2r-i)$\\ [1ex]
 \hline
 $i=r$ & $(2,4,6,\ldots,2(r-1),r)$& $(0,\ldots,0,2)$ & $(0,\ldots,0,1)$&$r^2$\\
 \hline
 \end{tabular}
 }
 \caption{Solutions of $B_r$-type with the expected asymptotic behavior and \newline\protect\phantom{TABLE 2\,} number of oscillator pairs for $Q$-operator at the node $i$}
 \label{tab:B}
\end{table}


\appendix


\section{Explicit formulas in type D}
\label{Appendix A: Lax explicitly}

In this Appendix, we record the explicit formulas for the matrices $F^D(z),H^D(z),E^D(z)$
the product of which recovers the Lax matrices $T_D(z)$ in type $D_r$, see~(\ref{eq:Lax definition},~\ref{eq:Lax explicit}).
All proofs are straightforward and are based on
Lemmas~\ref{lem:all-H},~\ref{lem:all-E-known},~\ref{lem:all-E-new},~\ref{lem:all-F-known},~\ref{lem:all-F-new}
of Subsection~\ref{sssec: Drinfeld-to-RTT-D}.


\medskip
\noindent
$\bullet$ \emph{\underline{Matrix $H^D(z)$ explicitly}.}
\begin{equation}\label{eq:H-part Lax}
\begin{split}
  & h^D_i(z)=
     \begin{cases}
       \frac{P_i(z)}{P_{i-1}(z-1)} \cdot \prod_{k=0}^{i-1} Z_k(z) & \mbox{if } i\leq r-2 \\
       \frac{P_{r-1}(z)P_r(z)}{P_{r-2}(z-1)} \cdot \prod_{k=0}^{r-2} Z_k(z) & \mbox{if } i=r-1 \\
       \frac{P_r(z)}{P_{r-1}(z-1)} \cdot \prod_{k=0}^{r-1} Z_k(z) & \mbox{if } i=r
     \end{cases} \\
  & h^D_{i'}(z)=
     \begin{cases}
       \frac{P_{i-1}(z+i-r)}{P_{i}(z+i-r)} \cdot \prod_{k=0}^{r} Z_k(z) \prod_{k=i}^{r-2} Z_k(z+k-r+1) & \mbox{if } i\leq r-2 \\
       \frac{P_{r-2}(z-1)}{P_{r-1}(z-1)P_r(z-1)} \cdot \prod_{k=0}^{r} Z_k(z) & \mbox{if } i=r-1 \\
       \frac{P_{r-1}(z)}{P_{r}(z-1)} \cdot \prod_{k=0}^{r-2} Z_k(z)\cdot Z_r(z) & \mbox{if } i=r
     \end{cases}
\end{split}
\end{equation}


\medskip
\noindent
$\bullet$ \emph{\underline{Matrix $E^D(z)$ explicitly}.}
\

\noindent
For $1\leq i<j\leq r-2$, we get:
\begin{small}
\begin{equation*}
   e^D_{i,j}(z)=(-1)^{j-i-1}
   \sum_{\substack{1\leq k_i\leq a_i\\\cdots\\ 1\leq k_{j-1}\leq a_{j-1}}}
     \frac{P_{i-1}(p_{i,k_i}-1)\cdot \prod_{s=i}^{j-2}P_{s,k_s}(p_{s+1,k_{s+1}}-1)}
          {(z-p_{i,k_i})\prod_{s=i}^{j-1} P_{s,k_s}(p_{s,k_s})}\cdot
     e^{\sum_{s=i}^{j-1} q_{s,k_s}}
\end{equation*}
\end{small}

\medskip
\noindent
For $1\leq i\leq r-2$, we get:
\begin{small}
\begin{equation*}
   e^D_{i,r-1}(z)=(-1)^{r-i}
   \sum_{\substack{1\leq k_i\leq a_i\\\cdots\\ 1\leq k_{r-2}\leq a_{r-2}}}
     \frac{P_{i-1}(p_{i,k_i}-1)\cdot \prod_{s=i}^{r-3}P_{s,k_s}(p_{s+1,k_{s+1}}-1)\cdot P_r(p_{r-2,k_{r-2}})}
          {(z-p_{i,k_i})\prod_{s=i}^{r-2} P_{s,k_s}(p_{s,k_s})}\cdot
     e^{\sum_{s=i}^{r-2} q_{s,k_s}}
\end{equation*}
\end{small}

\medskip
\noindent
For $1\leq i\leq r-2$, we get:
\begin{small}
\begin{equation*}
   e^D_{i,r}(z)=(-1)^{r-i-1}
   \sum_{\substack{1\leq k_i\leq a_i\\\cdots\\ 1\leq k_{r-1}\leq a_{r-1}}}
     \frac{P_{i-1}(p_{i,k_i}-1)\cdot \prod_{s=i}^{r-2}P_{s,k_s}(p_{s+1,k_{s+1}}-1)\cdot P_r(p_{r-2,k_{r-2}})}
          {(z-p_{i,k_i})\prod_{s=i}^{r-1} P_{s,k_s}(p_{s,k_s})}\cdot
     e^{\sum_{s=i}^{r-1} q_{s,k_s}}
\end{equation*}
\end{small}
while $e^D_{r-1,r}(z)$ is given by the same formula (with $i=r-1$) but with $P_r(p_{r-2,k_{r-2}})$ omitted.

\medskip
\noindent
For $1\leq i<j\leq r-2$, we get:
\begin{small}
\begin{equation*}
  e^D_{j',i'}(z) = (-1)^{j-i}
  \sum_{\substack{1\leq k_{i}\leq a_{i}\\\cdots\\ 1\leq k_{j-1}\leq a_{j-1}}}
     \frac{P_{i-1}(p_{i,k_{i}}-1)\cdot \prod_{s=i}^{j-2}P_{s,k_s}(p_{s+1,k_{s+1}}-1)}
          {(z-p_{j-1,k_{j-1}}+j-r)\prod_{s=i}^{j-1} P_{s,k_s}(p_{s,k_s})}\cdot
     e^{\sum_{s=i}^{j-1} q_{s,k_s}}
\end{equation*}
\end{small}

\medskip
\noindent
For $1\leq i\leq r-2$, we get:
\begin{small}
\begin{multline*}
   e^D_{(r-1)',i'}(z)=\\
   (-1)^{r-i+1}
   \sum_{\substack{1\leq k_{i}\leq a_{i}\\\cdots\\ 1\leq k_{r-2}\leq a_{r-2}}}
     \frac{P_{i-1}(p_{i,k_{i}}-1)\cdot \prod_{s=i}^{r-3}P_{s,k_s}(p_{s+1,k_{s+1}}-1)\cdot P_r(p_{r-2,k_{r-2}})}
          {(z-p_{r-2,k_{r-2}}-1)\prod_{s=i}^{r-2} P_{s,k_s}(p_{s,k_s})}\cdot
   e^{\sum_{s=i}^{r-2} q_{s,k_s}}
\end{multline*}
\end{small}

\medskip
\noindent
For $1\leq i\leq r-2$, we get:
\begin{small}
\begin{equation*}
   e^D_{r',i'}(z)=
   (-1)^{r-i}
   \sum_{\substack{1\leq k_{i}\leq a_{i}\\\cdots\\ 1\leq k_{r-1}\leq a_{r-1}}}
     \frac{P_{i-1}(p_{i,k_{i}}-1)\cdot \prod_{s=i}^{r-2}P_{s,k_s}(p_{s+1,k_{s+1}}-1)\cdot P_r(p_{r-2,k_{r-2}})}
          {(z-p_{r-1,k_{r-1}})\prod_{s=i}^{r-1} P_{s,k_s}(p_{s,k_s})}\cdot
   e^{\sum_{s=i}^{r-1} q_{s,k_s}}
\end{equation*}
\end{small}
while $e^D_{r',(r-1)'}(z)$ is given by the same formula (with $i=r-1$) but with $P_r(p_{r-2,k_{r-2}})$ omitted.

\medskip
\noindent
For $1\leq i\leq r-2$, we get:
\begin{small}
\begin{multline*}
   e^D_{i,r'}(z)=\\
   (-1)^{r-i}
   \sum_{\substack{1\leq k_{i}\leq a_{i}\\ \cdots\\ 1\leq k_{r-2}\leq a_{r-2}\\ 1\leq k_{r}\leq a_{r}}}
     \frac{P_{i-1}(p_{i,k_{i}}-1)\cdot \prod_{s=i}^{r-3}P_{s,k_s}(p_{s+1,k_{s+1}}-1)\cdot P_{r,k_r}(p_{r-2,k_{r-2}})}
          {(z-p_{i,k_{i}})\prod_{s=i}^{r-2} P_{s,k_s}(p_{s,k_s})\cdot P_{r,k_r}(p_{r,k_r})}\cdot
   e^{\sum_{s=i}^{r-2} q_{s,k_s} +\, q_{r,k_r}}
\end{multline*}
\end{small}
while $e^D_{r-1,r'}(z)$ equals $\Psi_D(E_r(z))$ specified in~\eqref{eq:homom assignment}.

\medskip
\noindent
For $1\leq i \leq r-2$, we get:
\begin{small}
\begin{equation*}
   e^D_{i,(r-1)'}(z)=(-1)^{r-i}
   \sum_{\substack{1\leq k_{i}\leq a_{i}\\ \cdots\\ 1\leq k_{r}\leq a_{r}}}
     \frac{P_{i-1}(p_{i,k_{i}}-1)\cdot \prod_{s=i}^{r-2}P_{s,k_s}(p_{s+1,k_{s+1}}-1)\cdot P_{r,k_r}(p_{r-2,k_{r-2}})}
          {(z-p_{i,k_{i}})\prod_{s=i}^{r} P_{s,k_s}(p_{s,k_s})}\cdot e^{\sum_{s=i}^{r} q_{s,k_s}}
\end{equation*}
\end{small}

\medskip
\noindent
For $1\leq i < j\leq r-2$, we get:
\begin{small}
\begin{multline*}
   e^D_{i,j'}(z)=(-1)^{j-i}\ \cdot\\
   \sum^{|I_{s}|=1+\delta_{s\in \{j,\ldots,r-2\}}}_{\substack{I_i\subset \{1,\ldots,a_i\} \\ \cdots\\ I_r\subset \{1,\ldots,a_r\}}}
     \frac{P_{i-1}(p_{i,k_{i}}-1)\cdot \prod_{i\leq s\leq r-2}^{k\in I_{s+1}}P_{s,I_s}(p_{s+1,k}-1)\cdot \prod_{k\in I_{r-2}}P_{r,I_r}(p_{r-2,k})}
          {(z-p_{i,k_{i}})\prod_{i\leq s\leq r}^{k\in I_s} P_{s,I_s}(p_{s,k})}\cdot
   e^{\sum_{i\leq s\leq r}^{k\in I_s} q_{s,k}} \, ,
\end{multline*}
\end{small}
the sum taken over all subsets $I_s\subset \{1,\ldots,a_s\}, i\leq s\leq r,$ of cardinality
$|I_i|=\ldots=|I_{j-1}|=1$, $|I_j|=\ldots=|I_{r-2}|=2, |I_{r-1}|=|I_r|=1$, and the natural generalization
of~\eqref{ZW-series} being used:
\begin{equation}\label{eq:P-generalized}
  P_{s,I_s}(z)\, :=\prod_{1\leq k\leq a_s}^{k\notin I_s} (z-p_{s,k}) \, .
\end{equation}

\medskip
\noindent
For $1\leq i \leq r-3$, we get:
\begin{small}
\begin{multline*}
   e^D_{i,i'}(z)=\\
   - \sum^{|I_{s}|=1+\delta_{s\in \{i,\ldots,r-2\}}}_{\substack{I_{i}\subset \{1,\ldots,a_i\} \\ \cdots\\ I_r\subset \{1,\ldots,a_r\}}}
     \frac{\prod_{k\in I_{i}}P_{i-1}(p_{i,k}-1)\cdot \prod_{i\leq s\leq r-2}^{k\in I_{s+1}}P_{s,I_s}(p_{s+1,k}-1)\cdot \prod_{k\in I_{r-2}}P_{r,I_r}(p_{r-2,k})}
          {\prod_{k\in I_{i}}(z-p_{i,k})\cdot \prod_{i\leq s\leq r}^{k\in I_s} P_{s,I_s}(p_{s,k})}\cdot
   e^{\sum_{i\leq s\leq r}^{k\in I_s} q_{s,k}}
\end{multline*}
\end{small}
while $e^D_{r,r'}(z)=0$, due to Lemma~\ref{lem:all-E-known}(a), and $e^D_{r-1,(r-1)'}(z)$ is given by:
\begin{small}
\begin{equation*}
   e^D_{r-1,(r-1)'}(z) \, =\,
   - \sum_{\substack{1\leq k_{r-1}\leq a_{r-1} \\ 1\leq k_r\leq a_r}}
     \frac{P_{r-2}(p_{r-1,k_{r-1}}-1)}
          {(z-p_{r-1,k_{r-1}})(z-p_{r,k_r})\prod_{s=r-1}^{r}P_{s,k_s}(p_{s,k_s})}\cdot e^{q_{r-1,k_{r-1}}+\, q_{r,k_r}}
\end{equation*}
\end{small}

\medskip
\noindent
For $2\leq i\leq r-2$, we get:
\begin{small}
\begin{multline*}
    e^D_{i,(i-1)'}(z)\ =
    \sum^{|I_{s}|=1+\delta_{s\in \{i,\ldots,r-2\}}}_{\substack{I_{i-1}\subset \{1,\ldots,a_{i-1}\} \\ \cdots\\ I_r\subset \{1,\ldots,a_r\}}}
     \frac{z-p_{i-1,k_{i-1}}-1}{\prod_{k\in I_{i}}(z-p_{i,k})}\, \times\\
    \frac{P_{i-2}(p_{i-1,k_{i-1}}-1)\cdot \prod_{i-1\leq s\leq r-2}^{k\in I_{s+1}}P_{s,I_s}(p_{s+1,k}-1)\cdot \prod_{k\in I_{r-2}}P_{r,I_r}(p_{r-2,k})}
          {\prod_{i-1\leq s\leq r}^{k\in I_s} P_{s,I_s}(p_{s,k})}\cdot
   e^{\sum_{i-1\leq s\leq r}^{k\in I_s} q_{s,k}}
\end{multline*}
\end{small}
while the $i=r-1,r$ counterparts of this formula are as follows:
\begin{small}
\begin{multline*}
   e^D_{r-1,(r-2)'}(z)\ =\
     -\sum_{\substack{1\leq k_{r-2}\leq a_{r-2}\\ 1\leq k_{r-1}\leq a_{r-1}\\ 1\leq k_{r}\leq a_{r}}}
     \frac{z-p_{r-2,k_{r-2}}-1}{(z-p_{r-1,k_{r-1}})(z-p_{r,k_r})}\, \times \\
     \frac{P_{r-3}(p_{r-2,k_{r-2}}-1) P_{r-2,k_{r-2}}(p_{r-1,k_{r-1}}-1) P_{r,k_r}(p_{r-2,k_{r-2}})}
          {\prod_{s=r-2}^{r} P_{s,k_s}(p_{s,k_s})}\cdot
   e^{\sum_{s=r-2}^{r} q_{s,k_s}}
\end{multline*}
\end{small}
and
\begin{small}
\begin{equation*}
   e^D_{r,(r-1)'}(z) \, =\,
   -\sum_{1\leq k_r\leq a_r}  \frac{1}{(z-p_{r,k_r})P_{r,k_r}(p_{r,k_r})}\cdot e^{q_{r,k_r}}
\end{equation*}
\end{small}

\medskip
\noindent
For $1\leq j\leq i-2 \leq r-4$, we get:
\begin{small}
\begin{multline*}
   e^D_{i,j'}(z) \ =\, (-1)^{i-j+1}
   \sum^{|I_{s}|=1+\delta_{s\in \{i,\ldots,r-2\}}}_{\substack{I_{j}\subset \{1,\ldots,a_{j}\} \\ \cdots\\ I_r\subset \{1,\ldots,a_r\}}}
   \frac{z-p_{i-1,k_{i-1}}-1}{\prod_{k\in I_{i}}(z-p_{i,k})}\, \times \\
   \frac{P_{j-1}(p_{j,k_j}-1)\cdot \prod_{j\leq s\leq r-2}^{k\in I_{s+1}} P_{s,I_s}(p_{s+1,k}-1)\cdot \prod_{k\in I_{r-2}} P_{r,I_r}(p_{r-2,k})}
          {\prod_{j\leq s\leq r}^{k\in I_s} P_{s,I_s}(p_{s,k})}\cdot
   e^{\sum_{j\leq s\leq r}^{k\in I_s} q_{s,k}}
\end{multline*}
\end{small}
while the $i=r-1,r$ counterparts of this formula are as follows:
\begin{small}
\begin{multline*}
   e^D_{r-1,j'}(z)\ =\, (-1)^{r-j+1}
   \sum_{\substack{1\leq k_j\leq a_{j} \\ \cdots\\ 1\leq k_r\leq a_r}}
   \frac{(z-p_{r-2,k_{r-2}}-1)}{(z-p_{r-1,k_{r-1}})(z-p_{r,k_r})}\, \times \\
   \frac{P_{j-1}(p_{j,k_j}-1)\cdot \prod_{s=j}^{r-2} P_{s,k_s}(p_{s+1,k_{s+1}}-1)\cdot P_{r,k_r}(p_{r-2,k_{r-2}})}
          {\prod_{s=j}^{r} P_{s,k_s}(p_{s,k_s})}\cdot e^{\sum_{s=j}^{r} q_{s,k_s}}
\end{multline*}
\end{small}
and
\begin{small}
\begin{multline*}
   e^D_{r,j'}(z) = \\
   (-1)^{r-j+1}
   \sum_{\substack{1\leq k_j\leq a_{j} \\ \cdots\\ 1\leq k_{r-2}\leq a_{r-2}\\ 1\leq k_r\leq a_{r}}}
   \frac{P_{j-1}(p_{j,k_j}-1)\cdot \prod_{s=j}^{r-3} P_{s,k_s}(p_{s+1,k_{s+1}}-1)\cdot P_{r,k_r}(p_{r-2,k_{r-2}})}
        {(z-p_{r,k_r})\prod_{s=j}^{r-2}P_{s,k_s}(p_{s,k_s})\cdot P_{r,k_r}(p_{r,k_r})}\cdot
   e^{\sum_{s=j}^{r-2} q_{s,k_s}+\, q_{r,k_r}}
\end{multline*}
\end{small}

\medskip

\begin{Rem}\label{rmk:E-skew}
In the notations $e^{D}_{i,j}(z)=\sum_{k\geq 1} e^{(D)k}_{i,j}z^{-k}$, see~\eqref{eq:modes},
the above formulas imply:
\begin{equation}\label{eq:E-linear-skew}
  e^{(D)1}_{i,j}=-e^{(D)1}_{j',i'} \,, \qquad \forall\, 1\leq i<j\leq 2r \, .
\end{equation}
\end{Rem}


\medskip
\noindent
$\bullet$ \emph{\underline{Matrix $F^D(z)$ explicitly}.}

\noindent
For $1\leq i<j\leq r-1$, we get:
\begin{small}
\begin{equation*}
\begin{split}
   & f^D_{j,i}(z)=\\
   & (-1)^{j-i}
   \sum_{\substack{1\leq k_i\leq a_i\\\cdots\\ 1\leq k_{j-1}\leq a_{j-1}}}
     \frac{\prod_{s=i}^{j-1}Z_s(p_{s,k_s}+1)\cdot \prod_{s=i+1}^{j-1}P_{s,k_s}(p_{s-1,k_{s-1}}+1)\cdot P_{j}(p_{j-1,k_{j-1}}+1)}
          {(z-p_{i,k_i}-1)\prod_{s=i}^{j-1} P_{s,k_s}(p_{s,k_s})}\cdot
     e^{-\sum_{s=i}^{j-1} q_{s,k_s}}
\end{split}
\end{equation*}
\end{small}

\medskip
\noindent
For $1\leq i\leq r-1$, we get:
\begin{small}
\begin{equation*}
   f^D_{r,i}(z)\, =\, (-1)^{r-i}
   \sum_{\substack{1\leq k_i\leq a_i\\\cdots\\ 1\leq k_{r-1}\leq a_{r-1}}}
     \frac{\prod_{s=i}^{r-1}Z_s(p_{s,k_s}+1)\cdot \prod_{s=i+1}^{r-1}P_{s,k_s}(p_{s-1,k_{s-1}}+1)}
          {(z-p_{i,k_i}-1)\prod_{s=i}^{r-1} P_{s,k_s}(p_{s,k_s})}\cdot
     e^{-\sum_{s=i}^{r-1} q_{s,k_s}}
\end{equation*}
\end{small}

\medskip
\noindent
For $1\leq i<j\leq r-1$, we get:
\begin{small}
\begin{multline*}
  f^D_{i',j'}(z) =\\
  (-1)^{j-i-1}
     \sum_{\substack{1\leq k_i\leq a_i\\\cdots\\ 1\leq k_{j-1}\leq a_{j-1}}}
     \frac{\prod_{s=i}^{j-1}Z_s(p_{s,k_s}+1)\cdot \prod_{s=i+1}^{j-1}P_{s,k_s}(p_{s-1,k_{s-1}}+1)\cdot P_{j}(p_{j-1,k_{j-1}}+1)}
          {(z-p_{j-1,k_{j-1}}+j-r-1)\prod_{s=i}^{j-1} P_{s,k_s}(p_{s,k_s})}\cdot
     e^{-\sum_{s=i}^{j-1} q_{s,k_s}}
\end{multline*}
\end{small}

\medskip
\noindent
For $1\leq i\leq r-1$, we get:
\begin{small}
\begin{equation*}
   f^D_{i',r'}(z)\, =\,
   (-1)^{r-i+1}
   \sum_{\substack{1\leq k_i\leq a_i\\\cdots\\ 1\leq k_{r-1}\leq a_{r-1}}}
     \frac{\prod_{s=i}^{r-1}Z_s(p_{s,k_s}+1)\cdot \prod_{s=i+1}^{r-1}P_{s,k_s}(p_{s-1,k_{s-1}}+1)}
          {(z-p_{r-1,k_{r-1}}-1)\prod_{s=i}^{r-1} P_{s,k_s}(p_{s,k_s})}\cdot
     e^{-\sum_{s=i}^{r-1} q_{s,k_s}}
\end{equation*}
\end{small}

\medskip
\noindent
For $1\leq i\leq r-2$, we get:
\begin{small}
\begin{multline*}
   f^D_{r',i}(z) \ =\
   (-1)^{r-i+1}
   \sum_{\substack{1\leq k_{i}\leq a_{i}\\ \cdots\\ 1\leq k_{r-2}\leq a_{r-2}\\ 1\leq k_{r}\leq a_{r}}}
   P_{r-2,k_{r-2}}(p_{r,k_r})P_{r-1}(p_{r-2,k_{r-2}}+1)\, \times\\
   \frac{\prod_{i\leq s\leq r}^{s\ne r-1}Z_s(p_{s,k_s}+1) \cdot \prod_{s=i+1}^{r-2}P_{s,k_s}(p_{s-1,k_{s-1}}+1)}
          {(z-p_{i,k_{i}}-1)\prod_{s=i}^{r-2} P_{s,k_s}(p_{s,k_s})\cdot P_{r,k_r}(p_{r,k_r})}\cdot
   e^{-\sum_{s=i}^{r-2} q_{s,k_s} -\, q_{r,k_r}}
\end{multline*}
\end{small}
while $f^D_{r',r-1}(z)$ equals $\Psi_D(F_r(z))$ specified in~\eqref{eq:homom assignment}.

\medskip
\noindent
For $1\leq i \leq r-2$, we get:
\begin{small}
\begin{multline*}
   f^D_{(r-1)',i}(z)\, =\\
   (-1)^{r-i+1}
   \sum_{\substack{1\leq k_{i}\leq a_{i}\\ \cdots\\ 1\leq k_{r}\leq a_{r}}}
     \frac{\prod_{s=i}^{r}Z_s(p_{s,k_s}+1)\cdot \prod_{s=i+1}^{r-1}P_{s,k_s}(p_{s-1,k_{s-1}}+1)\cdot P_{r-2,k_{r-2}}(p_{r,k_{r}})}
          {(z-p_{i,k_{i}}-1)\prod_{s=i}^{r} P_{s,k_s}(p_{s,k_s})}\cdot e^{-\sum_{s=i}^{r} q_{s,k_s}}
\end{multline*}
\end{small}

\medskip
\noindent
For $1\leq i < j\leq r-2$, we get:
\begin{small}
\begin{multline*}
   f^D_{j',i}(z)=(-1)^{j-i+1}\ \cdot\\
   \sum^{|I_{s}|=1+\delta_{s\in \{j,\ldots,r-2\}}}_{\substack{I_i\subset \{1,\ldots,a_i\} \\ \cdots\\ I_r\subset \{1,\ldots,a_r\}}}
     \frac{\prod_{i\leq s\leq r}^{k\in I_s} Z_s(p_{s,k}+1)\cdot \prod_{i+1\leq s\leq r-1}^{k\in I_{s-1}}P_{s,I_s}(p_{s-1,k}+1)\cdot P_{r-2,I_{r-2}}(p_{r,k_r})}
          {(z-p_{i,k_{i}}-1)\prod_{i\leq s\leq r}^{k\in I_s} P_{s,I_s}(p_{s,k})}\cdot
   e^{-\sum_{i\leq s\leq r}^{k\in I_s} q_{s,k}}
\end{multline*}
\end{small}
where we use the above notation~\eqref{eq:P-generalized} and $k_r$ denotes the only element of $I_r$, i.e.\ $I_r=\{k_r\}$.

\medskip
\noindent
For $1\leq i \leq r-2$, we get:
\begin{small}
\begin{multline*}
   f^D_{i',i}(z)\, =\\
   - \sum^{|I_{s}|=1+\delta_{s\in \{i,\ldots,r-2\}}}_{\substack{I_{i}\subset \{1,\ldots,a_i\} \\ \cdots\\ I_r\subset \{1,\ldots,a_r\}}}
     \frac{\prod_{i\leq s\leq r}^{k\in I_s} Z_s(p_{s,k}+1)\cdot \prod_{i+1\leq s\leq r-1}^{k\in I_{s-1}}P_{s,I_s}(p_{s-1,k}+1)\cdot P_{r-2,I_{r-2}}(p_{r,k_r})}
          {\prod_{k\in I_{i}}(z-p_{i,k}-1)\cdot \prod_{i\leq s\leq r}^{k\in I_s} P_{s,I_s}(p_{s,k})}\cdot
   e^{-\sum_{i\leq s\leq r}^{k\in I_s} q_{s,k}}
\end{multline*}
\end{small}
with $I_r=\{k_r\}$, while $f^D_{r',r}(z)=0$, due to Lemma~\ref{lem:all-F-known}(a), and $f^D_{(r-1)',r-1}(z)$ is given by:
\begin{small}
\begin{equation*}
   f^D_{(r-1)',r-1}(z) \, =\,
   - \sum_{\substack{1\leq k_{r-1}\leq a_{r-1} \\ 1\leq k_r\leq a_r}}
     \frac{Z_{r-1}(p_{r-1,k_{r-1}}+1)Z_r(p_{r,k_r}+1)P_{r-2}(p_{r,k_r})}
          {\prod_{s=r-1}^{r}(z-p_{s,k_{s}}-1)\cdot \prod_{s=r-1}^{r}P_{s,k_s}(p_{s,k_s})}\cdot e^{-q_{r-1,k_{r-1}}-\, q_{r,k_r}}
\end{equation*}
\end{small}

\medskip
\noindent
For $2\leq i\leq r-2$, we get:
\begin{small}
\begin{multline*}
    f^D_{(i-1)',i}(z) \, =\,
    -\sum^{|I_{s}|=1+\delta_{s\in \{i,\ldots,r-2\}}}_{\substack{I_{i-1}\subset \{1,\ldots,a_{i-1}\} \\ \cdots\\ I_r\subset \{1,\ldots,a_r\}}}
    \frac{z-p_{i-1,k_{i-1}}-2}{\prod_{k\in I_{i}}(z-p_{i,k}-1)}\, \times\\
    \frac{\prod_{i-1\leq s\leq r}^{k\in I_s} Z_s(p_{s,k}+1)\cdot \prod_{i\leq s\leq r-1}^{k\in I_{s-1}}P_{s,I_s}(p_{s-1,k}+1)\cdot P_{r-2,I_{r-2}}(p_{r,k_r})}
         {\prod_{i-1\leq s\leq r}^{k\in I_s} P_{s,I_s}(p_{s,k})}\cdot
   e^{-\sum_{i-1\leq s\leq r}^{k\in I_s} q_{s,k}}
\end{multline*}
\end{small}
with $I_r=\{k_r\}$, while the $i=r-1,r$ counterparts of this formula are as follows:
\begin{small}
\begin{multline*}
   f^D_{(r-2)',r-1}(z)\ =\,
     \sum_{\substack{1\leq k_{r-2}\leq a_{r-2}\\ 1\leq k_{r-1}\leq a_{r-1}\\ 1\leq k_{r}\leq a_{r}}}
     \frac{z-p_{r-2,k_{r-2}}-2}{(z-p_{r-1,k_{r-1}}-1)(z-p_{r,k_r}-1)}\, \times \\
     \frac{\prod_{s=r-2}^r Z_s(p_{s,k_s}+1)\cdot P_{r-2,k_{r-2}}(p_{r,k_{r}}) P_{r-1,k_{r-1}}(p_{r-2,k_{r-2}}+1)}
          {\prod_{s=r-2}^{r} P_{s,k_s}(p_{s,k_s})}\cdot
   e^{-\sum_{s=r-2}^{r} q_{s,k_s}}
\end{multline*}
\end{small}
and
\begin{small}
\begin{equation*}
   f^D_{(r-1)',r}(z)\ =
   \sum_{1\leq k_r\leq a_r} \frac{Z_r(p_{r,k_r}+1)P_{r-2}(p_{r,k_r})}{(z-p_{r,k_r}-1)P_{r,k_r}(p_{r,k_r})}\cdot e^{-q_{r,k_r}}
\end{equation*}
\end{small}

\medskip
\noindent
For $1\leq j\leq i-2 \leq r-4$, we get:
\begin{small}
\begin{multline*}
   f^D_{j',i}(z) \ =\ (-1)^{i-j}
   \sum^{|I_{s}|=1+\delta_{s\in \{i,\ldots,r-2\}}}_{\substack{I_{j}\subset \{1,\ldots,a_{j}\} \\ \cdots\\ I_r\subset \{1,\ldots,a_r\}}}
   \frac{z-p_{i-1,k_{i-1}}-2}{\prod_{k\in I_{i}}(z-p_{i,k}-1)}\, \times \\
   \frac{\prod_{j\leq s\leq r}^{k\in I_s} Z_s(p_{s,k}+1) \cdot \prod_{j+1\leq s\leq r-1}^{k\in I_{s-1}} P_{s,I_s}(p_{s-1,k}+1)\cdot P_{r-2,I_{r-2}}(p_{r,k_r})}
        {\prod_{j\leq s\leq r}^{k\in I_s} P_{s,I_s}(p_{s,k})}\cdot
   e^{-\sum_{j\leq s\leq r}^{k\in I_s} q_{s,k}}
\end{multline*}
\end{small}
with $I_r=\{k_r\}$, while the $i=r-1,r$ counterparts of this formula are as follows:
\begin{small}
\begin{multline*}
   f^D_{j',r-1}(z) \, =\, (-1)^{r-j}
   \sum_{\substack{1\leq k_j\leq a_{j} \\ \cdots\\ 1\leq k_r\leq a_r}}
   \frac{z-p_{r-2,k_{r-2}}-2}{(z-p_{r-1,k_{r-1}}-1)(z-p_{r,k_r}-1)}\, \times \\
   \frac{\prod_{s=j}^{r} Z_s(p_{s,k_s}+1)\cdot \prod_{s=j+1}^{r-1} P_{s,k_s}(p_{s-1,k_{s-1}}+1)\cdot P_{r-2,k_{r-2}}(p_{r,k_{r}})}
          {\prod_{s=j}^{r} P_{s,k_s}(p_{s,k_s})}\cdot e^{-\sum_{s=j}^{r} q_{s,k_s}}
\end{multline*}
\end{small}
and
\begin{small}
\begin{multline*}
   f^D_{j',r}(z) \ =\
   (-1)^{r-j}
   \sum_{\substack{1\leq k_j\leq a_{j} \\ \cdots\\ 1\leq k_{r-2}\leq a_{r-2}\\ 1\leq k_r\leq a_{r}}}
   P_{r-2,k_{r-2}}(p_{r,k_{r}})P_{r-1}(p_{r-2,k_{r-2}}+1)\, \times \\
   \frac{\prod_{j\leq s\leq r}^{s\ne r-1} Z_s(p_{s,k_s}+1)\cdot \prod_{s=j+1}^{r-2} P_{s,k_s}(p_{s-1,k_{s-1}}+1)}
        {(z-p_{r,k_r}-1)\prod_{s=j}^{r-2} P_{s,k_s}(p_{s,k_s})\cdot P_{r,k_r}(p_{r,k_r})}\cdot
   e^{-\sum_{s=j}^{r-2} q_{s,k_s} -\, q_{r,k_r}}
\end{multline*}
\end{small}

\medskip

\begin{Rem}\label{rmk:F-skew}
In the notations $f^{D}_{j,i}(z)=\sum_{k\geq 1} f^{(D)k}_{j,i}z^{-k}$, see~\eqref{eq:modes},
the above formulas imply:
\begin{equation}\label{eq:F-linear-skew}
  f^{(D)1}_{j,i} = -f^{(D)1}_{i',j'} \,, \qquad \forall\, 1\leq i<j\leq 2r \, .
\end{equation}
\end{Rem}

\medskip


\section{Explicit formulas in types B and C}
\label{Appendix B: shuffle realization}

In this Appendix, we provide a shuffle realization of the key homomorphisms of our paper.
To simplify the exposition, we shall follow the uniform formulas of~\cite{nw}
in the $(w,\sfu)$-oscillators (generalizing those of~\cite{bfnb} to non-simply-laced
cases in the spirit of~\cite{gklo,ft1}).


\subsection{Homomorphisms $\Phi^{\bar{\lambda}}_{\bar{\mu}}$}
\label{ssec Nakajima-Weekes formulas}
\

Let $\fg$ be a simple Lie algebra of rank $r$, and let $\{\alphavee_i\}_{i=1}^r$
(resp.\ $\{\alpha_i\}_{i=1}^r$) be the simple roots (resp.\ simple coroots) of $\fg$.
Let $(\cdot,\cdot)$ denote the corresponding pairing on the root lattice, and set
$\sd_i:=\frac{(\alphavee_i,\alphavee_i)}{2}$. Let $(a_{ij})_{i,j=1}^r$ be the
Cartan matrix of $\fg$, so that $\sd_i a_{ij} = (\alphavee_i,\alphavee_j)$.

\medskip
\noindent
We also choose an orientation of the graph $\mathrm{Dyn}_{\fg}$ obtained from the
Dynkin diagram of $\fg$ by replacing all multiple edges with simple ones.
The notation $j-i$ (resp.\ $j\rightarrow i$ or $j \leftarrow i$) is to indicate an edge
(resp.\ oriented edge pointing towards $i$ or $j$) between the vertices $i,j\in\mathrm{Dyn}_{\fg}$.

\medskip
\noindent
Fix a coweight $\bar{\mu}$ of $\fg$, and let $Y_{\bar{\mu}}(\fg)$ denote the corresponding
shifted (Drinfeld) Yangian of $\fg$, cf.~\cite{bfnb,nw}, whose generators are encoded into
the series $\sE_i(z),\sF_i(z),\sH_i(z)$ as before. We also fix a dominant coweight
$\bar{\lambda}=\omega_{i_1}+\ldots+\omega_{i_N}$ ($\omega_k$ being the $k$-th fundamental coweight)
such that $\bar{\lambda}+\bar{\mu}=a_1\alpha_1+\ldots+a_{r}\alpha_{r}$ with $a_i\in \BN$,
and choose a collection of points $z_1,\ldots,z_N\in \BC$.

\medskip
\noindent
Consider the associative $\BC$-algebra (cf.~(\ref{algebra A},~\ref{eq:general pq relation}))
\begin{equation*}
  \tilde{\CA} = \BC\, \Big\langle w_{i,k}\, ,\ \sfu_{i,k}^{\pm 1}\, ,\ (w_{i,k}-w_{i,\ell}+m\sd_i)^{-1}\Big\rangle
                 _{1\leq i\leq r, m\in \BZ}^{1\leq k\ne \ell\leq a_i}
\end{equation*}
with the defining relations:
\begin{equation*}
  [\sfu_{i,k},w_{j,\ell}]=\sd_i\delta_{i,j}\delta_{k,\ell} \sfu_{i,k} \, , \quad
  [w_{i,k},w_{j,\ell}]=0=[\sfu_{i,k},\sfu_{j,\ell}] \, , \quad
  \sfu_{i,k}^{\pm 1} \sfu_{i,k}^{\mp 1} = 1 \, .
\end{equation*}
Set $a_0:=0,\, a_{r+1}:=0,\, W_0(z):=1,\, W_{r+1}(z)=1$. For $1\leq i\leq r$, we also define:
\begin{equation*}
  W_{i}(z):=\prod_{k=1}^{a_i}(z-w_{i,k}) \, , \quad
  W_{i,\ell}(z)\, :=\prod_{1\leq k\leq a_i}^{k\ne \ell}(z-w_{i,k}) \, , \quad
  \sZ_i(z)\, :=\prod_{1\leq s\leq N}^{i_s=i} (z-z_s-\tfrac{1}{2}) \, .
\end{equation*}

\medskip

\begin{Rem}
The shift by $-\frac{1}{2}$ above is purely historical~\cite{bfnb}, and can be absorbed into~$z_s$.
\end{Rem}

\medskip
\noindent
The following is a rational counterpart of~\cite[Theorem 7.1]{ft1} (cf.~\cite[Theorem 5.4]{nw}):

\medskip

\begin{Thm}\label{thm:nakajima-weekes thm}
There is a unique $\BC$-algebra homomorphism
\begin{equation}\label{eq:nw homom}
  \Phi^{\bar{\lambda}}_{\bar{\mu}}\colon Y_{\bar{\mu}}(\fg)\longrightarrow \tilde{\CA} \, ,
\end{equation}
determined by the following assignment:
\begin{equation}\label{eq:nw homom assignment}
\begin{split}
  & \sE_i(z) \, \mapsto \,
    \frac{1}{\sd_i}\sum_{k=1}^{a_i}\frac{\prod_{j\to i} \prod_{p=1}^{-a_{ji}} W_{j}(w_{i,k}-\frac{1}{2}(\alphavee_i,\alphavee_j)-p\sd_j)}{(z-w_{i,k})W_{i,k}(w_{i,k})} \sfu^{-1}_{i,k} \, , \\
  & \sF_i(z) \, \mapsto \,
    -\sum_{k=1}^{a_i}\frac{\sZ_i(w_{i,k}+\sd_i)\prod_{j\leftarrow i}\prod_{p=1}^{-a_{ji}} W_{j}(w_{i,k}+\sd_i-\frac{1}{2}(\alphavee_i,\alphavee_j)-p\sd_j)}{(z-w_{i,k}-\sd_i)W_{i,k}(w_{i,k})} \sfu_{i,k} \, , \\
  & \sH_i(z) \, \mapsto \,
    \frac{\sZ_i(z)\prod_{j-i}\prod_{p=1}^{-a_{ji}} W_{j}(z-\frac{1}{2}(\alphavee_i,\alphavee_j)-p\sd_j)}{W_i(z)W_{i}(z-\sd_i)} \, .
\end{split}
\end{equation}
\end{Thm}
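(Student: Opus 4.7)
The uniqueness is immediate since the algebra $Y_{\bar{\mu}}(\fg)$ is generated by the coefficients of $\sE_i(z),\sF_i(z),\sH_i(z)$, so the entire content is the existence: one must verify that the defining relations (\ref{gY0},\,\ref{gY2})--(\ref{gY7},\,\ref{gY1-extended}) are compatible with the assignment (\ref{eq:nw homom assignment}). The formulas on the right-hand side are the rational analogues of the GKLO-type homomorphisms of \cite[Theorem 7.1]{ft1} and essentially coincide (up to cosmetic conventions of signs, shifts, and the identification $p_{i,k}\leftrightarrow w_{i,k}+\text{const}$) with those of \cite[Theorem~5.4]{nw}. Thus the cleanest path is to match our assignment with the one of \emph{loc.\ cit.}\ term-by-term and invoke that theorem; the plan below, however, also sketches the native verification.

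The hierarchy of relations to check is standard. First, (\ref{gY0}) is automatic because every $\Phi^{\bar{\lambda}}_{\bar{\mu}}(\sH_i(z))$ lies in the commutative subalgebra $\BC(\{w_{j,k}\})\subset\tilde{\CA}$. Second, the Cartan-current relations (\ref{gY2})--(\ref{gY3}) reduce to the identity $\sfu^{\pm 1}_{i,k}\,f(w_{i,k})=f(w_{i,k}\mp \sd_i)\,\sfu^{\pm 1}_{i,k}$ applied to $\Phi^{\bar{\lambda}}_{\bar{\mu}}(\sH_i(z))$: one compares the residues of both sides at the poles $z=w_{i,k},\,w_{i,k}-\sd_i$ (the only poles of $\sH_i(z)$ after the image is taken), where the factor $(\alphavee_i,\alphavee_j)/2$ emerges from the shift by $\sd_i$ of the arguments of $W_j(\cdot)$ (resp.\ $\sZ_i(\cdot)$) inside the numerator of $\sH_i(z)$. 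Third, the $[E,F]$ relation (\ref{gY1-extended}) follows from a direct residue calculation: in the product $\Phi(\sE_i(z))\Phi(\sF_j(w))$ the only surviving terms, after using $\sfu_{j,\ell}\sfu^{-1}_{i,k}=\delta_{i,j}\delta_{k,\ell}$, pair at $k=\ell$ (when $i=j$); a telescoping via $\frac{1}{z-a}-\frac{1}{w-a-\sd_i}=\frac{1}{z-w}\Big(\text{difference of }\sH_i\text{-contributions at }w_{i,k}\Big)$ recovers exactly the principal part $\frac{\unl{\sH_i(z)}-\unl{\sH_i(w)}}{z-w}$ on the right of (\ref{gY1-extended}), while for $i\ne j$ the product vanishes for degree reasons once one checks that all residues at $z=w_{i,k}$ are cancelled by the $W_j$-factor in the numerator (using $j\to i$ vs.\ $j\leftarrow i$).

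Fourth, the quadratic Drinfeld relations (\ref{gY4})--(\ref{gY5}) reduce, after multiplying out the rational prefactors, to the functional equation
\[
  (z-w-\sd_i a_{ij})\,\rho_i(z)\rho_j(w) + (w-z-\sd_i a_{ij})\,\rho_j(w)\rho_i(z) = 0
\]
satisfied by the single-summand contributions $\rho_i(z)=\frac{\prod_\ast W_j(\cdots)}{(z-w_{i,k})W_{i,k}(w_{i,k})}\sfu^{-1}_{i,k}$; this is a straightforward (if lengthy) verification using $\sfu^{-1}_{i,k}W_j(w)=W_j(w-\tfrac{1}{2}(\alphavee_i,\alphavee_j))\sfu^{-1}_{i,k}$. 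The genuine obstacle is the Serre relations (\ref{gY6})--(\ref{gY7}), which involve nested brackets of up to $1-a_{ij}$ copies of $\sE_i$; checking them by brute symmetrization is infeasible. The standard remedy, and the one I would take, is the shuffle-algebra detour: the $E$-subalgebra embeds into the rational shuffle algebra of \cite{ft1} whose wheel conditions automatically encode the Serre relations, and the images of $\sE_i^{(k)}$ under (\ref{eq:nw homom assignment}) are readily seen to satisfy the wheel conditions (vanishing of residues at configurations $w_{i,k_1}=w_{j,\ell}+\tfrac{1}{2}(\alphavee_i,\alphavee_j)=w_{i,k_2}-\sd_i$, where $j$ is adjacent to $i$ in $\mathrm{Dyn}_\fg$). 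An identical argument applies to the $F$-side. Since the wheel-condition verification is identical to \cite[\S7]{ft1} (in its rational degeneration) and to \cite[Proof of Theorem~5.4]{nw}, this step reduces to pointing at those sources, and the theorem follows.
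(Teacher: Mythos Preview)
The paper does not actually prove this theorem: it states it as a rational counterpart of \cite[Theorem~7.1]{ft1} and refers to \cite[Theorem~5.4]{nw}, with no further argument. Your sketch is therefore not so much a comparison target as a fleshing-out of what the paper leaves implicit; the verification strategy you outline (Cartan relations by shift identities, $[E,F]$ by residue telescoping, Serre via the shuffle/wheel mechanism) is the standard one and ultimately lands on the same two references the paper invokes, so there is no discrepancy to discuss.
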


\medskip


\subsection{Shuffle algebra realization of $\Phi^{\bar{\lambda}}_{\bar{\mu}}$}
\label{ssec shuffle realization}
\

Let $Y^{+}(\fg)$ and $Y^-(\fg)$ denote the subalgebras of the Drinfeld Yangian $Y(\fg)$
generated by $\{\sE^{(k)}_{i}\}_{1\leq i\leq r}^{k\geq 1}$ and $\{\sF^{(k)}_{i}\}_{1\leq i\leq r}^{k\geq 1}$,
respectively. They can also be described as algebras generated by $\{\sE^{(k)}_{i}\}_{1\leq i\leq r}^{k\geq 1}$
and $\{\sF^{(k)}_{i}\}_{1\leq i\leq r}^{k\geq 1}$ subject to the relations~(\ref{Y4},~\ref{Y6})
and~(\ref{Y5},~\ref{Y7}), respectively.

\medskip

\begin{Rem}\label{rmk:halves are opposite}
Note the algebra isomorphisms $Y^-(\fg)\iso Y^{+}(\fg)^{\op}$ determined via
$\sF_{i}^{(k)}\mapsto \sE_{i}^{(k)}$ (given an algebra $A$, we use $A^\op$
to denote the algebra with the opposite multiplication).
\end{Rem}

\medskip
\noindent
For any coweight $\nu$ of $\fg$, we define the subalgebras $Y^\pm_{\nu}(\fg)$ of the shifted Yangian
$Y_\nu(\fg)$ likewise. According to~\cite[Corollary 3.15]{fkp}, we have algebra isomorphisms for any $\nu$:
\begin{equation}\label{eq:halves isomorphism}
\begin{split}
  & Y^{+}_\nu(\fg)\iso Y^{+}(\fg) \, , \qquad \sE^{(k)}_{i}\mapsto \sE^{(k)}_{i} \, ,\\
  & Y^{-}_\nu(\fg)\iso Y^{-}(\fg) \, , \qquad \sF^{(k)}_{i}\mapsto \sF^{(k)}_{i} \, .
\end{split}
\end{equation}

\medskip
\noindent
Consider an $\BN^{r}$-graded $\BC$-vector space
\begin{equation}\label{eq:ambient shuffle}
  \BS^{(\fg)}\ =\underset{\underline{k}=(k_1,\ldots,k_{r})\in \BN^{r}}\bigoplus\BS^{(\fg)}_{\underline{k}} \, ,
\end{equation}
where $\BS^{(\fg)}_{\unl{k}}$ consists of $\prod_{i=1}^r S(k_i)$-symmetric rational functions
in the variables $\{x_{i,k}\}_{1\leq i\leq r}^{1\leq k\leq k_i}$. We also fix a matrix of
rational functions $(\zeta_{ij}(z))_{i,j=1}^{r}$ via:
\begin{equation}\label{eq:shuffle factor}
  \zeta_{ij}(z)=1+\frac{(\alphavee_i,\alphavee_j)}{2z}=1+\frac{\sd_i a_{ij}}{2z} \, .
\end{equation}
Let us define the \emph{shuffle product} $\star$ on $\BS^{(\fg)}$:
given $F\in \BS^{(\fg)}_{\underline{k}}$, $G\in \BS^{(\fg)}_{\underline{\ell}}$,
define $F\star G\in \BS^{(\fg)}_{\underline{k}+\underline{\ell}}$ via
\begin{equation}\label{shuffle product}
\begin{split}
  & (F\star G)(x_{1,1},\ldots,x_{1,k_1+\ell_1};\ldots;x_{r,1},\ldots, x_{r,k_{r}+\ell_{r}}):=
    \frac{1}{\unl{k}!\cdot\unl{\ell}!}\times\\
  & \Sym
    \left(F\left(\{x_{i,k}\}_{1\leq i\leq r}^{1\leq k\leq k_i}\right) G\left(\{x_{i',k'}\}_{1\leq i'\leq r}^{k_{i'}<k'\leq k_{i'}+\ell_{i'}}\right)\cdot
    \prod_{1\leq i\leq r}^{1\leq i'\leq r}\prod_{k\leq k_i}^{k'>k_{i'}}\zeta_{ii'}(x_{i,k}-x_{i',k'})\right).
\end{split}
\end{equation}
Here, $\unl{k}!=\prod_{i=1}^{r} k_i!$, while the \emph{symmetrization} of
$f\in \BC(\{x_{i,1},\ldots,x_{i,m_i}\}_{1\leq i\leq r})$ is defined via:
\begin{equation*}
  \Sym\, (f)\left(\{x_{i,1},\ldots,x_{i,m_i}\}_{1\leq i\leq r}\right)\ :=
  \sum_{(\sigma_1,\ldots,\sigma_{r})\in S(m_1)\times \dots \times S(m_r)}
  f\Big(\{x_{i,\sigma_i(1)},\ldots,x_{i,\sigma_i(m_i)}\}_{1\leq i\leq r}\Big).
\end{equation*}
This endows $\BS^{(\fg)}$ with a structure of an associative $\BC$-algebra
with the unit $\textbf{1}\in \BS^{(\fg)}_{(0,\ldots,0)}$.

\medskip
\noindent
We are interested in a certain $\BC$-subspace of $\BS^{(\fg)}$
defined by the \emph{pole} and \emph{wheel conditions}:
\begin{itemize}
\item[$\bullet$]
  We say that $F\in \BS^{(\fg)}_{\underline{k}}$ satisfies the \emph{pole conditions} if
  \begin{equation}\label{pole conditions}
    F=\frac{f(x_{1,1},\ldots,x_{r,k_{r}})}{\prod_{i-j}^{\mathrm{unordered}} \prod_{k\leq k_i}^{k'\leq k_{j}}(x_{i,k}-x_{j,k'})},\
    \mathrm{where}\ f\in \Big(\BC[\{x_{i,k}\}_{1\leq i\leq r}^{1\leq k\leq k_i}]\Big)^{S(k_1)\times \dots \times S(k_r)} \,.
  \end{equation}
\item[$\bullet$]
  We say that $F\in \BS^{(\fg)}_{\underline{k}}$ satisfies the \emph{wheel conditions} if
  for any connected $i-j$, we have:
  \begin{equation}\label{wheel conditions}
    F\Big(\{x_{i,k}\}\Big)\Big|_
    {(x_{i1},x_{i2},x_{i3}, \dots, x_{i,1-a_{ij}}) \mapsto (w, w+\sd_i, w+2\sd_i, \dots, w+\sd_i a_{ij}),\,
      x_{j1} \mapsto w+\frac{\sd_i a_{ij}}{2}} =\, 0 \,.
  \end{equation}
\end{itemize}
Let $S^{(\fg)}_{\underline{k}}\subset \BS^{(\fg)}_{\underline{k}}$ denote the
subspace of all elements $F$ satisfying these two conditions and set
$$
  S^{(\fg)}:=\underset{\underline{k}\in \BN^{r}}\bigoplus S^{(\fg)}_{\underline{k}}\, .
$$

\medskip
\noindent
It is straightforward to check that $S^{(\fg)}\subset\BS^{(\fg)}$ is $\star$-closed. The resulting algebra
$\left(S^{(\fg)},\star\right)$ is called the \emph{(rational) shuffle algebra of type $\fg$}.
It is related to $Y^+(\fg)$ via the embedding:
\begin{equation}\label{eq:Psi1}
  \Upsilon\colon Y^{+}(\fg)\hookrightarrow S^{(\fg)}\, ,\qquad
  \sE_{i}^{(k)}\mapsto x_{i,1}^{k-1} \qquad \mathrm{for}\quad 1\leq i\leq r \, ,\ k\geq 1 \, .
\end{equation}
In view of Remark~\ref{rmk:halves are opposite}, we also get:
\begin{equation}\label{eq:Psi2}
  \Upsilon\colon Y^{-}(\fg)\hookrightarrow S^{(\fg),\op}\, ,\qquad
  \sF_{i}^{(k)}\mapsto x_{i,1}^{k-1} \qquad \mathrm{for}\quad 1\leq i\leq r \, ,\ k\geq 1 \, .
\end{equation}

\medskip

\begin{Rem}
The above embeddings $\Upsilon$ of~(\ref{eq:Psi1},~\ref{eq:Psi2}) are expected to be actually
algebra isomorphisms, similar to the trigonometric counterpart as was recently established in~\cite{nt}.
This has been proved in (super version of) the type $A$ in~\cite[\S6-7]{t}.
\end{Rem}

\medskip
\noindent
The key result of this Appendix is the construction of the algebra homomorphisms
\begin{equation}\label{eq:shuffle homomorphisms}
  \wt{\Phi}^{\bar{\lambda}}_{\bar{\mu}}\colon \quad
  S^{(\fg)} \longrightarrow \tilde{\CA} \, , \qquad
  S^{(\fg),\op} \longrightarrow \tilde{\CA} \, ,
\end{equation}
compatible with $\Phi^{\bar{\lambda}}_{\bar{\mu}}$~\eqref{eq:nw homom} with respect to
the isomorphisms~\eqref{eq:halves isomorphism} and embeddings~(\ref{eq:Psi1},~\ref{eq:Psi2}).
To this end,  for $1\leq i\leq r$ and $1\leq \ell\leq a_i$, we define:
\begin{equation}\label{eq:Y-factors}
\begin{split}
  & Y_{i,\ell}(z):=
    \frac{\prod_{j\to i} \prod_{p=1}^{-a_{ji}} W_{j}(z-\frac{1}{2}(\alphavee_i,\alphavee_j)-p\sd_j)}{\sd_i\cdot W_{i,\ell}(z)} \, ,\\
  & Y'_{i,\ell}(z):=
    -\frac{\sZ_i(z+\sd_i)\prod_{j\leftarrow i}\prod_{p=1}^{-a_{ji}} W_{j}(z+\sd_i-\frac{1}{2}(\alphavee_i,\alphavee_j)-p\sd_j)}{W_{i,\ell}(z)} \, .
\end{split}
\end{equation}

\medskip

\begin{Thm}\label{thm:shuffle homomorphism}
(a) The assignment
\begin{equation}\label{eq:explicit shuffle homom 1}
\begin{split}
  & S^{(\fg)}_{(k_1,\ldots,k_r)} \ni E\mapsto \\
  & \sum_{\substack{m^{(1)}_1+\ldots+m^{(1)}_{a_1}=k_1\\\cdots\\ m^{(r)}_1+\ldots+m^{(r)}_{a_{r}}=k_{r}}}^{m^{(i)}_k\in \BN}
    \left\{\prod_{i=1}^{r}\prod_{k=1}^{a_i}\prod_{p=1}^{m^{(i)}_k} Y_{i,k}\Big(w_{i,k}-(p-1)\sd_i\Big)\cdot
    E\left(\Big\{w_{i,k}-(p-1)\sd_i\Big\}_{\substack{1\leq i\leq r\\ 1\leq k\leq a_i\\ 1\leq p\leq m^{(i)}_k}}\right)\times\right.\\
  & \left.
    \prod_{i=1}^{r}\prod_{k=1}^{a_i}\prod_{1\leq p_1<p_2\leq m^{(i)}_k}
      \zeta^{-1}_{ii}\Big( (w_{i,k}-(p_1-1)\sd_i)\,-\, (w_{i,k}-(p_2-1)\sd_i)\Big)\, \times\right.\\
  & \left.
    \prod_{i=1}^{r}\prod_{1\leq k_1\neq k_2\leq a_i}\prod_{1\leq p_1\leq m^{(i)}_{k_1}}^{1\leq p_2\leq m^{(i)}_{k_2}}
      \zeta^{-1}_{ii}\Big( (w_{i,k_1}-(p_1-1)\sd_i) \,-\, (w_{i,k_2}-(p_2-1)\sd_i)\Big)\, \times\right.\\
  & \left.\prod_{j\to i}\prod_{1\leq k_1\leq a_{i}}^{1\leq k_2\leq a_{j}}\prod_{1\leq p_1\leq m^{(i)}_{k_1}}^{1\leq p_2\leq m^{(j)}_{k_2}}
      \zeta^{-1}_{ij}\Big((w_{i,k_1}-(p_1-1)\sd_i) \,-\, (w_{j,k_2}-(p_2-1)\sd_j)\Big) \cdot\,
    \prod_{i=1}^{r}\prod_{k=1}^{a_i} \sfu_{i,k}^{-m^{(i)}_k}\right\}
\end{split}
\end{equation}
gives rise to the algebra homomorphism
\begin{equation}\label{eq:Psi-tilde +}
  \wt{\Phi}^{\bar{\lambda}}_{\bar{\mu}}\colon S^{(\fg)}\longrightarrow \tilde{\CA} \, .
\end{equation}
Moreover, the composition
\begin{equation}\label{eq:homom extension 1}
  Y^+_{\bar{\mu}}(\fg)\overset{\eqref{eq:halves isomorphism}}{\iso} Y^+(\fg)
  \overset{\Upsilon}{\longrightarrow} S^{(\fg)}
  \overset{\wt{\Phi}^{\bar{\lambda}}_{\bar{\mu}}}{\longrightarrow}\tilde{\CA}
\end{equation}
coincides with the restriction of the homomorphism $\Phi^{\bar{\lambda}}_{\bar{\mu}}$~\eqref{eq:nw homom}
to the subalgebra $Y^+_{\bar{\mu}}(\fg)\subset Y_{\bar{\mu}}(\fg)$.

\medskip
\noindent
(b)  The assignment
\begin{equation}\label{eq:explicit shuffle homom 2}
\begin{split}
  & S^{(\fg),\op}_{(k_1,\ldots,k_r)} \ni F\mapsto \\
  & \sum_{\substack{m^{(1)}_1+\ldots+m^{(1)}_{a_1}=k_1\\\cdots\\ m^{(r)}_1+\ldots+m^{(r)}_{a_{r}}=k_{r}}}^{m^{(i)}_k\in \BN}
    \left\{\prod_{i=1}^{r}\prod_{k=1}^{a_i}\prod_{p=1}^{m^{(i)}_k} Y'_{i,k}\Big(w_{i,k}+(p-1)\sd_i\Big)\cdot
    F\left(\Big\{w_{i,k}+p\sd_i\Big\}_{\substack{1\leq i\leq r\\ 1\leq k\leq a_i\\ 1\leq p\leq m^{(i)}_k}}\right)\times\right.\\
  & \left.
    \prod_{i=1}^{r}\prod_{k=1}^{a_i}\prod_{1\leq p_1<p_2\leq m^{(i)}_k}
      \zeta^{-1}_{ii}\Big((w_{i,k}+p_2\sd_i) \,-\, (w_{i,k}+p_1\sd_i)\Big)\, \times\right.\\
  & \left.
    \prod_{i=1}^{r}\prod_{1\leq k_1\neq k_2\leq a_i}\prod_{1\leq p_1\leq m^{(i)}_{k_1}}^{1\leq p_2\leq m^{(i)}_{k_2}}
      \zeta^{-1}_{ii}\Big( (w_{i,k_2}+p_2\sd_i) \,-\, (w_{i,k_1}+p_1\sd_i)\Big)\, \times\right.\\
  & \left.\prod_{j\leftarrow i}\prod_{1\leq k_1\leq a_{i}}^{1\leq k_2\leq a_{j}}\prod_{1\leq p_1\leq m^{(i)}_{k_1}}^{1\leq p_2\leq m^{(j)}_{k_2}}
      \zeta^{-1}_{ji}\Big( (w_{j,k_2}+p_2\sd_j) \,-\, (w_{i,k_1}+p_1\sd_i) \Big) \cdot\,
    \prod_{i=1}^{r}\prod_{k=1}^{a_i} \sfu_{i,k}^{m^{(i)}_k}\right\}
\end{split}
\end{equation}
gives rise to the algebra homomorphism
\begin{equation}\label{eq:Psi-tilde -}
  \wt{\Phi}^{\bar{\lambda}}_{\bar{\mu}}\colon S^{(\fg),\op}\longrightarrow \tilde{\CA} \, .
\end{equation}
Moreover, the composition
\begin{equation}\label{eq:homom extension 2}
  Y^-_{\bar{\mu}}(\fg)\overset{\eqref{eq:halves isomorphism}}{\iso} Y^-(\fg)
  \overset{\Upsilon}{\longrightarrow} S^{(\fg),\op}
  \overset{\wt{\Phi}^{\bar{\lambda}}_{\bar{\mu}}}{\longrightarrow}\tilde{\CA}
\end{equation}
coincides with the restriction of the homomorphism $\Phi^{\bar{\lambda}}_{\bar{\mu}}$~\eqref{eq:nw homom}
to the subalgebra $Y^-_{\bar{\mu}}(\fg)\subset Y_{\bar{\mu}}(\fg)$.
\end{Thm}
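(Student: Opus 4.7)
The plan is to mirror the strategy of~\cite[Theorem 7.1]{ft1} from the trigonometric setting, with appropriate rational modifications. The argument falls into three stages: (i) establishing that the formulas~\eqref{eq:explicit shuffle homom 1} and~\eqref{eq:explicit shuffle homom 2} actually land in $\tilde{\CA}$, (ii) verifying the shuffle multiplicativity, and (iii) matching with the restriction of $\Phi^{\bar{\lambda}}_{\bar{\mu}}$ on the generators $\sE^{(k)}_i$ and $\sF^{(k)}_i$.

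Stage (i) is what I expect to be the main obstacle. A generic $E\in S^{(\fg)}_{\unl{k}}$ carries denominators $\prod_{i-j}\prod(x_{i,k}-x_{j,k'})$ permitted by~\eqref{pole conditions}, and specializing $x_{i,k}\mapsto w_{i,a}-(p-1)\sd_i$ in part (a), or $x_{i,k}\mapsto w_{i,a}+p\sd_i$ in part (b), produces cross-color expressions $(w_{i,a}-w_{j,b})+c$ with $c\in \BC$ that are \emph{not} invertible in $\tilde{\CA}$. These apparent singularities must cancel after summing over $\{m^{(i)}_k\}$: the wheel conditions~\eqref{wheel conditions} ensure that whenever such a pole would occur, the corresponding specialization places the $x$-variables in a wheel pattern, forcing $E$ to vanish with matching multiplicity. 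The delicate point is checking that the shifts by $\sd_i,\sd_j$ appearing in~\eqref{eq:explicit shuffle homom 1} align precisely with the shifts prescribed by~\eqref{wheel conditions}. This cancellation mechanism is the rational analogue of the one present in all Feigin--Odesski shuffle algebras.

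For stage (ii), I would expand $\wt{\Phi}^{\bar{\lambda}}_{\bar{\mu}}(E\star G)$ using~\eqref{shuffle product} and~\eqref{eq:explicit shuffle homom 1}, and compare it with $\wt{\Phi}^{\bar{\lambda}}_{\bar{\mu}}(E)\cdot \wt{\Phi}^{\bar{\lambda}}_{\bar{\mu}}(G)$. On the right-hand side, moving the $\sfu_{i,k}^{-m^{(i)}_k}$ factors of the $E$-contribution past the $w$-valued coefficients of the $G$-contribution shifts the latter by integer multiples of $\sd_i$; the resulting cross factor matches exactly the product of $\zeta_{ij}$'s inserted by the shuffle product~\eqref{shuffle product}. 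The indexing data for $E\star G$ decomposes naturally as ``slots coming from $E$'' versus ``slots coming from $G$'', and after accounting for the $\unl{k}!\cdot \unl{\ell}!$ normalization in~\eqref{shuffle product} together with the $\mathrm{Sym}$, the two sides agree term by term. The $\zeta^{-1}_{ii}$-prefactors in~\eqref{eq:explicit shuffle homom 1} within a single color tower are produced by the shift-commutators among the $\sfu$-operators and cancel the $\zeta_{ii}$-part of the shuffle cross factor. The computation for~\eqref{eq:explicit shuffle homom 2} is dual, exchanging the role of $Y$ with $Y'$ and reversing the direction of the $\sd_i$-shifts.

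For stage (iii), one checks the claim on the distinguished elements $\Upsilon(\sE^{(k)}_i)=\Upsilon(\sF^{(k)}_i)=x_{i,1}^{k-1}\in S^{(\fg)}_{(0,\ldots,1,\ldots,0)}$. Since only a single $m^{(i)}_{k_0}=1$ is non-zero in each summand, the formula~\eqref{eq:explicit shuffle homom 1} collapses to $\sum_{k_0=1}^{a_i}Y_{i,k_0}(w_{i,k_0})\,w_{i,k_0}^{k-1}\,\sfu_{i,k_0}^{-1}$, which is precisely the $z^{-k}$-coefficient of the image of $\sE_i(z)$ under~\eqref{eq:nw homom assignment} (the factor $\sd_i^{-1}$ being absorbed into $Y_{i,k_0}$ as defined in~\eqref{eq:Y-factors}). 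The parallel collapse of~\eqref{eq:explicit shuffle homom 2} recovers the $\sF^{(k)}_i$-image. Once stage (i) is secured, stages (ii) and (iii) reduce to careful but routine bookkeeping, and the theorem follows.
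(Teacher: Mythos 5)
Your three-stage plan (regularity of the image, shuffle multiplicativity, agreement with $\Phi^{\bar{\lambda}}_{\bar{\mu}}$ on the generators $x_{i,1}^{k-1}$) is exactly the intended route: the paper deliberately omits the proof and points to the trigonometric type-$A$ prototype of [FT1, Theorem 7.1] and [FT2, Theorem 4.11], whose rational analogue you are reproducing. Stage (iii) as you describe it is correct and complete: for $E=x_{i,1}^{k-1}$ only one $m^{(i)}_{k_0}=1$ survives, all $\zeta^{-1}$-products are empty, and the summand $Y_{i,k_0}(w_{i,k_0})\,w_{i,k_0}^{k-1}\,\sfu_{i,k_0}^{-1}$ (resp.\ $Y'_{i,k_0}(w_{i,k_0})(w_{i,k_0}+\sd_i)^{k-1}\sfu_{i,k_0}$) is the $z^{-k}$-coefficient of \eqref{eq:nw homom assignment}, with $\sd_i^{-1}$ absorbed into \eqref{eq:Y-factors}. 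Stage (ii) is routine bookkeeping of the kind you indicate, driven by $\sfu_{i,k}^{-1}w_{i,k}=(w_{i,k}-\sd_i)\sfu_{i,k}^{-1}$ (though note the $\sfu$'s commute among themselves, so the $\zeta^{-1}_{ii}$-prefactors come from ratios of the $W_{i,k}$-denominators of the $Y$-factors along a tower, not from ``shift-commutators among the $\sfu$-operators'').

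The one place where your write-up would fail if executed literally is stage (i). The cancellation of the non-invertible cross-colour denominators is \emph{term-by-term} in the sum over $\{m^{(i)}_k\}$, not ``after summing'': each individual summand must already lie in $\tilde{\CA}$, and no cancellation between distinct summands occurs or is available. Moreover the wheel conditions \eqref{wheel conditions} are only one of three ingredients. Concretely, for an edge $i-j$ a denominator factor $(x_{i,a}-x_{j,b})$ of $E$ from \eqref{pole conditions} specializes to $(w_{i,k_1}-(p_1-1)\sd_i)-(w_{j,k_2}-(p_2-1)\sd_j)$ and is cancelled by the \emph{numerator} of the corresponding $\zeta^{-1}_{ij}$-factor in \eqref{eq:explicit shuffle homom 1} (since $\zeta^{-1}_{ij}(z)=z/(z+\sd_ia_{ij}/2)$); this leaves the shifted denominator $(w_{i,k_1}-(p_1-1)\sd_i)-(w_{j,k_2}-(p_2-1)\sd_j)+\tfrac{1}{2}(\alphavee_i,\alphavee_j)$, which for $p_2=1$ is cancelled by the factor $W_j(w_{i,k_1}-(p_1-1)\sd_i-\tfrac{1}{2}(\alphavee_i,\alphavee_j)-p\sd_j)$ with $p=-a_{ji}$ sitting in the numerator of $Y_{i,k_1}$, and only for $p_2\geq 2$ is the cancellation supplied by the vanishing of the specialized numerator of $E$ forced by \eqref{wheel conditions} (the tower $w_{i,k},w_{i,k}-\sd_i,\ldots$ together with the offending $j$-variable realizes precisely a wheel configuration). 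Your proposal attributes everything to the wheel conditions, which is not enough; without tracking the $\zeta^{-1}_{ij}$- and $W_j$-numerators you cannot close the regularity argument, and this is where the actual content of the proof lies.
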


\medskip
\noindent
The proof is straightforward and is left to the interested reader.
We note that a trigonometric type $A$ counterpart of this result played
a crucial role in~\cite{ft2}, see Theorem~4.11 of~\emph{loc.cit.}

\medskip


\subsection{Application to the Lax matrices of types B and C}
\label{ssec BC explicit}
\

\medskip
\noindent
The key application of Theorem~\ref{thm:shuffle homomorphism} to the main subject of the
present paper is that it allows to obtain explicit formulas for the matrix coefficients of
$E^D(z),F^D(z)$ featuring in our definition of the Lax matrices $T_D(z)$~\eqref{eq:Lax definition}.
In type $D_r$ this recovers the formulas of Appendix~\ref{Appendix A: Lax explicitly}
(which were rather derived using the relations of
Lemmas~\ref{lem:all-E-known},~\ref{lem:all-E-new},~\ref{lem:all-F-known},~\ref{lem:all-F-new}),
while in types $C_r$ and $B_r$ this provides concise formulas (used in the proofs of
Theorems~\ref{thm: regularity C},~\ref{thm: regularity B}), which are quite inaccessible
if derived iteratively via Lemmas~\ref{lem:known type C},~\ref{lem:new type C}
or~\ref{lem:known type B},~\ref{lem:new type B}, respectively.

\medskip
\noindent
Let $\fg$ be either $\sso_N\ (N=2r,2r+1)$ or $\ssp_N\ (N=2r)$. Let $X^+(\fg)$ and $X^-(\fg)$
denote the subalgebras of the corresponding extended Drinfeld Yangian $X(\fg)$, generated by
$\{E_{i}^{(k)}\}_{1\leq i\leq r}^{k\geq 1}$ and $\{F_{i}^{(k)}\}_{1\leq i\leq r}^{k\geq 1}$,
respectively. Likewise, let $X^{\rtt,+}(\fg)$ and $X^{\rtt,-}(\fg)$ denote the subalgebras of
the corresponding extended RTT Yangian $X^\rtt(\fg)$, generated by $\{e_{i,j}^{(k)}\}_{1\leq i<j\leq N}^{k\geq 1}$
and $\{f_{j,i}^{(k)}\}_{1\leq i<j\leq N}^{k\geq 1}$, respectively.
Then, we have the following natural algebra isomorphisms:
\begin{equation}\label{eq:various halves}
  X^{\rtt,+}(\fg)\iso X^+(\fg)\iso Y^+(\fg)\, ,\qquad
  X^{\rtt,-}(\fg)\iso X^-(\fg)\iso Y^-(\fg)\, .
\end{equation}
Let $\{\sE^{(k)}_{ij}\}_{1\leq i<j\leq N}^{k\geq 1}$ and
$\{\sF^{(k)}_{ji}\}_{1\leq i<j\leq N}^{k\geq 1}$ denote the images
of $e^{(k)}_{i,j}$ and $f^{(k)}_{j,i}$ in $Y^+(\fg)$ and $Y^-(\fg)$
under the composition maps of~\eqref{eq:various halves}, respectively, and
consider their generating series:
\begin{equation*}
  \sE_{ij}(z):=\, \sum_{k\geq 1}\sE^{(k)}_{ij}z^{-k}\, ,\qquad
  \sF_{ji}(z):=\, \sum_{k\geq 1}\sF^{(k)}_{ji}z^{-k} \, .
\end{equation*}
We conclude this Appendix by presenting explicit formulas for $\Upsilon(\sE_{ij}(z))$
and $\Upsilon(\sF_{ji}(z))$ (combining which with Theorem~\ref{thm:shuffle homomorphism}
recovers the Lax matrices $T_D(z)$ of~\eqref{eq:Lax definition}).
In what follows, $\varsigma_i$ will denote the $i$-th coordinate vector:
$\varsigma_i=(0,\ldots,0,1,0,\ldots,0)\in \BN^r$ with $1$ at the spot $i$.

\medskip

\begin{Lem}\label{lem:EF explicit shuffle C}(Type $C_r$)
Define the polynomial $Q(z_1,z_2;w_1,w_2)$ via:\footnote{Note that $Q(w,w-1;w-1/2,z)=0$
in accordance with the wheel conditions~\eqref{wheel conditions}.}
\begin{equation}
  Q(z_1,z_2;w_1,w_2)=2z_1z_2+2w_1w_2-(z_1+z_2)(w_1+w_2)+\tfrac{1}{2} \, .
\end{equation}

\medskip
\noindent
(a) We have the following equalities:

\begin{small}
\begin{equation*}
  \Upsilon \Big(\sE_{ij}(z)\Big) =
  \frac{1}{(z-\frac{i-1}{2}-x_{i,1})\prod_{k=i}^{j-2} (x_{k,1}-x_{k+1,1})}\in S^{(\ssp_{2r})}_{\varsigma_i+\ldots+\varsigma_{j-1}}
  \qquad  \mathrm{for}\ 1\leq i<j\leq r \, ,
\end{equation*}
\end{small}

\begin{small}
\begin{equation*}
  \Upsilon \Big(\sE_{ir'}(z)\Big) =
  \frac{2}{(z-\frac{i-1}{2}-x_{i,1})\prod_{k=i}^{r-1} (x_{k,1}-x_{k+1,1})}\in S^{(\ssp_{2r})}_{\varsigma_i+\ldots+\varsigma_{r}}
  \qquad  \mathrm{for}\ 1\leq i< r \, ,
\end{equation*}
\end{small}

\begin{small}
\begin{equation*}
\begin{split}
  & \Upsilon \Big(\sE_{ij'}(z)\Big) =
    \frac{2(2x_{j-1,1}-x_{j,1}-x_{j,2})\prod_{k=j}^{r-2}Q(x_{k,1},x_{k,2};x_{k+1,1},x_{k+1,2})}
          {(z-\frac{i-1}{2}-x_{i,1})\prod_{k=i}^{r-1} \prod_{p\leq 1+\delta_{j\leq k<r}}^{p'\leq 1+\delta_{j\leq k+1<r}} (x_{k,p}-x_{k+1,p'})}
    \in S^{(\ssp_{2r})}_{\varsigma_i+\ldots+\varsigma_{j-1}+2\varsigma_j+\ldots+2\varsigma_{r-1}+\varsigma_r} \\
  & \qquad \qquad \qquad \qquad \qquad \qquad \qquad \qquad \qquad \qquad \qquad \qquad \qquad  \qquad  \qquad \ \ \mathrm{for}\ 1\leq i<j< r\, ,
\end{split}
\end{equation*}
\end{small}

\begin{small}
\begin{equation*}
\begin{split}
  & \Upsilon \Big(\sE_{ii'}(z)\Big) =
    \frac{2(2z-i+2-x_{i,1}-x_{i,2})\prod_{k=i}^{r-2}Q(x_{k,1},x_{k,2};x_{k+1,1},x_{k+1,2})}
          {(z-\frac{i-1}{2}-x_{i,1})(z-\frac{i-1}{2}-x_{i,2})\prod_{k=i}^{r-1} \prod_{p\leq 1+\delta_{k<r}}^{p'\leq 1+\delta_{k+1<r}} (x_{k,p}-x_{k+1,p'})}
    \in S^{(\ssp_{2r})}_{2\varsigma_i+\ldots+2\varsigma_{r-1}+\varsigma_r} \\
  & \qquad \qquad \qquad \qquad \qquad \qquad \qquad \qquad \qquad \qquad \qquad \qquad \qquad  \qquad  \qquad \qquad  \mathrm{for}\ 1\leq i\leq r\, ,
\end{split}
\end{equation*}
\end{small}

\begin{small}
\begin{equation*}
\begin{split}
  & \Upsilon \Big(\sE_{ij'}(z)\Big) =
    \frac{2(Q(z-\frac{i-1}{2},x_{i-1,1};x_{i,1},x_{i,2})+\frac{1}{2}(2x_{i-1,1}-x_{i,1}-x_{i,2}))\prod_{k=i}^{r-2}Q(x_{k,1},x_{k,2};x_{k+1,1},x_{k+1,2})}
          {(z-\frac{i-1}{2}-x_{i,1})(z-\frac{i-1}{2}-x_{i,2})\prod_{k=j}^{r-1} \prod_{p\leq 1+\delta_{i\leq k<r}}^{p'\leq 1+\delta_{i\leq k+1<r}} (x_{k,p}-x_{k+1,p'})}
    \\
  & \qquad \qquad \qquad \qquad \qquad \qquad \qquad \qquad \qquad \qquad \in S^{(\ssp_{2r})}_{\varsigma_j+\ldots+\varsigma_{i-1}+2\varsigma_i+\ldots+2\varsigma_{r-1}+\varsigma_r}
    \qquad \mathrm{for}\ 1\leq j<i<r \, ,
\end{split}
\end{equation*}
\end{small}

\begin{small}
\begin{equation*}
  \Upsilon \Big(\sE_{rj'}(z)\Big) =
  \frac{2}{(z-\frac{r}{2}-x_{r,1})\prod_{k=j}^{r-1} (x_{k,1}-x_{k+1,1})} \in S^{(\ssp_{2r})}_{\varsigma_j+\ldots+\varsigma_{r}}
  \qquad  \mathrm{for}\ 1\leq j < r \, ,
\end{equation*}
\end{small}

\begin{small}
\begin{equation*}
  \Upsilon \Big(\sE_{i'j'}(z)\Big) =
  -\frac{1}{(z+\frac{i-2}{2}-r-x_{i-1,1})\prod_{k=j}^{i-2} (x_{k,1}-x_{k+1,1})}\in S^{(\ssp_{2r})}_{\varsigma_j+\ldots+\varsigma_{i-1}}
  \qquad  \mathrm{for}\ 1\leq j<i\leq r \, .
\end{equation*}
\end{small}

\medskip
\noindent
(b) For any $1\leq i<j\leq 2r$, $\Upsilon(\sF_{ji}(z))\in S^{(\ssp_{2r}),\op}[[z^{-1}]]$ is given
by the same formula (the expansion in $z^{-1}$ of the corresponding rational function in (a)) as
$\Upsilon(\sE_{ij}(z))\in S^{(\ssp_{2r})}[[z^{-1}]]$.
\end{Lem}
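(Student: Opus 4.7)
The plan is to prove this lemma by recursion on the distance between $i$ and $j$, exploiting the tower of commutator relations in Lemmas~\ref{lem:known type C} and~\ref{lem:new type C} together with the shuffle-algebra embedding $\Upsilon$ of~\eqref{eq:Psi1}. The base case is the simple root vectors: the isomorphism $\Upsilon_0\circ \iota_0$ in type $C_r$, given by~\eqref{eq:explicit identification C}, shows that the image of $e_{i,i+1}(z)$ in $Y^+(\ssp_{2r})$ corresponds to $\sE_i(z-\tfrac{i-1}{2})$ (with the factor $\frac12$ for $i=r$). Combined with $\Upsilon(\sE_i^{(k)})=x_{i,1}^{k-1}$, this yields
\[
\Upsilon\bigl(\sE_{i,i+1}(z)\bigr)=\frac{1}{z-\tfrac{i-1}{2}-x_{i,1}}\in S^{(\ssp_{2r})}_{\varsigma_i},
\]
matching the claim for $j=i+1$, and analogously for the $i=r$ entry.

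Next I would translate the recursive relations of Lemmas~\ref{lem:known type C}(b)--(g), \ref{lem:new type C}(a)--(b) into the shuffle algebra: each bracket $[e_{*,*}(z),e_k^{(1)}]$ in $Y^+$ becomes the shuffle commutator $F\star x_{k,1}-x_{k,1}\star F$ in $S^{(\ssp_{2r})}$. Because shuffling with the single-variable element $x_{k,1}$ is a symmetrization with the factors $\zeta_{*k}$ and $\zeta_{k*}$, the commutator collapses (after cancellation of the symmetric part) to a residue-type sum of ``boundary'' contributions that produces exactly one new denominator factor $(x_{m,1}-x_{k,1})$ (or, when doubling occurs past the $r$-th node, a factor $(x_{m,p}-x_{k,p'})$). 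Iterating this along the chain $i\to i+1\to\cdots\to j$ and then along the ``turn-around'' $r\to r-1\to\cdots\to j$ builds up the denominators appearing in the lemma. For the expressions $\sE_{ij'}(z)$ with $i<j<r$ and for $\sE_{ii'}(z)$, the grade vector jumps from $\varsigma_k$ to $2\varsigma_k$ once one crosses the $r$-th node, so the recursive step must be done in two blocks.

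The main obstacle will be identifying the numerator polynomial $Q(z_1,z_2;w_1,w_2)$ and its analogs. Indeed, once a second variable appears at node $k<r$, the raw output of the shuffle commutator is a symmetrization of a product of $\zeta_{k,k+1}$'s which has poles that must be cancelled against the wheel condition at the $(r-1,r)$ double edge; Lemma~\ref{lem:new type C}(b) together with the relation $e_{r,r+1}(z)=0$ (Lemma~\ref{lem:all-E-known}(a), still valid in type $C$) is what enforces this cancellation and yields the symmetric polynomial $Q$. The verification that $Q(w,w-1;w-\tfrac12,z)=0$ (built into the definition) exactly matches the wheel condition~\eqref{wheel conditions} at the double edge, and that the resulting symmetric function lies in $S^{(\ssp_{2r})}_{*}$, is the delicate part; this also determines the overall numerical factor $2$ appearing in front of $\sE_{ij'}$, $\sE_{ii'}$, $\sE_{rj'}$, and $\sE_{ir'}$, tracked through $\sd_r=2$ and the factor $\tfrac12$ in~\eqref{eq:explicit identification C}.

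Finally, for part (b), I would invoke the anti-algebra isomorphism $Y^-(\fg)\simeq Y^+(\fg)^{\op}$ of Remark~\ref{rmk:halves are opposite} together with the $F$-counterpart relations in Lemma~\ref{lem:known type C}(h)--(m) and Lemma~\ref{lem:new type C}(c)--(d), which have exactly the same structure as the $E$-side but with brackets reversed. Under $\Upsilon\colon Y^-(\fg)\hookrightarrow S^{(\fg),\op}$, the bracket reversal is absorbed into the opposite shuffle multiplication, so the same inductive calculation runs verbatim and produces the identical rational functions; thus $\Upsilon(\sF_{ji}(z))$ is given by the very same formula as $\Upsilon(\sE_{ij}(z))$, completing the proof.
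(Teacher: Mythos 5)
The paper itself gives no proof of this lemma (it is left as a direct, if tedious, verification), so there is no argument of the authors to compare against step by step. Your overall strategy --- anchor the base case $\Upsilon(\sE_{i,i+1}(z))=\frac{1}{z-\frac{i-1}{2}-x_{i,1}}$ via \eqref{eq:explicit extended identification C} and \eqref{eq:Psi1}, then climb the recursions of Lemmas~\ref{lem:known type C},~\ref{lem:new type C} by computing shuffle commutators with the degree-$\varsigma_k$ generators --- is the natural one, and it does go through for the single-variable-per-node cases (the first, second, sixth and seventh formulas). Two smaller inaccuracies: $\Upsilon(\sE_k^{(1)})=x_{k,1}^{0}=1$, not $x_{k,1}$; and the recursions in Lemma~\ref{lem:known type C}(g) and Lemma~\ref{lem:new type C}(b) contain genuine product terms $e_i(z)e_{i,(i+1)'}(z)$ in addition to commutators, which your sketch never addresses even though they are essential for $\sE_{ii'}(z)$ and the $j<i$ cases.

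The serious gap is in the step that is supposed to produce the polynomials $Q$. You assert that $e_{r,r+1}(z)=0$ is ``still valid in type $C$'' and invoke a wheel condition ``at the $(r-1,r)$ double edge'' to force the cancellation yielding $Q$. Both claims are false. In type $C_r$ one has $r'=r+1$ and $e_{r,r+1}(z)=2\,\Upsilon_0(E_r(z))$ by \eqref{eq:explicit extended identification C}: it is the generator attached to the long simple root, not zero; the vanishing $e_{r,r+1}(z)=0$ is a type-$D$ phenomenon (Lemma~\ref{lem:all-E-known}(a)). Moreover, in the degrees occurring here (two variables at nodes $j,\dots,r-1$ and one at node $r$) the wheel conditions at the double edge are vacuous --- they would require three variables at node $r-1$, respectively two at node $r$. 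The wheel condition that $Q$ actually encodes, as the footnote records via $Q(w,w-1;w-\tfrac12,z)=0$, lives on the simple edges between adjacent short nodes $k-(k+1)$ with $k+1\leq r-1$. The appearance of $Q$ has to come from honestly carrying out the $S(2)$-symmetrized shuffle commutator $[\,\cdot\,,1_{\varsigma_k}]_{\star}$ at the moment a second variable is created at node $k$ (the $\zeta_{kk}$-factors and the symmetrization generate the quadratic numerator); your proposal neither performs this computation nor supplies a valid reason why it closes on the stated formulas, so the core of part (a) remains unproved as written. Part (b) via Remark~\ref{rmk:halves are opposite} is fine once (a) is established.
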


\medskip

\begin{Lem}\label{lem:EF explicit shuffle B}(Type $B_r$)
(a) We have the following equalities:

\begin{small}
\begin{equation*}
  \Upsilon \Big(\sE_{ij}(z)\Big) =
  \frac{1}{(z-\frac{i-1}{2}-x_{i,1})\prod_{k=i}^{j-2} (x_{k,1}-x_{k+1,1})}\in S^{(\sso_{2r+1})}_{\varsigma_i+\ldots+\varsigma_{j-1}}
  \qquad  \mathrm{for}\ 1\leq i<j\leq r+1 \, ,
\end{equation*}
\end{small}

\begin{small}
\begin{equation*}
\begin{split}
  & \Upsilon \Big(\sE_{ij'}(z)\Big) =
    -\frac{\prod_{k=j}^{r-1}(x_{k,1}-x_{k,2}-1)(x_{k,2}-x_{k,1}-1)}
          {(z-\frac{i-1}{2}-x_{i,1})\prod_{k=i}^{r-1} \prod_{p\leq 1+\delta_{k\geq j}}^{p'\leq 1+\delta_{k+1\geq j}} (x_{k,p}-x_{k+1,p'})}
    \in S^{(\sso_{2r+1})}_{\varsigma_i+\ldots+\varsigma_{j-1}+2(\varsigma_j+\ldots+\varsigma_{r})} \\
  & \qquad \qquad \qquad \qquad \qquad \qquad \qquad \qquad \qquad \qquad \qquad \qquad \qquad  \qquad  \qquad \ \ \mathrm{for}\ 1\leq i<j\leq r\, ,
\end{split}
\end{equation*}
\end{small}

\begin{small}
\begin{equation*}
\begin{split}
  & \Upsilon \Big(\sE_{ii'}(z)\Big) =
    -\frac{\prod_{k=i}^{r-1}(x_{k,1}-x_{k,2}-1)(x_{k,2}-x_{k,1}-1)}
          {(z-\frac{i-1}{2}-x_{i,1})(z-\frac{i-1}{2}-x_{i,2})\prod_{k=i}^{r-1} \prod_{p\leq 2}^{p'\leq 2} (x_{k,p}-x_{k+1,p'})}
    \in S^{(\sso_{2r+1})}_{2(\varsigma_i+\ldots+\varsigma_{r})} \\
  & \qquad \qquad \qquad \qquad \qquad \qquad \qquad \qquad \qquad \qquad \qquad \qquad \qquad  \qquad  \qquad \qquad  \mathrm{for}\ 1\leq i\leq r\, ,
\end{split}
\end{equation*}
\end{small}

\begin{small}
\begin{equation*}
\begin{split}
  & \Upsilon \Big(\sE_{ij'}(z)\Big) =
    \frac{(z-\frac{i}{2}-x_{i-1,1})\prod_{k=i}^{r-1}(x_{k,1}-x_{k,2}-1)(x_{k,2}-x_{k,1}-1)}
          {(z-\frac{i-1}{2}-x_{i,1})(z-\frac{i-1}{2}-x_{i,2})\prod_{k=j}^{r-1} \prod_{p\leq 1+\delta_{k\geq i}}^{p'\leq 1+\delta_{k+1\geq i}} (x_{k,p}-x_{k+1,p'})} \\
  & \qquad \qquad \qquad \qquad \qquad \qquad \qquad \qquad \qquad \qquad
    \in S^{(\sso_{2r+1})}_{\varsigma_j+\ldots+\varsigma_{i-1}+2(\varsigma_i+\ldots+\varsigma_{r})} \qquad \mathrm{for}\ 1\leq j<i\leq r\, ,
\end{split}
\end{equation*}
\end{small}

\begin{small}
\begin{equation*}
  \Upsilon \Big(\sE_{i'j'}(z)\Big) =
  -\frac{1}{(z+\frac{i+1}{2}-r-x_{i-1,1})\prod_{k=j}^{i-2} (x_{k,1}-x_{k+1,1})}\in S^{(\sso_{2r+1})}_{\varsigma_j+\ldots+\varsigma_{i-1}}
  \quad  \mathrm{for}\ 1\leq j<i\leq r+1 \, .
\end{equation*}
\end{small}

\medskip
\noindent
(b) For any $1\leq i<j\leq 2r+1$, $\Upsilon(\sF_{ji}(z))\in S^{(\sso_{2r+1}),\op}[[z^{-1}]]$ is given
by the same formula (the expansion in $z^{-1}$ of the corresponding rational function in (a))
as $\Upsilon(\sE_{ij}(z))\in S^{(\sso_{2r+1})}[[z^{-1}]]$.
\end{Lem}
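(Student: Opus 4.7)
My plan is to prove part (a) by induction through the hierarchy of commutator relations provided in Lemmas~\ref{lem:known type B} and~\ref{lem:new type B}, and then to deduce part (b) from part (a) by exploiting the algebra isomorphism $Y^-(\sso_{2r+1})\iso Y^+(\sso_{2r+1})^{\op}$ from Remark~\ref{rmk:halves are opposite}. The embeddings~(\ref{eq:Psi1}) and~(\ref{eq:Psi2}) send $\sE_i^{(k)}$ and $\sF_i^{(k)}$ to the same element $x_{i,1}^{k-1}$, while $Y^-$ is realized inside $S^{(\fg),\op}$. Since the defining relations in Lemmas~\ref{lem:known type B} and~\ref{lem:new type B} for $f_{j,i}(z)$ are obtained from those for $e_{i,j}(z)$ by reversing the order of every commutator and product, passage to $S^{(\fg),\op}$ absorbs this reversal, and the rational function attached to $\sF_{ji}(z)$ must coincide with the one attached to $\sE_{ij}(z)$.

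For part (a) itself, the two base cases are handled directly. First, $\sE_{i,i+1}(z) = \sE_i(z-\tfrac{i-1}{2})$ under the shift $E_i(z)=\sE_i(z-\tfrac{i-1}{2})$ from~(\ref{eq:explicit identification B}), which together with $\Upsilon(\sE_i^{(k)})=x_{i,1}^{k-1}$ yields $\Upsilon(\sE_{i,i+1}(z)) = (z-\tfrac{i-1}{2}-x_{i,1})^{-1}$. Second, the formula for $\sE_{i'j'}(z)$ with $j=i-1$ follows at once from Lemma~\ref{lem:known type B}(b), $e_{(i+1)',i'}(z)=-e_i(z+i-r+\tfrac12)$, combined with the first base case. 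For the upper block $\sE_{ij}(z)$ with $j>i+1$, I would apply Lemma~\ref{lem:known type B}(c) iteratively: each step is the shuffle commutator $[G,1]_\star = G\star 1-1\star G$, since $\Upsilon(\sE_j^{(1)})=1\in S^{(\sso_{2r+1})}_{\varsigma_j}$, and a short calculation shows this collapses the $\zeta$-factors into the denominator product $\prod_{k=i}^{j-2}(x_{k,1}-x_{k+1,1})$. The formula for $\sE_{ij'}(z)$ with $i<j\leq r$ is then obtained by iterating Lemma~\ref{lem:known type B}(d) downward in $j$ starting from $\sE_{i,(r+1)'}(z)=\sE_{i,r+1}(z)$; the wheel condition~(\ref{wheel conditions}) at the short-root node $r$ forces a doubling of variables at each node $j\leq k\leq r$, producing the numerator factors $(x_{k,1}-x_{k,2}-1)(x_{k,2}-x_{k,1}-1)$. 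The remaining cases $\sE_{ij'}(z)$ for $j<i-1$ and $\sE_{i'j'}(z)$ for $j<i-1$ are handled by pure commutator iterations via Lemma~\ref{lem:new type B}(c) and Lemma~\ref{lem:known type B}(e), respectively, starting from the previously established formulas.

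The main obstacle will be the diagonal and near-diagonal cases $\sE_{ii'}(z)$, $\sE_{i+1,i'}(z)$, and $\sE_{ij'}(z)$ for $j<i$, which are governed by the quadratic non-commutator corrections in Lemma~\ref{lem:new type B}(a,b). Under $\Upsilon$, a product $e_i(z)\,e_{i,(i+1)'}(z)$ translates into the shuffle product $\Upsilon(\sE_i(z))\star\Upsilon(\sE_{i,(i+1)'}(z))$, which is a genuine symmetrization rather than a clean commutator. The task will be to verify that this correction combines with the bracket $[G,1]_\star$ to collapse into a single numerator factor, namely $(2z-i+2-x_{i,1}-x_{i,2})$ in the $\sE_{ii'}$-case and $(z-\tfrac{i}{2}-x_{i-1,1})$ in the cross-block case. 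This is a rational-function identity whose apparent residues at the hyperplanes $x_{k,1}-x_{k,2}=\pm 1$ are enforced precisely by the wheel conditions~(\ref{wheel conditions}) at the short-root node $r$. Once these identities are established, the pole and wheel conditions for all resulting shuffle elements can be read off directly from the explicit formulas, completing the proof.
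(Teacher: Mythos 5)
The paper gives no proof of this lemma (it is stated without proof, following Theorem~\ref{thm:shuffle homomorphism} whose verification is explicitly left to the reader), and your plan --- induction through the recursions of Lemmas~\ref{lem:known type B},~\ref{lem:new type B} pushed through $\Upsilon$, with the base cases $\Upsilon(\sE_{i,i+1}(z))=(z-\tfrac{i-1}{2}-x_{i,1})^{-1}$ and Lemma~\ref{lem:known type B}(b), and with part (b) deduced from part (a) via the anti-isomorphism of Remark~\ref{rmk:halves are opposite} --- is the natural (and essentially the only available) route, and it is sound: the pure-commutator steps reduce to the $\zeta$-factor cancellations you describe, and the quadratic corrections of Lemma~\ref{lem:new type B}(a,b) reduce to checkable rational-function identities (e.g.\ for $i=r$ one computes directly $[G,1]_\star-G\star G=-\prod_{p=1,2}(z-\tfrac{r-1}{2}-x_{r,p})^{-1}$ with $G=\Upsilon(\sE_r(z-\tfrac{r-1}{2}))$, matching the stated $\sE_{rr'}$ formula). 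One minor imprecision: the doubling of the variables at the nodes $j,\ldots,r$ is forced by the $\BN^r$-grading (the decomposition $\epsilon^\vee_i+\epsilon^\vee_j=\sum_{k=i}^{j-1}\alphavee_k+2\sum_{k=j}^{r}\alphavee_k$), not by the wheel conditions; the wheel conditions are rather what the numerator factors $(x_{k,1}-x_{k,2}\mp 1)$, produced by the $\zeta_{kk}$-factors in the symmetrization, guarantee a posteriori.
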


\medskip
\noindent
Inspecting the explicit formulas above, we obtain
(cf.~Remarks~\ref{rmk:E-skew},~\ref{rmk:F-skew} for the type $D_r$):

\medskip

\begin{Cor}\label{cor:BC1-symmetry}
(a) In the type $C_r$, we have (with $\varepsilon_i \in \{\pm 1\}$ defined as in~\eqref{eq:varepsilons}):
\begin{equation}\label{eq:C-linear-skew}
  \sE^{(1)}_{ij}=-\varepsilon_i \varepsilon_j \sE^{(1)}_{j'i'}\, ,\qquad
  \sF^{(1)}_{ji}=-\varepsilon_i \varepsilon_j \sF^{(1)}_{i'j'}\, ,\qquad
  \forall\, 1\leq i<j\leq 2r \, ,
\end{equation}
which imply the corresponding equalities for the matrix coefficients of $E^D(z)$ and $F^D(z)$:
\begin{equation*}
  e^{(D)1}_{i,j}=-\varepsilon_i \varepsilon_j e^{(D)1}_{j',i'}\, ,\qquad
  f^{(D)1}_{j,i}=-\varepsilon_i \varepsilon_j f^{(D)1}_{i',j'}\, ,\qquad
  \forall\, 1\leq i<j\leq 2r \, .
\end{equation*}

\medskip
\noindent
(b)  In the type $B_r$, we have:
\begin{equation}\label{eq:B-linear-skew}
  \sE^{(1)}_{ij} = - \sE^{(1)}_{j'i'}\, ,\qquad
  \sF^{(1)}_{ji} = - \sF^{(1)}_{i'j'}\, ,\qquad
  \forall\, 1\leq i<j\leq 2r+1 \, ,
\end{equation}
which imply the corresponding equalities for the matrix coefficients of $E^D(z)$ and $F^D(z)$:
\begin{equation*}
  e^{(D)1}_{i,j}=-e^{(D)1}_{j',i'}\, ,\qquad
  f^{(D)1}_{j,i}=-f^{(D)1}_{i',j'}\, ,\qquad
  \forall\, 1\leq i<j\leq 2r+1 \, .
\end{equation*}
\end{Cor}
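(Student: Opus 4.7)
The plan is to verify the claimed equalities inside the shuffle algebra via the injective embedding $\Upsilon$ of~(\ref{eq:Psi1},~\ref{eq:Psi2}) (combined with the identifications~\eqref{eq:various halves}), by reading off the $z^{-1}$-coefficients directly from the explicit formulas of Lemmas~\ref{lem:EF explicit shuffle C} and~\ref{lem:EF explicit shuffle B}. In each case listed there, $\Upsilon(\sE_{ij}(z))$ is a rational function of $z$ whose numerator has degree at most one and whose denominator has degree one or two, so that $\Upsilon(\sE^{(1)}_{ij})=\lim_{z\to\infty} z\cdot \Upsilon(\sE_{ij}(z))$ is extracted by a single substitution.

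I will partition the pairs $(i,j)$ with $1\leq i<j\leq N$ according to the involution $\iota\colon(i,j)\mapsto(j',i')$ into three families: (i) both entries lie in the same half of $\{1,\ldots,N\}$; (ii) genuinely mixed pairs with $i\leq r<j$ and $j\neq i'$; (iii) the self-conjugate pairs $j=i'$, for which the desired equality is tautological. For family~(i), typified by $1\leq i<j\leq r$, the limit $z\cdot \Upsilon(\sE_{ij}(z))$ produces the rational expression $1/\prod_{k=i}^{j-2}(x_{k,1}-x_{k+1,1})$, whereas renaming $i\leftrightarrow j$ in the formula for $\Upsilon(\sE_{I'J'}(z))$ yields the same expression multiplied by $-1$, confirming the required sign $-\varepsilon_i\varepsilon_j=-1$; the remaining sub-cases of family~(i) (e.g.\ involving the middle index $r+1$ in type $B_r$) are handled identically. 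For family~(ii), I will match each sub-case of $\Upsilon(\sE_{ij'}(z))$ -- namely $i<j<r$, $i=j$, $j<i<r$, $i=r$, and $j=r$ in type $C_r$, and their analogues in type $B_r$ -- against its $\iota$-partner $\Upsilon(\sE_{j,i'}(z))$. The key algebraic observation is that the leading $z$-coefficient of $Q(z-\tfrac{i-1}{2},x_{i-1,1};x_{i,1},x_{i,2})$ equals $2x_{i-1,1}-x_{i,1}-x_{i,2}$, which is precisely the factor appearing in the companion formula: this compensates the quadratic denominator $(z-\tfrac{i-1}{2}-x_{i,1})(z-\tfrac{i-1}{2}-x_{i,2})$ in one member of the pair, producing identical leading coefficients and hence the expected sign $-\varepsilon_i\varepsilon_j=+1$ in type $C_r$ (the type $B_r$ computation is parallel, with the $(x_{k,1}-x_{k,2}\pm 1)$-products playing the role of the $Q$-factors).

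The parallel equalities for $\sF^{(1)}_{ji}$ follow at once from part~(b) of the two lemmas, which asserts that $\Upsilon(\sF_{ji}(z))$ is given by the same rational expression as $\Upsilon(\sE_{ij}(z))$. Finally, the equalities for $e^{(D)1}_{i,j}$ and $f^{(D)1}_{j,i}$ in $\CA$ are obtained by applying the homomorphism $\Theta_D=\Psi_D\circ\Upsilon_{-\mu}^{-1}$ provided by Theorems~\ref{thm:Dr=RTT-shifted-C} and~\ref{thm:Dr=RTT-shifted-B}, which transports the identities from $X^\rtt_{-\mu}(\fg)$ to the oscillator algebra. The main obstacle is purely combinatorial: family~(ii) subdivides into several overlapping sub-cases, and the correct $\iota$-partner (together with its defining inequalities) must be identified in every instance, most delicately in the degenerate cases $i=r$ or $j=r$ where the shuffle formulas simplify and the $\iota$-partner may land in a different sub-case. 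No individual verification is conceptually deep, but the combinatorial bookkeeping requires care.
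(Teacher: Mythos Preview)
Your approach is correct and is essentially the same as the paper's: the paper simply states ``Inspecting the explicit formulas above, we obtain'' (referring to Lemmas~\ref{lem:EF explicit shuffle C} and~\ref{lem:EF explicit shuffle B}), and you carry out precisely this inspection by extracting the leading $z^{-1}$-coefficients and matching $\iota$-partners. Your organization into families (i)--(iii) and the observation about the leading $z$-coefficient of $Q$ are exactly the bookkeeping needed to make ``inspection'' rigorous; the paper omits these details entirely.
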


\medskip


\end{document}